\newcommand{\RR}{{\mathbb{R}}}
\newcommand{\NN}{{\mathbb{N}}}
\newcommand{\ZZ}{{\mathbb{Z}}}
\newcommand{\CC}{{\mathbb{C}}}
\newcommand{\eps}{\varepsilon}
\newcommand{\bp}{\noindent {\it Proof}.\,\,}
\newcommand{\ep}{\hfill$\Box$ \vskip 0.06in}
\newcommand{\dint}{\int\!\!\!\int}
\newcommand{\Sq}{\mathcal{A}}
\def\HPML { H^p_{L, {mol}, M}(\mathbb{R}^n) } 
\def\HPHML { \mathbb{H}^p_{L, {mol}, M}(\mathbb{R}^n) } 
\def\HPL { H^p_{L}(\mathbb{R}^n) }
\numberwithin{equation}{section}
\theoremstyle{plain}
\newtheorem{theorem}[equation]{Theorem}
\newtheorem{lemma}[equation]{Lemma}
\newtheorem{corollary}[equation]{Corollary}
\newtheorem{proposition}[equation]{Proposition}
\theoremstyle{definition}
\newtheorem{definition}[equation]{Definition}
\theoremstyle{remark}
\definecolor{gr}{rgb}   {0.,   0.8,   0. }
\definecolor{bl}{rgb}   {0.,   0.1,   1. }
\definecolor{mg}{rgb}   {0.7,  0.,    0.7}
\newcommand{\Bk}{\color{black}}
\def\HRL { H^p_{L, {\rm Riesz}}(\mathbb{R}^n) }
\def\HPL { H^p_{L}(\mathbb{R}^n) }
\def\hpl { H^p_{L} }
\begin{document}

\author{Steve Hofmann, Svitlana Mayboroda and Alan McIntosh}

\title[Second order elliptic operators]{Second order elliptic operators with complex bounded measurable coefficients in $L^p$, Sobolev and Hardy spaces}

\thanks{The first and second authors were partially supported by NSF grants DMS 0801079 and 0758500, respectively.  The third author was supported by the Australian Government through the Australian Research Council.}


\begin{abstract} Let $L$ be a  second order divergence form elliptic operator with complex bounded measurable coefficients. The operators arising in connection with  $L$, such as the heat semigroup and Riesz transform, are not, in general,  of Calder{\'o}n-Zygmund type and exhibit behavior
different from their counterparts built upon the Laplacian. The current paper aims at a thorough description of the properties of such operators in $L^p$, Sobolev, and  some new Hardy spaces naturally associated to $L$.

First, we show that the known ranges of boundedness in $L^p$ for the heat semigroup and Riesz transform  of $L$, are sharp. In particular, the heat semigroup $e^{-tL}$ need not be bounded in $L^p$ if $p\not\in [2n/(n+2),2n/(n-2)]$. Then we provide a complete description  of {\it all} Sobolev spaces in which $L$ admits a bounded functional calculus, in particular, where  $e^{-tL}$ is bounded. 

Secondly, we develop a comprehensive theory of Hardy and Lipschitz spaces associated to $L$, that serves the range of $p$ beyond  $[2n/(n+2),2n/(n-2)]$. It includes, in particular, characterizations by the sharp maximal function and the Riesz transform (for certain ranges of $p$), as well as
the molecular decomposition and duality and interpolation theorems. 

\end{abstract}

\maketitle

\tableofcontents

\newpage

\section{Introduction}\label{s1}
\setcounter{equation}{0}

Let $A$ be an $n\times n$ matrix with entries
\begin{equation}\label{eq1.1}
a_{jk}:\RR^n\longrightarrow \CC,\quad j=1,...,n, \quad
k=1,...,n,
\end{equation}

\noindent satisfying the ellipticity condition
\begin{equation}\label{eq1.2}
\lambda |\xi|^2\leq \Re e A\xi\cdot\bar \xi \quad \mbox{and} \quad
|A\xi\cdot\bar \zeta|\leq \Lambda |\xi||\zeta|, \quad
\forall\,\xi,\zeta\in \CC^n,
\end{equation}

\noindent for some constants $0<\lambda\leq \Lambda <\infty$. For such matrices $A$, our aim in this paper is to present a detailed investigation of Hardy spaces and their duals associated to the
second order divergence form operator
\begin{equation}\label{eq1.3}
Lf:=-{\rm div}(A\nabla f),
\end{equation}

\noindent which we interpret in the usual weak sense via a sesquilinear form.

In the case that $A$ is the $n\times n$ identity matrix (i.e., so that $L$ is the usual Laplacian
$\Delta :=-{\rm div}\cdot \nabla$),
this theory reduces to the classical Hardy space theory of Stein-Weiss \cite{StWe} and Fefferman-Stein
\cite{FeSt}.  For more general operators $L$ whose heat kernel satisfies a pointwise Gaussian
upper bound, an adapted Hardy space theory has been introduced by 
Auscher, Duong and McIntosh \cite{AuDM}, and by Duong and Yan, \cite{DL}, \cite{DL2}. 
In the absence of such pointwise kernel bounds, the theory has been developed more recently
in \cite{AMcR} by Auscher, McIntosh and Russ (when $L$ is the Hodge-Laplace operator on a manifold with doubling measure), and in \cite{HM} by the first two authors of the present paper, for the complex divergence form elliptic operators considered here.  In \cite{AMcR,HM}, 
the pointwise Gaussian bounds are replaced by 
the weaker  ``Gaffney estimates" (cf. (\ref{eq2.19}) and (\ref{eq2.21}) below), 
whose $L^2$ version is a refined parabolic ``Caccioppoli" inequality which may also be proved 
via integration by parts using only ellipticity and the divergence form structure of $L$.    
The present paper may be viewed in part as a sequel to
\cite{HM}, in which we extend results for the case $p=1$ given there, to the case of general $p$
(although we also obtain here some results, pertaining to the characterization of adapted Hardy spaces via Riesz transforms, that are new even in the case $p=1$).
In particular, it is in the nature of our present setting, in which pointwise kernel bounds may fail, that
the Hardy space theory for $p>1$ becomes non-trivial (i.e., the $L$-adapted 
$H^p$ spaces may be strictly smaller than $L^p$, even when $p>1$). We shall return to this point momentarily.  We note also that general non-negative self-adjoint operators satisfying an $L^2$ Gaffney estimate have recently been treated in \cite{HLMMY}.

We now proceed to discuss some relevant history, and to present a more detailed overview of the paper.
In \cite{KatoMain}, the authors solved a long-standing conjecture, known as the
Kato problem,
by identifying the domain of the square root of $L$. More precisely,
they showed that the domain of $\sqrt L$ is the Sobolev space $W^{1,2}(\RR^n)=\{f\in L^2: \,\nabla f\in L^2\}$ with
\begin{equation}\label{eq1.4}
\|\sqrt Lf\|_{L^2(\RR^n)}\approx \|\nabla f\|_{L^2(\RR^n)},
\end{equation}

\noindent In particular, the Riesz transform $\nabla L^{-1/2}$ is
bounded in $L^2(\RR^n)$.

Since then, substantial progress has been made in the development of the $L^p$ theory of
elliptic operators of the type described above.  Let us define 
$$p_-(L) := \inf \{p: \nabla L^{-1/2}:L^p(\RR^n)\longrightarrow L^p(\RR^n)\}.$$
It is now known
that $1\leq p_-(L)
<  2n/(n+2)$ (with $1<p_-(L)$  for some $L$; we shall return to the latter point momentarily),
and that there exists $\eps(L)>0$ such that
\begin{equation}\label{eq1.5}
\nabla L^{-1/2}:L^p(\RR^n)\longrightarrow L^p(\RR^n)\quad
\iff \quad p_-(L) <p< 2+\eps(L),
\end{equation}
(given (\ref{eq1.4}) as a starting point, (\ref{eq1.5}) with $p_-(L)<  2n/(n+2)$  
is established by combining the results and methods of \cite{HoMa} or \cite{BK} 
with those of \cite{AuscherSurvey}; see also \cite{A}, \cite{AT1},
Chapter~4 of \cite{AT}, and  \cite{BK2} for related theory). 
Moreover,  again given (\ref{eq1.4}) as a starting point, one has the reverse inequality
\begin{equation}\label{eq1.Kp}
\|\sqrt Lf\|_{L^p(\RR^n)}\lesssim \|\nabla f\|_{L^p(\RR^n)},\qquad
\mbox{for}\qquad (p_-(L))_* <p< (p_-(L^*))',
\end{equation}
where in general $p_*:=pn/(p+n)$ denotes the ``lower" Sobolev exponent, and as usual 
$p':=p/(p-1)$ is the  exponent dual to $p.$  The case $p<2$ of (\ref{eq1.Kp}) is due to Auscher  \cite{AuscherSurvey}, while  the case $p>2$ is simply dual to
the adjoint version of (\ref{eq1.5}).
Combining (\ref{eq1.5}) and (\ref{eq1.Kp}), we have that
\begin{equation}\label{eq1.Kp-Rp}
\|\sqrt Lf\|_{L^p(\RR^n)}\approx \|\nabla f\|_{L^p(\RR^n)}\quad
\iff \quad p_-(L) <p< 2+\eps.
\end{equation}
One of the main goals of the present 
paper is to understand the sense
in which (\ref{eq1.Kp-Rp}) extends to the range $p\leq p_-(L)$.  This extension may be viewed as solving the Kato problem below the critical exponent $p_-(L)$.  We discuss this question in more detail
in subsection \ref{ss1.2} below;  the proofs are given in Section \ref{s5} (cf. Theorem \ref{t5.1}).

Let us now discuss optimality of the range of $p$ in (\ref{eq1.5}) (hence also that in (\ref{eq1.Kp-Rp})),
for the entire class of $L$ under consideration.   
Even in the case of real symmetric coefficients, 
the upper bound cannot be improved, in general:  for each $p>2$, 
Kenig\footnote{Kenig's example is described in \cite{AT}, Section~4.2.2.} has constructed 
an operator $L$ whose Riesz transform is not bounded in $L^p$. In addition, the 
counterexamples in
\cite{MNP}, \cite{ACT}, \cite{Davies} showed that for some elliptic
operator $L$ satisfying (\ref{eq1.1})--(\ref{eq1.3}) there is a $p\in(1,2)$ such that 
 the Riesz transform is not bounded in $L^p$; i.e., for such $L$, one has $p_-(L) >1$.
 Moreover, the latter fact permeates all the $L^p$ 
 results in the theory: as shown in \cite{AuscherSurvey}, $p_-(L)$ is also the lower bound for 
 the respective intervals of 
 $p$ for which the heat semigroup and
the $L$-adapted square function (cf. (\ref{eq1.8}) below)
are $L^p$ bounded, and for which the semigroup enjoys 
$L^p\to L^2$ off diagonal estimates.  However,
identification of the sharp lower bound 
 $p_-(L)$ remained an open problem 
(posed, along with related questions,  in \cite{AuscherSurvey}, 
Conjecture~3.14, and in \cite{AOpen}, Problem 1.4, Problem 1.5, Problem 1.13).  

In Section \ref{s2} of the present paper, we observe that the example constructed by Frehse in \cite{Frehse} may be used to resolve these remaining sharpness issues, i.e., to show that $p_\pm(L)
= 2n/(n\mp2)\pm\varepsilon_\pm(L)$, where $(p_-(L), p_+(L))$ is the interior of the interval of $L^p$ boundedness of the heat semigroup $e^{-tL}, t>0$. More precisely, we have
\begin{eqnarray}\label{eq1.6}
&&  \forall\, p\not\in [2n/(n+2),2],\,\, \mbox{ $ \exists \,L$ with } \quad \nabla
L^{-1/2}:L^p(\RR^n)\not\!\!\longrightarrow L^p(\RR^n),\\[4pt]
\label{eq1.7}
&&   \forall\,p\not\in [2n/(n+2),2n/(n-2)],\,\, \mbox{ $\exists\,L$ with } \quad 
e^{-tL}:L^p(\RR^n)\not\!\!\longrightarrow L^p(\RR^n).
\end{eqnarray}

\noindent It follows, in particular, that in dimensions 
$n\geq 3$, the kernel of the heat semigroup
may fail to satisfy the pointwise Gaussian estimate  
$$|K_t(x,y)|\leq \, C t^{-n/2}\,e^{-c|x-y|^2/t},\quad t>0 \text{ and } x,y\in\RR^n.$$
This solves an open problem in \cite{AT}, p. 33. 

Thus, in dimensions $n>2$, 
the Riesz transform may fail to be bounded in $L^p$ for some $p\in (1,2)$, as may
the heat semigroup $e^{-tL}$, $t>0$, as well as the
other natural
operators associated with such $L$ (e.g., square function, non-tangential
maximal function).
Consequently, in the case that the endpoint $p_-(L) >1,$
the $L$-adapted Riesz transforms, semigroup and square function cannot be bounded from
the classical Hardy space $H^1$ into $L^1$, since interpolation with the known $L^2$ bound
would then produce a contradiction with (\ref{eq1.6}), (\ref{eq1.7}) (or with 
the analogous statement for the square function).
These operators therefore lie beyond
the scope of the Calder{\'o}n-Zygmund theory and exhibit behavior
different to their counterparts built upon  the \Bk
Laplacian.

By analogy to the classical theory then, this motivates the introduction of a family of $L$-adapted 
Hardy spaces $H^p_L$ for all $0<p<\infty$, {\bf not} equal to $L^p$ in the range $p\leq  p_-(L)$, 
on which the $L$-adapted semigroup, Riesz transforms and square function are well behaved, and which comprise a complex interpolation scale including $L^p$
for $p_-(L)<p<p_+(L).$  We note that the endpoint $p_-(L)$ plays a similar role to the exponent
$p=1$ in the classical theory.

In particular, in Section \ref{s5} we give a suitable Hardy space extension of
(\ref{eq1.5})  to the case  $p\leq p_-(L)$ (the case $p=1$ already appeared
in \cite{HM}), and, in one of the main results of this paper, we present
an appropriate converse, thus obtaining a Riesz transform
characterization of $L$-adapted $H^p$ spaces, for some range of $p$ 
depending on $n$.  As observed above, this characterization may be viewed as a sharp extension of the Kato square root estimate (\ref{eq1.4}), and of its $L^p$ version (\ref{eq1.Kp-Rp}), to the endpoint $p_-(L)$ and below.  In order to make these notions precise, we should first define our adapted $H^p_L$ spaces. 

\subsection{Definition of $H^p_L$} The first step in the development of an $L$-adapted Hardy space theory, in the case that pointwise kernel bounds may fail\footnote{In the presence of {\bf pointwise} Gaussian heat kernel bounds, an $L$-adapted $H^1$ and $BMO$ theory was previously introduce by Duong and Yan \cite{DL}, \cite{DL2}.}, 
was taken in \cite{HM} (and independently in
\cite{AMcR}), in which
the authors considered the model case of $H^1_L(\RR^n)$ and, on
the dual side, the appropriate analogue of the space $BMO$. The definition
of $H^1_L$ given in \cite{HM}\footnote{and in \cite{AMcR} for $H^p_L, p\geq 1$.} (by means of an $L$-adapted square
function) can be extended immediately to $0<p\leq 2$ and with
some additional care to $2\leq p<\infty$ as well. To this end,
consider the square function associated with the heat semigroup
generated by $L$
\begin{equation}\label{eq1.8}
Sf(x)=\left(\dint_{\Gamma(x)}|t^2Le^{-t^2L}f(y)|^2\,\frac
{dydt}{t^{n+1}}\right)^{1/2},\qquad x\in\RR^n,
\end{equation}

\noindent where, as usual, $\Gamma(x)=\{(y,t)\in\RR^n\times(0,
\infty):\,|x-y|<t\}$ is a non-tangential cone with vertex at
$x\in\RR^n$. Analogously to \cite{HM}, we define the space
$H^p_L(\RR^n)$ for $0<p\leq 2$ as  the \Bk completion of
$\{f\in L^2(\RR^n): Sf\in L^p(\RR^n)\}$  in the norm
\begin{equation}\label{eq1.9}
\|f\|_{H^p_L(\RR^n)}:=\|Sf\|_{L^p(\RR^n)}.
\end{equation}

\noindent For $2<p<\infty$ we assign
\begin{equation}\label{eq1.10}
H^p_L(\RR^n):=\left(H^{p'}_{L^*}(\RR^n)\right)^*,
\end{equation}

\noindent where $1/p+1/p'=1$ and $L^*$ is the adjoint operator to
$L$. These spaces also have an appropriate square function
characterization  as will be discussed in Section~\ref{s4}. \Bk

\medskip

\subsection{Riesz Transform characterization of $H^p_L$}\label{ss1.2} We shall show in Section \ref{s5}
that the Riesz transforms are bounded from $H^p_L$ into $L^p$, $0<p<2+\varepsilon(L)$,
and even into classical $H^p, \, n/(n+1) < p \leq 1.$
Conversely, for some restricted range of $p$, we show 
that these estimates are reversible, thus obtaining a Riesz transform characterization of 
the corresponding $H^p_L$.
Let us describe these results in more detail.

As preliminary steps, we establish two results that are also of independent interest: 
in Section \ref{s3}, we shall obtain a molecular decomposition of $H^p_L$
spaces, $0<p\leq 1$, analogous to the classical atomic decompositions of Coifman \cite{Co} and Latter
\cite{La} and in Section \ref{s4}, we observe that the spaces $H^p_L$ 
form a complex interpolation scale, including $L^p$ in the range $p_-(L)<p<p_+(L)$
(see (\ref{eq1.13})). As in the classical case, we are then able to use
these fundamental properties of Hardy spaces  to prove in Section \ref{s5} that
\begin{eqnarray}\label{eq1.11}&\nabla L^{-1/2}: H^p_L(\RR^n)\to L^p(\RR^n)\,,& \qquad
0<p<2+\eps(L)\,,\\\label{eq1.11a}
&\nabla L^{-1/2}: H^p_L(\RR^n)\to H^p(\RR^n)\,,& \qquad
\frac{n}{n+1}<p\leq1\, ,\end{eqnarray} 
where $H^p(\RR^n)$ denotes the classical Hardy space \cite{FeSt}.
Observe that these results extend (\ref{eq1.5}) to 
the range of $p$ below the endpoint $p_-(L)$
(the case $p=1$ has already appeared in \cite{HM}). 
The $H^p_L$ spaces  in (\ref{eq1.11})--(\ref{eq1.11a}) do not, in general, 
coincide with $L^p$ or classical $H^p$ (we recall that $H^p(\RR^n) = L^p(\RR^n)$ if  $1<p<\infty$).
In fact, we can ascertain only that
\begin{eqnarray}\label{eq1.13}
H^p_L(\RR^n)=L^p(\RR^n)\,,&\quad &p_-(L)<p<p_+(L)\,, \\ \label{eq1.13a}
L^2\cap H^p_L\subset L^2\cap H^p\,,&
\quad & n/(n+1)<p\leq p_-(L)\,,
\\ \label{eq1.13b}
L^p(\RR^n)/\mathcal{N}_p(L)\hookrightarrow H^p_L(\RR^n)\,,&
\quad & p\geq p_+(L),
\end{eqnarray}
where $\mathcal{N}_p(L)$ is the null space of $L$ in $L^p(\RR^n)$ (cf. Section \ref{s9} for details).  
In addition, the containments 
in (\ref{eq1.13a})\footnote{We note that 
$L^2\cap H^p_L$ is dense in
$H^p_L(\RR^n)$, so by \eqref{eq1.13a} there is a natural ``embedding" of 
$\HPL$ into $H^p(\RR^n)$ which extends the identity map on a dense subset. 
Intuitively then, one might expect that 
the stronger containment $\HPL \subset H^p(\RR^n)$ should
hold in (\ref{eq1.13a}).  In practice, however,
matters appear to be more subtle, so we present a more detailed discussion of this matter, along with proofs of 
(\ref{eq1.13}) - (\ref{eq1.13b}),
in an Appendix, Section \ref{s9}.} (resp. (\ref{eq1.13b}))
are {\it strict} if $p_-(L) > 1$ 
(resp. $p_+(L)<\infty$).

By contrast, when $L=\Delta$, the space $H^p_\Delta (\RR^n)$ is the
usual Hardy space for $0<p\leq 1$ 
and $L^p$ for
$1<p<\infty$. Hence, (\ref{eq1.11})--(\ref{eq1.11a}) recover the well-known mapping properties of 
$\nabla\Delta^{-1/2}$ in $L^p$ and $H^p$.

Moreover,  we have that $H^1_L=H^1$, and $H^p_L=L^p$ for 
$1<p<\infty$ whenever the heat kernel of $L$ satisfies a Gaussian upper
bound and local Nash type H\"{o}lder continuity (as in (\ref{eq2.15}-\ref{eq2.17})); indeed, in that case the square function (\ref{eq1.8}) is a standard Hilbert space valued Calder\'{o}n-Zygmund operator, which therefore maps $H^1(\RR^n)$ into $L^1(\RR^n)$; whence it follows readily that $H^1(\RR^n)$ embeds continuously into $H^1_L(\RR^n),$ and thus $H^1(\RR^n)=H^1_L(\RR^n)$, by (\ref{eq1.13a}).
The case $p>1$ is obtained by interpolation and duality.  The ``Gaussian" property (\ref{eq2.15}-\ref{eq2.17}) holds always in dimensions $n=1,2,$ and for real coefficients, it holds in all dimensions.
However, as we mentioned earlier, it may fail for complex coefficients in dimensions $n\geq 3$.

We turn now to the matter of characterizing $H^p_L$, for some range of $p\leq p_-(L)$, 
via the Riesz transform operator
$\nabla L^{-1/2}$.  In the classical setting (i.e., $L = \Delta$), the Riesz transform
provided the foundation for the development, beginning in \cite{StWe} and \cite{FeSt},
of the real variable theory of $H^p$, and furnished also a 
link between that theory and PDEs, via sub-harmonic functions.
The classical Riesz transform characterization
says that
\begin{equation}\label{eq1.14}
f \in H^p(\RR^n)\quad\mbox{if and only if } \quad f \in
L^p(\RR^n)\quad\mbox{and}\quad \nabla\Delta^{-1/2}\in L^p(\RR^n),
\end{equation}

\noindent for all $(n-1)/n<p\leq 1$ (assuming some growth restriction at infinity when $p<1$; see, e.g. \cite{St2}, p. 123).
There are analogous, but more complicated results involving higher order Riesz transforms
when $p\leq (n-1)/n$.
Apparently, no such characterization has been obtained for operators substantially different
from the Laplacian (although we mention that some results in this direction have been obtained for
lower order perturbations of the Laplacian \cite{DP,DZ}).

Upon attempting to generalize the Riesz transform characterization to $H^p_L$ spaces, one
immediately encounters several difficulties. The original
argument relied on the subharmonicity of  small powers of the
gradient of a harmonic function. No analogue of such a property
exists (or even makes sense) in our context. In addition, that
(\ref{eq1.14}) holds only for the values of $p$ close to 1
suggests that in our case, in which $\HPL$ is strictly contained in $L^p(\RR^n)$ if $p\leq
p_-(L)$, the Riesz
transform characterization should be proved for $p$ close to
$p_-(L)$. In fact, in Section \ref{s5} of this paper we show that 
\begin{equation}\label{eq1.rieszchar}\HPL = \HRL\,, \qquad 
\frac{p_-(L)n}{n+p_-(L)}<p<2+\eps(L),\end{equation}
where for $p$ in the stated range, $\HRL$ is defined as the completion of 
the set $\{f\in L^2(\mathbb {R}^n):
\nabla L^{-1/2} f \in H^p(\mathbb{R}^n)\},$ 
with respect to the norm 
\begin{equation}\label{e1.hrldef}\|f\|_{\HRL} := \|\nabla L^{-1/2} f\|_{H^p(\mathbb{R}^n)}\end{equation}
(bearing in mind that classical $H^p(\mathbb{R}^n) = L^p(\mathbb{R}^n)$ if $p>1$).
Observe that the lower bound $\frac{p_-(L)n}{n+p_-(L)}>\frac{n-1}{n}$ (cf. \eqref{eq1.14}). 
The equivalence (\ref{eq1.rieszchar}) amounts to proving that for $f\in L^2(\RR^n)$,
\footnote{By definition, $\HPL\cap L^2(\RR^n)$ is dense in
$\HPL$; similarly for $\HRL$.}
\begin{eqnarray}\label{eq1.15}
\|f\|_{H^p_L(\RR^n)}\approx \|\nabla L^{-1/2}f\|_{L^p(\RR^n)},&
\max\left\{1,\frac{p_-(L)n}{n+p_-(L)}\right\}<p<2+\eps(L) ,
\\ \label{eq1.16}
\|f\|_{H^p_L(\RR^n)}\approx \|\nabla L^{-1/2}f\|_{H^p(\RR^n)},&\frac{p_-(L)n}{n+p_-(L)}<p\leq 1.
\end{eqnarray}
We note that (\ref{eq1.15}) and (\ref{eq1.16}) can be viewed as sharp extensions of the Kato square root estimate (\ref{eq1.4}) to the endpoint $p_-(L)$ and below.
\footnote{We remark also that the direction
$\|f\|_{H^p_L(\RR^n)}\lesssim \|\nabla L^{-1/2}f\|_{L^p(\RR^n)}$ of (\ref{eq1.15}) is a sharp
version of the bound
$\|f\|_{L^p(\RR^n)}\lesssim \|\nabla L^{-1/2}f\|_{L^p(\RR^n)}$, proved in \cite{AuscherSurvey}
for the same range of $p$.   Indeed, as mentioned above $\HPL$ may be ``strictly 
smaller" (in the sense of \eqref{eq1.13a}) than $L^p(\RR^n)$.  We shall discuss this point in more detail in Sections \ref{s5} and \ref{s9}.}

Consequently,
for this same range of $p$, (\ref{eq1.16}) together with (\ref{eq1.14}) imply that 
\begin{equation}\label{eq1.17}
\|f\|_{H^p_L(\RR^n)}\approx\|\Delta^{1/2} L^{-1/2}f\|_{H^p(\RR^n)}\approx \|\nabla L^{-1/2}f\|_{L^p(\RR^n)}+\|\Delta^{1/2} L^{-1/2}f\|_{L^p(\RR^n)},
\end{equation}

\noindent for suitable $f$. 
Indeed, since the classical Riesz transforms $\partial_{x_j}\Delta^{-1/2}=
\Delta^{-1/2}\partial_{x_j}$ are
bounded on classical $H^p$, we have that
$$\|\nabla L^{-1/2}f\|_{H^p(\RR^n)}=\|\nabla \Delta^{-1/2}\Delta^{1/2} L^{-1/2}f\|_{H^p(\RR^n)}
\lesssim\|\Delta^{1/2} L^{-1/2}f\|_{H^p(\RR^n)},$$
and by (\ref{eq1.14}), that
\begin{equation*}\|\Delta^{1/2} L^{-1/2}f\|_{H^p(\RR^n)} =
\| \Delta^{-1/2}\,{\rm div}\,\nabla L^{-1/2}f\|_{H^p(\RR^n)}
\lesssim \|\nabla L^{-1/2}f\|_{H^p(\RR^n)}.\end{equation*}

As a consequence of \eqref{eq1.17}, one obtains the following {\it new characterization of the classical Hardy spaces}. Namely, 
\begin{multline}\label{eq1.18}
f \in H^1(\RR^n)\\\mbox{if and only if } \quad \nabla L^{-1/2}f
\in L^1(\RR^n)\quad\mbox{and}\quad \Delta^{1/2} L^{-1/2}\in
L^1(\RR^n),
\end{multline}

\noindent for any operator $L$ whose heat kernel satisfies
Gaussian bounds.

Finally, we remark that in \cite{May}, the second named author has recently developed further the circle of ideas related to the Riesz transform characterization 
of $\HPL$ to establish sharp $L^p$ solvability results
for the regularity problem for the equation $u_{tt} - Lu = 0$ in the half-space $\RR^{n+1}_+.$

\subsection{The Dual of $H^p_L$ , $0<p\leq1$} Another important aspect of the theory is the identification of the
duals of Hardy spaces, and the elaboration of their properties.   In the classical setting,
the duality result for $p=1$ is the celebrated theorem of Fefferman \cite{FeSt};  the
case $0<p<1$ was treated in one dimension by Duren, Romberg and Shields \cite{DRS}, 
and in general by Fefferman and Stein \cite{FeSt}.
Just as $H^1$ provides a substitute for $L^1$ in harmonic
analysis,  so too does the dual of $H^1$, the space of functions with bounded
mean oscillation ($BMO$), substitute for $L^\infty$. Furthermore, the
duals of $H^p$ for $p<1$ are Lipschitz spaces, whose norms measure
fractional order
smoothness. In our setting they can be introduced as
follows.

Let $\alpha$ be a non-negative real number and $M\in\NN$ be such
that $M>\frac 12\left(\alpha+\frac n2\right)$. For $\eps >0$  we
define the space ${\bf M}^{\eps,M}_{\alpha,L}$ as the collection of
all $\mu \in L^2(\mathbb{R}^n)$ such that $\mu$ belongs to the range of $L^k$ in $L^2(\RR^n)$, $k=1,...,M$, and
$$\|\mu\|_{{\bf M}^{\eps,M}_{\alpha,L}} \equiv  \sup_{j\geq 0}
2^{j(n/2+\alpha+\eps)}\sum_{k=0}^M\|L^{-k}\mu\|_{L^2(S_j(Q_0))}<\infty,$$
where $Q_0$ is the unit cube centered at $0$ and $S_j(Q_0)$, $j\in\NN$, are the corresponding dyadic annuli (see (\ref{eq3.2})). We say that an
element
\begin{equation}\label{eq1.19}f \in \cap_{\eps > 0}\left({\bf M}^{\eps,M}_{\alpha,L}\right)^*=: 
{\bf M}^{M,\,*}_{\alpha,L}\end{equation}

\noindent \Bk belongs to the space $\Lambda_{L^*}^\alpha(\RR^n)$
if \footnote{We note that in the presence of a pointwise Gaussian bound, similar spaces
were previously introduced in the work of Duong and Yan \cite{DL,DL2,DL3}.  We shall discuss this point in more detail at the end of this section.}
\begin{equation}\label{eq1.20}
\|f\|_{\Lambda_{L^*}^\alpha(\RR^n)} :=\sup_{Q}\frac{1}{|Q|^{\alpha/n}}\left(\frac{1}{|Q|}\int_Q
\left|(I-e^{-l(Q)^2{L^*}})^Mf(x)\right|^2\,dx\right)^{1/2}<\infty,
\end{equation}
where the supremum runs over all cubes $Q\subset \mathbb{R}^n.$
Here and throughout the paper $|Q|$ stands for the
Euclidean volume of the cube $Q$, and $l(Q)$ denotes its sidelength.
For $\alpha>0$ the spaces $\Lambda_{L^*}^\alpha(\RR^n)$ are the
analogues of the classical Lipschitz spaces,\footnote{ 
Indeed, for $\alpha >0$, the norm in (\ref{eq1.20}) is clearly modeled
on the mean oscillation characterization, due to
N. Meyers \cite{N-M}, of the classical homogeneous ``Lip$_\alpha$" space
$\Lambda^\alpha(\RR^n)$.  For $0<\alpha <1$, we define the latter
to be the space of continuous functions modulo constants, for which the norm $\|\varphi\|_{\Lambda^\alpha(\RR^n)}:= \sup_{x\neq y}\frac{|\varphi(x)-\varphi(y)}{|x-y|^\alpha}<\infty.$}  while the case
$\alpha=0$ corresponds to $BMO$. Accordingly, we denote
$BMO_{L^*}(\RR^n):=\Lambda_{L^*}^0(\RR^n)$.  We refer the reader
to \cite{HM}, where the authors also established some further
properties of $BMO_{L^*}$ such as a Carleson measure characterization and
an analogue of the John-Nirenberg inequality.
\Bk  
In addition, the authors showed in \cite{HM} that $(H^1_L)^*=BMO_{L^*}$. In Section \ref{s3}
of the present paper, we extend this duality as follows:
\begin{equation}\label{eq1.21}
(H^p_L(\RR^n))^*=\Lambda_{L^*}^{\alpha}(\RR^n), \qquad 0<p\leq
1,\qquad \alpha=n(1/p-1).
\end{equation}

\noindent Moreover, the dual of $\Lambda_{L^*}^{\alpha}(\RR^n)$,
in turn, provides an ambient space for $H^p_L$, for the elements
of $H^p_L$, $p<1$, are not necessarily functions, they are linear
functionals on $\Lambda_{L^*}^{\alpha}(\RR^n)$ (recall that the
elements of $H^p$ are tempered distributions).

Finally, as we already mentioned, $H^p_\Delta(\RR^n)=H^p(\RR^n)$
for all $0<p<\infty$, which reduces to $L^p(\RR^n)$ when $p>1$. Then, by
duality, $BMO_\Delta(\RR^n)=BMO(\RR^n)$ and
$\Lambda_{\Delta}^{\alpha}(\RR^n)=\Lambda^{\alpha}(\RR^n)$, the
classical $BMO$ and Lipschitz spaces. In general, one has only the
proper inclusions (\ref{eq1.13a}) 
and on the dual side $BMO(\RR^n)\subset BMO_L(\RR^n)$,
$\Lambda^{\alpha}(\RR^n)\subset\Lambda_L^{\alpha}(\RR^n)$ for $0<\alpha< 1$.

\subsection{The Dual of $H^p_L$ ,  $1<p<2$}  In the case $2<p<\infty$, the spaces
$H^p_L$  were originally defined by 
the duality relationship (\ref{eq1.10}).
We shall give two intrinsic characterizations of these spaces:
one, in Section \ref{s4} (cf. Corollary \ref{c4.4}),
in terms of square functions, analogous to (\ref{eq1.8})--(\ref{eq1.9}), and another one, in 
Section \ref{s6}, in
terms of a variant of the sharp maximal function.   
The former characterization is a consequence of tent space duality, and is similar to the analogous results presented in \cite{AMcR}. The latter is new (although rooted in ideas 
of \cite{FeSt} and also \cite{ChemaSM}), and we discuss it in a bit more detail at this point.

Following \cite{FeSt} and \cite{ChemaSM}, consider the operator
\begin{equation}\label{eq1.22}
{\mathcal M}^{\sharp}f(x) :=\sup_{Q\ni \,x}\left(\frac{1}{|Q|}\int_Q
\left|(I-e^{-l(Q)^2{L}})^Mf(y)\right|^2\,dy\right)^{1/2},\qquad x\in\RR^n,
\end{equation}

\noindent where $M\in\NN$ and $\sup_{Q\ni \,x}$ is the supremum over all cubes in $\RR^n$ containing $x$.
We shall refer to ${\mathcal M}^{\sharp}$ as the sharp maximal operator and write ${\mathcal M}_M^{\sharp}$ to underline the dependence on $M$ whenever necessary. By definition, we have that
$f\in {\bf M}^{M,\,*}_{0,L}$, $M>n/4$, belongs to the space $BMO_L(\RR^n)$ if and only if ${\mathcal M}^{\sharp}f\in L^{\infty}(\RR^n)$.
In the current paper we show that an analogous characterization holds for all spaces in the Hardy-BMO scale when $p>2$. That is, roughly speaking, for $2<p<\infty$, we have
$f\in H^p_L(\RR^n)$ if and only if ${\mathcal M}^{\sharp}_Mf
\in L^p(\RR^n),\, M>n/4$, and
\begin{equation}\label{eq1.23}
\|f\|_{H^p_L(\RR^n)}\approx \|{\mathcal M}^{\sharp}_Mf\|_{L^p(\RR^n)},\quad M>n/4.
\end{equation}
We shall prove a precise version of this statement in Section \ref{s6}.

\subsection{ Sobolev spaces and fractional powers of $L$}  The last topic that we shall treat, in Sections
\ref{s7} and \ref{s8}, concerns the adapted $H^p_L$ spaces and their relationship to the behavior of $L$ in classical Sobolev spaces. In fact, we find a {\it complete range of all Sobolev spaces} which naturally interact with the operators associated to $L$, and one of the major ingredients in the argument is the Riesz transform characterization of $H^p_L$.  Let us describe these results in more detail.

We first prove in Section \ref{s7} 
that the fractional powers of $L$ satisfy
\begin{equation}\label{eq1.12}
L^{-\alpha}:H^p_L(\RR^n)\longrightarrow H^r_L(\RR^n), \qquad
\alpha=\frac 12\left(\frac np-\frac nr\right),\quad 0<p<r<\infty,
\end{equation}
thereby extending
the mapping properties of $L^{-\alpha}$ in $L^p$ (cf.
\cite{AuscherSurvey}, Proposition~5.3) to the range of $p$  beyond $(p_-(L),p_+(L))$.

In Section \ref{s8}, we then consider the action of operators associated to $L$
in the classical Sobolev spaces.
As is customary,  we define the homogeneous Sobolev spaces 
$\dot W^{1,p}(\RR^n),\, 1\leq p<\infty,$ to be the completion of 
$C_0^\infty(\RR^n)$ in
the seminorm
\begin{equation}\label{eq5.36*}\|f\|_{\dot{W}^{1,p}(\RR^n)}=\|\nabla f\|_{L^p(\RR^n)}.\end{equation}
More generally (except for the case $p=1$),
we let $\dot W^{s,p}(\RR^n),\, 1< p<\infty,$  denote the completion of $C_0^\infty(\RR^n)$ in
the seminorm \begin{equation}\label{eq5.36}\|f\|_{\dot
W^{s,p}(\RR^n)}=\|\Delta^{s/2}f\|_{L^p(\RR^n)},\qquad s>0,\end{equation} 
and set $\dot W^{-s,p}(\RR^n)=(\dot W^{s,p'}(\RR^n))^*$, $\frac 1p+\frac1{p'}=1$.   

Consider first the case $n\geq 5$.
We prove that for any operator $L$ defined in (\ref{eq1.1})--(\ref{eq1.3}), for every function $\varphi$ holomorphic in a certain sector of a complex plane $\Sigma_\mu^0$ (the exact definitions will be given in the body of the paper), and for every $ f\in \dot W^{\alpha,p}(\RR^n),$
\begin{equation}\label{eq1.24}
\|\varphi(L)f\|_{\dot W^{\beta,q}(\RR^n)}\leq C
\left\|z^{\frac{\beta-\alpha}{2}+\frac 12\left(\frac np-\frac
nq\right)}\varphi\right\|_{L^\infty(\Sigma_\mu^0)}\|f\|_{\dot W^{\alpha,p}(\RR^n)},
\end{equation}
 provided that the function $z\mapsto z^{\frac{\beta-\alpha}{2}+\frac 12\left(\frac np-\frac
nq\right)}\varphi(z)$ belongs to $L^\infty(\Sigma_\mu^0)$ and the indices $\alpha,\beta,p\leq q$ are such that 
the points $(\beta,1/q)$ and $(\alpha,1/p)$ belong to the closed
region ${\mathcal R}_1$, depicted on Figure~1.

\setlength{\unitlength}{0.4 cm}

\begin{picture}(14,19)(-7,-3)

\thinlines
\put(-4,2){\line(1,0){24}}    
\put(8,1){\line(0,1){13}}    

\put(7.92,13.8){\vector(0,1){0}}     %
\put(8.08,13.8){\vector(0,1){0}}      
\put(8,14){\vector(0,1){0}}           

\put(19.8,1.92){\vector(1,0){0}}     %
\put(19.8,2.08){\vector(1,0){0}}      
\put(20,2){\vector(1,0){0}}           

\multiput(8.00,7.0)(0.5,0){21}{\line(-1,0){0.25}}    
\multiput(8.00,7.0)(-0.5,0){20}{\line(-1,0){0.25}}    

\put(8.00,12.00){\circle*{0.2}}  
\put(8.00,7.00){\circle*{0.2}} 

\put(18.00,2){\circle*{0.2}} 
\put(-2.00,2.00){\circle*{0.2}} 

\put(-2.00,7){\circle*{0.3}} 
\put(18.00,7){\circle*{0.3}} 

\put(-2.00,3.67){\circle*{0.3}} 
\put(18.00,10.33){\circle*{0.3}} 
\put(8.00,5.32){\circle*{0.2}} 
\put(8.00,8.67){\circle*{0.2}} 

\put(8.1,7.5){\scriptsize{{$\frac{\rm 1}{\rm 2}$}}}
\put(-2,1){\rm\scriptsize -1} \put(18,1){\rm \scriptsize 1}
\put(8.2,12){\rm \scriptsize 1}
\put(8.15,13.5){$\textstyle{{\frac{1}{p}}}$}
\put(19.5,1.25){$\textstyle{s}$}

\put(16,11){\scriptsize{{$C=\left(1,\frac{n+4}{2n}\right)$}}}
\put(-4,2.8){\scriptsize{{$A=\left(-1,\frac{n-4}{2n}\right)$}}}
\put(4.3,9.3){\scriptsize{{$E=\left(0,\frac{n+2}{2n}\right)$}}}
\put(8.1,4.5){\scriptsize{{$F=\left(0,\frac{n-2}{2n}\right)$}}}
\put(-4,7.6){\scriptsize{{$B=\left(-1,\frac{1}{2}\right)$}}}
\put(16,6){\scriptsize{{$D=\left(1,\frac{1}{2}\right)$}}}

\linethickness{0.12\unitlength} \thicklines
\put(-2,7){\line(0,-1){3.33}}            
\put(18,7){\line(0,1){3.33}}            

\thicklines
\put(-2,7){\line(6,1){20}}            
\put(18,7){\line(-6,-1){20}}            

\put(-3.00,-1.25){{\bf Figure 1 -- the region ${\mathcal R}_1$.}}

\end{picture}

In particular, for every $t>0$
\begin{equation}\label{eq1.25}
e^{-tL}:\dot W^{\alpha,p}(\RR^n)\longrightarrow \dot
W^{\alpha,p}(\RR^n), \,\,\mbox{if}\,\,(\alpha,1/p)\in {\mathcal R}_1,
\end{equation}

\noindent and 
\begin{equation}\label{eq1.26}
L^{-s}:\dot W^{\alpha,p}(\RR^n)\longrightarrow \dot
W^{\beta,q}(\RR^n), \,\,\mbox{if}\,\, (\beta,1/q)\in {\mathcal R}_1\,\,\mbox{and}\,\,(\alpha,1/p)\in {\mathcal R}_1,
\end{equation}

\noindent with $s=\frac{\beta-\alpha}{2}+\frac 12\left(\frac
np-\frac nq\right)$, $p\leq q$.

The region ${\mathcal R}_1$ is closed and is also sharp, in the sense that for every pair $\alpha, p$ such that $(\alpha,1/p)\not\in {\mathcal R}_1$ there is an operator $L$ for which the property (\ref{eq1.25}) is not satisfied and hence, (\ref{eq1.24}) is not generally satisfied. 

Furthermore, all the results in (\ref{eq1.24})--(\ref{eq1.26}) have analogues for $n\leq 4$. In this case $\frac{2n}{n+4}\leq 1$, and just as the classical Hardy spaces provide a natural extension of $L^p$ to the range $p\leq 1$,  so too do the Triebel-Lizorkin (or ``$H^p$ Sobolev") spaces $\dot F_s^{p,2}$ extend 
$\dot W^{s,p}$ in this range;  i.e., the spaces $\dot F_s^{p,2}$ coincide with $\dot W^{s,p}$ when $p>1$ and otherwise naturally extend the Sobolev scale to small values of $p$. We prove that 
\begin{equation}\label{eq1.27}
\|\varphi(L)f\|_{\dot F_{\beta}^{q,2}(\RR^n)}\leq C
\left\|z^{\frac{\beta-\alpha}{2}+\frac 12\left(\frac np-\frac
nq\right)}\varphi\right\|_{L^\infty(\Sigma_\mu^0)}\|f\|_{\dot F_{\alpha}^{p,2}(\RR^n)},\quad \forall f\in \dot F_{\alpha}^{p,2}(\RR^n),
\end{equation}

\noindent provided that the function $z\mapsto z^{\frac{\beta-\alpha}{2}+\frac 12\left(\frac np-\frac
nq\right)}\varphi(z)$ belongs to $L^\infty(\Sigma_\mu^0)$ and the indices $\alpha,\beta,p\leq q$ are such that 
the points $(\beta,1/q)$ and $(\alpha,1/p)$ belong to the region ${\mathcal R}_2$, depicted on Figure~2. 
In particular, the analogues of (\ref{eq1.25})--(\ref{eq1.26}) hold in this context as well. Moreover, all the results are once again sharp, in the sense that for every point outside of the region  ${\mathcal R}_2$ even the heat semigroup is not necessarily bounded in the corresponding Triebel-Lizorkin space.

\setlength{\unitlength}{0.4 cm}

\begin{picture}(14,19)(-7,-3)

\thinlines
\put(-4,2){\line(1,0){24}}    
\put(8,1){\line(0,1){13}}    

\put(7.92,13.8){\vector(0,1){0}}     %
\put(8.08,13.8){\vector(0,1){0}}      
\put(8,14){\vector(0,1){0}}           

\put(19.8,1.92){\vector(1,0){0}}     %
\put(19.8,2.08){\vector(1,0){0}}      
\put(20,2){\vector(1,0){0}}           

\multiput(8.00,7.0)(0.5,0){21}{\line(-1,0){0.25}}    
\multiput(8.00,7.0)(-0.5,0){20}{\line(-1,0){0.25}}    

\multiput(8.5,12.05)(0.5,0){20}{\line(-1,0){0.25}}    

\put(8.00,12.05){\circle*{0.2}}  
\put(8.00,7.00){\circle*{0.2}} 

\put(18.00,2){\circle*{0.2}} 
\put(-2.00,2.00){\circle*{0.2}} 

\put(-2.00,7){\circle*{0.3}} 
\put(18.00,7){\circle*{0.3}} 

\put(-2.00,2){\circle*{0.3}} 
\put(18.00,13.67){\circle*{0.3}} 
\put(8.00,3.67){\circle*{0.2}} 
\put(8.00,10.33){\circle*{0.2}} 
\put(3,2){\circle*{0.2}} 

\put(8.1,7.5){\scriptsize{{$\frac{\rm 1}{\rm 2}$}}}

\put(18,1){\rm \scriptsize 1} \put(8.1,12.2){\rm \scriptsize 1}
\put(8.15,13.5){$\textstyle{{\frac{1}{p}}}$}
\put(19.5,1.25){$\textstyle{s}$}

\put(16,14.2){\scriptsize{{$C=\left(1,\frac{n+4}{2n}\right)$}}}
\put(-2.8,1.2){\scriptsize{{$\widetilde A=\left(-1,0\right)$}}}
\put(4.3,10.8){\scriptsize{{$E=\left(0,\frac{n+2}{2n}\right)$}}}
\put(8.1,2.83){\scriptsize{{$F=\left(0,\frac{n-2}{2n}\right)$}}}
\put(-4,7.6){\scriptsize{{$B=\left(-1,\frac{1}{2}\right)$}}}
\put(16,5.8){\scriptsize{{$D=\left(1,\frac{1}{2}\right)$}}}
\put(2.2,1.2){\scriptsize{{$\widetilde{F}=\left(\frac{2-n}{2},0\right)$}}}

\linethickness{0.12\unitlength} \thicklines
\put(-2,7){\line(0,-1){5}}            
\put(18,7){\line(0,1){6.67}}            

\thicklines
\put(-2,7){\line(3,1){20}}            
\put(18,7){\line(-3,-1){15}}            
\put(3,2){\line(-1,0){5}}              

\put(-3.00,-1.25){{\bf Figure 1 -- the region ${\mathcal R}_2$.}}

\end{picture}

The study of the properties of the operators associated to $L$ in Sobolev spaces stems from the work of P.~Auscher in \cite{AuscherSurvey} (Sections~5.3, 5.4). Our results extend the theorems in \cite{AuscherSurvey} in several directions: to the range of $p$ beyond the range of $L^p$-boundedness of the heat semigroup (i.e. to the cases $p<p_-(L)<2n/(n+2)$ and $p>p_+(L)>2n/(n-2)$),  and in particular to $p\leq 1$, and are accompanied by the negative results which lead to sharpness of the obtained range of indices.  In particular, we resolve the conjecture posed at the end of Section~5 in \cite{AuscherSurvey}. 

The results we describe in this paper generalize most of
the important aspects of the real variable Hardy space theory to
a context in which the standard tools of the Calder{\'o}n-Zygmund  theory are
not applicable. Besides the aforementioned works \cite{AMcR} and
\cite{HM}, some properties of the Hardy and BMO spaces associated with different operators were introduced previously in \cite{AR}, \cite{DL},
\cite{DL2}, \cite{Lixin}. 

In particular, we note that the theory of $L$-adapted $H^1$ and $BMO$
spaces, including an appropriate analogue of Fefferman's duality theorem,
originates in the work of Duong and Yan \cite{DL}, \cite{DL2}
who treated the case that the associated heat kernel satisfies a pointwise Gaussian
bound.  Their $BMO$ norm is the same as that in (\ref{eq1.20}), with $\alpha = 0$ and $M=1$,
and they have also considered Morrey-Campanato type spaces corresponding to the case
$\alpha >0$ \cite{DL3}.  
As we have observed above, the theory and techniques of the present paper, 
which we develop in the absence of pointwise kernel bounds, assuming only decay estimates of ``Gaffney" type, are necessarily somewhat different.

We note also that, while this manuscript was in preparation, we learned that some of
the results presented here in the case $0<p<1$ have been obtained independently by
R. Jiang and D. Yang \cite{JY} (molecular decomposition, duality, and some mapping properties of linear and non-negative sublinear operators in spaces with integrability $0<p<1$).  As mentioned above, the case $p=1$ was already treated in
\cite{HM} (and in \cite{AMcR}, in a somewhat different context).  
Our main results in the case $p>1$, as well as our 
Riesz transform characterization (\ref{eq1.rieszchar}), appear to be unique to this paper.\footnote
{Although as mentioned above, our tent space/square function definition of 
adapted $H^p$ spaces with $p>1$ follows that given in \cite{AMcR}.}

\medskip

\noindent{\it Acknowledgements}. A part of this work was conducted during the second author's visit to  the Centre for Mathematics and its Applications at the Australian National University, and during the
third author's visit to the University of Missouri. The
authors thank the faculty and staff of those institutions for their warm hospitality.   We also thank
Pascal Auscher for reading a draft of the manuscript and for offering several helpful suggestions.
In addition, the third named author thanks Prof. Auscher and Andrew Morris for illuminating discussions.

We thank Dachun Yang for making a preliminary version of 
his joint work with R. Jiang \cite {JY} available to us while this manuscript was in preparation. 
As mentioned above, their results and ours, which overlap substantially in the case
$0<p<1$, have been obtained independently,
but we have incorporated a simplification introduced in \cite{JY} in the proof of the duality result
for $0<p<1$ (cf. Step II of Theorem \ref{t3.5} below).  Our original proof here had been based on the more complicated argument in \cite{HM}.

Finally, the authors would like to thank the referee for careful reading of the paper and numerous helpful suggestions.

\section{The heat semigroup and functions of $L$ in $L^p$.}\label{s2}
\setcounter{equation}{0}

\subsection{Definitions and $L^2$ theory}\label{s2.1}

Let $L$ be a second order elliptic operator satisfying
(\ref{eq1.1})--(\ref{eq1.3}) viewed as an accretive
operator in $L^2(\RR^n)$. There exists some $\omega\in[0,\pi/2)$
such that the operator $L$ is of type $\omega$ on $L^2(\RR^n)$. In
particular, $-L$ generates a complex semigroup which extends to an
analytic semigroup $\{e^{-zL}\}_{z\in \Sigma_{\pi/2-\omega}^0}$ on $L^2(\RR^n)$.
Here
\begin{eqnarray}\label{eq2.1}
\Sigma_\mu^0 & := & \{z\in\CC\setminus\{0\}:\,|\arg z|<\mu\},\quad \mu\in(0,\pi).
\end{eqnarray}

Furthermore, $L$ has bounded holomorphic functional calculus on
$L^2(\RR^n)$ (see \cite{Mc} and \cite{ADM}).
To be more precise, let us define
\begin{eqnarray}\label{eq2.2}
H^\infty(\Sigma_\mu^0)&:=&\{\psi:\Sigma_\mu^0\to\CC:\,\psi \mbox{ is
analytic and }\|\psi\|_{L^\infty(\Sigma_\mu^0)}<\infty\},\\[4pt]
\Psi_{\sigma,\tau}(\Sigma_\mu^0)&:=&\{\psi:\Sigma_\mu^0\to\CC:\,\psi
\mbox{ is
analytic and } \nonumber \\[4pt]
&&\quad |\psi(\xi)|\leq
C\inf\{|\xi|^\sigma,|\xi|^{-\tau}\}\,\,{\mbox{for every}}\,\xi\in
\Sigma_\mu^0\}.\label{eq2.3}
\end{eqnarray}

\noindent Alternatively, one can say that
\begin{equation}\label{eq2.4}
\psi\in \Psi_{\sigma,\tau}(\Sigma_\mu^0)\quad\Longleftrightarrow \quad \psi \in H^\infty(\Sigma_\mu^0)\mbox{ and  $|\psi(\xi)|\leq C\,\frac{|\xi|^\sigma}{1+|\xi|^{\sigma+\tau}}$},\quad \sigma,\tau>0.
\end{equation}

\noindent Whenever $\psi\in H^\infty(\Sigma_\mu^0)$
\begin{equation}\label{eq2.5}
\|\psi(L)f\|_{L^2(\RR^n)}\leq C \|\psi\|_{L^\infty(\Sigma_\mu^0)}
\|f\|_{L^2(\RR^n)} \quad \mbox{for every}\quad f\in L^2(\RR^n).
\end{equation}

Let $\Psi(\Sigma_\mu^0):=\cup_{\sigma,\tau>0} \Psi_{\sigma,\tau}(\Sigma_\mu^0)$.
If $\psi\in \Psi(\Sigma_\mu^0)$ then $\psi(L)$ can be represented as
\begin{equation}\label{eq2.6}
\psi(L)=\int_{\Gamma_+}e^{-zL}\eta_+(z)\,dz+\int_{\Gamma_-}e^{-zL}\eta_-(z)\,dz,
\end{equation}

\noindent where
\begin{equation}\label{eq2.7}
\eta_{\pm}(z)=\frac{1}{2\pi i}\int_{\gamma_{\pm}}e^{\xi z}\psi(\xi)\,d\xi,\quad z\in \Gamma_{\pm},
\end{equation}

\noindent and $\Gamma_{\pm}=\RR^+e^{\pm i(\pi/2-\theta)}$, $\gamma_{\pm}=\RR^+e^{\pm i\nu}$, $\omega<\theta<\nu<\mu<\pi/2$. In general, when  $\psi\in H^\infty(\Sigma_\mu^0)$, $\psi(L)$ can be defined using (\ref{eq2.6})--(\ref{eq2.7}) and a limiting procedure (see \cite{AuscherSurvey}, Chapter~2, and references therein).

Finally, let us introduce 
\begin{eqnarray}
\Psi'_{\sigma,\tau}(\Sigma_\mu^0)&=&\{\psi:\Sigma_\mu^0\to\CC:\,\psi
\mbox{ is
analytic and there are some } \sigma, \tau, C> 0 \nonumber \\[4pt]
&&\quad \mbox{ such that }|\psi(\xi)|\leq
C\sup\{|\xi|^\sigma,|\xi|^{-\tau}\}\,\,{\mbox{for every}}\,\xi\in
\Sigma_\mu^0\}.\label{eq2.8}
\end{eqnarray}

\noindent For every $\psi\in \Psi'_{\sigma,\tau}$ one can define an unbounded operator $\psi(L)$ on $L^2(\RR^n)$ following the procedure in \cite{Mc}. In particular, the fractional powers of $L$ arise in this way.

\subsection{$L^p$ boundedness of the heat semigroup: sharp results}\label{s2.2}

Following \cite{AuscherSurvey}, let
us denote by ${\mathcal J}(L)$ the maximal interval of exponents
$p\in[1,\infty]$ for which the heat semigroup $\{e^{-tL}\}_{t>0}$ is
$L^p$-bounded and let us write $int\,{\mathcal J}(L)=(p_-(L),p_+(L))$.
It was proved in \cite{AuscherSurvey} (Sections~3.2 and 4.1) that
\begin{equation}\label{eq2.9}
{\textstyle{p_-(L) < \frac{2n}{n+2}}}\qquad\mbox{and} \qquad
{\textstyle{ p_+(L)>\frac{2n}{n-2},}}
\end{equation}

\noindent for $L$ as in (\ref{eq1.1})--(\ref{eq1.3}), and that $p_-(L)$ is also the lower bound
for the interval of $p$ for which $\nabla L^{-1/2}:L^p\to L^p$ (hence this notation is consistent 
with that in Section \ref{s1}). We shall show that the bounds in
(\ref{eq2.9}) are sharp, in the following sense.

\begin{proposition}\label{p2.1} Given any $\widetilde p_- $ with $1\leq \widetilde{p}_-
<\frac{2n}{n+2}$ there
exists an operator $L$ such that the heat semigroup
$\{e^{-tL}\}_{t>0}$ is not bounded in $L^{\widetilde p_- }$. And
similarly, given any $\widetilde p_+ $ with $\frac{2n}{n-2}<\widetilde{p}_+ \leq \infty,$ there exists
an operator $L$ such that the heat semigroup $\{e^{-tL}\}_{t>0}$ is
not bounded in $L^{\widetilde p_+}$.
\end{proposition}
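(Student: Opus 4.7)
\bigskip
\noindent\textbf{Proof plan for Proposition \ref{p2.1}.}

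The plan is to construct the required family of operators from a classical example of Frehse. First, I would reduce the statement about $\widetilde p_+$ to the statement about $\widetilde p_-$ by duality: since $(e^{-tL})^*=e^{-tL^*}$ and since $L^*$ is an operator of the same class, an operator witnessing failure of $L^{q}$-boundedness for $L^*$ with $q<2n/(n+2)$ automatically provides (via adjoints) an operator witnessing failure of $L^{q'}$-boundedness with $q'>2n/(n-2)$. So it suffices to produce, for each $\widetilde p_-<2n/(n+2)$, an operator $L$ with $p_-(L)>\widetilde p_-$.

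The main input is Frehse's construction: in each dimension $n\geq 3$ one obtains a one-parameter family of complex matrices $A_\tau$, satisfying the measurability and ellipticity conditions (\ref{eq1.1})--(\ref{eq1.2}), and corresponding weak solutions $u_\tau(x)=\Omega_\tau(x/|x|)|x|^{-\tau}$ to $-\mathrm{div}(A_\tau\nabla u_\tau)=0$, where the homogeneity exponent $\tau$ can be taken strictly inside $\bigl(0,(n-2)/2\bigr)$ and pushed arbitrarily close to the endpoint $(n-2)/2$. The solution $u_\tau$ is in $W^{1,2}_{\mathrm{loc}}(\RR^n)$ for such $\tau$, lies in $L^q_{\mathrm{loc}}(\RR^n)$ precisely when $q<n/\tau$, but is unbounded near the origin. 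As $\tau\nearrow(n-2)/2$, the Lebesgue integrability exponent $n/\tau$ decreases down to $2n/(n-2)$.

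Now I would exploit the following consequence of Auscher's characterization of $(p_-(L),p_+(L))$ (reviewed in \S \ref{s2.2}): if $e^{-tL_\tau^*}$ is bounded on $L^p$ for some $p$ in the open interval $\bigl(2n/(n-2),\,n/\tau\bigr)$, then the $L^p$--$L^2$ off-diagonal semigroup bounds together with the Caccioppoli inequality upgrade to a local reverse-H\"older/Moser estimate asserting that every weak solution of $L_\tau v=0$ lying in $L^p_{\mathrm{loc}}$ must lie in $L^\infty_{\mathrm{loc}}$. Applying this to $v=u_\tau$ yields a contradiction, because $u_\tau \in L^p_{\mathrm{loc}}$ for $p$ in the stated range yet blows up at $0$. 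Hence $p_+(L_\tau^*)\leq n/\tau$, so by duality $p_-(L_\tau)\geq (n/\tau)'$; as $\tau\nearrow(n-2)/2$, $(n/\tau)'\nearrow 2n/(n+2)$, and choosing $\tau$ close enough to $(n-2)/2$ yields $p_-(L_\tau)>\widetilde p_-$ for any prescribed $\widetilde p_-<2n/(n+2)$. The second part of the proposition then follows by replacing $L$ with $L^*$ and using the duality $(e^{-tL})^*=e^{-tL^*}$.

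The main obstacle, in my view, is the upgrade from $L^p$-boundedness of the semigroup to the local $L^\infty$-regularity of weak solutions, since the standard De Giorgi--Nash--Moser machinery is unavailable for complex coefficients. This step is carried out by combining the Gaffney-type $L^p$--$L^2$ off-diagonal bounds from \cite{AuscherSurvey} with the usual Caccioppoli argument applied to cutoffs of $u_\tau$; verifying the precise range of $p$ in which this self-improvement can be run, and confirming that Frehse's family indeed realizes homogeneity exponents up to the critical value $(n-2)/2$, are the two points that require the most care.
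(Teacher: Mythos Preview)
Your overall strategy---use Frehse's singular homogeneous solution and derive a contradiction from assumed $L^p$ semigroup bounds---is exactly the paper's, and the duality reduction between the $\widetilde p_-$ and $\widetilde p_+$ statements is fine (the paper does it in the opposite order, proving the $\widetilde p_+$ statement first, but that is immaterial).

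The difficulty is your central implication: ``$e^{-tL}$ bounded on $L^p$ for a single $p\in(2n/(n-2),n/\tau)$ $\Rightarrow$ every $L^p_{\rm loc}$ weak solution is $L^\infty_{\rm loc}$.'' As stated this is too strong, and in fact is refuted by the very example you invoke. We always have $p_+(L_\tau)>2n/(n-2)$, so the semigroup \emph{is} bounded on $L^p$ for $p$ slightly above $2n/(n-2)$; for such $p$ the Frehse solution $u_\tau$ lies in $L^p_{\rm loc}$ (since $p<n/\tau$) yet is unbounded at the origin. So the Moser-type upgrade you propose cannot hold at that level of generality, and Caccioppoli alone (which only yields $L^2$ gradient control from $L^2$ data) will not manufacture it for complex coefficients. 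There is also an unnecessary detour through $L_\tau^*$: since $u_\tau$ solves $L_\tau u_\tau=0$, the natural target is $p_+(L_\tau)$ directly.

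The paper avoids this regularity question entirely. It localizes: with $\phi\in C_0^\infty$ equal to $1$ near the origin, one has $L(u\phi)=f\in C_0^\infty$ because $Lu=0$. If the semigroup were bounded on $L^{\widetilde p_+}$, then by \cite{AuscherSurvey}, Proposition~5.3, one has $L^{-1}:L^p\to L^r$ whenever $n/p-n/r=2$, $p>p_-(L)$, $r<\widetilde p_+$; applying this to $f$ forces $u\phi\in L^r$ for all $2n/(n-2)\le r<\widetilde p_+$. But $u\phi=u$ near $0$, and $u\notin L^r$ once $r>n/(q-1)$. Taking $q$ close to $n/2$ pushes $n/(q-1)$ down to $2n/(n-2)$ and yields the contradiction. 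This replaces your unproven solution-regularity step with a one-line mapping property of $L^{-1}$ that is already available from the semigroup hypothesis.
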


\bp We argue as in \cite{AT}, Section~1.3, but using the example of
\cite{Frehse} rather than that of \cite{MNP}.

Let $n\geq 3$. By \cite{Frehse}, for every $q<n/2$ and $\lambda>0$, there is an $n\times
n$ matrix $A=A(q,\lambda)$ satisfying (\ref{eq1.1})--(\ref{eq1.2})
and such that
\begin{equation}\label{eq2.10}
u=\frac{x_1}{|x|^q}\,e^{i\lambda \ln |x|}
\end{equation}

\noindent is a classical solution of the equation $Lu=-{\rm div}(A\nabla u)=0$ in
$\RR^n\setminus\{0\}$, and is a weak solution globally in $\RR^n$.

More precisely, $A$ has a form
\begin{equation}\label{eq2.11}
A=\left\{(\alpha+i)\delta_{jk}+\beta
\frac{x_jx_k}{|x|^2}\right\}_{j,k=1}^n,
\end{equation}

\noindent where $\alpha\in\RR$ and $\beta\in\CC$ are some constants.
For any fixed $\alpha\in\RR$, $\lambda\neq 0$, $q\neq 0$ there
exists $\beta=\beta(\alpha,q,\lambda)$ (explicitly written in
\cite{Frehse}) such that $u$ in (\ref{eq2.10}) solves the equation
$-{\rm div}(A\nabla u)=0$, and moreover, for $q<n/2$, $\lambda>0$,
$\alpha>0$ sufficiently small and $\beta=\beta(\alpha,q,\lambda)$,
the corresponding  matrix $A$ satisfies the ellipticity conditions.

Now let us return to the properties of the heat semigroup. First of
all, take some $\phi \in C_0^\infty(\RR^n)$, supported in the unit
ball $B_1$, such that $\phi=1$ in the ball of radius $1/2$ centered
at the origin. Then $\nabla \phi \in C_0^\infty(B_1)$ and $\nabla
\phi=0$ in a neighborhood of 0. Since the only singularity of $u$
(and of $A$) is at 0, we have
\begin{equation}\label{eq2.12}
L(u\phi)= -{\rm div}(A\nabla (u\phi))=-{\rm div}(Au\nabla
\phi)-A\nabla u\cdot\nabla \phi =:f\in C_0^\infty(B_1),
\end{equation}

\noindent where the second equality follows from the fact that
$Lu=0$.

Fix some $\widetilde p_+ >\frac{2n}{n-2}$ and assume that the heat
semigroup $\{e^{-tL}\}_{t>0}$ is bounded in $L^{\widetilde p_+}$ for
an operator $L$. Then, according to \cite{AuscherSurvey}, Proposition~5.3, we have
\begin{equation}\label{eq2.13}
L^{-1}:L^p(\RR^n)\longrightarrow L^r(\RR^n), \quad n/p-n/r=2,
\end{equation}

\noindent provided $r<\widetilde p_+$ and $p>p_-(L)$. But since
$p_-(L)$ is always smaller than $\frac{2n}{n+2}$, (\ref{eq2.13}) is
valid for any $\frac{2n}{n-2}\leq r<\widetilde p_+$.

The function $f\in C_0^\infty(B_1)$ in the right-hand side of
(\ref{eq2.12}) belongs, in particular, to all $L^p$ spaces, $1\leq
p\leq\infty$, and therefore, by (\ref{eq2.13}) the solution
\begin{equation}\label{eq2.14}
\mbox{$L^{-1}f=u\phi$ must belong to all $L^r$,\qquad
$\frac{2n}{n-2}\leq r<\widetilde p_+$.} \end{equation}
However, $u\phi=u$ in a neighborhood of the origin and $u$ given by
(\ref{eq2.10}) does not belong to $L^r$ when $r(1-q)+n<0$.
We can take $\eps>0$ sufficiently small so that
$2n/(n-2-2\eps)<\widetilde p_+$ and take $q=n/2-\eps$. Then
$u\phi \not\in L^r$ for any $r>2n/(n-2-2\eps)$ which
contradicts (\ref{eq2.14}).

Since $p_-(L)=(p_+(L^*))'$, this computation also shows that assuming
boundedness of $\{e^{-tL}\}_{t>0}$ in $L^{\widetilde p_- }$ for all
$L$ will lead to a contradiction. \ep

Let $L$ be a divergence form elliptic operator with complex bounded coefficients given by (\ref{eq1.1})--(\ref{eq1.3}). Let $K_t(x,y)$, $t>0$, $x,y\in\RR^n$, denote the Schwartz kernel of the heat semigroup generated by $L$. We say that it satisfies the Gaussian property if for each $t>0$ the kernel $K_t(x,y)$ is H\"older continuous in $x$ and $y$ and there exist  some constants $C,c,\alpha>0$ such that for every $x,y,h\in\RR^n$
\begin{eqnarray}\label{eq2.15}
|K_t(x,y)|&\leq & \frac{C}{t^{n/2}}\,e^{-\frac{|x-y|^2}{ct}}, \\[4pt]
\label{eq2.16}
|K_t(x,y)-K_t(x+h,y)|&\leq& \frac{C}{t^{n/2}}\,\left(\frac{|h|}{t^{1/2}+|x-y|}\right)^{\alpha}\,
e^{-\frac{|x-y|^2}{ct}}, \\[4pt]
\label{eq2.17}
|K_t(x,y)-K_t(x,y+h)|&\leq& \frac{C}{t^{n/2}}\,\left(\frac{|h|}{t^{1/2}+|x-y|}\right)^{\alpha}\,
e^{-\frac{|x-y|^2}{ct}}, 
\end{eqnarray}
\noindent whenever $2|h|\leq t^{1/2}+|x-y|$.
For every elliptic operator defined in (\ref{eq1.1})--(\ref{eq1.3}) the heat kernel satisfies the Gaussian bounds in dimensions $n=1,2$, and for every elliptic operator with real coefficients this property holds in all dimensions. It was known that in general the Gaussian bounds may fail in dimensions $n\geq 5$. Whether or not they necessarily hold when $n=3,4$ has been an open problem (see, e.g., \cite{AT}, \S{1.2} and the Remark on p. 33). The Corollary below answers this question to the negative.

\begin{corollary}\label{c2.2} Let $n\geq 3$.
There exists an elliptic operator $L$ given by (\ref{eq1.1})--(\ref{eq1.3}) such that the kernel of the heat semigroup generated by $L$ does not satisfy (\ref{eq2.15}). In particular, for such $L$ the Gaussian property does not hold.
\end{corollary}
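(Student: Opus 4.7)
The plan is to derive the corollary as an immediate consequence of Proposition \ref{p2.1}, by observing that the pointwise Gaussian upper bound \eqref{eq2.15} forces $L^p$-boundedness of the heat semigroup on the full range $1 \le p \le \infty$, which contradicts the non-boundedness statement of Proposition \ref{p2.1} for any exponent above $2n/(n-2)$ (or, symmetrically, any exponent below $2n/(n+2)$).

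More concretely, I would argue by contradiction. Suppose that for every elliptic $L$ as in \eqref{eq1.1}--\eqref{eq1.3} with $n\geq 3$, the Schwartz kernel $K_t(x,y)$ of $e^{-tL}$ satisfied \eqref{eq2.15}. Then for every $t>0$ and every $x\in\RR^n$,
\begin{equation*}
\int_{\RR^n}|K_t(x,y)|\,dy \;\leq\; \frac{C}{t^{n/2}}\int_{\RR^n}e^{-|x-y|^2/(ct)}\,dy \;=\; C',
\end{equation*}
uniformly in $x$ and $t$, and the same estimate holds with the roles of $x$ and $y$ reversed. By Schur's test (equivalently, Young's convolution-type inequality for integral operators with uniformly $L^1$ kernels in each variable), this would yield
\begin{equation*}
\|e^{-tL}\|_{L^p(\RR^n)\to L^p(\RR^n)}\;\leq\; C',\qquad 1\leq p\leq \infty,
\end{equation*}
for every $L$ in the class, with a bound independent of $t>0$ and $p$.

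Now fix any $\widetilde{p}_+$ with $2n/(n-2)<\widetilde{p}_+<\infty$. By Proposition \ref{p2.1}, one may choose an operator $L$ for which $\{e^{-tL}\}_{t>0}$ is \emph{not} bounded on $L^{\widetilde{p}_+}(\RR^n)$. For this particular $L$, the previous display fails, so the hypothesis \eqref{eq2.15} must fail for the kernel of $e^{-tL}$ at some $t>0$. This proves the first assertion of the corollary. The ``in particular'' clause is then immediate, since any heat kernel satisfying the full Gaussian property \eqref{eq2.15}--\eqref{eq2.17} in particular satisfies \eqref{eq2.15}.

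I do not foresee a genuine technical obstacle here: the argument is essentially a two-line reduction to Proposition \ref{p2.1}. The only point requiring mild care is the Schur-test step, which must be applied on $L^1$ and $L^\infty$ and then interpolated (or simply applied directly using the $L^1$-in-each-variable bound on the kernel); but this is entirely routine. All of the substantive work has already been done in the proof of Proposition \ref{p2.1} via Frehse's example.
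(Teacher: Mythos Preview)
Your proof is correct and matches precisely the second of the two arguments the paper gives: deduce uniform $L^1$ and $L^\infty$ bounds on $e^{-tL}$ from the kernel estimate \eqref{eq2.15} via Schur/interpolation, then invoke Proposition~\ref{p2.1} to reach a contradiction. The paper also records a first, slightly different route (integrating \eqref{eq2.15} in $t$ gives $|G(x,y)|\lesssim |x-y|^{2-n}$ for the kernel of $L^{-1}$, forcing \eqref{eq2.13} and contradicting Frehse's example directly), but your approach is the one the paper singles out as the simpler alternative.
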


\bp The estimate (\ref{eq2.15})
implies that the integral kernel $G(x,y)$, $x,y\in\RR^n$, of the operator $L^{-1}=\int_0^{\infty} e^{-tL}\,dt$ is controlled by $C|x-y|^{2-n}$.  Hence, (\ref{eq2.13}) holds, which yields as before a contradiction. An analogous argument was used in \cite{AT}, \S{1.3}. 

Alternatively, one could check directly that (\ref{eq2.15}) implies the $L^p$ boundedness of the heat semigroup for all $1\leq p\leq \infty$. Indeed, the boundedness in $L^1$ follows applying the Fubini theorem to the $L^1$ norm of $e^{-tL}f$ and integrating the upper bound of the kernel, given by (\ref{eq2.15}), in $x$. The boundedness in $L^\infty$ is also trivial, since bringing out the $L^\infty$ norm of $f$ in an integral expression for $e^{-tL}f$, one just ends up with the integral of the right-hand side of (\ref{eq2.15}) in $y$. The range $1<p<\infty$ then follows by interpolation. However, the $L^p$ boundedness of the heat semigroup for all $1\leq p\leq \infty$ contradicts Proposition~\ref{p2.1}. We thank the referee for pointing out this, perhaps simpler, route.  
\ep

\begin{corollary}\label{c2.3} For each
$p<\frac {2n}{n+2}$ and each $p>2$ there exists $L$ such that $\nabla L^{-1/2}$
is not bounded in $L^{p}$. 
\end{corollary}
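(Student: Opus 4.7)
My plan is to split the argument according to the two ranges $p < 2n/(n+2)$ and $p > 2$, since each requires a different construction.

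For $p < 2n/(n+2)$, I would apply Proposition~\ref{p2.1} directly with $\widetilde p_- := p$, producing via Frehse's example an operator $L$ for which $\{e^{-tL}\}_{t>0}$ is unbounded on $L^p$. Since \eqref{eq2.9} guarantees $p_+(L) > 2n/(n-2) > 2 > p$, the point $p$ cannot lie to the right of the interior interval $(p_-(L),p_+(L))$ of $L^q$-boundedness of the semigroup, so the failure forces $p \leq p_-(L)$. Invoking the fact (stated just before Proposition~\ref{p2.1}, following \cite{AuscherSurvey}) that $p_-(L)$ is also the lower endpoint for $L^q$-boundedness of $\nabla L^{-1/2}$, so that boundedness requires $p_-(L) < q < 2+\varepsilon(L)$, I immediately conclude that $\nabla L^{-1/2}$ fails to be bounded on $L^p$.

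For $p > 2$, this route is unavailable: Proposition~\ref{p2.1} only produces heat semigroup failures for $p > 2n/(n-2)$, whereas the upper endpoint $2+\varepsilon(L)$ for $L^q$-boundedness of the Riesz transform is generally strictly less than $p_+(L)$, so controlling the semigroup is not enough to control $\nabla L^{-1/2}$ at these exponents. For this range I would simply invoke Kenig's construction, described in \cite{AT}, Section~4.2.2: for each $p > 2$ it furnishes an elliptic operator $L$ — in fact with real symmetric coefficients — for which $\nabla L^{-1/2}$ is not bounded on $L^p$. This handles the second half of the corollary.

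The main obstacle is the asymmetry between the two regimes: the semigroup sharpness of Proposition~\ref{p2.1}, which is what the present section delivers via the Frehse example, ties directly to the endpoint $p_-(L)$ and thus automatically propagates to the Riesz transform when $p < 2$, but the corresponding sharpness at the Riesz upper endpoint $2 + \varepsilon(L)$ for each $p > 2$ is of a qualitatively different nature. In particular the Frehse-based $L^{-1}:L^p\to L^r$ contradiction used in Proposition~\ref{p2.1} cannot be shifted to this regime by a simple duality, and one really does have to appeal to a separate, Kenig-type construction to cover every $p > 2$.
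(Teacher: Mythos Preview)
Your proposal is correct and follows essentially the same approach as the paper: for $p>2$ you invoke Kenig's construction from \cite{AT}, and for $p<2n/(n+2)$ you combine Proposition~\ref{p2.1} with the identification of $p_-(L)$ as the common lower endpoint for the heat semigroup and the Riesz transform. The paper's proof is just a more compressed version of the same two-case argument.
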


\bp The counterexample for $p>2$ is due to C.\,Kenig (see \cite{AT}, Section 4.2.2). The case $p<\frac {2n}{n+2}$ follows from Proposition~\ref{p2.1}
along with the fact, proved in \cite{AuscherSurvey} and noted above, that the lower endpoint of the interval of boundedness of Riesz transform coincides with the lower endpoint of the interval of boundedness of the heat semigroup .\ep

\subsection{Off-diagonal estimates and $L^p-L^q$ bounds}\label{s2.3}

We say that  a \Bk family of operators $\{S_t\}_{t>0}$
satisfies $L^2$ off-diagonal estimates (``Gaffney estimates") if
there are some constants $c, C >0$ such that for arbitrary
closed sets $E,F\subset \RR^n$
\begin{equation}\label{eq2.19}
\|S_tf\|_{L^2(F)}\leq C\,e^{-\frac{{\rm
dist}\,(E,F)^2}{ct}}\,\|f\|_{L^2(E)},
\end{equation}

\noindent for every $t>0$ and every $f\in L^2(\RR^n)$ supported in
$E$.  Similarly,
a family $\{S_z\}_{z\in \Sigma_{\mu}^0}$, $0<\mu<\pi/2$, satisfies $L^2$ off-diagonal estimates in $z$ if the analogue of (\ref{eq2.19}) holds with $|z|$ in place of $t$ on the right-hand side. For example, 
if $0<\mu <\pi/2-\omega$, 
the families $\{e^{-zL}\}_{z\in \Sigma_{\mu}^0}$ and 
$\{(zL)^k e^{-zL}\}_{z\in \Sigma_{\mu}^0},\, 
k=1,2,...,$  satisfy $L^2$ off-diagonal estimates in $z$ (see \cite{AuscherSurvey}, \S{2.3}). For  later reference we record the following result.

\begin{lemma}\label{l2.4}\, (\cite{HoMa})
If two families of operators, $\{S_t\}_{t>0}$ and $\{T_t\}_{t>0}$,
satisfy Gaffney estimates (\ref{eq2.19}) then so does
$\{S_tT_t\}_{t>0}$. Moreover, there exist $c, C>0$ such that for
arbitrary closed sets $E,F\subset \RR^n$
\begin{equation}\label{eq2.20}
\|S_sT_tf\|_{L^2(F)}\leq C\,e^{-\frac{{\rm
dist}\,(E,F)^2}{c\max\{t,s\}}}\,\|f\|_{L^2(E)},
\end{equation}

\noindent for all $t,s>0$ and all $f\in L^2(\RR^n)$ supported in
$E$.
\end{lemma}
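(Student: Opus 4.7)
The plan is to prove the stronger, two-parameter estimate \eqref{eq2.20} directly; the first statement (that $\{S_tT_t\}_{t>0}$ satisfies Gaffney) is the special case $s=t$. The idea is the standard ``interpolating set'' trick: when $f$ is supported in $E$, the function $T_tf$ is spread over all of $\RR^n$, so we cut it into a piece that is still far from $F$ (controlled by Gaffney for $T_t$) and a piece supported far from $F$ (to which Gaffney for $S_s$ then applies).

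Fix closed sets $E,F\subset\RR^n$, let $d:=\operatorname{dist}(E,F)$, and set
\[G:=\{x\in\RR^n:\operatorname{dist}(x,E)\le d/2\}.\]
By the triangle inequality one checks that $\operatorname{dist}(E,G^c)\ge d/2$ and $\operatorname{dist}(G,F)\ge d/2$. Given $f\in L^2(\RR^n)$ supported in $E$, decompose
\[T_tf=\chi_G\,T_tf+\chi_{G^c}\,T_tf,\]
and correspondingly split
\[S_sT_tf=S_s(\chi_G T_tf)+S_s(\chi_{G^c}T_tf).\]

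For the first summand, $\chi_G T_tf$ is supported in $G$, and $\operatorname{dist}(G,F)\ge d/2$, so the Gaffney estimate \eqref{eq2.19} for $\{S_s\}$ gives
\[\|S_s(\chi_G T_tf)\|_{L^2(F)}\le C\,e^{-d^2/(4cs)}\,\|\chi_G T_tf\|_{L^2(G)}\le C\,e^{-d^2/(4cs)}\,\|T_tf\|_{L^2(\RR^n)}.\]
Taking $E=F=\RR^n$ in \eqref{eq2.19} shows that the Gaffney hypothesis forces $L^2$-boundedness of $T_t$ with constant $C$, so this piece is bounded by $C^2\,e^{-d^2/(4cs)}\|f\|_{L^2(E)}$. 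For the second summand, use the Gaffney estimate \eqref{eq2.19} for $\{T_t\}$ with sets $E$ and $G^c$ (separated by $d/2$):
\[\|\chi_{G^c}T_tf\|_{L^2(\RR^n)}=\|T_tf\|_{L^2(G^c)}\le C\,e^{-d^2/(4ct)}\,\|f\|_{L^2(E)},\]
and then apply $L^2$-boundedness of $S_s$ to obtain $\|S_s(\chi_{G^c}T_tf)\|_{L^2(F)}\le C^2\,e^{-d^2/(4ct)}\|f\|_{L^2(E)}$.

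Combining the two estimates yields
\[\|S_sT_tf\|_{L^2(F)}\le 2C^2\bigl(e^{-d^2/(4cs)}+e^{-d^2/(4ct)}\bigr)\|f\|_{L^2(E)}\le 4C^2\,e^{-d^2/(4c\max\{s,t\})}\,\|f\|_{L^2(E)},\]
which is \eqref{eq2.20} after relabeling the constants. There is no real obstacle here; the only delicate choice is the intermediate set $G$, and one just needs to verify that both its distance to $F$ and the distance from $E$ to its complement scale like $d$. The same proof works verbatim for the holomorphic version where $t,s$ are replaced by $|z|,|w|$ in $\Sigma_\mu^0$, should that generality be needed later.
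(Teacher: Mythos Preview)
Your proof is correct and is precisely the standard ``intermediate set'' argument for this lemma. The paper itself does not give a proof here; it simply cites \cite{HoMa}, where exactly this splitting via the set $G=\{x:\operatorname{dist}(x,E)\le d/2\}$ is used, so your approach matches the intended one.
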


A family of operators $\{S_t\}_{t>0}$
satisfies $L^p-L^q$ off-diagonal estimates, $1<p,q<\infty$, if for arbitrary
closed sets $E,F\subset \RR^n$
\begin{equation}\label{eq2.21}
\|S_tf\|_{L^q(F)}\leq Ct^{\frac 12\left(\frac nq-\frac
np\right)}\,e^{-\frac{{\rm dist}\,(E,F)^2}{ct}}\,\|f\|_{L^p(E)},
\end{equation}

\noindent for every $t>0$ and every $f\in L^p(\RR^n)$ supported in
$E$.

\begin{lemma}(\cite{AuscherSurvey}) \label{l2.5}
For every
$p$ and $q$ such that $p_-(L)<p\leq q<p_+(L)$ the family $\{e^{-tL}\}_{t>0}$
satisfies $L^p-L^q$ off-diagonal estimates.
In particular, the operator $e^{-tL}$, $t>0$, maps $L^p(\RR^n)$
to $L^q(\RR^n)$ with norm controlled by $Ct^{\frac 12\left(\frac
nq-\frac np\right)}$. \Bk
\end{lemma}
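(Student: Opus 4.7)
The plan is to reduce to the $L^2$ Gaffney case already noted in the text, and to upgrade in two stages: first extend the off-diagonal decay from $L^2$ to $L^p-L^p$ throughout the interior of $\mathcal{J}(L)$, and then exploit the semigroup property together with an on-diagonal $L^p\to L^q$ bound to produce the full $L^p-L^q$ version.

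For the first stage, fix $p$ with $p_-(L)<p<p_+(L)$ and closed sets $E,F\subset\RR^n$. Tile $\RR^n$ by a grid of cubes of sidelength $\sqrt{t}$ and decompose $f\in L^p(\RR^n)$ supported in $E$ as $f=\sum_Q f\mathbf 1_{Q\cap E}$. For each such cube $Q$, $\|f\mathbf 1_{Q\cap E}\|_{L^2}\leq |Q|^{1/2-1/p}\|f\mathbf 1_{Q\cap E}\|_{L^p}=t^{n/4-n/(2p)}\|f\mathbf 1_{Q\cap E}\|_{L^p}$, and a dual-side use of H\"older after applying $L^2$ Gaffney estimates costs $t^{n/(2p)-n/4}$ on any target cube. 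The exponential factors $\exp(-\mathrm{dist}(Q,F)^2/(ct))$ are then summed against the $L^p$-boundedness of $e^{-tL}$ on each diagonal term; the polynomial losses cancel and the Gaussian decay survives (at the cost of enlarging the constant $c$), producing the $L^p-L^p$ analogue of (\ref{eq2.19}).

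For the second stage, write $e^{-tL}=e^{-tL/2}\,e^{-tL/2}$. The first factor furnishes $L^p$ off-diagonal from Stage~1, and the second factor supplies an on-diagonal bound $\|e^{-tL/2}\|_{L^p\to L^q}\lesssim t^{\frac12(\frac{n}{q}-\frac{n}{p})}$ for $p_-(L)<p\leq q<p_+(L)$, which is the Sobolev-type ultracontractivity for $L$ derived in \cite{AuscherSurvey} from the $L^p$ boundedness of $\nabla L^{-1/2}$ on $(p_-(L),p_+(L))$ (in combination with analyticity of the semigroup, so that $(tL)^k e^{-tL}$ may be used to gain regularity). Composing, and using Lemma~\ref{l2.4}-type absorption of the Gaussian factor on overlapping annular regions, yields (\ref{eq2.21}) with the stated constant.

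The main obstacle is the on-diagonal $L^p\to L^q$ bound with the sharp dimensional scaling $t^{\frac12(n/q-n/p)}$: unlike for the Laplacian it cannot be read off from a pointwise heat-kernel estimate, so one must invoke the Sobolev inequality dual to the Riesz transform bound $\nabla L^{-1/2}:L^p\to L^p$ valid in the range $(p_-(L),p_+(L))$. Once this ultracontractivity is in hand, marrying it to the off-diagonal $L^p$-decay from Stage~1 via the semigroup identity is a soft argument, and the final statement that $e^{-tL}:L^p\to L^q$ with norm $\lesssim t^{\frac12(n/q-n/p)}$ follows by taking $E=F=\RR^n$ in (\ref{eq2.21}).
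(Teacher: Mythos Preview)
The paper does not supply its own proof; it records that the result is Proposition~3.2 of \cite{AuscherSurvey} (stated there with $q=2$, the general case following by the same argument and duality). Your two-stage outline is a reasonable shape, and Stage~1 is essentially correct, though Riesz--Thorin interpolation between the $L^2(E)\to L^2(F)$ Gaffney bound and the uniform $L^{p_0}$ semigroup bound is cleaner than cube-tiling and avoids the implicit restriction $p\le 2$ in your H\"older steps.

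The genuine gap is in Stage~2. You invoke an on-diagonal $L^p\to L^q$ bound ``derived in \cite{AuscherSurvey} from the $L^p$ boundedness of $\nabla L^{-1/2}$ on $(p_-(L),p_+(L))$''. But $\nabla L^{-1/2}$ is \emph{not} bounded on $L^p$ throughout that interval: by \eqref{eq1.5} and Corollary~\ref{c2.3}, the Riesz transform is bounded only for $p_-(L)<p<2+\eps(L)$, and $2+\eps(L)$ may be strictly smaller than $p_+(L)$. So the ingredient you cite is unavailable in the range where you need it. Worse, even on the interval where the Riesz bound does hold, its proof in \cite{AuscherSurvey} already \emph{uses} the $L^p-L^2$ off-diagonal estimates of Proposition~3.2, so your argument is circular. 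Auscher's actual route avoids Riesz transforms entirely: the $L^2-L^2$ Gaffney estimate for $\sqrt{t}\,\nabla e^{-tL}$ (a Caccioppoli-type inequality, independent of any Riesz result) combined with the local Sobolev embedding yields a single $L^{2_*}-L^2$ off-diagonal step with $2_*=2n/(n+2)$; interpolation with the $L^{p_0}$ semigroup bound and iteration through $e^{-tL}=(e^{-tL/k})^k$ then extends this to the full range $p_-(L)<p\le 2$, with $2\le q<p_+(L)$ handled by duality and composition.
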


The Lemma has been essentially proven in \cite{AuscherSurvey}, Proposition 3.2. There, $q\equiv 2$, but the argument directly extends to the full range stated in Lemma~\ref{l2.5} above (see also the Remark following Proposition~3.2 in  \cite{AuscherSurvey}).

\begin{lemma}\label{l2.6}
Assume that for some  $1\leq r\leq 2$ the family
$\{e^{-tL}\}_{t>0}$ satisfies $L^r-L^2$ off-diagonal estimates.
Then the family $\{tLe^{-tL}\}_{t>0}$ also satisfies $L^r-L^2$
off-diagonal estimates and the operators $e^{-tL}$, $tLe^{-tL}$,
$t>0$, are bounded from $L^r(\RR^n)$ to $L^2(\RR^n)$ with norms
bounded by $Ct^{\frac 12\left(\frac n2-\frac nr\right)}$, and from
$L^r(\RR^n)$ to $L^r(\RR^n)$ with norms independent of $t$. \Bk
\end{lemma}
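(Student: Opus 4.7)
The plan proceeds in four steps, each building on the previous one.

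\emph{Step 1: $L^r\to L^2$ bound for $e^{-tL}$.} From the hypothesized $L^r$--$L^2$ off-diagonal estimate (\ref{eq2.21}) I first derive the global bound $\|e^{-tL}f\|_{L^2(\RR^n)}\le Ct^{\frac12(n/2-n/r)}\|f\|_{L^r(\RR^n)}$ by a dyadic annular covering argument. Partition $\RR^n$ into a grid of cubes $\{Q_k\}$ of sidelength $\sqrt{t}$, and for each $k$ decompose $f=\sum_{j\ge0}f\chi_{C_j(Q_k)}$, where $C_0(Q_k)=4Q_k$ and $C_j(Q_k)=2^{j+1}Q_k\setminus 2^jQ_k$ for $j\ge1$. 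Applying (\ref{eq2.21}) to each pair $(C_j(Q_k),Q_k)$ produces a bound with prefactor $t^{\frac12(n/2-n/r)}e^{-c4^j}$, and the sum is controlled via Minkowski's inequality in $\ell^2(k)$, the embedding $\ell^r(k)\hookrightarrow\ell^2(k)$ (valid since $r\le2$), and the bounded overlap $\#\{k:x\in C_j(Q_k)\}\lesssim 2^{jn}$, which is easily absorbed by $e^{-c4^j}$.

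\emph{Step 2: $L^r$--$L^2$ off-diagonal for $tLe^{-tL}$.} I will exploit the factorization $tLe^{-tL}=2\bigl(\tfrac{t}{2}Le^{-(t/2)L}\bigr)\circ e^{-(t/2)L}$ combined with the $L^2$ Gaffney estimate for the first factor (recorded in \S\ref{s2.3}) and the hypothesis for the second. Given closed sets $E,F$ with $d=\mathrm{dist}(E,F)$ and $f$ supported in $E$, introduce the intermediate set $H=\{x\in\RR^n:\mathrm{dist}(x,E)\le d/2\}$, so that $\mathrm{dist}(H,F)\ge d/2$ and $\mathrm{dist}(E,H^c)\ge d/2$. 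Split $e^{-(t/2)L}f=\chi_H e^{-(t/2)L}f+\chi_{H^c}e^{-(t/2)L}f$. The $L^2(F)$-norm of the first piece is estimated using the $L^2$ Gaffney estimate for $\tfrac{t}{2}Le^{-(t/2)L}$ (yielding $e^{-cd^2/t}$) together with the $L^r\to L^2$ bound from Step~1 (yielding $t^{\frac12(n/2-n/r)}$). The $L^2(F)$-norm of the second piece is handled by the $L^2\to L^2$ boundedness of $\tfrac{t}{2}Le^{-(t/2)L}$ together with the $L^r$--$L^2$ off-diagonal hypothesis for $e^{-(t/2)L}$ applied on the far set $H^c$, which produces both the exponential and the polynomial factors simultaneously. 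Reapplying the covering argument of Step~1 to this newly established off-diagonal estimate yields the global $L^r\to L^2$ bound for $tLe^{-tL}$.

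\emph{Step 3: uniform $L^r\to L^r$ bounds.} Let $S_t$ denote either of $e^{-tL}$ or $tLe^{-tL}$ and let $\{Q_k\}$ be the grid of cubes of side $\sqrt t$ from Step~1. Hölder's inequality on each $Q_k$ combined with the $L^r$--$L^2$ off-diagonal estimate established above gives
\[
\|S_t(f\chi_{Q_j})\|_{L^r(Q_k)}\le|Q_k|^{\frac1r-\frac12}\|S_t(f\chi_{Q_j})\|_{L^2(Q_k)}\le C\,e^{-\mathrm{dist}(Q_j,Q_k)^2/(ct)}\|f\|_{L^r(Q_j)},
\]
since $|Q_k|^{\frac1r-\frac12}=t^{\frac{n}{2}(\frac1r-\frac12)}$ exactly cancels the $t^{\frac12(n/2-n/r)}$ prefactor. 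Raising to the $r$-th power, summing in $k$, and applying Schur's test to the matrix $a_{jk}=e^{-\mathrm{dist}(Q_j,Q_k)^2/(ct)}$ (whose row and column sums are bounded uniformly in $t$ and $k,j$) delivers the desired $t$-independent bound $\|S_tf\|_{L^r(\RR^n)}\le C\|f\|_{L^r(\RR^n)}$.

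The main obstacle is the bookkeeping in Step~2: the intermediate set $H$ must be chosen so that the exponential factor $e^{-cd^2/t}$ and the polynomial factor $t^{\frac12(n/2-n/r)}$ each appear exactly once in each of the two resulting pieces, without redundant prefactors or loss of the full exponential decay through the factor-of-two reduction $d\mapsto d/2$.
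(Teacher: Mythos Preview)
Your proof is correct and follows essentially the same route as the paper: the same factorization $tLe^{-tL}=2\bigl(\tfrac{t}{2}Le^{-(t/2)L}\bigr)e^{-(t/2)L}$, composition of off-diagonal estimates via an intermediate set (which is precisely the mechanism behind the paper's Lemma~\ref{l2.4}), and the passage from off-diagonal estimates to $L^r$-boundedness (which the paper outsources to \cite{AuscherSurvey}, Lemma~3.3, and \cite{BK2}, while you give the explicit Schur-test argument). One minor remark: your Step~1 is more elaborate than necessary, since the global $L^r\to L^2$ bound follows immediately from the off-diagonal hypothesis by taking $E=F=\RR^n$ in (\ref{eq2.21}).
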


\bp The fact that $L^r-L^2$ off-diagonal
estimates implies boundedness in $L^r(\RR^n)$ is rather standard, see e.g.,
\cite{AuscherSurvey}, Lemma~3.3, or \cite{BK2}.  

As we mentioned above Lemma~\ref{l2.4}, the family of operators $\{tLe^{-tL}\}_{t>0}$
satisfies $L^2-L^2$ off-diagonal estimates and, in particular, is bounded in
$L^2(\RR^n)$. We can combine this information with the properties
of the heat semigroup, stated in Lemma~\ref{l2.5}, and Lemma~\ref{l2.4} to deduce that  $tLe^{-tL}=2\left(\frac
t2 Le^{-\frac t2 L}\right)e^{-\frac t2L}$, \Bk $t>0$, also
satisfies $L^r-L^2$ off-diagonal estimates and is $L^r-L^2$
bounded.\ep

We say that  a family of operators $\{S_t\}_{t>0}$
satisfies $L^2$ off-diagonal estimates {\it of order $N$}, $N>0$, $N\in\RR$, if
there is a constant $C >0$ such that for arbitrary
closed sets $E,F\subset \RR^n$
\begin{equation}\label{eq2.22}
\|S_tf\|_{L^2(F)}\leq C\,\min\left\{1,\frac{t}{{\rm
dist}\,(E,F)^2}\right\}^N\,\|f\|_{L^2(E)},
\end{equation}

\noindent for every $t>0$ and every $f\in L^2(\RR^n)$ supported in
$E$.

\begin{lemma}\label{l2.7} Let $\mu\in(\omega,\pi/2)$, $\psi\in \Psi_{\sigma, \tau}(\Sigma_\mu^0)$ for some $\sigma, \tau>0$, and $f\in H^\infty(\Sigma_\mu^0)$. Then the family of operators $\{\psi(tL)f(L)\}_{t>0}$ satisfies $L^2$ off-diagonal estimates of order $\sigma$, with the constant controlled by $\|f\|_{L^\infty(\Sigma_\mu^0)}$.
\end{lemma}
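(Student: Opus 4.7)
The plan is to exploit the Dunford-type representation in (\ref{eq2.6})--(\ref{eq2.7}), applied to the product function $z\mapsto \psi(tz)f(z)$, and then reduce everything to the Gaffney estimates known for $\{e^{-zL}\}_{z\in\Sigma^0_{\pi/2-\omega}}$ (with $|z|$ in place of $t$), which were recalled just before Lemma \ref{l2.4}.

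First I would observe that, by the bounded $H^\infty$ functional calculus on $L^2$, $\psi(tL)f(L)=\Phi_t(L)$, where $\Phi_t(\xi):=\psi(t\xi)f(\xi)$. Since $|\psi(t\xi)|\le C\min\{(t|\xi|)^\sigma,(t|\xi|)^{-\tau}\}$ and $f\in H^\infty(\Sigma_\mu^0)$, one has $\Phi_t\in\Psi_{\sigma,\tau}(\Sigma_\mu^0)$ with a $t$-dependent constant bounded by a multiple of $\|f\|_{L^\infty(\Sigma_\mu^0)}$. Fix $\omega<\theta<\nu<\mu<\pi/2$, and using (\ref{eq2.6})--(\ref{eq2.7}) write
\begin{equation*}
\psi(tL)f(L)=\int_{\Gamma_+}e^{-zL}\eta_+(z,t)\,dz+\int_{\Gamma_-}e^{-zL}\eta_-(z,t)\,dz,
\qquad \eta_\pm(z,t)=\frac{1}{2\pi i}\int_{\gamma_\pm}e^{\xi z}\psi(t\xi)f(\xi)\,d\xi.
\end{equation*}

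Next I would estimate $|\eta_\pm(z,t)|$. For $z\in\Gamma_+$, $\xi\in\gamma_+$ one has $\Re(\xi z)=-|\xi||z|\sin(\nu-\theta)$, so after the substitution $u=t|\xi|$,
\begin{equation*}
|\eta_\pm(z,t)|\le \frac{C\|f\|_{L^\infty(\Sigma_\mu^0)}}{t}\int_0^\infty e^{-cu|z|/t}\min\{u^\sigma,u^{-\tau}\}\,du
\le \frac{C\|f\|_{L^\infty(\Sigma_\mu^0)}}{t}\min\Bigl\{1,\bigl(t/|z|\bigr)^{\sigma+1}\Bigr\},
\end{equation*}
the last step coming from splitting the integral at $u\sim t/|z|$ (finite integral when $|z|\lesssim t$, and $\Gamma$-function decay $\sim(|z|/t)^{-\sigma-1}$ when $|z|\gg t$). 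For $z\in\Gamma_\pm$ with $|\arg z|=\pi/2-\theta<\pi/2-\omega$, the standard Gaffney estimate in $z$ gives, for $g\in L^2$ supported in $E$,
\begin{equation*}
\|e^{-zL}g\|_{L^2(F)}\le C\,e^{-c\,\mathrm{dist}(E,F)^2/|z|}\,\|g\|_{L^2(E)}.
\end{equation*}

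Putting these two ingredients together, with $d:=\mathrm{dist}(E,F)$ and after the substitution $r=tu$ on $\Gamma_\pm$,
\begin{equation*}
\|\psi(tL)f(L)g\|_{L^2(F)}\le C\|f\|_{L^\infty(\Sigma_\mu^0)}\|g\|_{L^2(E)}\int_0^\infty e^{-c d^2/(tu)}\min\{1,u^{-\sigma-1}\}\,du.
\end{equation*}
The remaining task is to show that the last integral is bounded by $C\min\{1,(t/d^2)^\sigma\}$. Setting $\alpha:=d^2/t$, I would split at $u=1$: the tail $\int_1^\infty u^{-\sigma-1}du$ is $O(1)$, and after the change of variables $u=\alpha/v$ it is also $O(\alpha^{-\sigma})$; the part $u\le1$ is trivially $\le 1$ for small $\alpha$ and, via $v=1/u$, is $\lesssim \alpha^{-1}e^{-c\alpha}\lesssim \alpha^{-\sigma}$ for large $\alpha$. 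This yields exactly the bound (\ref{eq2.22}) of order $\sigma$.

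The main obstacle is bookkeeping: choosing $\theta,\nu$ so that $e^{-zL}$ remains analytic along $\Gamma_\pm$ while the integral defining $\eta_\pm$ converges with the sharp decay in $|z|/t$, and then tracking constants to confirm that the only dependence on $f$ is through $\|f\|_{L^\infty(\Sigma_\mu^0)}$. All of the analytic work reduces to elementary one-dimensional integrals, so no additional operator-theoretic input beyond (\ref{eq2.6})--(\ref{eq2.7}) and the Gaffney bounds for the semigroup is required.
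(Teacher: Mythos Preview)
Your proposal is correct and follows essentially the same route as the paper: the Dunford representation (\ref{eq2.6})--(\ref{eq2.7}) applied to $\Phi_t(\xi)=\psi(t\xi)f(\xi)$, a pointwise estimate $|\eta_\pm(z,t)|\le Ct^{-1}\|f\|_{L^\infty}\min\{1,(t/|z|)^{\sigma+1}\}$, the Gaffney bound for $e^{-zL}$ along $\Gamma_\pm$, and then an elementary analysis of the resulting one-variable integral. The only cosmetic differences are that the paper splits the $\eta_\pm$ integral at $|\xi|=1/t$ (i.e.\ at $u=1$ in your notation) rather than at $u\sim t/|z|$, and organizes the final integral into the cases $|z|\le t$ versus $|z|>t$ and $t\gtrless d^2$; your substitutions $u=t|\xi|$ and $r=tu$ streamline these computations slightly but lead to the same bound.
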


An analogous fact has been established for the Hodge-Dirac operator on a complete Riemannian manifold in \cite{AMcR}, Lemma~3.6.
\vskip 0.08in

\bp Recall the representation formulas (\ref{eq2.6}), (\ref{eq2.7}). We use them for the function  $\psi(tL)f(L)$, $t>0$. First of all,
\begin{multline}\label{eq2.23}
|\eta_{\pm}(z)|\leq \frac Ct\,\int_{\gamma_{\pm}}|\psi(t\xi)|\,|f(\xi)|\,d(t\xi)\\ 
\leq \frac Ct\,\|f\|_{L^\infty(\Sigma_\mu^0)}\int_{\gamma_{\pm}}\frac{|t\xi|^\sigma}{1+|t\xi|^{\sigma+\tau}} 
\,d(t\xi)\leq \frac Ct\,\|f\|_{L^\infty(\Sigma_\mu^0)},
\end{multline}

\noindent for all $z\in \Gamma_{\pm}$, in particular, for $z$ with $|z|\leq t$. 

When $|z|>t$ we break $\eta_{\pm}(z)$ into two integrals: one over $\{\xi\in \gamma_{\pm}:\,|\xi|\leq 1/t\}$ (called $J_1$) and the second one over  $\{\xi\in \gamma_{\pm}:\,|\xi|\geq 1/t\}$ (called $J_2$). Then
\begin{eqnarray}\label{eq2.24}\nonumber
J_1&\leq & C \,\|f\|_{L^\infty(\Sigma_\mu^0)}\int_{\xi\in \gamma_{\pm}:\,|\xi|\leq 1/t}e^{-\delta |z||\xi|}|t\xi|^\sigma\,d\xi \\[4pt]& \leq & \frac{C}{|z|}\,\|f\|_{L^\infty(\Sigma_\mu^0)}\,\frac{t^\sigma}{|z|^\sigma}\,\,\int_0^{\infty}
e^{-\delta \rho}\,\rho^\sigma\,d\rho
\leq \frac {C}t\,\|f\|_{L^\infty(\Sigma_\mu^0)}\,\Bigl(\frac{t}{|z|}\Bigr)^{\sigma+1},
\end{eqnarray}

\noindent where $\delta=-\cos{\left(\frac \pi 2-\theta+\nu\right)}\in (0,1)$, and
\begin{multline}\label{eq2.25}
J_2\leq C\, \|f\|_{L^\infty(\Sigma_\mu^0)}
\int_{\xi\in \gamma_{\pm}:\,|\xi|\geq 1/t}|z\xi|^{-\sigma-1}|t\xi|^{-\tau}\,d\xi \\
\leq  C\,\|f\|_{L^\infty(\Sigma_\mu^0)}\,\Bigl(\frac{t}{|z|}\Bigr)^{\sigma+1}\,t^{-\tau-\sigma-1}\,\,\int_{\xi\in \gamma_{\pm}:\,|\xi|\geq 1/t}|\xi|^{-\sigma-1-\tau}\,d\xi
\\\leq \frac {C}t\,\|f\|_{L^\infty(\Sigma_\mu^0)}\,\Bigl(\frac{t}{|z|}\Bigr)^{\sigma+1}.
\end{multline}

\noindent Hence,
\begin{equation}\label{eq2.26}
|\eta_{\pm}(z)|\leq \frac Ct\,\|f\|_{L^\infty(\Sigma_\mu^0)}\,\min\left\{1, \Bigl(\frac{t}{|z|}\Bigr)^{\sigma+1}\right\},\quad \forall\,z\in \Gamma_{\pm}.
\end{equation}

Armed with this estimate, we proceed to the bounds on $\psi(tL)f(L)$, $t>0$.
Take some $g\in L^2(\RR^n)$ supported in a closed set $E$. Then for any closed set $F\subset \RR^n$
\begin{equation}\label{eq2.27}
\|\psi(tL)f(L)g\|_{L^2(F)}\leq \int_{\Gamma_+}\|e^{-zL}g\|_{L^2(F)}|\eta_+(z)|\,dz
+\int_{\Gamma_-}\|e^{-zL}g\|_{L^2(F)}|\eta_-(z)|\,dz.
\end{equation}

\noindent Further,
\begin{eqnarray}\label{eq2.28}\nonumber
&& \int_{\Gamma_{\pm}}\|e^{-zL}g\|_{L^2(F)}|\eta_{\pm}(z)|\,dz\leq
C \,\|g\|_{L^2(E)}\int_{\Gamma_{\pm}}e^{-\frac{{\rm dist}\,(E,F)^2}{c|z|}}\, |\eta_{\pm}(z)|\,dz\\[4pt]
&& \qquad \leq C \,\|f\|_{L^\infty(\Sigma_\mu^0)}\,\|g\|_{L^2(E)}\int_{\Gamma_{\pm}}e^{-\frac{{\rm dist}\,(E,F)^2}{c|z|}}\, \min\left\{1, \Bigl(\frac{t}{|z|}\Bigr)^{\sigma+1}\right\}\,\frac 1t\,dz.
\end{eqnarray}

\noindent Now we split the last integral in (\ref{eq2.28}) according to whether $|z|\leq t$ or $|z|\geq t$, and denote the corresponding parts of it by $I_1$ and $I_2$, respectively. Then
\begin{eqnarray}\label{eq2.29}
I_1=\int_{z\in \Gamma_{\pm}:|z|\leq t}e^{-\frac{{\rm dist}\,(E,F)^2}{c|z|}}\,\frac 1t\,dz\leq e^{-\frac{{\rm dist}\,(E,F)^2}{ct}}.
\end{eqnarray}

\noindent On the other hand,
\begin{eqnarray}\label{eq2.30}
I_2=\int_{z\in \Gamma_{\pm}:|z|\geq t}e^{-\frac{{\rm dist}\,(E,F)^2}{c|z|}}\,\Bigl(\frac{t}{|z|}\Bigr)^{\sigma+1}\,\frac 1t\,dz.
\end{eqnarray}

\noindent If $t\geq {\rm dist}\,(E,F)^2$, we obtain the bound
\begin{eqnarray}\label{eq2.31}
I_2\leq\int_{z\in \Gamma_{\pm}:|z|\geq t}\Bigl(\frac{t}{|z|}\Bigr)^{\sigma+1}\,\frac 1t\,dz\leq C.
\end{eqnarray}

\noindent If $t\leq {\rm dist}\,(E,F)^2$, then
\begin{eqnarray}\label{eq2.32}
&&I_2\leq\int_{z\in \Gamma_{\pm}:t\leq |z|\leq {\rm dist}\,(E,F)^2} \Bigl(\frac{|z|}{{\rm dist}\,(E,F)^2}\Bigr)^{N}\,\Bigl(\frac{t}{|z|}\Bigr)^{\sigma+1}\,\frac 1t\,dz\nonumber\\[4pt]
&&\qquad + \int_{z\in \Gamma_{\pm}:|z|\geq {\rm dist}\,(E,F)^2}\,\Bigl(\frac{t}{|z|}\Bigr)^{\sigma+1}\,\frac 1t\,dz,
\end{eqnarray}

\noindent for any $N>0$. Let us take $N>\sigma$. Then
\begin{eqnarray}\label{eq2.33}\nonumber
I_2&\leq & C \Bigl(\frac{1}{{\rm dist}\,(E,F)^2}\Bigr)^{N}t^{\sigma}{\rm dist}\,(E,F)^{2(N-\sigma)}+C \Bigl(\frac{t}{{\rm dist}\,(E,F)^2}\Bigr)^{\sigma}\\[4pt]
&\leq & C \Bigl(\frac{t}{{\rm dist}\,(E,F)^2}\Bigr)^{\sigma}.
\end{eqnarray}

\noindent  This finishes the proof of the Lemma.\ep

Finally, we establish the following Lemma (cf. Lemma~3.7 in \cite{AMcR}).

\begin{lemma}\label{l4.2} Let $\mu\in(\omega,\pi/2)$ and $\sigma_1,\sigma_2,\tau_1,\tau_2>0$. Suppose further that $\psi\in \Psi_{\sigma_1,\tau_1}(\Sigma_\mu^0)$, $\widetilde \psi\in \Psi_{\sigma_2,\tau_2}(\Sigma_\mu^0)$
and $f\in H^\infty(\Sigma_\mu^0)$. Then for any $0<a<\min\{\sigma_1,\tau_2\}$ and $0<b<\min\{\sigma_2,\tau_1\}$ there is a family of operators $T_{s,t}$, $s,t>0$ such that
\begin{equation}\label{eq4.9}
\psi(sL)f(L)\widetilde{\psi}(tL)=\min\left\{\Bigl(\frac st\Bigr)^a, \Bigl(\frac ts\Bigr)^b\right\}T_{s,t},
\end{equation}
\noindent where\\
\noindent $(1)$ $\{T_{s,t}\}_{s\leq t}$ satisfy the $L^2$ off-diagonal estimates in $t$ of order $\sigma_2+a$ uniformly in $s\leq t$, \\
\noindent $(2)$ $\{T_{s,t}\}_{t\leq s}$ satisfy the $L^2$ off-diagonal estimates in $s$ of order $\sigma_1+b$ uniformly in $t\leq s$,\\
\noindent with the constants bounded by $\|f\|_{L^\infty(\Sigma_\mu^0)}$.
\end{lemma}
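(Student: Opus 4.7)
The plan is to combine the two cut-off factors $\psi(sL)$ and $\widetilde\psi(tL)$ into a single function of $L$, and then to extract the desired power $\min\{(s/t)^a,(t/s)^b\}$ through a careful pointwise estimate on the resulting symbol, thereby reducing matters to Lemma~\ref{l2.7}. By symmetry (interchanging the roles of $(\psi,\sigma_1,\tau_1)$ and $(\widetilde\psi,\sigma_2,\tau_2)$ and of $a$ and $b$), it suffices to treat the case $s\le t$ and to prove conclusion~(1).

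\textbf{Step 1 (reduction via functional calculus).} Let $u:=s/t\in(0,1]$. By the bounded $H^\infty$-calculus on $L^2$, the operators $\psi(sL)$, $f(L)$ and $\widetilde\psi(tL)$ commute, and
$$\psi(sL)\widetilde\psi(tL)=h_{s,t}(tL),\qquad h_{s,t}(z):=\psi(uz)\,\widetilde\psi(z),\ z\in\Sigma_\mu^0.$$
Setting $T_{s,t}:=u^{-a}\,h_{s,t}(tL)\,f(L)$ thus gives $\psi(sL)f(L)\widetilde\psi(tL)=(s/t)^a\,T_{s,t}$, and conclusion~(1) will follow once I show that $\widetilde h_{s,t}:=u^{-a}h_{s,t}$ lies in $\Psi_{\sigma_2+a,\tau_2-a}(\Sigma_\mu^0)$ with a constant independent of the pair $s\le t$; Lemma~\ref{l2.7} then supplies the off-diagonal estimates of order $\sigma_2+a$, with constant $\lesssim\|f\|_{L^\infty(\Sigma_\mu^0)}$.

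\textbf{Step 2 (the key pointwise bound).} The heart of the argument is to establish
$$|\psi(uz)\widetilde\psi(z)|\le C\,u^a\,\min\bigl\{|z|^{\sigma_2+a},\,|z|^{-(\tau_2-a)}\bigr\},\qquad z\in\Sigma_\mu^0,\ u\in(0,1],$$
with $C$ independent of $u$. I multiply the standing bounds $|\psi(uz)|\le C\min\{|uz|^{\sigma_1},|uz|^{-\tau_1}\}$ and $|\widetilde\psi(z)|\le C\min\{|z|^{\sigma_2},|z|^{-\tau_2}\}$ and split into three regimes: (i) $|z|\le 1$ (hence $|uz|\le 1$); (ii) $1\le|z|\le 1/u$ (still $|uz|\le 1$); (iii) $|z|\ge 1/u$. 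In (i) and (ii) I use $|\psi(uz)|\lesssim(u|z|)^{\sigma_1}$ together with $|\widetilde\psi(z)|\lesssim|z|^{\sigma_2}$ or $|z|^{-\tau_2}$ respectively, and factor the product as $u^a|z|^{\sigma_2+a}\cdot(u|z|)^{\sigma_1-a}$, respectively $u^a|z|^{-(\tau_2-a)}\cdot(u|z|)^{\sigma_1-a}$; since $\sigma_1-a>0$ and $u|z|\le 1$ in both regimes, the residual factor is $\le 1$. In (iii) I use $|\psi(uz)|\lesssim(u|z|)^{-\tau_1}$ and $|\widetilde\psi(z)|\lesssim|z|^{-\tau_2}$ and factor the product as $u^a|z|^{-(\tau_2-a)}\cdot(u|z|)^{-(\tau_1+a)}$, where $\tau_1+a>0$ and $u|z|\ge 1$ force the residual factor to be $\le 1$.

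\textbf{Step 3 (main obstacle and conclusion).} The only delicate point is Step~2: the hypotheses $a<\sigma_1$ and $a<\tau_2$ are precisely what is needed so that the exponents $\sigma_1-a$ (in regimes (i)--(ii)) and $\tau_2-a$ (which must remain positive so that $|z|^{-(\tau_2-a)}$ decays at infinity) have the correct signs; violating either kills one of the regimes. Once the estimate is established, $\widetilde h_{s,t}\in\Psi_{\sigma_2+a,\tau_2-a}(\Sigma_\mu^0)$ uniformly in $s\le t$, and Lemma~\ref{l2.7} yields~(1). The case $t\le s$ is handled identically, now with $v:=t/s\in(0,1]$ and $\psi(sL)\widetilde\psi(tL)=g_{s,t}(sL)$, where $g_{s,t}(z):=\psi(z)\widetilde\psi(vz)$; the analogous three-regime analysis (with $\sigma_1\leftrightarrow\sigma_2$, $\tau_1\leftrightarrow\tau_2$, $a\leftrightarrow b$) produces $g_{s,t}\in v^b\Psi_{\sigma_1+b,\tau_1-b}(\Sigma_\mu^0)$ uniformly in $t\le s$, which via Lemma~\ref{l2.7} gives~(2).
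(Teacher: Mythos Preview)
Your proof is correct and follows essentially the same strategy as the paper: extract the factor $(s/t)^a$ (resp.\ $(t/s)^b$) and reduce to Lemma~\ref{l2.7}. The only difference is in packaging: the paper writes, for $s\le t$,
\[
\psi(sL)f(L)\widetilde\psi(tL)=\Bigl(\frac st\Bigr)^a\bigl[(sL)^{-a}\psi(sL)f(L)\bigr]\bigl[(tL)^a\widetilde\psi(tL)\bigr],
\]
and then applies Lemma~\ref{l2.7} with the $H^\infty$ function $\xi\mapsto (s\xi)^{-a}\psi(s\xi)f(\xi)$ (bounded uniformly in $s$ since $a<\sigma_1$) and the $\Psi$ function $z\mapsto z^a\widetilde\psi(z)\in\Psi_{\sigma_2+a,\tau_2-a}$. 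This algebraic splitting makes the membership in $\Psi_{\sigma_2+a,\tau_2-a}$ immediate from the definition and bypasses your three-regime case analysis, but the content is the same.
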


\bp Let us consider first $s\leq t$. Then
\begin{equation}\label{eq4.10}
\psi(sL)f(L)\widetilde{\psi}(tL)=\Bigl(\frac st\Bigr)^a\,(sL)^{-a}\psi(sL)f(L)(tL)^{a}\widetilde{\psi}(tL)=:\Bigl(\frac st\Bigr)^a\,T_{s,t}.
\end{equation}

\noindent The function $(s\xi)^{-a}\psi(s\xi)f(\xi)$, $\xi \in \Sigma_\mu^0$, belongs to $H^\infty(\Sigma_\mu^0)$ and
\begin{equation}\label{eq4.11}
\|(s\xi)^{-a}\psi(s\xi)f(\xi)\|_{L^\infty(\Sigma_\mu^0)}\leq C\|f\|_{L^\infty(\Sigma_\mu^0)},
\end{equation}

\noindent with the constant $C$ independent of $s>0$.
Hence, by Lemma~\ref{l2.7} the operators $\{T_{s,t}\}_{s\leq t}$ satisfy the $L^2$ off-diagonal estimates in $t$ of order $\sigma_2+a$ uniformly in $s\leq t$, with the constant bounded by $\|f\|_{L^\infty(\Sigma_\mu^0)}$.
The case $s\geq t$ follows analogously, and their combination proves the Lemma. \ep

\section{Molecular decomposition and duality, $0<p\leq1$.}\label{s3}
\setcounter{equation}{0}

 To begin, we would like to make a few comments regarding the
well-definedness and the nature of the space
$\Lambda_{L^*}^{\alpha}(\RR^n)$, $\alpha\geq 0$. Let $M\in\NN$, $M>\frac 12\left(\alpha+\frac n2\right)$. First,
 $(I-e^{-t^2L^*})^M f$, $t\in\RR$, is globally well
defined in the sense of distributions for every 
$f\in {\bf M}^{M,\,*}_{\alpha,L}$, and belongs to $L^2_{loc}$.
 Indeed, if $\varphi \in L^2(Q)$ for some cube $Q$,
it follows from the Gaffney estimate (\ref{eq2.19}) that
$(I-e^{-t^2L})^M \varphi \in {\bf M}^{\eps,M}_{\alpha,L}$ for every
$\eps > 0$ (with the norm depending on $t,\ell (Q),
\textrm{dist}(Q,0)$). Thus,
\begin{equation}\label{eq3.1}
\langle (I-e^{-t^2L^*})^M f, \varphi \rangle \equiv  \langle  f,
(I-e^{-t^2L})^M \varphi \rangle \leq C_{t,\ell (Q),
\textrm{dist}(Q,0)} \|f\|_{({\bf
M}^{\eps,M}_{\alpha,L})^*}\|\varphi\|_{L^2(Q)}.\end{equation} Since
$Q$ was arbitrary, the claim follows.  Therefore, the norm in
(\ref{eq1.20}) is well-defined for such $f$.
Furthermore, the elements of ${\bf M}^{\eps,M}_{\alpha,L}$ are,
modulo translation, dilation and normalization, the molecules of the corresponding Hardy
spaces. The details are as follows. \Bk

For a cube $Q\subset \RR^n$ , \Bk by $S_i(Q)$, $i=0,1,2,...$,
we denote the dyadic annuli based on $Q$, i.e.
\begin{equation}\label{eq3.2}
S_0(Q):=Q \quad \mbox{ and }\quad S_i(Q):=2^iQ\setminus 2^{i-1}Q
\mbox{ for } i=1,2,...,
\end{equation}

\noindent where $2^iQ$ is the cube with the same center as $Q$ and
sidelength $2^il(Q)$. Let $0<p\leq 1$, $\eps>0$,  and $M\in \NN$.
We will always assume the above restrictions on $\eps$ and $M$, and typically,
given $p$, unless otherwise stated we will take
$M>\frac{n}{2}\Bigl(\frac 1p-\frac 12\Bigr)$. 
A function $m\in L^2(\RR^n)$ is called an
$(\hpl,\eps,M)$ - {\it molecule}\footnote{Molecules have been 
introduced in the classical setting corresponding to $L = -\Delta$ in \cite{TW};  see also \cite{CW}.} 
if it belongs to the range of $L^k$ in $L^2(\RR^n)$, for each
$k=1,...,M$, and there exists a cube
$Q\subset\RR^n$ such that
\begin{equation}
 \|(l(Q)^{-2}L^{-1})^k m\|_{L^2(S_i(Q))}\leq
\,(2^il(Q))^{\frac n2-\frac np}\,2^{-i\eps}, \,\, i=0,1,2,...,
\,\, k=0,1,...,M.\label{eq3.3}
\end{equation}
Observe that for $k=0$ the estimate (\ref{eq3.3}) is
the usual size control condition and for $k=1,..., M$ the condition (\ref{eq3.3}) is a quantitative version
of the requirement that $m\in R(L^k),$ which in turn is analogous to the classical requirement of vanishing moments.

We are now able to define a molecular $H^p_L$ space, which we shall eventually show is equivalent to
the space $H^p_L$ defined via square functions.

\begin{definition}\label{def 2.4} Let $0<p\leq 1$, and fix $\eps > 0$.
The Hardy space $\HPML$ is defined as follows.
We say that
$f= \sum\lambda_j m_j$, where $ \{\lambda_j\}_{j=0}^{\infty}\in {\ell}^p$, is a molecular 
$(H^p_L,2,\eps,M)$-representation (of $f$ ) if 
each $m_j$ is an $(H^p_L,\varepsilon,M)$-molecule, and the sum converges in $L^2(\mathbb{R}^n).$
Set
\begin{equation*}
\HPHML= \\\Big\{f:  \,f
\mbox{ has a molecular $(H^p_L,2,\eps,M)$-representation} \Big\},  
\end{equation*}
with the ``norm" (it is a true norm only when $p=1$), given by
\begin{multline*}
||f||_{\HPHML}=\\{\rm inf}\Big\{\left(\sum_{j=0}^{\infty}|\lambda_j|^p\right)^{1/p}:  
f=\sum\limits_{j=0}^{\infty}\lambda_jm_j\,
\mbox{ is a molecular $(H^p_L,2,\eps,M)$-representation} \Big\}. 
\end{multline*}
The space $\HPML$ is then defined as the completion of $\HPHML$ with respect to the metric 
induced by $||f||_{\HPHML}^p$.
\end{definition}
We note that this approach to the definition of adapted $H^p$ spaces has also been used 
in \cite{HLMMY}, at least in the case $p=1$.  We also remark that this approach, in the case
$p=1$, was implicit in \cite{HM}, but with a more complicated formulation in which
$L^2$ convergence of the molecular sums was achieved constructively, by means of an explicit truncation in scale.

Eventually, we shall see that any fixed choice of
$M> \frac{n}{2}(\frac{1}{p}-\frac{1}{2})$ and $\eps > 0$,  yields the same space. 
Indeed, more generally, we will show that the ``square function" and ``molecular"
$H^p$ spaces are equivalent, if the parameter $M >  \frac{n}{2}(\frac{1}{p}-\frac{1}{2}).$   
In fact, we shall prove
\begin{theorem}\label{th4.1} Let $0<p\leq1$.  Suppose that $M >\frac{n}{2}(\frac{1}{p}-\frac{1}{2})$ and that $\eps>0.$  
Then $\HPML= \HPL$.  Moreover,
$$\|f\|_{\HPML} \approx \|f\|_{\HPL},$$
where the implicit constants depend only on $M$, $n$, $p$, $\eps$ and ellipticity. 
 \end{theorem}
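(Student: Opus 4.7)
\medskip

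\noindent\textbf{Proof plan for Theorem \ref{th4.1}.} The plan is to show the equivalence at the level of the dense subspaces $L^2\cap H^p_L$ and $\HHML$, and then extend by completion. There are two inclusions to establish.

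\emph{Molecular $\Rightarrow$ square function.} First I would prove the uniform bound
\[
\|Sm\|_{L^p(\RR^n)}\leq C
\]
for every $(H^p_L,\eps,M)$-molecule $m$ associated to a cube $Q$. The idea is to split $\RR^n = \bigcup_i S_i(Q)$ and to estimate $\|Sm\|_{L^p(S_i(Q))}$ dyadic annulus by annulus. For $i\leq 2$, use the $L^2$-boundedness of $S$ together with H\"older's inequality and the size condition (\ref{eq3.3}) for $k=0$. For $i\geq 3$ one decomposes the tent integral defining $Sm(x)$ into a local part $t\leq 2^{i-2}l(Q)$ and a far part $t>2^{i-2}l(Q)$: on the local part, Lemma \ref{l2.7} gives off-diagonal decay of order $M$ for $(t^2L)e^{-t^2L}$, which combined with the $k=0$ molecular condition produces the required $2^{-i(\eps+ n/p-n/2)}$ gain (here one needs $M>\frac{n}{2}(\frac{1}{p}-\frac{1}{2})$); on the far part, write $m=L^k(L^{-k}m)$ and move $M$ powers of $t^2L$ onto $L^{-k}m$ via (\ref{eq3.3}) for $k=M$ to obtain a decay in $t$ that is integrable for large $t$. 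Summing over $i$ using $p\leq 1$ yields the claim. Given the uniform molecular estimate, subadditivity of $\|\cdot\|_{L^p}^p$ at once gives $\|Sf\|_{L^p}^p\lesssim \sum_j|\lambda_j|^p$ for any molecular representation.

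\emph{Square function $\Rightarrow$ molecular.} For the converse, I would use a tent space atomic decomposition together with a Calder\'on-type reproducing formula adapted to $L$. For $f\in L^2\cap H^p_L$, set $F(y,t):=(t^2L)e^{-t^2L}f(y)$, so that $\|F\|_{T^p_2}\approx \|Sf\|_{L^p}=\|f\|_{H^p_L}$. The Coifman--Meyer--Stein atomic decomposition of $T^p_2$ gives $F=\sum_j \lambda_j A_j$ with $(T^p_2)$-atoms $A_j$ supported in tents $\widehat{Q}_j$, and $(\sum_j|\lambda_j|^p)^{1/p}\lesssim \|F\|_{T^p_2}$. Using the functional calculus identity
\[
f\;=\;c_M\int_0^\infty (t^2L)^{M+1}e^{-2t^2L}f\,\frac{dt}{t}\;=\;c_M\int_0^\infty (t^2L)^M e^{-t^2L}F(\cdot,t)\,\frac{dt}{t}
\]
(valid in $L^2$ for $f\in \overline{R(L)}$, and handling the null space separately as in the standard argument), one obtains the decomposition $f=\sum_j \lambda_j m_j$ with
\[
m_j\;:=\;c_M\int_0^\infty (t^2L)^M e^{-t^2L}A_j(\cdot,t)\,\frac{dt}{t},
\]
the sum converging in $L^2(\RR^n)$ because $\{(t^2L)^M e^{-t^2L}\}_{t>0}$ is uniformly $L^2$-bounded and $A_j\in T^2_2$ with controlled norms.

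\emph{Verification of the molecular bounds.} The remaining task, and the main technical obstacle, is to verify that each $m_j$ is (a constant multiple of) an $(H^p_L,\eps,M)$-molecule adapted to $Q_j$. The membership in $R(L^k)$ for $k\leq M$ is immediate: by writing
\[
L^{-k}m_j\;=\;c_M\int_0^\infty t^{2k}(t^2L)^{M-k}e^{-t^2L}A_j(\cdot,t)\,\frac{dt}{t},
\]
the factor $L^M$ in the integrand absorbs the inverse powers. To obtain the annular decay (\ref{eq3.3}), test against $\varphi\in L^2(S_i(Q_j))$ with $\|\varphi\|_2=1$, use that $A_j$ is supported in the tent $\widehat{Q}_j$ (so $t\leq l(Q_j)$ and the spatial argument lies in $Q_j$), and apply Lemma \ref{l2.7} to $(t^2L)^{M-k}e^{-t^2L}$ to get off-diagonal decay of order $M-k$ between $Q_j$ and $S_i(Q_j)$. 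The elementary estimate
\[
\Bigl(\frac{t^2}{\mathrm{dist}(Q_j,S_i(Q_j))^2}\Bigr)^{M-k}\,\lesssim\,\Bigl(\frac{t}{2^i l(Q_j)}\Bigr)^{2(M-k)},
\]
together with the $T^2_2$ control $\|A_j\|_{T^2_2}\lesssim |Q_j|^{1/2-1/p}$, yields after integration in $t$ exactly the right-hand side $(2^il(Q_j))^{n/2-n/p}2^{-i\eps}$ with $\eps:=2M-n(\frac1p-\frac12)>0$. The condition $M>\frac{n}{2}(\frac{1}{p}-\frac{1}{2})$ is used precisely here to guarantee a positive $\eps$; any other $\eps'>0$ is then recovered by choosing $M$ a bit larger and reconciling the two definitions by showing that the resulting spaces are independent of the admissible choice of $(M,\eps)$. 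Combining everything gives
\[
\|f\|_{\HPHML}\;\leq\;\Bigl(\sum_j|\lambda_j|^p\Bigr)^{1/p}\;\lesssim\;\|f\|_{\HPL},
\]
and extending from the dense subspaces to the completions concludes the proof.
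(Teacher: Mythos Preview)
Your overall two-step strategy matches the paper's proof exactly: uniform $L^p$-control of $S$ on molecules (the paper's Lemma~\ref{S-mol}), then tent-space atomic decomposition plus the $\pi_{M,L}$-type reproducing formula to manufacture molecules (the paper's Lemma~\ref{l4.atoms} and Proposition~\ref{prop4.5}). There is, however, a recurring technical slip that would make the argument break as written. You repeatedly invoke Lemma~\ref{l2.7} for the off-diagonal decay, but that lemma yields only \emph{polynomial} decay of order equal to the vanishing order at zero of the symbol. For $(t^2L)e^{-t^2L}$ this is order $1$, not $M$; for $(t^2L)^{M-k}e^{-t^2L}$ it is order $M-k$, which is zero when $k=M$. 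With only that decay your molecular verification collapses for $k$ near $M$, and your claimed uniform $\eps=2M-n(\tfrac1p-\tfrac12)$ cannot come out of it. The correct tool throughout is the exponential Gaffney estimate \eqref{eq2.19} for the families $\{(t^2L)^je^{-t^2L}\}_{t>0}$ (stated just above Lemma~\ref{l2.4}), which gives decay $e^{-c\,4^i}$ between $Q_j$ and $S_i(Q_j)$ regardless of the power $j$, and hence the molecular bound \eqref{eq3.3} for \emph{every} $\eps>0$ with no need to ``choose $M$ a bit larger''.

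Relatedly, you have the role of the hypothesis $M>\tfrac{n}{2}(\tfrac1p-\tfrac12)$ inverted. In the paper this condition is used only in the direction molecular $\Rightarrow$ square function, specifically in the far-$t$ part of Lemma~\ref{S-mol}, where one writes $m=L^M b$ and needs $t^{-4M}$ to beat the growth $(2^j\ell(Q))^{n(2/p-1)}$. The converse direction (Lemma~\ref{l4.atoms}, Proposition~\ref{prop4.5}) holds for every integer $M\geq1$. Finally, there is no null-space issue to handle separately: for the divergence-form $L$ here, ellipticity forces $\nabla u=0$ whenever $Lu=0$ in $L^2$, so $L$ is injective on $L^2(\RR^n)$ and the reproducing formula applies on all of $L^2$.
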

Consequently, one may write simply $H^p_{L,mol}(\mathbb{R}^n)$ in place of $\HPML$, when
$M > \frac{n}{2}(\frac{1}{p}-\frac{1}{2}),$ and for any fixed $\eps >0$,  as these spaces are all equivalent.  
Moreover, we could also define
$(\hpl,q,\eps,M)$-molecules as $m\in L^q(\RR^n)$ belonging to the range of $L^k$ in $L^q(\RR^n)$, $k=1,...,M$, and satisfying the estimates
\begin{multline}
 \|(l(Q)^{-2}L^{-1})^k m\|_{L^q(S_i(Q))}\leq
C\,(2^il(Q))^{\frac nq-\frac np}\,2^{-i\eps}, \\[4pt] i=0,1,2,...,
\quad k=0,1,...,M,\label{eq3.21}
\end{multline}

\noindent  These would also yield the same $\HPL$ spaces 
provided $p_-(L)< q<p_+(L)$. 
We omit the details here, although we do note that
a proof is given in \cite{HM}, \cite{HM2} in the case $p=1$.

\medskip

We now proceed to the proof of Theorem \ref{th4.1}.  The basic strategy is as follows:
by density, it is enough to show that 
\begin{equation}\label{hpequivalence}\HPHML = L^2(\mathbb{R}^n) \cap \HPL, \qquad 
M>\frac{n}{2}\left(\frac{1}{p}-\frac{1}{2}\right)
\end{equation} 
with equivalence of norms.  The proof of this fact proceeds in two steps.  

\medskip 

\noindent {\bf Step 1}:  $\HPHML \subseteq L^2(\mathbb{R}^n) \cap \HPL$, if
$M>\frac{n}{2}(1/p-1/2).$

\medskip

\noindent {\bf Step 2}:  $\HPL \cap  L^2(\mathbb{R}^n)  \subseteq   \HPHML$, for every
$M\in \NN$.

\medskip

We take these in order.
The conclusion of Step 1 is an immediate consequence of the following pair of Lemmata.

\begin{lemma}\label{l4.1} Fix $M\in \NN$, and suppose that $0<p\leq1$. 
Assume that $T$ is a linear operator, or a non-negative  
sublinear operator, satisfying the weak-type (2,2) bound 
\begin{equation}\label{e4.weak}
\mu\{x\in \mathbb{R}^n: 
|Tf(x)|>\eta \}\leq \,C_T\, \,\eta^{-2} \|f\|_{L^2(\mathbb{R}^n)}^2,\quad \forall \eta>0,
\end{equation}
and that for every 
$(H^p_L,\eps, M)$-molecule $m$, we have 
\begin{eqnarray}\label{e4.1a}
\|Tm\|_{L^p(\mathbb{R}^n)}\leq C 
\end{eqnarray}
\noindent with constant $C$ independent of $m$.
Then $T$ is bounded from $\HPHML$ to $L^p(\mathbb{R}^n),$ and
$$
\|Tf\|_{L^p(\mathbb{R}^n)}\leq C\|f\|_{\HPHML}.
$$
Consequently, by density, $T$ extends to a bounded operator from $\HPML$ to $L^p(\mathbb{R}^n).$ 
\end{lemma}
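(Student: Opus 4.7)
\noindent\emph{Proof proposal.} The plan is to exploit the $p$-subadditivity $(a+b)^p \leq a^p+b^p$, valid for $0<p\leq1$ and $a,b\geq 0$, together with the uniform molecular bound \eqref{e4.1a}, to control finite partial sums, and then to pass to the limit using the weak-$(2,2)$ hypothesis \eqref{e4.weak} and Fatou's lemma.

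Given $f\in \HPHML$ with molecular representation $f=\sum_{j=1}^\infty \lambda_j m_j$ converging in $L^2(\RR^n)$, set $f_N:=\sum_{j=1}^N\lambda_j m_j$. Whether $T$ is linear or non-negative sublinear (with positive homogeneity $T(\alpha g)=|\alpha|T(g)$), one has the pointwise bound $|Tf_N(x)|\leq \sum_{j=1}^N|\lambda_j|\,Tm_j(x)$. Raising to the $p$-th power, using $(\sum a_j)^p\leq \sum a_j^p$ for $p\leq 1$ and $a_j\geq 0$, integrating, and applying the molecular hypothesis \eqref{e4.1a} gives
\[ \|Tf_N\|_{L^p(\RR^n)}^p \leq \sum_{j=1}^N|\lambda_j|^p\|Tm_j\|_{L^p(\RR^n)}^p \leq C^p\sum_{j=1}^N|\lambda_j|^p. \]
Since $f_N\to f$ in $L^2(\RR^n)$, the weak-$(2,2)$ bound, together with the sublinear/linear estimate $|Tf-Tf_N|\leq T(f-f_N)$ (valid for sublinear $T$ via $T(-g)=T(g)$, and trivial for linear $T$), yields convergence of $Tf_N$ to $Tf$ in measure, so some subsequence converges a.e. Fatou's lemma then gives $\|Tf\|_{L^p(\RR^n)}^p\leq C^p\sum_j|\lambda_j|^p$, and infimizing over all molecular representations of $f$ yields $\|Tf\|_{L^p(\RR^n)}\leq C\|f\|_{\HPHML}$.

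Finally, since $\HPML$ is by definition the completion of $\HPHML$ under the metric induced by $\|\cdot\|_{\HPHML}^p$, and $L^p(\RR^n)$ is complete for $0<p\leq 1$ under the corresponding quasi-metric, standard density arguments extend $T$ uniquely and boundedly from $\HPML$ to $L^p(\RR^n)$. The only technical subtlety is the passage from finite to infinite molecular sums; this step genuinely needs \eqref{e4.weak}, since in the regime $p<1$ there is no a priori way to pass from $L^p$-bounds on partial sums to an $L^p$-bound on the limit without some form of continuity of $T$ linking the $L^2$-convergence of $\{f_N\}$ to pointwise or measure-theoretic behavior of $\{Tf_N\}$.
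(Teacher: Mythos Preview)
Your proof is correct and follows essentially the same approach as the paper: both use the weak-$(2,2)$ bound together with $L^2$-convergence of the molecular sum to justify passing to the limit, and then the $p$-subadditivity $(\sum a_j)^p\leq\sum a_j^p$ for $p\leq 1$ combined with the uniform molecular bound. The only organizational difference is that the paper establishes the pointwise a.e.\ inequality $|Tf|\leq\sum_j|\lambda_j||Tm_j|$ directly (by showing the tail $T(f^N)$ with $f^N=\sum_{j>N}\lambda_jm_j$ tends to zero in measure), whereas you bound the partial sums in $L^p$ first and then invoke Fatou along an a.e.\ convergent subsequence; these are standard equivalent variants of the same argument.
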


We mention that a result similar to Lemma \ref{l4.1} appears in \cite{JY} (Lemma 5.1).

\begin{lemma} \label{S-mol} Let $m$ be an $(H^p_L,\eps,M)$-molecule, with $0<p\leq1$, $M> \frac{n}{2}(\frac{1}{p}-\frac{1}{2})$ and $\eps > 0$.  Then there is a constant $C_0$ depending only on 
$p,\eps,M,n$ and ellipticity such that
$$  \|Sm\|_p \leq C_0,$$
where $S$ denotes the square function defined in (\ref{eq1.8}).
\end{lemma}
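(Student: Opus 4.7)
The plan is to bound $\|Sm\|_{L^p}$ annulus-by-annulus around the cube $Q$ associated with the molecule $m$. Set $r=\ell(Q)$, $Q^{*}=4Q$, $U_0=Q^{*}$, and $U_j=2^{j}Q^{*}\setminus 2^{j-1}Q^{*}$ for $j\geq 1$. Since $p\leq 1<2$, H\"older's inequality gives
$$\|Sm\|_{L^p(U_j)}^{p}\;\leq\;|U_j|^{\,1-p/2}\,\|Sm\|_{L^2(U_j)}^{p}\;\lesssim\;(2^{j}r)^{n(1-p/2)}\,\|Sm\|_{L^2(U_j)}^{p},$$
so the task reduces to showing that $\|Sm\|_{L^2(U_j)}$ decays fast enough in $j$ that $\sum_j\|Sm\|_{L^p(U_j)}^{p}$ is finite, uniformly in $Q$.

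For the local annuli (say $j\leq 4$) I would use $L^2$-boundedness of $S$, which is a consequence of the bounded $H^\infty$-functional calculus of $L$ on $L^2(\RR^n)$ applied to $\psi(z)=ze^{-z}\in H^\infty(\Sigma_\mu^0)$. Summing the molecular size condition (\ref{eq3.3}) with $k=0$ in $i$: since $p\leq 1$ forces $n-2n/p\leq -n$, the geometric series $\sum_i(2^{i}r)^{n-2n/p}2^{-2i\eps}$ is dominated by its first term, yielding $\|m\|_{L^2(\RR^n)}\lesssim r^{n/2-n/p}$. This makes $\|Sm\|_{L^p(U_j)}^{p}=O(1)$, and since there are only boundedly many such $j$, this regime is harmless.

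For the non-local annuli ($j$ large) I would exploit the range condition $m\in R(L^M)$: write $m=L^{M}b$ with $b=L^{-M}m$ and $b_i:=b\,\chi_{S_i(Q)}$; by (\ref{eq3.3}) with $k=M$, $\|b_i\|_{L^2}\leq r^{2M}(2^{i}r)^{n/2-n/p}2^{-i\eps}$. The crucial identity is
$$t^{2}Le^{-t^{2}L}m\;=\;t^{-2M}\,\psi(t^{2}L)b,\qquad \psi(z):=z^{M+1}e^{-z}\in\Psi_{M+1,\tau}(\Sigma_\mu^0)$$
for any $\tau>0$. By Fubini, using $|B(y,t)\cap U_j|\leq t^{n}$ and the fact that the integrand is supported where $\mathrm{dist}(y,U_j)<t$,
$$\|Sm\|_{L^2(U_j)}^{2}\;\lesssim\;\int_0^{\infty}\|t^{2}Le^{-t^{2}L}m\|_{L^2(U_j^{(t)})}^{2}\,\frac{dt}{t},$$
where $U_j^{(t)}$ denotes the $t$-enlargement of $U_j$. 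I would split the $t$-integral at $t_0:=c\,2^{j}r$. For $t\geq t_0$ the factor $t^{-2M}$, combined with the trivial $L^2$-bound $\|\psi(t^{2}L)b\|_{L^2}\lesssim\|b\|_{L^2}$ and $\|b\|_{L^2}^{2}\lesssim r^{4M+n-2n/p}$ (another geometric sum), gives $\|Sm\|_{L^2(U_j)}^{2}\big|_{t\geq t_0}\lesssim 2^{-4Mj}r^{n-2n/p}$. For $t<t_0$ the set $U_j^{(t)}$ stays at distance $\gtrsim 2^{j}r$ from $Q$; I would invoke Lemma~\ref{l2.7} (with $f\equiv 1$) to obtain off-diagonal estimates of order $M+1$ for $\psi(t^{2}L)$, and then sum the contributions of the $b_i$ by splitting the index into far pieces ($i<j-c$, where the decay factor $(t/2^{j}r)^{2(M+1)}$ is effective) and nearby pieces ($i\geq j-c$, where the molecular tail $2^{-i\eps}$ closes the sum).

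Combining the pieces, the powers of $r$ cancel (as required by scale invariance of the molecular norm), leaving a factor of the form $2^{-jp(2M-n/p+n/2)}$, whose $j$-summability is exactly the hypothesis $M>\frac{n}{2}(\frac{1}{p}-\frac{1}{2})$. I expect the main obstacle to be the bookkeeping in the small-$t$, large-$j$ regime: one must interlace the Lemma~\ref{l2.7} off-diagonal decay in $t$ with the simultaneous summation over the annuli $S_i(Q)$ on which the pieces $b_i$ live, and track that no loss occurs in either index. The large-$t$ and local-$j$ regimes are essentially routine once the algebraic identity above is in hand.
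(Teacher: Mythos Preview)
Your overall architecture matches the paper's: annular decomposition around $Q$, H\"older to reduce to $L^2$ on each $U_j$, $L^2$-boundedness of $S$ for small $j$, and for large $j$ a split of the $t$-integral with the representation $m=L^Mb$ handling the large-$t$ tail. The large-$t$ estimate and the local-$j$ estimate are fine.

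The gap is in the small-$t$, large-$j$ regime, precisely where you flag ``bookkeeping'' as the obstacle --- but it is not just bookkeeping. You commit to the formula $t^2Le^{-t^2L}m = t^{-2M}\psi(t^2L)b$ and decompose $b=\sum_i b_i$ with $b_i=b\,\chi_{S_i(Q)}$. For the \emph{nearby} pieces $i\geq j-c$, the support of $b_i$ meets $U_j^{(t)}$, so Lemma~\ref{l2.7} gives only the trivial bound $\|\psi(t^2L)b_i\|_{L^2}\lesssim\|b_i\|_{L^2}$, and the resulting integral
\[
\int_0^{t_0} t^{-4M}\,\|\psi(t^2L)b_i\|_{L^2(U_j^{(t)})}^2\,\frac{dt}{t}
\;\lesssim\;\|b_i\|_{L^2}^2\int_0^{t_0} t^{-4M-1}\,dt\;=\;\infty.
\]
The molecular tail $2^{-i\eps}$ only controls the $i$-sum; it cannot cure a divergence in $t$. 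Note also that you cannot undo the damage by writing $t^{-2M}\psi(t^2L)b_i=t^2Le^{-t^2L}(L^Mb_i)$ and hoping to recover a piece of $m$, since $L^M(\chi_{S_i(Q)}L^{-M}m)\neq \chi_{S_i(Q)}m$.

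The remedy (and this is what the paper does) is to abandon the $b$-formulation for small $t$ and work directly with $m$: decompose $m$ into a piece supported on a fattened annulus $\widehat{S}_j(Q)$ around $U_j$ (handled by the $L^2$ square-function bound together with the molecular size condition $\|m\|_{L^2(\widehat{S}_j(Q))}\lesssim (2^jr)^{n/2-n/p}2^{-j\eps}$), and far pieces (handled by Gaffney estimates for $t^2Le^{-t^2L}$). For this to produce $j$-decay on the far pieces, the paper splits at $t_0=2^{\theta j}r$ with $\theta<1$ chosen just below $1$ (so that the large-$t$ part still closes using $M>\frac{n}{2}(\frac1p-\frac12)$, while the Gaffney factor $e^{-(2^jr/t)^2}\lesssim 2^{-N(1-\theta)j}$ supplies the needed decay with $N$ taken large). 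Your choice $t_0\sim 2^jr$ happens to work for the far-$b_i$ pieces because $\psi\in\Psi_{M+1,\tau}$ gives an extra order of off-diagonal decay, but once you switch to $m$ for the nearby pieces you will need the $\theta<1$ trick for the far-$m$ pieces as well.
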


Indeed, given Lemma \ref{S-mol}, we may apply Lemma \ref{l4.1} with $T=S$
to obtain
$$\|f\|_{\HPL} := \|Sf\|_{L^p(\mathbb{R}^n)} \leq C \|f\|_{\HPHML},$$
whence Step 1 follows.  

To finish Step 1, it therefore suffices to prove the two Lemmata.

\begin{proof}[Proof of Lemma \ref{l4.1}]
Let
$f \in \HPHML$, where $f = \sum \lambda_j m_j$ is a molecular $(H^p_L,2,\eps,M)$-representation such that
$$\|f\|^p_{\HPHML} \approx  \sum_{j=0}^\infty |\lambda_j|^p .$$  Since the sum converges in $L^2$
(by definition), and since $T$ is of weak-type $(2,2)$, we have
that at almost every point,
\begin{equation}\label{eq4.4a}|T(f)| \leq \sum_{j=0}^\infty |\lambda_j| \,|T(m_j)|.\end{equation}
Indeed, for every $\eta >0$, we have that, if $f^N:= \sum_{j>N} \lambda_j m_j$, then,
 \begin{multline*}
\mu\Big\{|T(f)| - \sum_{j=0}^\infty |\lambda_j| \,|T(m_j)| >\eta\Big\}\,\leq \limsup_{N\to \infty}
\mu\left\{|T(f^N)|>\eta\right\}\\ \leq \,C_T\,\,\eta^{-2} \,\limsup_{N\to \infty} \|f^N\|_2 =0,\end{multline*}
from which (\ref{eq4.4a}) follows.
In turn, (\ref{eq4.4a}) and (\ref{e4.1a}) imply the desired $L^p$ bound for $Tf$, since $0<p\leq1$.
\end{proof}


\begin{proof}[Proof of Lemma \ref{S-mol}]  Fix a cube $Q$, and
let $m$ be an $(H^p_L,\eps,M)$-molecule, adapted to $Q$, with $0<p\leq1$, $M> \frac{n}{2}(\frac{1}{p}-\frac{1}{2})$ and $\eps > 0$. 
In particular, we have that for each $k \in \{0,1,...,M\}$,
\begin{equation}\label{e5.l2bound}
\|\left(\ell(Q)^2L\right)^{-k}m\|_{L^2(\mathbb{R}^n)}  \leq C_k \,|Q|^{1/2-1/p}.\end{equation}
Hence, by H\"{o}lder's inequality and the  $L^2$ boundedness of $S$, we have that
$$\|Sm\|_{L^p(16Q)} \leq C |Q|^{1/p-1/2} \,\|S m\|_{L^2(\mathbb{R}^n)} \leq C.$$
Writing now $\|Sm\|^p_p = \|Sm\|^p_{L^p(16Q)} +\sum_{j=5}^\infty \|Sm\|^p_{L^p(S_j(Q))}$,
where we recall that $S_j(Q) := 2^jQ\setminus 2^{j-1}Q,$ we see that it is enough to prove that
\begin{equation}\label{e5.2}
\|Sm\|_{L^2(S_j(Q))}\leq C2^{-j\alpha}|2^jQ|^{1/2-1/p},
\end{equation}
for some $\alpha >0$ and for each $j\geq 5$.  To this end, we write
\begin{multline*}
\|Sm\|^2_{L^2(S_j(Q))}
\\= \int_{S_j(Q)}
\int_0^\infty\!\!\int_{|x-y|<t}\,\Bigl|\left(t^2Le^{-t^2L} \,m\right)(y)\,\Bigr|^2\,
\frac{dydt}{t^{n+1}}\, dx\\=\int_{S_j(Q)}
\int_0^{2^{\theta(j-5)}\ell(Q)}\int_{|x-y|<t}\,\,+\,\,\int_{S_j(Q)}
\int_{2^{\theta(j-5)}\ell(Q)}^\infty\int_{|x-y|<t}=: I + II,
\end{multline*}
where $\theta \in (0,1)$ will be chosen momentarily.
Then by Fubini's theorem,  the definition of an $(H^p_L,\eps,M)$-molecule (cf. (\ref{eq3.3})),
the uniform $L^2$ boundedness of $t^{2K}L^Ke^{-t^2L}$ for each non-negative integer $K$, 
and (\ref{e5.l2bound}), setting  $b:= L^{-M}m$, we have
\begin{multline*}II \leq 
\int_{2^{\theta(j-5)}\ell(Q)}^\infty
\int_{\mathbb{R}^n}\Bigl|\left(t^{2(M+1)}L^{M+1}e^{-t^2L} \,b\right)(y)\,\Bigr|^2\, dy\,
\frac{dt}{t^{4M+1}}\\\leq C\left( 2^{\theta j}\ell(Q)\right)^{-4M} 
\|b\|^2_{L^2(\mathbb{R}^n)}\leq C2^{-j(4\theta M+n(1-2/p))}
2^{jn(1-2/p)}\,|Q|^{1-2/p}\\= \,C 2^{-j(4\theta M-n(2/p-1))}\,|2^jQ|^{1-2/p}\,.
\end{multline*}
Taking square roots, and choosing $\theta$ sufficiently close to $1$, we obtain (\ref{e5.2}) for the contribution of the term $II$, with 
$\alpha = (2\theta M-n(1/p-1/2)) >0.$

We now treat the term $I$.   We set 
$$\widetilde{S}_j (Q) := 2^{j+1}Q \setminus 2^{j-2}Q,\quad
\widehat{S}_j (Q) := 2^{j+2}Q \setminus 2^{j-3}Q,$$
and observe that, by Fubini's Theorem
\begin{multline*} I \leq\,\int_0^{2^{\theta(j-5)}\ell(Q)}
\int_{\widetilde{S}_j(Q)}\Bigl|\left(t^2Le^{-t^2L} \,m\right)(y)\,\Bigr|^2\,
dy\,\frac{dt}{t} \\\lesssim\,\,\int_0^{2^{\theta(j-5)}\ell(Q)}
\int_{\widetilde{S}_j(Q)}\Bigl|\left(t^2Le^{-t^2L} \,(1_{2^{j-3}Q}m)\right)(y)\,\Bigr|^2\,
dy\,\frac{dt}{t}\\+\,\,\int_0^{2^{\theta(j-5)}\ell(Q)}
\int_{\widetilde{S}_j(Q)}\Bigl|\left(t^2Le^{-t^2L} \,(1_{\widehat{S}_j(Q)}m)\right)(y)\,\Bigr|^2\,
dy\,\frac{dt}{t}\\+\,\,\int_0^{2^{\theta(j-5)}\ell(Q)}
\int_{\widetilde{S}_j(Q)}\Bigl|\left(t^2Le^{-t^2L} 
\,(1_{\mathbb{R}^n\setminus2^{j+2}Q}\,m)\right)(y)\,\Bigr|^2\,
dy\,\frac{dt}{t}\\=:I_1 + I_2+ I_3.
\end{multline*}
By the $L^2$ boundedness of $S$ and the definition of a molecule (cf. (\ref{eq3.3})),
$$\sqrt{I_2} \leq C\, \|m\|_{\widehat{S}_j(Q)}\leq \,C\, 2^{-j\eps} |2^j Q|^{1/2-1/p},$$
which is (\ref{e5.2}) for the contribution of $I_2$. 
For the other two terms, we have that by the Gaffney estimates (cf. subsection \ref{s2.3}),
\begin{multline*}I_1 + I_3 \leq C \|m\|_{L^2(\mathbb{R}^n)}^2 
\,\int_0^{2^{\theta(j-5)}\ell(Q)}\exp\left(\frac{-(2^j \ell(Q))^2}{c\,t^2}\right)
\frac{dt}{t}\\ \leq C_N \|m\|_{L^2(\mathbb{R}^n)}^2 
\,\int_0^{2^{\theta(j-5)}\ell(Q)}\left(\frac{t}{2^j\ell(Q)}\right)^N
\frac{dt}{t}\,\, \leq \,\, C_N |Q|^{2(1/2 - 1/p)} 2^{N(\theta-1)j},
\end{multline*}
where we have used (\ref{e5.l2bound}) in the last step, and $N$ is at our disposal.
Having fixed $\theta < 1$ above, we may now choose $N$ so large that $N(1-\theta)
\geq 4M>2n(1/p-1/2)$,  to obtain in turn the desired bound
$$I_1 + I_3
\leq \,C\, |2^jQ|^{2(1/2-1/p)} 2^{-j(4M-2n(1/p-1/2))},$$
whence (\ref{e5.2}) follows.

\end{proof}

This concludes Step 1.  We now turn to Step 2.  

Our goal is to show that every $f \in L^2(\mathbb{R}^n) \cap \HPL$ has a
molecular $(H^p_L,2,\eps,M)$-representation, with appropriate quantitative control
of the coefficients.  To this end, we follow the (nowadays) standard tent space approach of \cite{CMS},
as adapted to the present setting  
in the case $p=1$ in
\cite{AMcR} (cf.  \cite{HLMMY} and \cite{JY}, as well as the earlier work \cite{DL});  yet another 
(somewhat more complicated) adaptation of the methods of \cite{CMS}
was used in \cite{HM}, \cite{HM2}.

Let us begin by recalling some basic facts from \cite{CMS}.  First,  for $0<p<\infty$, the tent spaces on
$\RR^{n+1}_+=\RR^n\times(0,\infty)$ are defined by
\begin{equation}\label{eq4.1}
T^p(\RR^{n+1}_+):=\{F:\RR^{n+1}_+\longrightarrow
\CC;\,\|F\|_{T^p(\RR^{n+1}_+)}:=\|{\Sq}F\|_{L^p(\RR^n)}<\infty\},
\end{equation}

\noindent where
\begin{equation}\label{eq4.2}
{\Sq} F(x)= \left(\dint_{\Gamma(x)}
|F(y,t)|^2\frac{dydt}{t^{n+1}}\right)^{1/2},\qquad x\in\RR^n.
\end{equation}
In addition, the case $p=\infty$ may be handled as follows.
For $F:\RR^{n+1}_+\to \CC$ let
\begin{equation}\label{eq6.9}
{\mathcal C}F(x):=\sup_{B\ni x}\left(\frac{1}{|B|}\dint_{\widehat
B}|F(y,t)|^2\,\frac{dydt}{t}\right)^{1/2}, \qquad x\in\RR^n,
\end{equation}

\noindent where $B$ stands for a ball in $\RR^n$ and
\begin{equation}\label{eq6.10}
\widehat B:=\{(x,t)\in\RR^n\times(0, \infty):\,{\rm
dist}(x,\,^cB)\geq t\}.
\end{equation}
For $p=\infty$, we then have
\begin{equation}\label{eq4.Tinfinity}
T^\infty(\RR^{n+1}_+):=\{F:\RR^{n+1}_+\longrightarrow
\CC;\,\|F\|_{T^\infty(\RR^{n+1}_+)}:=\|{\mathcal C}F\|_{L^\infty(\RR^n)}<\infty\}.
\end{equation}
Moreover, according to \cite{CMS},
\begin{equation}\label{eq4.tentp>2}
\|{\mathcal C}F\|_{L^p(\RR^n)}\approx  \|{\Sq} F\|_{L^p(\RR^n)} = \|F\|_{T^p(\RR^{n+1}_+)}\,,
\quad\quad 2<p<\infty.
\end{equation}

\noindent The tent spaces satisfy the natural duality and interpolation properties:
\begin{equation}\label{eq4.3}
\left(T^q(\RR^{n+1}_+)\right)^*=T^{q'}(\RR^{n+1}_+),\quad 1/q+{1}/{q'}=1,\quad 1<q<\infty,
\end{equation}
and also $\left(T^1(\RR^{n+1}_+)\right)^*=T^{\infty}(\RR^{n+1}_+)$;  moreover,
\begin{equation} \label{eq4.4}
[T^{p_0}(\RR^{n+1}_+),T^{p_1}(\RR^{n+1}_+)]_{\theta}=T^p(\RR^{n+1}_+),\quad
1/p=(1-\theta)/p_0+\theta/p_1,\quad 0<\theta<1,
\end{equation}

\noindent for $0<p_0<p_1\leq+\infty$.  We will later discuss the precise meaning of the complex interpolation in (\ref{eq4.4}) and provide references (see the proof of Lemma~\ref{l4.5} and the preceding discussion).  

It has been proved in \cite{CMS} 
that every $F\in T^p(\RR^{n+1}_+),\, 0<p\leq1$ has an atomic 
decomposition.  For future reference, we record this result below.
We first define the notion of a $T^p(\RR^{n+1})$-atom.

\begin{definition}\label{def4.6} Let $0<p\leq 1$.  A measurable function $A$ on 
$\RR^{n+1}_+$ is said to be a $T^p$-atom if there exists a cube 
$Q\subset \RR^n$ such that $A$ is supported in the ``Carleson box"
$$R_Q:=Q\times\Big(0,\ell(Q)\Big)\,,$$ and

\begin{eqnarray}\label{e4.8}
\left(\iint_{R_Q}\,|A(x,t)|^2 {dxdt\over t}\right)^{1/2}\leq |Q|^{\frac{1}{2}-\frac{1}{p}}.
\end{eqnarray}
\end{definition}

\begin{proposition}\label{prop4.7}\cite{CMS} Let $0<p\leq1.$  For every element $F\in T^p(\RR^{n+1}_+),$ there exist a  numerical sequence $\{\lambda_j\}_{j=0}^{\infty}\subset \ell^p$ and a sequence of 
$T^p$-atoms $\{A_j\}_{j=0}^{\infty}$ such that
\begin{equation}\label{eqtentdecomp}
F=\sum_{j=0}^{\infty}\lambda_jA_j\quad\mbox{in }\,T^p(\RR^{n+1}_+) \,\text{ and a.e. in } \RR^{n+1}_+. 
\end{equation}
Moreover,  $$ 
\sum_{j=0}^{\infty}|\lambda_j|^p\approx \|F\|^p_{T^p(\RR^{n+1}_+)},$$
where the implicit constants depend only on dimension.   

Finally, if
$F \in T^p(\RR^{n+1}_+)\cap T^2(\RR^{n+1}_+)$, then the decomposition (\ref{eqtentdecomp}) 
also converges in $T^2(\RR^{n+1}_+)$.
\end{proposition}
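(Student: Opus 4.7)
The plan is to implement the classical Coifman--Meyer--Stein construction: partition $\RR^{n+1}_+$ using the level sets of $\Sq F$, assemble atoms as normalized restrictions of $F$ to Carleson boxes over Whitney cubes, and verify both the atom condition and the coefficient control. For each $k\in\ZZ$ set $O_k:=\{x\in\RR^n:\Sq F(x)>2^k\}$, which is open by lower semicontinuity of $\Sq F$, and let $O_k^*:=\{x:\mathcal{M}\mathbf{1}_{O_k}(x)>1/2\}$ be a mild Hardy--Littlewood expansion, so $|O_k^*|\lesssim|O_k|$. Write $\widehat U:=\{(y,t)\in\RR^{n+1}_+:B(y,t)\subset U\}$ for the tent over an open set $U$; a direct Fubini computation yields $\iint_{\widehat U}|F|^2\,\frac{dydt}{t}\approx\int_U(\Sq F)^2\,dx$ whenever $F$ is supported in $\widehat U$. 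Since $\Sq F(x)=0$ forces $F=0$ a.e.\ on $\Gamma(x)$, $F$ is a.e.\ supported in $\bigcup_k\widehat{O_k^*}$, and telescoping gives $F=\sum_k F\cdot\mathbf{1}_{\widehat{O_k^*}\setminus\widehat{O_{k+1}^*}}$ a.e.

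Next, I would Whitney-decompose each $O_k^*$ into essentially disjoint cubes $\{Q_{k,j}\}_j$ with $\ell(Q_{k,j})\approx\operatorname{dist}(Q_{k,j},(O_k^*)^c)$, fix a dimensional dilate $Q_{k,j}^*:=c_n Q_{k,j}$ large enough that $\widehat{O_k^*}\cap(Q_{k,j}\times\RR^+)\subset R_{Q_{k,j}^*}$, and set
$$A_{k,j}:=\lambda_{k,j}^{-1}\,F\cdot\mathbf{1}_{R_{Q_{k,j}^*}\cap(\widehat{O_k^*}\setminus\widehat{O_{k+1}^*})},\qquad\lambda_{k,j}:=C\cdot 2^k|Q_{k,j}^*|^{1/p}.$$
The key estimate is then Fubini: since $A_{k,j}$ vanishes on $\widehat{O_{k+1}^*}$, the cone integral collapses to vertices $x\in Q_{k,j}^*\setminus O_{k+1}^*$, where $\Sq F(x)\leq 2^{k+1}$, giving
$$\iint|A_{k,j}|^2\,\frac{dydt}{t}\lesssim\lambda_{k,j}^{-2}\int_{Q_{k,j}^*\setminus O_{k+1}^*}(\Sq F(x))^2\,dx\lesssim\lambda_{k,j}^{-2}\,2^{2k}|Q_{k,j}^*|\leq|Q_{k,j}^*|^{1-2/p},$$
so $A_{k,j}$ is a $T^p$-atom in the sense of Definition \ref{def4.6}. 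Coefficient summability follows from the layer-cake identity:
$$\sum_{k,j}|\lambda_{k,j}|^p\lesssim\sum_k 2^{kp}\sum_j|Q_{k,j}|\lesssim\sum_k 2^{kp}|O_k^*|\lesssim\sum_k 2^{kp}|O_k|\lesssim\int(\Sq F)^p\,dx=\|F\|_{T^p}^p,$$
while the reverse inequality reduces to the routine atomic bound $\|A\|_{T^p}\lesssim 1$ (Cauchy--Schwarz against $\mathbf{1}_{Q^*}$ on the vertex set) together with $p$-subadditivity of the quasi-norm for $p\leq 1$.

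Pointwise a.e.\ convergence of $\sum\lambda_{k,j}A_{k,j}$ to $F$ is built into the construction, and $T^p$ convergence of the partial sums is immediate from $p$-subadditivity and the $\ell^p$ tail estimate on the coefficients. The most delicate point, and the one I expect to be the main obstacle, is the additional assertion that the decomposition converges in $T^2$ when $F\in T^p\cap T^2$: this requires choosing an enumeration of the pairs $(k,j)$ so that the supports $\Omega_N:=\bigcup_{(k,j)\in E_N}(R_{Q_{k,j}^*}\cap(\widehat{O_k^*}\setminus\widehat{O_{k+1}^*}))$ form an increasing family exhausting the a.e.\ support of $F$. Once the nesting is arranged the partial sums equal $F\cdot\mathbf{1}_{\Omega_N}$, so the pointwise domination $\Sq(F-F\cdot\mathbf{1}_{\Omega_N})\leq\Sq F\in L^2(\RR^n)$ together with $\Sq(F-F\cdot\mathbf{1}_{\Omega_N})\to 0$ a.e.\ lets dominated convergence in $L^2(\RR^n)$ deliver $\|F-F_N\|_{T^2}\to 0$; the bookkeeping to produce a genuinely monotone exhaustion, despite the two-parameter indexing by level $k$ and Whitney index $j$, is the only technically subtle step.
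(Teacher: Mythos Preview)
Your proposal is correct and follows the same Coifman--Meyer--Stein construction the paper invokes for the main atomic decomposition. For the $T^2$-convergence addendum, the paper takes a slightly simpler route that dissolves your concern about producing a monotone exhaustion: by Fubini one has
\[
\|G\|_{T^2(\RR^{n+1}_+)}^2 \approx \iint_{\RR^{n+1}_+} |G(y,t)|^2\,\frac{dydt}{t},
\]
so $T^2$ is, up to equivalence, just $L^2(\RR^{n+1}_+, \tfrac{dydt}{t})$. Since the pieces $\lambda_{k,j}A_{k,j}=F\cdot\mathbf{1}_{S_{k,j}}$ have pairwise disjoint supports, for \emph{any} enumeration the tail satisfies
\[
\Big\|\sum_{(k,j)\notin E_N}\lambda_{k,j}A_{k,j}\Big\|_{T^2}^2 \approx \sum_{(k,j)\notin E_N}\iint_{S_{k,j}}|F|^2\,\frac{dydt}{t}\to 0,
\]
simply because the full sum equals $\iint|F|^2\,\tfrac{dydt}{t}<\infty$. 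No monotone ordering of the index set is required; your dominated-convergence argument on $\Sq(F-F_N)$ also works, but the bookkeeping you flagged as the ``only technically subtle step'' is in fact avoidable.
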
 
\begin{proof}
Except for the final part of the proposition, concerning $T^2$ convergence,
this is proved in \cite{CMS}, and we refer the reader to that paper for the proof.
The $T^2$ convergence is only implicit there, so we shall sketch the proof here.
To this end, we first note that 
\begin{multline}\label{e4.tentequiv}
\|F\|_{T^2(\RR^{n+1}_+)}^2 := \int_{\RR^n} (\mathcal{A}F)^2 dx
=\int_{\RR^n}\int_0^\infty\int_{|x-y|<t} |F(y,t)|^2 \frac{dy dt}{t^{n+1}} d(x)\\\approx
\int_0^\infty\int_{\RR^n}|F(y,t)|^2 {dydt\over t}
\end{multline}
Suppose now that $F\in T^p \cap T^2$.  We recall that, in the constructive proof of 
the decomposition (\ref{eqtentdecomp}) in \cite{CMS},
one has that
$$\lambda_j A_j = F \,1_{S_j},$$
where $\{S_j\}$ is a collection of sets which are pairwise disjoint (up to sets of measure zero),
and whose union covers $\RR^{n+1}_+$.  
Thus, by (\ref{e4.tentequiv}), \begin{multline*}\|\sum_{j>N}\lambda_jA_j\|_{T^2(\RR^{n+1}_+)}^2 \approx
\int_0^\infty\int_{\RR^n}\Big|\sum_{j>N} 1_{S_j}\,F(y,t)\Big|^2 {dydt\over t}\\
=\sum_{j>N} \iint_{S_j}|F|^2{dydt\over t} \to 0,
\end{multline*}
as $N\to \infty$, where we have used disjointness of the sets $S_j$ and dominated convergence.
It therefore follows that  $F=\sum \lambda_j A_j$ in $T^2$. 
\end{proof}

Now, given  $M\geq 1$, we define an operator 
$\pi_{M,L}$, acting initially on $T^2$, as follows:
\begin{equation}\label{eqpidef}
\pi_{M,L} (F) := \int_0^\infty \left(t^2L\right)^{M+1}e^{-t^2L} F(\cdot,t)\, \frac{dt}{t}.
\end{equation}
By a standard duality argument involving well known quadratic estimates for $L^*$, one obtains that
the improper integral converges weakly in $L^2$, and that
 \begin{equation}\label{eL2bound}
\|\pi_{M,L}(F)\|_{L^2(\RR^n)} \lesssim \|F\|_{T^2(\RR^{n+1}_+)},\,\,\,\,M\geq 0, 
\end{equation}
where the implicit bound depends only on $M$, ellipticity and dimension.

Following \cite{CMS}, we now observe that $\pi_{M,L}$ essentially maps $T^p$ atoms into 
$H^p_L$-molecules.
We have:
\begin{lemma}\label{l4.atoms}Suppose that $A$ is a $T^p(\RR^{n+1}_+)$-atom associated to a cube
$Q\subset \mathbb{R}^n$ (or more precisely, to its Carleson box $R_Q$).  Then  for each integer
$M\geq 1$, and every $\eps >0$, there is a uniform constant $C_{\eps,M}$ such that $C_{\eps,M}^{-1}\, \pi_{M,L} (A)$ is an 
$(H^p_L,\eps,M)$-molecule associated to $Q$.
\end{lemma}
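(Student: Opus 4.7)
Set $m := \pi_{M,L}(A)$ and, for $k=0,1,\ldots,M$, let
$$b_k := \left(\ell(Q)^{-2}L^{-1}\right)^k m = \ell(Q)^{-2k}\int_0^{\ell(Q)} t^{2k}\,(t^2L)^{M+1-k}e^{-t^2L}A(\cdot,t)\,\frac{dt}{t},$$
where we have used the support condition on $A$ to truncate the $t$-integral at $\ell(Q)$. The identity shows that $m\in R(L^k)$ for $k=1,\ldots,M$ (in fact $L^{-k}m = \ell(Q)^{2k} b_k$); one only needs to check that each $b_k\in L^2$ with the correct quantitative bound on each annulus $S_i(Q)$. This is exactly the content of the molecular estimate (\ref{eq3.3}), so it suffices to prove
\begin{equation}\label{planeq}
\|b_k\|_{L^2(S_i(Q))}\,\leq\, C_{\varepsilon,M}\,(2^i\ell(Q))^{n/2-n/p}\,2^{-i\varepsilon},\qquad k=0,\ldots,M,\ i\geq 0.
\end{equation}

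The main tool is a duality argument. For $g\in L^2(\mathbb{R}^n)$ supported in $S_i(Q)$ with $\|g\|_2\leq 1$, write $\psi(z):=z^{M+1-k}e^{-z}\in \Psi_{M+1-k,\tau}(\Sigma_\mu^0)$ for any $\tau>0$, so that
$$\langle b_k,g\rangle = \ell(Q)^{-2k}\int_0^{\ell(Q)} t^{2k}\,\bigl\langle A(\cdot,t),\,\psi(t^2L^*)g\bigr\rangle\,\frac{dt}{t}.$$
Applying Cauchy--Schwarz in $(y,t)$, using that $A$ is supported in $R_Q$ and using the atom bound $\iint_{R_Q}|A|^2 dy\,dt/t\leq |Q|^{1-2/p}$, this is controlled by
$$\ell(Q)^{-2k}|Q|^{1/2-1/p}\Bigl(\int_0^{\ell(Q)} t^{4k}\,\|\psi(t^2L^*)g\|_{L^2(Q)}^2\,\frac{dt}{t}\Bigr)^{1/2}.$$

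For $i\in\{0,1,2,3\}$ (i.e.\ $S_i(Q)$ not well-separated from $Q$) I would discard the spatial localization and use the standard quadratic estimate $\int_0^\infty\|\psi(t^2L^*)g\|_{L^2(\mathbb{R}^n)}^2 dt/t\lesssim \|g\|_2^2$ that is a consequence of the bounded $H^\infty$ calculus on $L^2$; combined with $t^{4k}\leq \ell(Q)^{4k}$ this gives $\|b_k\|_{L^2(S_i(Q))}\lesssim |Q|^{1/2-1/p}\simeq (2^i\ell(Q))^{n/2-n/p}$, which is (\ref{planeq}) with any $\varepsilon$ (the constant absorbs the bounded factor $2^{i\varepsilon}$). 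For $i\geq 4$ I would instead invoke the $L^2$ off-diagonal (Gaffney) bounds for $\psi(t^2L^*)$ supplied by Lemma~\ref{l2.7}, which yield
$$\|\psi(t^2L^*)g\|_{L^2(Q)}\,\leq\, C\,e^{-c(2^i\ell(Q))^2/t^2}\,\|g\|_{L^2(S_i(Q))}.$$
Since $t\leq \ell(Q)$ on the domain of integration, the exponent is $\geq c\cdot 4^i$, so $\int_0^{\ell(Q)} t^{4k}e^{-c\cdot 4^i/\text{(stuff)}}dt/t\lesssim \ell(Q)^{4k}e^{-c\cdot 4^i/2}$. Plugging back, $\|b_k\|_{L^2(S_i(Q))}\lesssim |Q|^{1/2-1/p}\,e^{-c\cdot 4^i/4}$, and the super-polynomial decay in $2^i$ beats the polynomial factor $2^{i(n/p-n/2+\varepsilon)}$ in (\ref{planeq}) with room to spare.

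\textbf{Anticipated difficulty.} The conceptual core is routine---it is the standard trick, going back to \cite{CMS}, of trading tent-space atom bounds against quadratic/off-diagonal estimates via duality. The mild subtlety is that the defining integral for $\pi_{M,L}(A)$ must be interpreted weakly in $L^2$ (as in (\ref{eL2bound})), so the formal identity $L^{-k}m = \ell(Q)^{2k}b_k$ and all the duality pairings above should be justified either by the weak-$L^2$ convergence or, for notational convenience, by first truncating the $t$-integral to $[\delta,\ell(Q)]$, deriving the uniform estimates, and then passing $\delta\to 0$. Once that is set up, the bookkeeping of constants $C_{\varepsilon,M}$ is straightforward and depends only on $M$, $n$, $\varepsilon$, and ellipticity.
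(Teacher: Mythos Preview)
Your proposal is correct and follows essentially the same route as the paper: both arguments write $(\ell(Q)^{-2}L^{-1})^k m$ as a $t$-integral against $(t^2L)^{M+1-k}e^{-t^2L}A(\cdot,t)$, pass to the adjoint by pairing with a test function $g$ supported in $S_i(Q)$, invoke the quadratic/square-function estimate for the near annuli, and use exponential Gaffney decay for the far annuli. One minor correction: the exponential off-diagonal bound you use for $\psi(t^2L^*)=(t^2L^*)^{M+1-k}e^{-t^2L^*}$ is not Lemma~\ref{l2.7} (which only gives polynomial decay of order $\sigma$, insufficient here since $\eps>0$ is arbitrary) but rather the genuine Gaffney estimate for $\{(zL^*)^je^{-zL^*}\}$ recorded just before Lemma~\ref{l2.4}; also, your displayed bound $\int_0^{\ell(Q)}t^{4k}e^{-c\cdot 4^i}\,dt/t\lesssim\ell(Q)^{4k}e^{-c\cdot 4^i/2}$ needs the exponential kept inside the integral when $k=0$ (else the $dt/t$ diverges), but the standard fix $e^{-c(2^i\ell(Q))^2/t^2}\leq C_N(t/2^i\ell(Q))^N$ handles this and still yields super-polynomial decay in $2^i$.
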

\begin{proof}
Fix a cube $Q$ and let $A$ be a $T^p(\RR^{n+1}_+)$-atom associated to $R_Q$,
so that (\ref{e4.8}) holds.  
We set 
$$
m:=\pi_{M,L}(A)=L^M b,
$$
where
$$
b:=\int_0^{\infty}t^{2M}t^2L\,e^{-t^2L}\big(A(\cdot,\, t)\big){dt\over t},
$$
and we need to establish that $m$ satisfies (\ref{eq3.3}).
We first prove an $L^2$ estimate which in particular yields the desired bound ``near" $Q$.
Let $g\in L^2(\RR^n)$. 
Then for every $k=0,1,\dots,M $ we have
\begin{multline}\label{e9.9}
\Big|\int_{\RR^n}(\ell(Q)^2L)^{k}b(x)\,g(x)dx\Big| 
\\ =\Big|\lim_{\delta\to 0}\int_{\RR^n}\left(\int_\delta^{1/\delta}\ell(Q)^{2k}  
L^{k}t^{2M}t^2L\,e^{-t^2L}\big(A(\cdot,\, t)\big)(x) 
{dt\over t}\right)g(x)\,dx \Big| \\[4pt]
=   \Big|\int_{R_Q} A(x,t) \,\ell(Q)^{2k}  
(L^*)^{k}t^{2M}t^2L^*\,e^{-t^2L^*}g(x)
{dxdt\over t} \Big| \\[4pt]
 \leq  \ell(Q)^{2M} |Q|^{1/2-1/p}\Big(
\int_{ R_Q }\big|  (t^2L^*)^{k+1}e^{-t^2L^*}g(x)
\big|^2{dxdt\over t}\Big)^{1/2}.  
\end{multline}
Here, the third line is obtained by using the compactness of the $t$ interval to interchange the order of integration, and the fourth line by using that  
$A$ is a $T^p$-atom supported in $R_Q$ (so that $0<t<\ell(Q)$ and (\ref{e4.8}) holds) and the fact
that $k\leq M.$  
In turn, by standard square function estimates for $L^*$, (\ref{e9.9})
is bounded by 
$$ C\ell(Q)^{2M}|Q|^{1/2-1/p} \|g\|_{L^2(\RR^n)}.$$  Specializing to the case that
$g$ is supported in $2Q$, and taking a supremum over all such $g$ with $\|g\|_{L^2(2Q)}=1,$  
we then have the bound
\begin{equation*}
\|(\ell(Q)^2L)^{k}b\|_{L^2(2Q)} \,
\leq\, C\ell(Q)^{2M}|Q|^{1/2-1/p},\quad k=0,1,...,M,
\end{equation*}
which is clearly equivalent to the cases $i=0,1$ of (\ref{eq3.3}).

Now for $i \geq 2$, let $g$ be supported in $S_i(Q)$, with
$\|g\|_{L^2(S_i(Q)}=1.$  Applying the Gaffney estimate to $dx$ integral in
the last line in (\ref{e9.9}),
and taking a supremum over all such $g$, we find that
\begin{multline*}
\|(\ell(Q)^2L)^{k}b\|_{L^2(S_i(Q))} 
\leq C\ell(Q)^{2M}|Q|^{1/2-1/p}\,\int_0^{\ell(Q)}e^{-(2^i\ell(Q)/t)^2} \frac{dt}{t}\\[4pt]
\leq C_N 2^{-iN} \ell(Q)^{2M}|Q|^{1/2-1/p}, 
\end{multline*} 
for every $N \in \NN$ and each $k=0,1,...,M.$  The molecular bound (\ref{eq3.3}) follows, for every choice of $\eps>0.$
\end{proof}

We are now ready to establish the molecular decomposition of $\HPL \cap L^2(\RR^n).$
Our proof here is based on the approach in \cite{AMcR}\footnote{In particular, 
it is the idea of \cite{AMcR}, in the case $p=1$, to exploit  the fact that a 
$T^p$-atomic decomposition, of an element in $T^p \cap T^2$,
converges also in $T^2$.}.   A similar approach, also following \cite{AMcR}, is taken 
in \cite{HLMMY} and in \cite{JY}.  As mentioned above, a 
more complicated method 
was used in \cite{HM, HM2}.

\begin{proposition}\label{prop4.5} Let $0<p\leq 1$ and $M\geq1$.  If $f\in\HPL\cap L^2(\RR^n)$,
then there exist a family of $(H^p_L,\eps,M)$-molecules  
$\{m_j\}_{j=0}^{\infty}$ and a sequence of numbers 
$\{\lambda_j\}_{j=0}^{\infty}\subset \ell^p$ such that $f$ can be represented in the form 
$f=\sum_{j=0}^{\infty}\lambda_j m_j$, with the sum converging in $L^2(\RR^n)$, and 
$$
\|f\|^p_{\HPHML}\leq C\sum_{j=0}^{\infty}|\lambda_j|^p\leq C\|f\|^p_{\HPL},
$$
where $C$ is independent of $f$. In particular,

\begin{eqnarray}\label{eq1000}
\HPL\cap L^2(\RR^n)\subseteq \HPHML.
\end{eqnarray}
\end{proposition}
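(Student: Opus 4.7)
The plan is to realize $f$ as $\pi_{M,L}$ applied to a tent-space function, then decompose the tent-space function into $T^p$-atoms and invoke Lemma~\ref{l4.atoms} to convert these atoms into molecules. Concretely, for $f\in L^2(\RR^n)\cap \HPL$ set
\[
F(x,t) := t^2 L e^{-t^2L} f(x), \qquad (x,t)\in \RR^{n+1}_+.
\]
By the very definition of $S$ in \eqref{eq1.8}, $\mathcal{A} F = Sf$, so
\[
\|F\|_{T^p(\RR^{n+1}_+)}=\|Sf\|_{L^p(\RR^n)} = \|f\|_{\HPL},
\]
while the standard $L^2$ quadratic estimate for $L$ gives $\|F\|_{T^2(\RR^{n+1}_+)}\lesssim \|f\|_{L^2(\RR^n)}$. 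Hence $F\in T^p\cap T^2$, and Proposition~\ref{prop4.7} yields a decomposition $F=\sum_j \lambda_j A_j$, converging in both $T^p$ and $T^2$, with $\sum_j |\lambda_j|^p \lesssim \|F\|^p_{T^p} = \|f\|^p_{\HPL}$.

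The next step is a Calder\'on-type reproducing formula. A direct computation (changing variable $s=t^2$) gives
\[
\pi_{M,L}(F) = \int_0^\infty (t^2L)^{M+2} e^{-2t^2L} f\,\frac{dt}{t} = c_M \, f,
\]
where $c_M=\tfrac{1}{2}\int_0^\infty u^{M+1} e^{-2u}\,\frac{du}{u}\neq 0$; this identity holds in $L^2(\RR^n)$ by the bounded holomorphic functional calculus for $L$ on $L^2$ (cf.\ Subsection~\ref{s2.1}), since the $\Psi$-function $\psi(z)=z e^{-z}$ admits such a reproducing formula against its integral with $(tL)^{M+1}e^{-tL}$. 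Combining,
\[
f \;=\; c_M^{-1}\,\pi_{M,L}(F) \;=\; \sum_{j=0}^\infty \bigl(c_M^{-1} \lambda_j\bigr)\,\pi_{M,L}(A_j).
\]
By Lemma~\ref{l4.atoms}, there is a uniform constant $C_{\eps,M}$ such that each $\widetilde m_j := C_{\eps,M}^{-1} c_M^{-1} \pi_{M,L}(A_j)$ is an $(\HPL,\eps,M)$-molecule, and we relabel the coefficients $\widetilde \lambda_j := C_{\eps,M}\lambda_j$, which preserves $\sum|\widetilde \lambda_j|^p \lesssim \|f\|_{\HPL}^p$.

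It remains to verify $L^2$ convergence of $\sum_j \widetilde\lambda_j \widetilde m_j$. Since $\pi_{M,L}\colon T^2\to L^2$ is bounded (cf.\ \eqref{eL2bound}) and the tent decomposition of $F$ converges in $T^2$ (the last part of Proposition~\ref{prop4.7}, which is applicable because $F\in T^p\cap T^2$), the partial sums
\[
\sum_{j\leq N} \widetilde\lambda_j \widetilde m_j \;=\; c_M^{-1}\,\pi_{M,L}\!\Bigl(\sum_{j\leq N} \lambda_j A_j\Bigr)
\]
converge in $L^2$ to $c_M^{-1}\pi_{M,L}(F)=f$. This gives a molecular $(\HPL,2,\eps,M)$-representation of $f$ with the required control of coefficients, proving $\|f\|^p_{\HPHML}\le \sum |\widetilde\lambda_j|^p \lesssim \|f\|^p_{\HPL}$ and hence the inclusion \eqref{eq1000}.

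The only real subtlety is the justification of the reproducing formula $c_M^{-1}\pi_{M,L}(F)=f$ at the level of the improper integral and the interchange with the $T^2$-convergent atomic sum; this is handled by the $H^\infty$-calculus together with the $T^2\to L^2$ boundedness of $\pi_{M,L}$. Everything else reduces to combining Proposition~\ref{prop4.7} with Lemma~\ref{l4.atoms}.
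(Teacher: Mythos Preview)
Your proof is correct and follows essentially the same approach as the paper: define $F(\cdot,t)=t^2Le^{-t^2L}f$, decompose $F$ into $T^p$-atoms via Proposition~\ref{prop4.7} (using $T^2$ convergence), apply the Calder\'on reproducing formula $f=c_M^{-1}\pi_{M,L}(F)$, and invoke Lemma~\ref{l4.atoms} together with the $T^2\to L^2$ boundedness of $\pi_{M,L}$. The only discrepancy is a harmless slip in your formula for $c_M$ (the integrand should be $u^{M+1}e^{-2u}\,du$, not $u^{M+1}e^{-2u}\,\frac{du}{u}$), but all that matters is $c_M\neq 0$.
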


\begin{proof} Let $f\in \HPL\cap L^2(\RR^n)$, and set
$$F(\cdot,t):= t^2Le^{-t^2L}f.$$
We note that $F\in T^2(\RR^{n+1}_+)\cap T^p(\RR^{n+1}_+)$, by standard quadratic estimates for $L$ and the 
definition of $\HPL$.
Therefore, by Proposition \ref{prop4.7}, we have that
\begin{equation}\label{eqatomicrep} F =\sum \lambda_j \, A_j,\end{equation}
where each $A_j$ is a $T^p$-atom, the sum converges in both $T^p(\RR^{n+1}_+)$ 
and $T^2(\RR^{n+1}_+),$
and
\begin{equation}\label{e4.20}\sum|\lambda_j|^p\leq C\|F\|^p_{T^p(\RR^{n+1}_+)}=C\|f\|^p_{\HPL}.
\end{equation} 
Also, by $L^2$-functional 
calculus (\cite{Mc}), we have the ``Calder\'{o}n reproducing formula"
\begin{equation}\label{e4.9}
f=c_{M}\,\pi_{M,L}\,(t^{2}L e^{-t^2{L}}f) 
=c_M \,\pi_{M,L}(F) = c_M \,\sum \lambda_j\, \pi_{M,L}(A_j),
\end{equation}
where by (\ref{eL2bound}) and the $T^2$ convergence of the decomposition in (\ref{eqatomicrep}),
the last sum converges in $L^2(\RR^n).$
Moreover, by Lemma \ref{l4.atoms}, for every $M\geq 1$, we have that up to multiplication by some 
harmless constant,
each $m_j := c_M\, \pi_{M,L}(A_j)$
is  an $(H^p_L,\eps,M)$-molecule.
Consequently, the last sum in (\ref{e4.9}) is a molecular $(H^p_L,2,\eps,M)$-representation, so 
that $f\in \HPHML$, and by (\ref{e4.20}) we have
$$\|f\|_{\HPHML}\leq C\|f\|_{\HPL}.$$
\end{proof}
Step 2 is now complete.  This concludes the proof of Theorem \ref{th4.1}.
\ep

\medskip

We next discuss duality for the spaces $\HPL$ with $0<p\leq1$.

 If $m$ is an $(\hpl,\eps,M)$  - molecule, then $m \in {\bf
M}^{\eps,M}_{n(1/p-1),L}$ (this follows from the fact that, given
any two cubes $Q_1$ and $Q_2$, there exists integers $K_1$ and
$K_2$, depending upon $\ell(Q_1), \ell(Q_2)$ and dist$(Q_1,Q_2)$,
such that $2^{K_1}Q_1 \supseteq Q_2$ and $2^{K_2}Q_2 \supseteq
Q_1$), and the converse is also true (up to a normalization). Therefore, $g(m):=\langle g,m\rangle$ is
well-defined for every $(\hpl,\eps,M)$  - {\it molecule} $m$ and
every $g\in \Lambda_{L^*}^{n(1/p-1)}(\RR^n)$. Moreover, the following estimate
holds.

\begin{lemma}\label{l3.1} Suppose $0<p\leq 1$, $\eps>0$,
$M>\frac{n}{2}\Bigl(\frac 1p-\frac 12\Bigr)$. Then
\begin{equation}\label{eq3.4}
\left|g(m)\right|\leq C \|g\|_{\Lambda_{L^*}^{n(1/p-1)}(\RR^n)}
\end{equation}

\noindent for every $g\in \Lambda_{L^*}^{n(1/p-1)}(\RR^n)$ 
(in the case $p=1$ we set $\Lambda_{L^*}^0 := BMO_{L^*}$) and
every $(\hpl,\eps,M)$ -molecule $m$.
\end{lemma}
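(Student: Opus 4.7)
The plan is to adapt the duality argument of \cite{HM} for the case $p=1$ to the full range $0<p\leq 1$. The strategy rests on an $L^2$-Calder\'on reproducing formula for $m$ whose integrand contains a factor of the form $(I-e^{-t^2L})^M$; this factor can then be transposed by duality onto $g$, where it is controlled by the defining norm of $\Lambda_{L^*}^{n(1/p-1)}$.

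I would first fix $\mu\in(\omega,\pi/2)$ and choose $\psi(z):=(1-e^{-z})^{M}\,\tilde\psi(z)$ on $\Sigma_\mu^0$ with $\tilde\psi(z):=z^{K}e^{-z}$ for $K$ sufficiently large, so that both $\psi$ and $\tilde\psi$ lie in $\Psi(\Sigma_\mu^0)$. After normalization we may assume $\int_0^\infty\psi(s)\,\tfrac{ds}{s}=1$. Because $m\in L^{2}(\RR^n)$ lies in $R(L^M)$, the bounded $H^\infty$-calculus for $L$ recalled in \S\ref{s2.1} yields the $L^{2}$-convergent reproducing identity
\begin{equation*}
m=\int_0^\infty\psi(t^2L)\,m\,\frac{dt}{t}.
\end{equation*}
Pairing with $g$ and transposing $(I-e^{-t^2L})^M$ onto the $L^*$-side gives
\begin{equation*}
\langle g,m\rangle=\int_0^\infty\bigl\langle (I-e^{-t^2L^*})^M g,\;\tilde\psi(t^2L)\,m\bigr\rangle\,\frac{dt}{t}.
\end{equation*}
For each $t>0$ I would cover $\RR^n$ by a grid $\{Q_j^{t}\}$ of cubes of sidelength $t$, and apply the definition of $\Lambda_{L^*}^{n(1/p-1)}$ cube-by-cube to obtain $\|(I-e^{-t^2L^*})^M g\|_{L^{2}(Q_j^{t})}\leq C\,\|g\|_{\Lambda_{L^*}^{n(1/p-1)}}\,t^{n/p-n/2}$. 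Cauchy--Schwarz in the spatial sum thereby reduces the task to a uniform bound on an integral involving only the ``molecular square factor'' $\tilde\psi(t^2L)m$.

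To bound this integral I would split the $t$-axis at $t=\ell(Q)$, where $Q$ is the cube associated to the molecule $m$. For $t\leq\ell(Q)$, I would use the factorization $\tilde\psi(t^2L)\,L^M=t^{-2M}\chi(t^2L)$ with $\chi(z):=z^M\tilde\psi(z)\in\Psi(\Sigma_\mu^0)$; writing $m=L^M b$ where $b:=L^{-M}m$ satisfies the $k=M$ molecular bound (\ref{eq3.3}) on each $S_i(Q)$, and invoking the $L^{2}$ off-diagonal estimates furnished by Lemma~\ref{l2.7}, one extracts a factor $(t/\ell(Q))^{2M}$. For $t\geq\ell(Q)$, I would use the uniform $L^{2}$-boundedness of $\tilde\psi(t^2L)$ together with the $k=0$ size bound for $m$, again exploiting Gaffney decay across the annuli $S_i(Q)$ to handle the double spatial sum over $\{Q_j^{t}\}$ and $\{S_i(Q)\}$. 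The main obstacle is precisely this bookkeeping: balancing the two spatial decompositions against one another while keeping track of the matching powers $t^{n/p-n/2}$ from the $\Lambda_{L^*}$-side and $\ell(Q)^{n/2-n/p}$ from the molecular side. The hypothesis $M>\tfrac{n}{2}(\tfrac1p-\tfrac12)$ is exactly what is required for the product $t^{n/p-n/2}\cdot(t/\ell(Q))^{2M}\cdot\tfrac{dt}{t}$ to integrate over $(0,\ell(Q)]$ to a finite constant; the large-$t$ regime is then trivially summable by the $k=0$ molecular bound, and the spatial sums converge absolutely thanks to the gain $2^{-i\varepsilon}$ in (\ref{eq3.3}) combined with the off-diagonal decay from Lemma~\ref{l2.7}. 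Assembling the pieces yields $|\langle g,m\rangle|\leq C\|g\|_{\Lambda_{L^*}^{n(1/p-1)}}$ with $C$ depending only on $p,\varepsilon,M,n$ and ellipticity, as required.
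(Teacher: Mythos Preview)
Your reproducing-formula approach is a natural idea, but there is a genuine error in the scale bookkeeping: you have the small-$t$ and large-$t$ regimes swapped.

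Writing $m=L^M b$ gives $\tilde\psi(t^2L)m = t^{-2M}\chi(t^2L)b$, and since $\|b\|_{L^2}\lesssim \ell(Q)^{2M}\ell(Q)^{n/2-n/p}$, the factor you extract is $(\ell(Q)/t)^{2M}$, not $(t/\ell(Q))^{2M}$. For $t\le\ell(Q)$ this \emph{blows up}, so it cannot help there. In fact, the condition $M>\tfrac{n}{2}(\tfrac1p-\tfrac12)$ is precisely what makes
\[
\int_{\ell(Q)}^\infty t^{n/p-n/2}\,\bigl(\ell(Q)/t\bigr)^{2M}\,\frac{dt}{t}<\infty,
\]
i.e.\ it is the \emph{large}-$t$ integrability that consumes the hypothesis on $M$. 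Conversely, your ``trivial'' large-$t$ estimate via the $k=0$ bound cannot work: $t^{n/p-n/2}$ grows (since $p<2$) and $\|\tilde\psi(t^2L)m\|_{L^2}$ has no decay in $t$, so $\int_{\ell(Q)}^\infty t^{n/p-n/2}\,\tfrac{dt}{t}=\infty$. Your own check --- that ``$M>\tfrac{n}{2}(\tfrac1p-\tfrac12)$ is exactly what is required for $\int_0^{\ell(Q)} t^{n/p-n/2}(t/\ell(Q))^{2M}\tfrac{dt}{t}<\infty$'' --- is also off: that integral converges for every $M>0$, which should have been a warning sign.

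Even after swapping the regimes, the $\ell^1$ sum over cubes $Q_j^t$ of side $t$ introduces an extra $t^{-n/2}$ (from Cauchy--Schwarz over $\sim(\ell(Q)/t)^n$ cubes meeting $Q$), so the small-$t$ integral becomes $\int_0^{\ell(Q)} t^{n/p-n}\,\tfrac{dt}{t}$, which is finite only for $p<1$. The paper sidesteps all of this by avoiding the continuous reproducing formula altogether: it uses a purely algebraic finite decomposition of the identity at the \emph{fixed} scale $\ell(Q)$ (equations (3.5)--(3.10)), producing finitely many terms of the form $(I-e^{-s^2L^*})^M g$ with $s\approx\ell(Q)$ paired against operators $T^{\ell(Q)}_{i,k}m_{N_i}$ satisfying Gaffney estimates at scale $\ell(Q)^2$. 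The estimate then reduces to a single dyadic sum over annuli $S_j(Q)$, with no $t$-integration at all.
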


\bp The case $p=1$ was proved in \cite{HM}, so we now suppose that $p<1$.
For every $x\in\RR^n$
\begin{equation}\label{eq3.5}
m(x)= 2^{M}\left(l(Q)^{-2}\int_{l(Q)}^{\sqrt 2
l(Q)}s\,ds\right)^Mm(x),
\end{equation}

\noindent and
\begin{equation}\label{eq3.6}
\int_{l(Q)}^{\sqrt 2 l(Q)}s\,ds= \int_{l(Q)}^{\sqrt 2
l(Q)}s(I-e^{-s^2L})^M\,ds + \sum_{k=1}^M C_{k,M}
\int_{l(Q)}^{\sqrt 2 l(Q)}s e^{-ks^2L}\,ds.
\end{equation}

\noindent where $C_{k,M}\in\RR$ are some constants depending on
$k$ and $M$ only. Going further,
\begin{eqnarray}
&& 2kL\int_{l(Q)}^{\sqrt 2 l(Q)}s e^{-ks^2L}\,ds =
-\int_{l(Q)}^{\sqrt 2 l(Q)}\partial_s e^{-ks^2L}\,ds =
e^{-kl(Q)^2L}-e^{-2kl(Q)^2L}\nonumber\\[4pt]
&&\qquad =e^{-kl(Q)^2L}(I-e^{-kl(Q)^2L}) =
e^{-kl(Q)^2L}(I-e^{-l(Q)^2L})\sum_{j=0}^{k-1}
e^{-jl(Q)^2L}.\label{eq3.7}
\end{eqnarray}

\noindent Applying the procedure outlined in
(\ref{eq3.6})--(\ref{eq3.7}) $M$ times, we arrive at the
following formula
\begin{eqnarray}
&&\hskip -0.7cm m  = 2^{M}\Bigg(l(Q)^{-2}\int_{l(Q)}^{\sqrt 2
l(Q)}s(I-e^{-s^2L})^M\,ds \nonumber\\[4pt]
 &&\hskip -0.7cm\qquad + \sum_{k=1}^M C_{k,M} l(Q)^{-2}L^{-1}
e^{-kl(Q)^2L}(I-e^{-l(Q)^2L})\sum_{j=0}^{k-1}
e^{-jl(Q)^2L} \Bigg)^Mm.\label{eq3.8}
\end{eqnarray}

\noindent  Let
\begin{equation}\label{eq3.9}
m_{N_i}:=l(Q)^{-2N_i}L^{-N_i} m,\qquad 0\leq N_i\leq M.
\end{equation}

\noindent Then
\begin{multline}\label{eq3.10}
g(m) \Bk = C_{1,1}
\int_{\RR^n}(I-e^{-l(Q)^2L^*})^M
g(x)\,T_{1,1}^{l(Q)}m_{M}(x)\,dx\\[4pt] +\,\,
\sum_{i=1}^{(M+1)^M-1} C_{i,2}
\int_{\RR^n}\left(l(Q)^{-2}\int_{l(Q)}^{\sqrt 2
l(Q)}s(I-e^{-s^2L^*})^Mg(x)\,ds\right)\,
T_{i,2}^{l(Q)}m_{N_i}(x)\,dx,
\end{multline}

\noindent where $C_{i,k}$ are some constants,
$T_{i,k}^{l(Q)}$ are some operator families
satisfying Gaffney estimates (\ref{eq2.19}) with $t\approx l(Q)^2$,  and the
integrals on the right-hand side are interpreted  analogously to
(\ref{eq3.1}). More precisely, each $T_{i,k}$ is a composition of operators of the form (\ref{eq3.7}) and operators coming from
\begin{equation}\label{eq3.11}
l(Q)^{-2}\int_{l(Q)}^{\sqrt 2
l(Q)}s(I-e^{-s^2L})^M\,ds.
\end{equation}
\noindent However, according to (\ref{eq3.7})--(\ref{eq3.8}), the latter can be written as a constant plus an operator in (\ref{eq3.7}), modulo the factor $l(Q)^{-2}L^{-1}$. The negative powers of $l(Q)^2L$ are absorbed in $m_{N_i}$. Hence, each  $T_{i,k}$ is a constant (possibly, zero) plus a linear combination of the terms in the form $e^{-t^2L}$ with    $t\approx l(Q)^2$.

 Applying the Cauchy-Schwarz inequality, we
deduce that
\begin{equation}\label{eq3.12}
 |g(m)| \Bk \leq C \|g\|_{\Lambda_{L^*}^{n(1/p-1)}(\RR^n)}\,
|Q|^{\frac 1p-\frac 12} \sum_{j=0}^\infty 2^{jn\left(\frac
1p-\frac12\right)}\sum_{i,k}\|T_{i,k}^{l(Q)}m_{N_i}\|_{L^2(S_j(Q))}.
\end{equation}

\noindent If $j\leq 3$, then
\begin{equation}\label{eq3.13}
\|T_{i,k}^{l(Q)}m_{N_i}\|_{L^2(S_j(Q))}\leq
C\|m_{N_i}\|_{L^2(\RR^n)} \leq C  |Q|^{\frac 12-\frac 1p},
\end{equation}

\noindent for $i$ and $k$ as above. If $j\geq 3$, we split
\begin{equation}\label{eq3.14}
m_{N_i}=m_{N_i}\,\chi_{\widetilde S_j(Q)}+ m_{N_i}\,\chi_{\RR^n
\setminus \widetilde S_j(Q)},
\end{equation}

\noindent where, as before, 
\begin{equation}\label{eq3.15}
\widetilde S_j(Q)=2^{j+1}Q\setminus 2^{j-2}Q.
\end{equation}

\noindent Then
\begin{equation}\label{eq3.16}
\left\|T_{i,k}^{l(Q)}\left(m_{N_i}\,\chi_{\widetilde
S_j(Q)}\right)\right\|_{L^2(S_j(Q))}\leq C
\left\|m_{N_i}\,\chi_{\widetilde S_j(Q)}\right\|_{L^2(\RR^n)}\leq C
2^{j\left(\frac n2-\frac np-\eps\right)}|Q|^{\frac 12-\frac 1p},
\end{equation}

\noindent by the definition of molecule, and
\begin{equation}\label{eq3.17}
\left\|T_{i,k}^{l(Q)}\left(m_{N_i}\,\chi_{\RR^n \setminus\widetilde
S_j(Q)}\right)\right\|_{L^2(S_j(Q))}\leq C
e^{-\frac{(2^jl(Q))^2}{cl(Q)^2}}\, \|m_{N_i}\|_{L^2(\RR^n)}\leq C
2^{-jN}|Q|^{\frac 12-\frac 1p},
\end{equation}

\noindent for a number $N$ arbitrarily large. Inserting the
results into (\ref{eq3.12}), we finish the proof of
(\ref{eq3.4}). \ep



We are now ready to state our duality results generalizing \cite{FeSt,DRS,DL}.

\begin{theorem}\label{t3.5}
Suppose $0<p\leq 1$. Then
\begin{equation}\label{eq3.26}
(H^p_L(\RR^n))^*=\Lambda_{L^*}^{n(1/p-1)}(\RR^n) \mbox{ if
$p<1$,\quad and \quad } (H^1_L(\RR^n))^*=BMO_{L^*}(\RR^n).
\end{equation}
\end{theorem}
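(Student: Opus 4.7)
The case $p=1$ is already covered by \cite{HM}, so the plan focuses on $0<p<1$, setting $\alpha:=n(1/p-1)$, and establishes the two inclusions separately. For the easier direction $\Lambda_{L^*}^\alpha(\RR^n)\hookrightarrow(H^p_L(\RR^n))^*$, I would start from Lemma \ref{l3.1}, which provides the uniform bound $|\langle g,m\rangle|\lesssim\|g\|_{\Lambda_{L^*}^\alpha(\RR^n)}$ for every $(H^p_L,\eps,M)$-molecule $m$. Given $f\in H^p_L\cap L^2$, Theorem \ref{th4.1} together with Proposition \ref{prop4.5} supplies a molecular representation $f=\sum_j\lambda_j m_j$ converging in $L^2$, with $\sum_j|\lambda_j|^p\approx\|f\|^p_{H^p_L(\RR^n)}$. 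Defining $\langle g,f\rangle:=\sum_j\lambda_j\langle g,m_j\rangle$ and invoking $p$-subadditivity, I obtain
\begin{equation*}
|\langle g,f\rangle|^p\leq\sum_j|\lambda_j|^p|\langle g,m_j\rangle|^p\lesssim\|g\|_{\Lambda_{L^*}^\alpha(\RR^n)}^p\|f\|_{H^p_L(\RR^n)}^p.
\end{equation*}
Independence of the chosen representation follows from $L^2$-convergence combined with the fact that $g$ acts consistently on the dense class ${\bf M}^{\eps,M}_{\alpha,L}\subset L^2$ (via (\ref{eq3.1})), and the resulting functional extends to $H^p_L(\RR^n)$ by density.

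The substantive part is the reverse inclusion. Given $\ell\in(H^p_L(\RR^n))^*$, every $m\in{\bf M}^{\eps,M}_{\alpha,L}$ is (up to a harmless normalizing constant) an $(H^p_L,\eps,M)$-molecule, hence lies in $H^p_L\cap L^2$ with $\|m\|_{H^p_L(\RR^n)}\lesssim\|m\|_{{\bf M}^{\eps,M}_{\alpha,L}}$. Thus $\langle g,m\rangle:=\overline{\ell(m)}$ defines $g\in\bigcap_{\eps>0}({\bf M}^{\eps,M}_{\alpha,L})^*={\bf M}^{M,*}_{\alpha,L}$, for which the Campanato-type norm (\ref{eq1.20}) is meaningful via (\ref{eq3.1}). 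To bound this norm by $\|\ell\|$, fix a cube $Q$ and $\phi\in L^2(Q)$ with $\|\phi\|_{L^2(Q)}=1$, and set $\psi:=(I-e^{-l(Q)^2L})^M\phi$. The crux of the argument is the claim that $\psi/(C|Q|^{1/p-1/2})$ is an $(H^p_L,\eps,M)$-molecule associated to $Q$, for every $\eps>0$, with $C$ depending only on $n,M,\eps$ and ellipticity. Granting the claim, $\|\psi\|_{H^p_L(\RR^n)}\lesssim|Q|^{1/p-1/2}$ by Theorem \ref{th4.1}, so
\begin{equation*}
\Bigl|\int_Q\phi(x)\,\overline{(I-e^{-l(Q)^2L^*})^Mg(x)}\,dx\Bigr|=|\ell(\psi)|\leq\|\ell\|\,\|\psi\|_{H^p_L(\RR^n)}\lesssim\|\ell\|\,|Q|^{1/p-1/2};
\end{equation*}
taking the supremum over such $\phi$ and dividing by $|Q|^{1/2+\alpha/n}=|Q|^{1/p-1/2}$ yields the Campanato bound $\|g\|_{\Lambda_{L^*}^\alpha(\RR^n)}\lesssim\|\ell\|$.

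To verify the molecule claim, I would exploit the factorization $(I-e^{-l(Q)^2L})^M=(l(Q)^2L)^M[g_0(l(Q)^2L)]^M$ with $g_0(z):=(1-e^{-z})/z$, which immediately gives $\psi\in R(L^k)$ for each $k=1,\ldots,M$. For the size and decay bounds, I would compute $(l(Q)^{-2}L^{-1})^k\psi=h_k(l(Q)^2L)\phi$ with $h_k(z):=z^{-k}(1-e^{-z})^M$, and aim at the estimate
\begin{equation*}
\|h_k(l(Q)^2L)\phi\|_{L^2(S_i(Q))}\leq C_N\,4^{-iN}\|\phi\|_{L^2(Q)}\qquad(N>0,\ i\geq 1),
\end{equation*}
the $i=0$ case being $L^2$-boundedness. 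Choosing $N$ large in terms of $n,p,\eps$ then secures (\ref{eq3.3}).

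The main obstacle is producing this decay uniformly in $k\in\{0,\ldots,M\}$ and in $p\in(0,1)$. A direct application of Lemma \ref{l2.7} via $h_k\in\Psi_{M-k,k}(\Sigma_\mu^0)$ yields only $O(4^{-i(M-k)})$, insufficient to beat the exponent $2^{-i(n/p-n/2+\eps)}$ from (\ref{eq3.3}) when $k$ is close to $M$ and $p$ is small. The remedy---the simplification from \cite{JY} that the authors credit---is to exploit the explicit heat-semigroup form of $h_k$: expanding $(1-e^{-z})^M=\sum_{j=0}^M(-1)^j\binom{M}{j}e^{-jz}$, splitting $\phi$ relative to a dyadic neighborhood of $S_i(Q)$, and iterating the Gaffney composition rule (Lemma \ref{l2.4}) yields essentially exponential decay in $i$, uniform in $k$, which suffices for any prescribed $\eps>0$ and any fixed $M>\tfrac{n}{2}(1/p-1/2)$.
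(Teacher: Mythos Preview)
Your overall two-step structure matches the paper, and your argument for the inclusion $(H^p_L)^*\hookrightarrow\Lambda_{L^*}^\alpha$ is essentially the paper's Step~I: one shows that $|Q|^{-(1/p-1/2)}(I-e^{-l(Q)^2L})^M\phi$ is an $(H^p_L,\eps,M)$-molecule. The paper does this directly via Gaffney estimates (see (\ref{eq3.30})--(\ref{eq3.31})), obtaining exponential decay $2^{-jN}$ for arbitrary $N$, uniformly in $k=0,\dots,M$, without ever passing through Lemma~\ref{l2.7}. Your detour via $h_k\in\Psi_{M-k,k}$ is unnecessary---the ``obstacle'' you identify is an artifact of choosing the wrong tool, and the simplification from \cite{JY} is \emph{not} located here.

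Where your proposal genuinely diverges from the paper, and where there is a real gap, is the direction you label ``easier'', namely $\Lambda_{L^*}^\alpha\hookrightarrow(H^p_L)^*$. You define $\langle g,f\rangle:=\sum_j\lambda_j\langle g,m_j\rangle$ via a molecular representation and invoke $L^2$-convergence together with (\ref{eq3.1}) for independence of the representation. But $g$ lives only in $({\bf M}^{\eps,M}_{\alpha,L})^*$, and $L^2$-convergence of the partial sums $\sum_{j\leq N}\lambda_j m_j$ does \emph{not} imply convergence in ${\bf M}^{\eps,M}_{\alpha,L}$ (molecules adapted to cubes far from the origin can have large ${\bf M}^{\eps,M}$-norm while being small in $L^2$), so one cannot pass to the limit in the pairing. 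Equation~(\ref{eq3.1}) says only that $(I-e^{-t^2L^*})^M g\in L^2_{\rm loc}$, which is not enough to make $g$ continuous for $L^2$-convergence of molecular sums. Consequently your bound controls $|\langle g,f\rangle|$ only by $(\sum|\lambda_j|^p)^{1/p}$ for the \emph{given} representation, not by the infimum $\|f\|_{\HPHML}$.

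This is precisely the subtlety that the paper's Step~II addresses, and it is \emph{here} that the \cite{JY} simplification actually enters. Rather than pairing $g$ with a molecular decomposition of $f$, the paper uses the Calder\'on reproducing formula (\ref{3.60}) to express $\langle g,f\rangle$ intrinsically as an integral over $\RR^{n+1}_+$, then applies the tent-space atomic decomposition $t^2Le^{-t^2L}f=\sum\lambda_jA_j$ from \cite{CMS}, and bounds each term via the Carleson-type estimate (\ref{e3.59}). Since the atomic coefficients automatically satisfy $(\sum|\lambda_j|^p)^{1/p}\lesssim\|f\|_{\HPL}$ and $\sum|\lambda_j|\leq(\sum|\lambda_j|^p)^{1/p}$ for $p<1$, the desired bound (\ref{eq32}) follows without any appeal to representation-independence.
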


\bp The statement about the duality of $H^1_L$ and $BMO_{L^*}$ was proved in \cite{HM}. 
Therefore we consider here only the case $p<1$.

\noindent {\bf Step I.} We start with the left-to-the-right
inclusion.

Assume that $g$ is a linear functional on $H^p_L(\RR^n)$. Then for
every $f\in H^p_L(\RR^n)$
\begin{equation}\label{eq3.27}
|g(f)|\leq \|g\|\,\|f\|_{H^p_L(\RR^n)}.
\end{equation}

\noindent  Theorem~\ref{th4.1}, in particular, implies that every
$(H^p_L,\eps,M)$ - molecule belongs to $H^p_L$ and $\|m\|_{H^p_L}\leq
C$. Hence,
\begin{equation}\label{eq3.28}
|g(m)|\leq C\|g\|.
\end{equation}

\noindent  However, if $\mu \in {\bf M}^{\eps,M}_{n(1/p-1)}$ with
norm $1$, then $\mu$ is a $(p,\eps,M)$ - molecule adapted to
$Q_0$. Therefore, by (\ref{eq3.28}), $g$ defines a linear
functional on ${\bf M}^{\eps,M}_{n(1/p-1)}$. It remains to prove
that the norm (\ref{eq1.20}), understood in the sense of
(\ref{eq3.1}), is finite. To do this, it is enough to show
that for every $\varphi\in L^2(Q)$ such that
$\|\varphi\|_{L^2(Q)}=1$ the function
\begin{equation}\label{eq3.29}
\frac{1}{|Q|^{\alpha/n+1/2}}(I-e^{-l(Q)^2L})^M\varphi,\qquad
\alpha=n\left(\frac 1p-1\right),\qquad M>\frac n2\left(\frac
1p-\frac 12\right),
\end{equation}

\noindent is a $(p,\eps,M)$-molecule (then the claim follows from
(\ref{eq3.28})).

\Bk

Since $\varphi$ is supported in $Q$, by Gaffney estimates
\begin{eqnarray}\nonumber
&&\frac{1}{|Q|^{\alpha/n+1/2}}\,
\|(I-e^{-l(Q)^2L})^M\varphi\|_{L^2(S_j(Q))} \leq C
\Bk\frac{1}{|Q|^{\alpha/n+1/2}}\,
\sum_{k=0}^M \|e^{-kl(Q)^2L}\varphi\|_{L^2(S_j(Q))}\\[4pt]
&&\qquad\leq \frac{C}{|Q|^{\alpha/n+1/2}}\, e^{-\frac{{\rm
dist}\,(S_j(Q),Q)^2}{c l(Q)^2}}\,\|\varphi\|_{L^2(Q)}\leq
\frac{C\, 2^{-jN}}{|Q|^{\alpha/n+1/2}}=\frac{C\,
2^{-jN}}{|Q|^{1/p-1/2}},\label{eq3.30}
\end{eqnarray}

\noindent for every $j\in\NN$ and $N\in\NN$ arbitrarily large.
Similarly, for $k=1,..., M$
\begin{multline}
\frac{1}{|Q|^{\alpha/n+1/2}}\,\|(l(Q)^{-2}L^{-1})^k
(I-e^{-l(Q)^2L})^M\varphi\|_{L^2(S_j(Q))}\\[4pt]
=\frac{1}{|Q|^{\alpha/n+1/2}}\,\left\|(l(Q)^{-2}L^{-1})^k
 \Bigl( \int_0^{l(Q)}\partial_t e^{-t^2L}\,dt\Bigr)^k\,
(I-e^{-l(Q)^2L})^{M-k}\varphi\right\|_{L^2(S_j(Q))}\\[4pt] 
=\frac{1}{|Q|^{\alpha/n+1/2}}\,\left\|
 \Bigl( \int_0^{l(Q)}\,\frac{2t}{l(Q)^2} \,e^{-t^2L}\,dt\Bigr)^k\,
(I-e^{-l(Q)^2L})^{M-k}\varphi\right\|_{L^2(S_j(Q))}\\[4pt]
\leq \frac{C}{|Q|^{\alpha/n+1/2}}\, e^{-\frac{{\rm
dist}\,(S_j(Q),Q)^2}{c l(Q)^2}}\,\|\varphi\|_{L^2(Q)}\leq
\frac{C\, 2^{-jN}}{|Q|^{1/p-1/2}},\label{eq3.31}
\end{multline}

\noindent where we employed Lemma~\ref{l2.4} for the
next-to-the-last inequality. As before, $N\in\NN$ can be taken
arbitrarily large, and that finishes the argument.

\vskip 0.08in

 \noindent {\bf Step II.}
Let us now turn to the
right-to-the-left  inclusion in (\ref{eq3.26}). Let 
$g\in\Lambda_{L^*}^{n(1/p-1)}(\RR^n)$.   We note that the mapping
$$\mathcal{L}_g(f):= \langle g,f\rangle,$$
may be defined initially (by virtue of Lemma \ref{l3.1}) 
when $f$ is a finite linear combination of $(H^p_L,\eps,M)$-molecules,
with $M>(n/2)(1/p-1/2)$, and by the density, in $\HPL$, of the collection of all such $f$,
it is enough to establish the
{\it a priori} bound
\begin{equation}\label{eq32}
|\mathcal{L}_g(f)|\leq C\,\|g\|_{\Lambda_{L^*}^{n(1/p-1)}(\RR^n)} \, \|f\|_{\HPHML}\,,
\end{equation}
for some uniform constant $C$, whenever $f$ is such a finite linear combination.
Indeed, in that case, $\mathcal{L}_g$ extends by continuity to a continuous linear functional on $\HPL$.

Our proof of (\ref{eq32}) is based in part on the approach in \cite{HM}, but we shall
incorporate a simplification
to that approach, which was introduced in \cite{JY}.  As above, 
let $g \in \Lambda_{L^*}^{\alpha}(\RR^n)$, $\alpha = n(1/p-1)$, 
and let $f$ be a finite linear combination of  
$(H^p_L,\eps,M)$-molecules,
with $M>(n/2)(1/p-1/2)$.  We begin by noting that the following 
two facts, first proved in \cite{HM} in the case $p=1$ (equivalently, $\alpha=0$), 
may be extended to the case
$0<p<1$ ($\alpha>0$) mutatis mutandi, and we omit the details.  First, as in \cite{HM}, Lemma 8.3,
we have that 
\begin{equation}\label{e3.59}
\sup_Q \frac{1}{|Q|^{1+2\alpha/n}} \iint_{R_Q}|(t^2L^*)^Me^{-t^2L^*} g|^2 \frac{dxdt}{t} \leq
C \|g\|^2_{ \Lambda_{L^*}^{\alpha}(\RR^n)}\,;
\end{equation}
second, as in \cite{HM}, Lemma 8.4, for $f,g$ as above, the following Calder\'{o}n reproducing formula is valid:
\begin{equation}\label{3.60}
\langle g,f\rangle 
= C_M\,\lim_{\delta\to 0} \int_\delta^{1/\delta}\!\!\int_{\RR^n}
(t^2L^*)^Me^{-t^2L^*}g(x)\,\overline{t^2Le^{-t^2L}f(x)}\,\frac{dxdt}{t}.
\end{equation}

At this point we follow \cite{JY}.  Since $t^2Le^{-t^2L} f \in T^p$, 
we may invoke the result of \cite{CMS} to obtain the decomposition
$$t^2Le^{-t^2L} f = \sum \lambda_j A_j,$$ 
where each $A_j$ is a $T^p$ atom, supported in a Carleson box
$R_{Q_j}$, and where
$\{\lambda_j\} \in \ell^p$, with
\begin{equation}\label{e3.61}\left(\sum|\lambda_j|^p\right)^{1/p} 
\lesssim \|F\|_{T_p} \approx \|f\|_{\HPHML}.\end{equation}
Using (\ref{3.60}), we then have
\begin{multline*}|\langle g,f\rangle|\,\leq \,C\sum |\lambda_j| 
\dint_{\RR^{n+1}_+}
|(t^2L^*)^Me^{-t^2L^*}g(x)|\,|A_j(x,t)|\,\frac{dxdt}{t}\\\leq C
\sum |\lambda_j| 
\left(\dint_{R_{Q_j}}
|(t^2L^*)^Me^{-t^2L^*}g(x)|^2\,\frac{dxdt}{t}\right)^{1/2}\,|Q_j|^{\frac{1}{2}-\frac{1}{p}}\\
\leq C \sum|\lambda_j|\,\|g\|_{\Lambda_{L^*}^\alpha},
\end{multline*}
where in the second inequality we have used the definition of a $T^p$-atom (cf. (\ref{e4.8})),
and in the last inequality we have used (\ref{e3.59}) with $\alpha = n(1/p-1).$
The desired bound (\ref{eq32}) now follows from (\ref{e3.61}), since $p<1$.

 \ep

 \Bk

\section{Square function characterizations and interpolation.}\label{s4}
\setcounter{equation}{0}

Recall the square function definition of Hardy spaces given in
(\ref{eq1.8})--(\ref{eq1.9}). In fact, there is certain
flexibility in the choice of the square function which gives an
equivalent norm in $H^p_L(\RR^n)$. It is possible to replace
$\psi(\xi)=\xi e^{-\xi}$, $\xi=t^2L$, in (\ref{eq1.8}) by another
function of $\xi$ with holomorphic extension to an open sector
of the complex plane, \Bk provided it has enough decay at zero and
infinity. One way to see this is to re-prove the molecular decomposition of Hardy spaces, this time using
a square function based on $\psi$, Lemma~\ref{l2.7} and quadratic estimates in \cite{Mc}.

Now we present a different approach, via the connection with the tent spaces
(cf. (\ref{eq4.1}), (\ref{eq4.2})), again using fundamentally the ideas of \cite{CMS}.
In a different context this has been done in \cite{AMcR}.  
Here we will follow a similar path, pointing out the aspects which are particular to our setting.

Let $\omega<\mu<\pi/2$ and $\psi\in\Psi(\Sigma_{\mu}^0)$. According to the quadratic estimates in \cite{Mc} the operator
\begin{equation}\label{eq4.5}
Q_{\psi,L}\, f(x,t):=\psi(t^2L)f(x),\quad (x,t)\in\RR^{n+1}_+,
\end{equation}

\noindent is bounded from $L^2(\RR^n)$ to $T^2(\RR^{n+1}_+)$. Then for every $\psi\in\Psi(\Sigma_{\mu}^0)$ the operator
\begin{equation}\label{eq4.6}
\pi_{\psi,L}\, F(x):=\int_0^\infty
\psi(t^2L)F(x,t)\,\frac{dt}{t},\quad x\in\RR^n,
\end{equation}

\noindent is well-defined for all $F\in T^2(\RR^{n+1}_+)$ and bounded from $T^2(\RR^{n+1}_+)$ to $L^2(\RR^n)$ by duality. Indeed, the operator $\pi_{\psi,L}$ is the 
adjoint of the operator $Q_{\psi,L^*}\,$, and vice versa.   In the sequel, for the sake of notational convenience, we shall sometimes omit the subscript $L$, and write merely $Q_{\psi},\,\pi_{\psi}$
when there is no chance of confusion.

Finally, for $\psi,\widetilde \psi\in \Psi(\Sigma_\mu^0)$
and $f\in H^\infty(\Sigma_\mu^0)$ let ${\mathcal Q}^f:=Q_{\psi}\circ f \circ
\pi_{\widetilde\psi}$, i.e.,
\begin{equation}\label{eq4.7}
{\mathcal Q}^fF(x,s):=\int_0^{\infty}\left(\psi(s^2L)f(L)
\widetilde{\psi}(t^2L)F(\cdot,t)\right)(x)
\,\frac{dt}{t},\quad (x,s)\in\RR^{n+1}_+.
\end{equation}

\noindent Then it follows from the observations above that ${\mathcal Q}^f$ is bounded in $T^2(\RR^{n+1}_+)$, with the norm bounded by $\|f\|_{L^{\infty}(\Sigma_{\mu}^0)}$. We will sometimes write ${\mathcal Q}$ in place of ${\mathcal Q}^f$ when $f=1$.

\begin{proposition}\label{p4.1} Let $\mu\in(\omega,\pi/2)$. Then for every $\psi,\widetilde \psi\in \Psi(\Sigma_\mu^0)$
and $f\in H^\infty(\Sigma_\mu^0)$ the operator
${\mathcal Q}^f$ originally defined on $T^2(\RR^{n+1}_+)$ extends by continuity to a bounded operator on $T^p(\RR^{n+1}_+)$
provided that either

\noindent (1) $0<p\leq 2$, $\psi\in \Psi_{\alpha,\beta}(\Sigma_\mu^0)$, $\widetilde \psi\in \Psi_{\beta,\alpha}(\Sigma_\mu^0)$, or\\
\noindent (2) $2\leq p<\infty$, $\psi\in \Psi_{\beta,\alpha}(\Sigma_\mu^0)$, $\widetilde \psi\in \Psi_{\alpha,\beta}(\Sigma_\mu^0)$,\\
where $\alpha>0$,   $\beta>\frac{n}{2}\left(\max\{\frac
1p,1\}-\frac 12\right)$. Moreover,
\begin{equation}\label{eq4.8}
\|{\mathcal Q}^fF\|_{T^p(\RR^{n+1}_+)}\leq C \|f\|_{L^\infty(\Sigma_\mu^0)}\|F\|_{T^p(\RR^{n+1}_+)},\quad \mbox{for all}\quad F\in T^p(\RR^{n+1}_+).
\end{equation}
\end{proposition}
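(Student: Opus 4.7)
The plan is to establish the proposition by treating three regimes of $p$ separately and then assembling them via duality/interpolation.

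The case $p=2$ is essentially immediate from the $H^\infty$-calculus: by the quadratic estimates of \cite{Mc} the operator $Q_{\widetilde\psi,L^*}$ maps $L^2$ to $T^2$, hence by duality $\pi_{\widetilde\psi,L}$ maps $T^2$ to $L^2$; then $f(L)$ is bounded on $L^2$ with operator norm controlled by $\|f\|_{L^\infty(\Sigma_\mu^0)}$; and $Q_{\psi,L}$ maps $L^2$ into $T^2$. Composing yields (\ref{eq4.8}) with $p=2$.

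For the endpoint case $0<p\leq1$, I will use the atomic decomposition of Proposition \ref{prop4.7}: since $T^p\cap T^2$ is dense in $T^p$, any $F\in T^p\cap T^2$ admits a decomposition $F=\sum\lambda_jA_j$ that converges in $T^p$ and in $T^2$, with $\sum|\lambda_j|^p\approx\|F\|_{T^p}^p$. Because $p\leq1$ allows the $\ell^p$ quasi-triangle inequality, everything reduces to proving a uniform bound $\|\mathcal{Q}^fA\|_{T^p}\leq C\|f\|_{L^\infty(\Sigma_\mu^0)}$ for every $T^p$-atom $A$ adapted to a Carleson box $R_Q$. I split
\[
\|\mathcal{Q}^fA\|_{T^p}^p=\|\mathcal{A}(\mathcal{Q}^fA)\|_{L^p(4Q)}^p+\sum_{j\geq3}\|\mathcal{A}(\mathcal{Q}^fA)\|_{L^p(S_j(Q))}^p.
\]
For the local piece I apply H\"{o}lder's inequality together with the $T^2$-boundedness from Stage~1 and the atom normalization $\|A\|_{T^2}\leq|Q|^{1/2-1/p}$:
\[
\|\mathcal{A}(\mathcal{Q}^fA)\|_{L^p(4Q)}\leq|4Q|^{1/p-1/2}\|\mathcal{Q}^fA\|_{T^2}\leq C\|f\|_{L^\infty(\Sigma_\mu^0)}.
\]
For the far pieces I again use H\"{o}lder to pass to an $L^2(S_j(Q))$ estimate, then unwind $\mathcal{Q}^fA$ using the definition (\ref{eq4.7}) and invoke Lemma \ref{l4.2}: writing $\psi(s^2L)f(L)\widetilde\psi(t^2L)=\min\{(s/t)^a,(t/s)^b\}T_{s,t}$ with $a<\alpha$ and $b<\beta$, the two-parameter off-diagonal estimates (order $\beta+a$ in $t$ when $s\leq t$, order $\alpha+b$ in $s$ when $t\leq s$) together with the separation $\mathrm{dist}(R_Q,S_j(Q)\times\mathbb{R}_+)\gtrsim2^j\ell(Q)$ and Fubini in cone coordinates give
\[
\|\mathcal{A}(\mathcal{Q}^fA)\|_{L^2(S_j(Q))}\leq C\,2^{-jN}|Q|^{1/2-1/p},
\]
with $N$ arbitrarily close to $\alpha+\beta$. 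The summability $\sum_j|S_j(Q)|^{1-p/2}\cdot 2^{-jNp}|Q|^{-1+p/2}<\infty$ then reduces to $N>n(1/p-1/2)$, which is assured precisely by the hypothesis $\beta>\tfrac{n}{2}(1/p-1/2)$ (taking $a$ slightly below $\alpha$). This is where the proof is delicate.

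The remaining ranges fall out by general principles. For $1<p<2$ (case (1) with $\beta>0$ arbitrary), I interpolate between the $T^1$ bound from Stage~2 and the $T^2$ bound from Stage~1 using the complex interpolation identity (\ref{eq4.4}). For $2<p<\infty$ (case (2)), I use duality: $(T^p)^*=T^{p'}$ with $1<p'<2$, and a direct computation against the pairing $\iint F\overline{G}\,dxdt/t$ identifies the adjoint of $\mathcal{Q}^f_L$ with $\mathcal{Q}^{\bar f}_{L^*}$ having the roles of $\psi$ and $\widetilde\psi$ interchanged; the parameter conditions in (2) for $\mathcal{Q}^f_L$ are exactly what case (1) requires of $\mathcal{Q}^{\bar f}_{L^*}$ at the dual exponent $p'$, and the $H^\infty$ calculus for $L^*$ holds on the same sector as for $L$. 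The main obstacle throughout is the quantitative off-diagonal estimate in Stage~2, where one must extract enough decay from Lemma \ref{l4.2} to overcome the volume factor $|S_j(Q)|^{1/p-1/2}$ and still obtain an $\ell^p$-summable series.
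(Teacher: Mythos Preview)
Your proposal is correct and follows essentially the same route as the paper: the $T^2$ case via $H^\infty$-calculus, the endpoint $p\leq 1$ via the atomic decomposition of $T^p$ together with the near/far splitting using Lemma~\ref{l4.2}, then interpolation for $1<p<2$ and duality for $p>2$. The paper likewise defers the detailed off-diagonal computation in the far piece to the analogous argument in \cite{AMcR}; your quantitative accounting of the decay exponent $N$ is slightly loose (the actual decay from \eqref{eq4.13} is $2M$ with $M$ close to $\min\{\beta+a,\alpha+b\}$, not $\alpha+\beta$), but the required inequality $2M>n(1/p-1/2)$ does follow from $\beta>\tfrac{n}{2}(1/p-1/2)$ and $\alpha>0$ by choosing $a,b$ appropriately, so the argument goes through.
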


\noindent{\it Proof of Proposition~\ref{p4.1}.}\, Let $0<p\leq 2$. Using the Lemma~\ref{l4.2} for any $a,b$ such that $0<a<\alpha$ and $0<b<\beta$ one can write
\begin{equation}\label{eq4.12}
{\mathcal Q}^fF(x,s)=\int_0^{\infty}\min\left\{\Bigl(\frac st\Bigr)^{2a}, \Bigl(\frac ts\Bigr)^{2b}\right\}\,T_{s^2,t^2} F(\cdot,t)(x)
\,\frac{dt}{t},\quad (x,s)\in\RR^{n+1}_+,
\end{equation}

\noindent where \\
\noindent $(1)$ $\{T_{s,t}\}_{s\leq t}$ satisfy the $L^2$ off-diagonal estimates in $t$ of order $\beta+a$ uniformly in $s\leq t$, \\
\noindent $(2)$ $\{T_{s,t}\}_{t\leq s}$ satisfy the $L^2$ off-diagonal estimates in $s$ of order $\alpha+b$ uniformly in $t\leq s$. \\
\noindent with the constant bounded by $\|f\|_{L^\infty(\Sigma_\mu^0)}$.
 Note that the constants $a,b$ can be chosen so that both $\alpha+b>\frac{n}{2}\left(\max\{\frac
1p,1\}-\frac 12\right)$ and $\beta+a>\frac{n}{2}\left(\max\{\frac
1p,1\}-\frac 12\right)$. Then
there exist some $M>\frac{n}{2}\left(\max\{\frac
1p,1\}-\frac 12\right)$ and some
$C >0$ such that for arbitrary
closed sets $E,F\subset \RR^n$
\begin{equation}\label{eq4.13}
\|T_{s^2,t^2}g\|_{L^2(F)}\leq C\,\|f\|_{L^\infty(\Sigma_\mu^0)}\,\min\left\{1,\frac{\max\{t,s\}}{{\rm
dist}\,(E,F)}\right\}^{2M}\,\|g\|_{L^2(E)},
\end{equation}

\noindent for every $s,t>0$ and every $g\in L^2(\RR^n)$ supported in
$E$.

The remainder of the proof follows the same path as that of Theorem~4.9 in \cite{AMcR}. 
Suppose first that $p\leq 1$.  By density of $T^2(\RR^n_+)\cap T^p(\RR^n_+)$ in $T^p(\RR^n_+)$ 
it is enough to establish an {\it a priori} estimate for $F(x,t) \in T^2(\RR^n_+)\cap T^p(\RR^n_+)$.
We may then use the atomic decomposition of tent spaces in \cite{CMS} (cf. Proposition \ref{prop4.7} above) to reduce (\ref{eq4.8}) to the atomic estimate
\begin{equation}\label{eq4.14}
\|{\mathcal Q}^fA\|_{T^p(\RR^n_+)}\leq C \|f\|_{L^\infty(\Sigma_\mu^0)} \mbox{\, uniformly for $T^p(\RR^n_+)$-atoms $A$.}
\end{equation}

\noindent Then one breaks down ${\mathcal Q}^fA$ into a part close to the support of $A$ and a part away from the support of $A$. Close to the support we use the boundedness of ${\mathcal Q}^f$ in $T^2(\RR^n_+)$, and away from the support we use (\ref{eq4.13}). The details can be recovered carefully following an analogous argument in \cite{AMcR}.
Then the case $1<p\leq 2$ follows by interpolation and the case $2\leq p<\infty$ is obtained by duality. \ep

\begin{proposition}\label{p4.3}
Let $\mu\in(\omega,\pi/2)$ and $\psi\in \Psi(\Sigma_\mu^0)$. The operator $Q_{\psi,L}$ originally defined on $L^2(\RR^n)$ by  the
 formula (\ref{eq4.5})
extends to a bounded operator
\begin{equation}\label{eq4.15}
Q_{\psi,L}:H^p_L(\RR^n)\longrightarrow T^p(\RR^{n+1}_+),
\end{equation}

\noindent provided that

$$
\mbox{either \quad
 $(1)$ \,$0<p\leq 2$, $\psi\in \Psi_{\alpha,\beta}(\Sigma_\mu^0)$, \quad or\quad $(2)$\, $2\leq p<\infty$, $\psi\in \Psi_{\beta,\alpha}(\Sigma_\mu^0)$,} $$
\noindent where $\alpha>0$ and $\beta>\frac{n}{2}\left(\max\{\frac
1p,1\}-\frac 12\right)$.

The operator $\pi_{\psi,L}$ defined on $T^2(\RR^{n+1}_+)$ by means of (\ref{eq4.6})
extends to a
bounded operator
\begin{equation}\label{eq4.16}
\pi_{\psi,L}:T^p(\RR^{n+1}_+)\longrightarrow H^p_L(\RR^n),
\end{equation}

\noindent provided that
$$
\mbox{either \quad
 $(1)$ \,$0<p\leq 2$, $\psi\in \Psi_{\beta,\alpha}(\Sigma_\mu^0)$, \quad or\quad $(2)$\, $2\leq p<\infty$, $\psi\in \Psi_{\alpha,\beta}(\Sigma_\mu^0)$,} $$

\noindent where $\alpha>0$ and $\beta>\frac{n}{2}\left(\max\{\frac
1p,1\}-\frac 12\right)$.

\end{proposition}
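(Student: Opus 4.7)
I would reduce both claims cleanly to Proposition~\ref{p4.1} by means of Calder\'on-type reproducing formulas coming from the $L^2$ holomorphic functional calculus of \cite{Mc}. Fix throughout the ``defining'' function $\eta_0(\xi):=\xi e^{-\xi}$, which lies in $\Psi_{1,\tau}(\Sigma_\mu^0)$ for every $\tau>0$; by the very definition of the Hardy space (see \eqref{eq1.8}--\eqref{eq1.9}),
\[
\|f\|_{\HPL}=\|Q_{\eta_0,L}f\|_{T^p(\RR^{n+1}_+)},\qquad f\in L^2(\RR^n)\cap \HPL,\quad 0<p\leq 2.
\]

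\emph{First, the $\pi_{\psi,L}$ half of case $0<p\leq 2$.} For $F\in T^p(\RR^{n+1}_+)\cap T^2(\RR^{n+1}_+)$, a dense subspace of $T^p$, the $T^2\to L^2$ boundedness of $\pi_{\psi,L}$ gives $\pi_{\psi,L}F\in L^2(\RR^n)$, and interchanging $\eta_0(s^2L)$ with the integral in $t$ yields
\[
Q_{\eta_0,L}\pi_{\psi,L}F(x,s)=\int_0^\infty\eta_0(s^2L)\,\psi(t^2L)F(\cdot,t)(x)\,\frac{dt}{t}=\mathcal{Q}F(x,s),
\]
where $\mathcal{Q}$ is the operator from Proposition~\ref{p4.1} (with $f\equiv 1$) in which the roles of $(\psi,\widetilde{\psi})$ there are played by $(\eta_0,\psi)$. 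Since $\eta_0\in\Psi_{\alpha',\beta'}$ for every $\alpha'\in(0,1]$ and every $\beta'>0$, and the hypothesis $\psi\in\Psi_{\beta,\alpha}$ has $\beta>\frac{n}{2}(\max\{1/p,1\}-\frac12)$ and $\alpha>0$, one may choose $\beta'\in\bigl(\tfrac{n}{2}(\max\{1/p,1\}-\tfrac12),\,\beta\bigr]$ and $\alpha'\in(0,\min\{1,\alpha\}]$; then Proposition~\ref{p4.1}(1) applies and gives $\|\pi_{\psi,L}F\|_{\HPL}=\|\mathcal{Q}F\|_{T^p}\leq C\|F\|_{T^p}$.

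\emph{Second, the $Q_{\psi,L}$ half of case $0<p\leq 2$.} Fix an integer $N>\beta$ and set $\widetilde{\eta}(\xi):=\xi^{N+1}e^{-\xi}\in\Psi_{N+1,\tau}(\Sigma_\mu^0)$ for every $\tau>0$. Since $\int_0^\infty\widetilde{\eta}(s)\eta_0(s)\frac{ds}{s}=\int_0^\infty s^{N+1}e^{-2s}\,ds\neq 0$, the functional calculus of \cite{Mc} produces a nonzero constant $c_0$ for which
\[
f=c_0\int_0^\infty\widetilde{\eta}(t^2L)\,\eta_0(t^2L)f\,\frac{dt}{t},\qquad f\in L^2(\RR^n).
\]
Applying $\psi(s^2L)$ and recognizing the resulting operator as $\mathcal{Q}$ applied to $Q_{\eta_0,L}f$, with Proposition~\ref{p4.1}'s $(\psi,\widetilde{\psi})$ now taken to be $(\psi,\widetilde{\eta})$, we obtain $Q_{\psi,L}f=c_0\,\mathcal{Q}(Q_{\eta_0,L}f)$. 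The hypothesis $\psi\in\Psi_{\alpha,\beta}$, together with $\widetilde{\eta}\in\Psi_{N+1,\tau}\subset\Psi_{\beta,\alpha}$, places this squarely in Proposition~\ref{p4.1}(1), whence $\|Q_{\psi,L}f\|_{T^p}\leq C\|Q_{\eta_0,L}f\|_{T^p}=C\|f\|_{\HPL}$.

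\emph{Third, the case $2\leq p<\infty$} would follow by duality. A routine pairing computation shows $(Q_{\psi,L})^*=\pi_{\psi^*,L^*}$ and $(\pi_{\psi,L})^*=Q_{\psi^*,L^*}$, where $\psi^*(\xi):=\overline{\psi(\bar\xi)}$ lies in the same $\Psi_{\sigma,\tau}$-class as $\psi$. For $p\geq 2$ one has $p'\in(1,2]$ and $\frac{n}{2}(\max\{1/p',1\}-\tfrac12)=\tfrac{n}{4}$, which is precisely the threshold on $\beta$ in the hypotheses of case~(2). Applying the previous two paragraphs to $L^*$ and the exponent $p'$, and combining with $\HPL=(H^{p'}_{L^*}(\RR^n))^*$ from \eqref{eq1.10} and the tent-space duality \eqref{eq4.3}, yields both \eqref{eq4.15} and \eqref{eq4.16} in the range $p\geq 2$. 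The only substantive obstacle is the bookkeeping required to match the decay indices of the auxiliary $\eta_0$ and $\widetilde{\eta}$ with those permitted in Proposition~\ref{p4.1}; the genuine analysis is already encapsulated in that proposition and in the $L^2$ functional calculus.
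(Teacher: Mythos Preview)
Your proof is correct and follows essentially the same approach as the paper: both arguments reduce to Proposition~\ref{p4.1} via the Calder\'on reproducing formula, using $\eta_0(\xi)=\xi e^{-\xi}$ as the defining function and an auxiliary $\widetilde{\eta}(\xi)=\xi^{N}e^{-\xi}$ with sufficiently large exponent, then handle $p>2$ by the $\pi$--$Q$ duality. The only cosmetic difference is that the paper takes the exponent of the auxiliary function to be the least integer exceeding $\tfrac{n}{2}(\max\{1/p,1\}-\tfrac12)$, while you take $N>\beta$; both choices serve equally well.
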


\noindent {\it Remark}.\,    Before proving the Proposition, we note
that for $\psi,\widetilde\psi\in \Psi(\Sigma_\mu^0)$ such that
$\int_0^\infty \psi(t)\widetilde \psi(t)\,\frac{dt}{t}=1,$ we have the following
Calder\'on reproducing formula:
\begin{equation}\label{eq4.17}
\pi_{\psi}\circ Q_{\widetilde \psi}=\pi_{\widetilde\psi}\circ Q_{\psi}=I \quad\mbox{in $L^2(\RR^n)$.}
\end{equation}

\noindent Moreover, for every non-trivial $\psi\in \Psi(\Sigma_\mu^0)$, 
such $\widetilde \psi$ can be found, for example, taking
\begin{equation}\label{eq4.18}
\widetilde \psi(z):=\overline{\psi}(z)\Bigl(\int_0^\infty |\psi(t)|^2\,\frac{dt}{t}\Bigr)^{-1}, \quad z\in \Sigma_\mu^0.
\end{equation}

\vskip 0.08in

\noindent {\it Proof of Proposition~\ref{p4.3}.}  Let $\psi_0(z)=ze^{-z}$, $z\in \Sigma_\mu^0$.  Then the boundedness of the corresponding $Q_{\psi_0}$ in (\ref{eq4.15}) for $0<p\leq 2$
follows directly from the definitions of $\HPL, \, 0<p\leq2$, and $T^p(\RR^{n+1}_+)$.

Now take any $\psi\in \Psi_{\beta,\alpha}$ and $0<p\leq 2$. For every $F\in T^p(\RR^{n+1}_+)\cap T^2(\RR^{n+1}_+)$
\begin{equation}\label{eq4.19}
\|\pi_{\psi}F\|_{H^p_L(\RR^n)}=\|Q_{\psi_0}\circ \pi_{\psi}F\|_{T^p(\RR^{n+1}_+)},
\end{equation}

\noindent and due to Proposition~\ref{p4.1} the last expression above is controlled by $\|F\|_{T^p(\RR^{n+1}_+)}$. Then (\ref{eq4.16}) follows by a density argument.

Next, let $\psi\in \Psi_{\alpha,\beta}(\Sigma_\mu^0)$, $0<p\leq 2$. Since $L^2$ is dense in $H^p_L$, it is enough to prove that

\begin{equation}\label{eq4.20}
\|Q_{\psi}f\|_{T^p(\RR^{n+1}_+)}\leq C \|f\|_{H^p_L(\RR^n)},\quad\mbox{for every}\quad f\in H^p_L\cap L^2.
\end{equation}

\noindent By definition $Q_{\psi_0}f\in T^p(\RR^{n+1}_+)$ for every  $f\in H^p_L\cap L^2$. Let $M$ be the smallest integer larger than $\frac{n}{2}\left(\max\{\frac
1p,1\}-\frac 12\right)$ and $\widetilde \psi_0(\xi):=\xi^M e^{-\xi}$, $\xi\in \Sigma_\mu^0$.
Then $\int_0^\infty \psi_0(t)\widetilde \psi_0(t)\,\frac{dt}{t}=C_M$, and hence, by (\ref{eq4.17}) we have

\begin{equation}\label{eq4.21}
 f=\frac{1}{C_M}\,\,\pi_{\widetilde \psi_0}\circ Q_{\psi_0}f\quad\mbox{ for }\quad f\in L^2.
\end{equation}

\noindent
 Note that $\widetilde \psi_0 \in \Psi_{M,N}$ for every $N>0$.
Therefore,
\begin{equation*}
\|Q_{\psi}f\|_{T^p(\RR^{n+1}_+)}=C\|Q_{\psi}\circ \pi_{\widetilde \psi_0}\circ Q_{\psi_0} f\|_{T^p(\RR^{n+1}_+)}\leq C\|Q_{\psi_0} f\|_{T^p(\RR^{n+1}_+)}=C \|f\|_{H^p_L(\RR^n)}, 
\end{equation*}
where the inequality is a consequence of Proposition \ref{p4.1}.

For $p>2$ we use the duality between the operators $\pi$ and  $Q$. \ep

\noindent {\it Remark.}\,
We would like to mention that  in \cite{HvNP} the authors developed an alternative approach to (\ref{eq4.15}).\vskip 0.08in

\noindent {\it Remark.}\,
The results of the Proposition~\ref{p4.3} lead to an alternative  molecular decomposition of Hardy spaces, defining molecules as the images of the atoms of tent spaces under $\pi_{\psi}$ for appropriate $\psi$ (cf. \cite{AMcR}). \vskip 0.08in

\noindent {\it Remark}.\,
The tent spaces have an appropriate counterpart when $p=\infty$
and the results of Proposition~\ref{p4.3} extend to this case as well
(see \cite{HM}). \vskip 0.08in

Proposition~\ref{p4.3}, in particular, provides the square function
characterization for the Hardy spaces $H^p_L$ with $p>2$, which were originally defined by duality (\ref{eq1.10}).

\begin{corollary}\label{c4.4}
Let $\psi$ be a nontrivial function satisfying either 
\begin{enumerate}
\item[(1)] $0<p\leq 2$, $\psi\in \Psi_{\alpha,\beta}(\Sigma_\mu^0),\,$  or
\item[(2)] $2\leq p<\infty$, $\psi\in \Psi_{\beta,\alpha}(\Sigma_\mu^0)$, 
\end{enumerate}
where $\alpha>0$ and $\beta>\frac{n}{2}\left(\max\{\frac
1p,1\}-\frac 12\right)$. 
Define $H^p_{\psi,L}(\RR^n)$ to be the completion of the space
$$\mathbb{H}^p_{\psi,L}(\mathbb{R}^n) := \{f\in L^2(\RR^n): Q_{\psi,L}\,f \in T^p(\RR^{n+1}_+)\},$$
with respect to the norm
\begin{equation}\label{eq4.23}
\|f\|_{H^p_{\psi,L}(\RR^n)}:=\|Q_{\psi,L}f\|_{T^p(\RR^{n+1}_+)}=
\left\|\left(\dint_{\Gamma(\cdot)}|\psi(t^2L)f(y)|^2\,\frac
{dydt}{t^{n+1}}\right)^{1/2}\right\|_{L^p(\RR^n)}.
\end{equation}
Then $\HPL = H^p_{\psi,L}$, with equivalence of norms.
\end{corollary}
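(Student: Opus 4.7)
The approach is to combine Proposition~\ref{p4.3} with an $L^2$-level Calder\'{o}n reproducing formula, thereby identifying $H^p_L$ with $H^p_{\psi,L}$ as completions of the same $L^2$-dense subspace with equivalent norms. One inequality, $\|Q_{\psi,L}f\|_{T^p(\RR^{n+1}_+)} \lesssim \|f\|_{H^p_L(\RR^n)}$ for $f\in L^2\cap H^p_L$, is an immediate restatement of (\ref{eq4.15}).

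For the reverse inequality, the plan is to construct an auxiliary $\widetilde\psi\in\Psi(\Sigma_\mu^0)$ with two properties: first, a nonvanishing scalar $c_\psi\neq 0$ (the constant produced, via the rescaling $u=t^2 z$, when one evaluates $\int_0^\infty\psi(t^2 L)\widetilde\psi(t^2 L)\,\frac{dt}{t}$ in the $L^2$ holomorphic functional calculus of \cite{Mc}), so that
$$f \,=\, \frac{1}{c_\psi}\,\pi_{\widetilde\psi,L}\bigl(Q_{\psi,L}f\bigr) \quad \text{in } L^2(\RR^n);$$
and second, that $\widetilde\psi$ lies in a $\Psi$-class for which (\ref{eq4.16}) yields $\pi_{\widetilde\psi,L}:T^p\to H^p_L$ bounded. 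Concretely I would fix an integer $N>\tfrac{n}{2}(\max\{1/p,1\}-\tfrac{1}{2})$ and take $\widetilde\psi_b(z):=z^N e^{-bz}$ for a parameter $b>0$ to be chosen. Each $\widetilde\psi_b$ lies in $\Psi_{N,M}(\Sigma_\mu^0)$ for every $M>0$, which is compatible with the hypothesis of Proposition~\ref{p4.3} on $\pi$ in both cases (1) and (2); and, up to a harmless constant, the pairing
$$c_\psi(b) \,=\, \int_0^\infty \psi(u)\,u^{N-1}\,e^{-bu}\,du$$
is the Laplace transform of the nontrivial function $u\mapsto \psi(u)u^{N-1}$, so Laplace uniqueness forces $c_\psi(b_0)\neq 0$ for some $b_0>0$, which fixes $\widetilde\psi:=\widetilde\psi_{b_0}$. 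Combining the reproducing formula with (\ref{eq4.16}) then yields, for any $f\in L^2$ with $Q_{\psi,L}f\in T^p(\RR^{n+1}_+)$,
$$\|f\|_{H^p_L(\RR^n)} \,=\, \frac{1}{|c_\psi|}\,\|\pi_{\widetilde\psi,L}(Q_{\psi,L}f)\|_{H^p_L(\RR^n)} \,\lesssim\, \|Q_{\psi,L}f\|_{T^p(\RR^{n+1}_+)}.$$

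The two inequalities together give norm equivalence on the common $L^2$-dense subspace, whence $H^p_L=H^p_{\psi,L}$ follows by completion; for $p>2$ the $H^p_L$-norm is interpreted through the duality (\ref{eq1.10}), and the argument persists because the reproducing identity is valid in $L^2$ while $\pi_{\widetilde\psi,L}$ extends continuously from $T^2\to L^2$ to $T^p\to H^p_L$ by Proposition~\ref{p4.3}. The main delicate point is producing $\widetilde\psi$ that simultaneously satisfies the $\Psi$-class constraint and the nonvanishing condition on $c_\psi$; introducing the extra Laplace parameter $b$ in the simple template $z^N e^{-bz}$ decouples these requirements, letting us fix $N$ to handle the decay and then vary $b$ (via Laplace uniqueness) to secure $c_\psi\neq 0$, uniformly across all admissible $\psi$.
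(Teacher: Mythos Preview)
For $0<p\leq 2$ your argument is correct and follows the same strategy as the paper: one direction is (\ref{eq4.15}), and for the converse a Calder\'on reproducing formula $f=c_\psi^{-1}\pi_{\widetilde\psi,L}(Q_{\psi,L}f)$ in $L^2$ combined with (\ref{eq4.16}) yields $\|f\|_{H^p_L}\lesssim\|Q_{\psi,L}f\|_{T^p}$. Your Laplace-parameter construction $\widetilde\psi_b(z)=z^Ne^{-bz}$, with $b$ chosen via Laplace uniqueness to force $c_\psi\neq0$, is a clean way to make rigorous a point the paper glosses over as ``routine details'' (the paper reuses the fixed $\widetilde\psi_0(\xi)=\xi^Me^{-\xi}$ and simply asserts that the resulting constant is nonzero for a general nontrivial $\psi$).

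For $2<p<\infty$, however, there is a genuine gap. Here $H^p_L$ is \emph{defined} as the dual space $(H^{p'}_{L^*})^*$ via (\ref{eq1.10}), not as the completion of an $L^2$-subspace. Hence even after you establish the norm equivalence on $L^2\cap H^p_L=\mathbb H^p_{\psi,L}$, you can conclude only that the completion $H^p_{\psi,L}$ embeds isometrically into $H^p_L$; the reverse containment requires the density of $L^2\cap H^p_L$ in $H^p_L$, which is not automatic and which your proposal does not supply. The paper proves this density as a separate step: it extends the reproducing identity first to $H^{p'}_{L^*}$ by density (there $p'\leq2$, so density is by definition), then by duality to all of $H^p_L$, and approximates an arbitrary $f\in H^p_L$ by $\pi_{\widetilde\psi,L}\bigl((Q_{\psi,L}f)\chi_K\bigr)\in L^2\cap H^p_L$, where $\chi_K$ truncates to a compact subset of $\RR^{n+1}_+$. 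Your phrase ``the argument persists'' does not account for this. As a secondary remark, for the norm inequality $\|f\|_{H^p_L}\lesssim\|Q_{\psi,L}f\|_{T^p}$ itself the paper uses a direct duality computation (cf.\ (\ref{eq4.dual})) rather than invoking $\pi_{\widetilde\psi,L}$; your route would also work, but it requires checking that the duality-extended operator $\pi_{\widetilde\psi,L}:T^p\to H^p_L$ agrees on $T^2\cap T^p$ with the original $L^2$-valued operator, which you leave implicit.
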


\bp For $0<p\leq2$, by the definitions it is enough to establish equality of the dense spaces
$L^2(\RR^n)\cap \HPL$ and $\mathbb{H}^p_{\psi,L}(\RR^n)$, with equivalence of norms.
One direction is precisely the estimate (\ref{eq4.20}) above.  The opposite direction is proved in
exactly the same way as (\ref{eq4.20}), by simply interchanging 
the roles of $\psi$ and $\psi_0$, and observing that the reproducing formula (\ref{eq4.21}) 
is still valid (with a different constant), for the same choice of $\widetilde{\psi}_0$, but with
$\psi_0$ replaced by $\psi$.  We omit the routine details.

The case $2<p<\infty$ is slightly more involved.  We begin by claiming that
$L^2(\RR^n) \cap \HPL$ is dense in $\HPL$ (this fact is immediate by definition only
for the range $0<p\leq 2$).  To prove the claim, let $\chi_K$ denote the characteristic function
of the set $\{(x,t) \in \RR^{n+1}_+: |x|<K,\, 1/K < t < K\}$, so that for $F \in T^p,\, 2<p<\infty$, we have that
$F_K := F \chi_K \in T^2 \cap T^p$, and also that $F_K \to F$ in $T^p$.
Now given $\psi \in \Psi_{\beta,\alpha}(\Sigma_\mu^0)$, choose $\widetilde{\psi} \in \Psi_{\alpha,\beta}(\Sigma_\mu^0)$ satisfying the reproducing formula (\ref{eq4.17}).  Then by (\ref{eq4.15})
and (\ref{eq4.16}), the reproducing formula extends to $H^{p'}_{L^*}(\RR^n)$ 
(since $L^2 \cap H^{p'}_{L^*}$ is dense in the latter space), 
and thus by duality to $\HPL$.   Consequently, given $f \in \HPL,\, 2<p<\infty$, we may write
$$f = \pi_{\widetilde\psi,L}\circ Q_{\psi,L} \,f = \lim_{K\to \infty} \pi_{\widetilde\psi,L} \left(
(Q_{\psi,L} f) \,\chi_K\right),$$
where by our previous remarks and  (\ref{eq4.16}), the limit exists in $\HPL$.  Moreover,
$F_K:= (Q_{\psi,L} f) \,\chi_K \in T^2 \cap T^p$, so that $\pi_{\widetilde\psi,L} F_K \in L^2(\RR^n)
\cap \HPL$.  Thus, the claimed density holds.

Therefore, it is enough to prove that $L^2(\RR^n) \cap \HPL = 
\mathbb{H}^p_{\psi,L}(\mathbb{R}^n),$ with equivalence of norms.
One direction follows immediately from (\ref{eq4.15}).  We now proceed to establish the other direction, namely that for $f\in \mathbb{H}^p_{\psi,L}(\mathbb{R}^n),$ we have
$$\|f\|_{\HPL} \lesssim \|Q_{\psi,L} \,f\|_{T^p(\RR^{n+1}_+)}.$$
In turn, by the definition of $\HPL,\, 2<p<\infty,$ as a dual space, it is enough to show that for $g\in L^2(\RR^n)\cap H^{p'}_{L^*}(\RR^n)$, we have
\begin{equation}\label{eq4.dual}\left|\int_{\RR^n} f\, \overline{g}\,\right| \lesssim \|Q_{\psi,L} f\|_{T^p(\RR^{n+1}_+)}\, \|g\|_{ H^{p'}_{L^*}(\RR^n)}.\end{equation}

To this end, given $\psi \in \Psi_{\beta,\alpha}(\Sigma_\mu^0)$, as above we 
choose $\widetilde{\psi} \in \Psi_{\alpha,\beta}(\Sigma_\mu^0)$ satisfying the 
reproducing formula (\ref{eq4.17}), so that
\begin{multline*}\left|\int_{\RR^n} f\, \overline{g}\,\right| = \left|\int_{\RR^n} 
\pi_{\widetilde\psi,L}\circ Q_{\psi,L} \,f\, \overline{g}\,\right|\\[4pt] \leq\,
\|Q_{\psi,L} \,f\|_{T^p(\RR^{n+1}_+)}\,\|Q_{\widetilde{\psi},L^*} \,g\|_{T^{p'}(\RR^{n+1}_+)}\\[4pt] 
\leq\,C\,
\|Q_{\psi,L} \,f\|_{T^p(\RR^{n+1}_+)}\,\|g\|_{ H^{p'}_{L^*}(\RR^n)},
\end{multline*}
as desired, where in the last step we have used (\ref{eq4.20}).
\ep

Let us now turn to the interpolation property.
One of the most important features of the classical Hardy spaces lies in the
fact that they form a complex interpolation scale including, in
particular, $L^p(\RR^n)$ for some values of $p$ (in fact,
$1<p<\infty$). It has to be mentioned that Calder{\'o}n's
original method of complex interpolation was
defined for Banach spaces and could not be immediately extended to
the case when the underlying spaces were only quasi-Banach
($p<1$). One reason for that is a possible failure of the maximum
modulus principle in quasi-Banach spaces. Over the years there have
been developed several approaches to this issue (see, in
particular, {{\cite{CT}}}, {{\cite{JJ}}}, {{\cite{CwMS}}},
{{\cite{GM}}} regarding the classical Hardy spaces).  Here we
are going to employ an extension of the complex interpolation
method to analytically convex spaces described in \cite{KaMi},
\cite{KMM}.

\begin{lemma}\label{l4.5}
For each $0<\theta<1$ and $0<p_0,p_1<+\infty$,
\begin{equation}\label{eq4.24}
\left[H^{p_0}_L({\mathbb R}^n),H^{p_1}_L({\mathbb
R}^n)\right]_{\theta} =H^p_L({\mathbb R}^n),\quad\mbox{where
}1/p=(1-\theta)/p_0+\theta/p_1,
\end{equation}

\noindent and
\begin{multline}\label{eq4.25}
\left[H^{p_0}_L({\mathbb R}^n),\mbox{BMO}_L({\mathbb
R}^n)\right]_{\theta} =H^p_L({\mathbb R}^n),\\[4pt]
0<\theta<1,\,\,0<p_0<+\infty,\,\, 1/p=(1-\theta)/p_0.
\end{multline}
\end{lemma}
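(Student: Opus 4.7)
The plan is to realize each $H^p_L(\RR^n)$ as a retract of the tent space $T^p(\RR^{n+1}_+)$ and then transfer the already known tent space interpolation identity (\ref{eq4.4}) to the Hardy space scale. Concretely, I will fix a pair $\psi,\widetilde\psi\in\Psi(\Sigma_\mu^0)$ satisfying the Calder\'on reproducing formula $\int_0^\infty \psi(t)\widetilde\psi(t)\,dt/t=1$, chosen so that both functions belong to $\Psi_{\beta,\beta}(\Sigma_\mu^0)$ for some $\beta$ large enough to cover the full range $\min(p_0,p_1,p,2)$ through $\max(p_0,p_1,p,2)$; a high power of $ze^{-z}$ (suitably normalized) will do. With such a choice, Proposition~\ref{p4.3} supplies bounded operators
$$Q_{\psi,L}:H^p_L(\RR^n)\longrightarrow T^p(\RR^{n+1}_+),\qquad \pi_{\widetilde\psi,L}:T^p(\RR^{n+1}_+)\longrightarrow H^p_L(\RR^n)$$
for every $p$ in the relevant interval, and by density in the range $0<p\leq 2$, and by the density argument of Corollary~\ref{c4.4} in the range $2<p<\infty$, the relation $\pi_{\widetilde\psi,L}\circ Q_{\psi,L}=I$ extends from $L^2(\RR^n)$ to all of $H^p_L(\RR^n)$. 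Thus each $H^p_L$ is a retract of the corresponding $T^p$, with the same retraction/co-retraction pair used for every $p$.

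Next I will invoke the standard retraction principle for complex interpolation: if $X_j$ is a retract of $Y_j$ $(j=0,1)$ via a common pair of bounded maps $Q$ and $\pi$ satisfying $\pi\circ Q=I$, then
$$[X_0,X_1]_\theta \;=\; \pi\bigl([Y_0,Y_1]_\theta\bigr),$$
with equivalent (quasi-)norms. Combined with the tent space identity $[T^{p_0},T^{p_1}]_\theta=T^p$ from (\ref{eq4.4}), this immediately yields (\ref{eq4.24}) in the Banach regime $1\leq p_0,p_1<\infty$. For the quasi-Banach regime (one of $p_0,p_1$ below $1$), I will appeal to the analytically convex extension of Calder\'on's complex method developed in \cite{KaMi,KMM}, where the retraction principle remains valid provided the ambient spaces are analytically convex. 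The tent spaces $T^p$ are analytically convex (for $0<p<\infty$ this follows from their realization as vector-valued $L^p$ spaces via the $\mathcal{A}$-functional), and analytic convexity passes to retracts, so $H^p_L$ inherits the property and the interpolation identity follows in full generality.

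For (\ref{eq4.25}) the same retraction scheme is used at the $\mathrm{BMO}_L$ endpoint. The $p=\infty$ version of Proposition~\ref{p4.3} (noted in the remark following that proposition, and established in \cite{HM}) gives $Q_{\psi,L}:\mathrm{BMO}_L(\RR^n)\to T^\infty(\RR^{n+1}_+)$ and $\pi_{\widetilde\psi,L}:T^\infty(\RR^{n+1}_+)\to \mathrm{BMO}_L(\RR^n)$, together with $\pi_{\widetilde\psi,L}\circ Q_{\psi,L}=I$ on $\mathrm{BMO}_L$ (interpreted in the appropriate weak sense). Combining this with the tent space identity $[T^{p_0},T^\infty]_\theta=T^p$ from (\ref{eq4.4}) and the retraction principle in the analytically convex setting of \cite{KaMi,KMM} produces (\ref{eq4.25}).

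The principal obstacle I anticipate is the quasi-Banach endpoint handling: one must verify that the spaces $T^p$ and $H^p_L$ with $p<1$ meet the analytic convexity requirement of \cite{KaMi,KMM} so that the retraction theorem applies unambiguously, and one must also make precise the sense in which the reproducing formula $\pi_{\widetilde\psi,L}\circ Q_{\psi,L}=I$ holds on $\mathrm{BMO}_L$ (where elements live only in the dual of a molecular space) so that the co-retraction $\pi_{\widetilde\psi,L}$ really does map $T^\infty$ onto $\mathrm{BMO}_L$ rather than into a larger distributional space. Once these two technical points are settled, the rest of the proof is a mechanical application of the abstract interpolation framework.
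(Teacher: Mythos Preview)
For (\ref{eq4.24}) your approach is essentially identical to the paper's: both realize $H^p_L$ as a retract of $T^p$ via a fixed pair $Q_{\widetilde\psi,L}$, $\pi_{\psi,L}$ with $\psi,\widetilde\psi\in\Psi_{\beta,\beta}(\Sigma_\mu^0)$ for $\beta$ large enough to cover all exponents in play, and then transfer the tent space interpolation (\ref{eq4.4}) using the analytic convexity framework of \cite{KaMi,KMM}.

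For (\ref{eq4.25}) you take a genuinely different route. You propose to extend the retraction directly to the endpoint $p=\infty$ (i.e., realize $\mathrm{BMO}_L$ as a retract of $T^\infty$) and then invoke $[T^{p_0},T^\infty]_\theta=T^p$. The paper instead sidesteps the endpoint retraction entirely: once (\ref{eq4.24}) is in hand for finite exponents, it incorporates $\mathrm{BMO}_L$ by duality combined with Wolff's reiteration theorem \cite{W}, which in turn requires constructing a single ambient Banach space into which all $H^p_L$ ($p_0\leq p<\infty$) and $\mathrm{BMO}_L$ embed continuously---a construction carried out separately in an appendix (Section~\ref{s10}). Your approach is more direct and avoids that ambient-space construction, but, exactly as you flag, it demands a careful justification of $\pi_{\widetilde\psi,L}\circ Q_{\psi,L}=I$ on $\mathrm{BMO}_L$, where elements live only in the dual of a molecular space. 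The paper's approach trades that endpoint subtlety for the (also nontrivial) ambient-space machinery; either route is viable, and the technical cost lands in a different place.
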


\bp The proof of (\ref{eq4.24}) is a combination of an analogous
result for the tent spaces and Proposition~\ref{p4.3}. First of all,
(\ref{eq4.4})
 holds for all $0<p_0<p_1\leq+\infty$ (this is stated in \cite{CMS}, Proposition 6, p. 326; complete details
 are given in \cite{VerbIP}). 
 On the other hand, by
Proposition~\ref{p4.3}, if $0<p<\infty$, Hardy spaces are the retracts of the
corresponding tent spaces, i.e. there exists an operator mapping
any tent space to the corresponding Hardy space and having the
right inverse (actually, this is also true for $p=\infty$, if we designate 
$BMO_L(\RR^n)=: H^\infty_L(\RR^n)$;  the proof is implicit in \cite{HM}, however, we shall not need to make explicit use of this fact in the sequel). More precisely, 
given any pair $0<p_0<p_1<\infty$, we can take
$\psi\in\Psi_{\beta,\beta}$, where  $\beta>\frac{n}{2}\left(\max\{\frac
1{p_0},1\}-\frac 12\right)$ and $\widetilde \psi\in\Psi_{\beta,\beta}$ as in (\ref{eq4.18}).
Then for all $p$ between $p_0$ and $p_1$ the operator $\pi_\psi$ maps $T^p$ to $H^p_L$, and $Q_{\widetilde \psi}:H^p_L\to T^p$ is its right inverse.
Therefore, (\ref{eq4.4}) implies (\ref{eq4.24})
once we make sure that $T^{p_0}(\RR^{n+1})+T^{p_1}(\RR^{n+1}_+)$
is analytically convex (see Lemma 7.11 in \cite{KMM}). This,
however, follows from Theorem 7.9 in \cite{KMM} (see also the discussion
in \cite{VerbIP}, Section~3, and in the proof of Lemma~\ref{l8.5} below). The space $BMO_L$ can then be incorporated by
duality and Wolff's reiteration theorem \cite{W}, once we have shown that, given any fixed $p_0>0$, 
there is some large ambient Banach space into which  every $\HPL, \,  p_0\leq p<\infty,$ and also $BMO_L(\RR^n)$,  may be
continuously embedded. 
We shall establish the existence of such an ambient space in an appendix (cf. Section \ref{s10} below).
\ep

\section{Riesz transform characterization of Hardy spaces.}\label{s5}
\setcounter{equation}{0}

Let us recall that for a given operator $L$ the interval $(p_-(L),p_+(L))$ is the interior of the interval of $L^p$-boundedness of the heat semigroup and $2+\eps(L)$ is an upper bound for the interval of  $L^p$-boundedness of the Riesz transform.  As pointed out in the introduction, we have
\begin{equation}\label{eq5.1}
\nabla L^{-1/2}:L^p(\RR^n)\longrightarrow L^p(\RR^n)\quad\iff\quad p_-(L)<p<2+\eps(L),
\end{equation}

\noindent and the bounds $p_-(L)<\frac{2n}{n+2}$, $\eps(L)>0$ are sharp in the sense of Corollary~\ref{c2.3}. In the present section we aim to extend (\ref{eq5.1}) to other values of $p$, passing to the Hardy $H^p_L$ spaces, and to prove the reverse estimate for a certain range of $p,$ thus establishing
for such $p$ the equivalence of the spaces $\HPL$ and $\HRL$ (cf. (\ref{eq1.9}) and (\ref{e1.hrldef}))  Our main result in this section is the following.

\begin{theorem}\label{t5.1}
Let $1<r\leq 2$ be such that the family $\{e^{-tL}\}_{t>0}$
satisfies $L^r-L^2$ off-diagonal estimates. We then have
\begin{equation}\HPL = \HRL\,,\qquad \frac{rn}{n+r}<p< 2+\eps(L) \end{equation} Moreover,  
we have the following equivalence of norms:
\begin{equation}\label{eq5.2}
\|f\|_{H^p_L(\RR^n)}\approx \|\nabla L^{-1/2}f\|_{L^p(\RR^n)}\,,\qquad \max\left\{1,\frac{rn}{n+r}\right\}<p<2+\eps(L)
\end{equation}
and if $rn/(n+r)\leq 1$, then 
\begin{equation}\label{eq5.3}
\|f\|_{H^p_L(\RR^n)}\approx \|\nabla L^{-1/2}f\|_{H^p(\RR^n)}\,,\qquad \frac{rn}{n+r}<p\leq 1.
\end{equation}
\end{theorem}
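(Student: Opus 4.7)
The theorem amounts to the two norm equivalences (\ref{eq5.2})--(\ref{eq5.3}). I split into two directions and, within each direction, further distinguish the range $p>p_-(L)$ from the range $p\leq p_-(L)$. Throughout, I define
\[
R f := -L^{-1/2}\,\mathrm{div}(A f)
\]
acting on vector fields, and observe the algebraic identity $R\circ\nabla L^{-1/2}=I$ on $L^2$, which is immediate from $L=-\mathrm{div}(A\nabla)$. This identity is what ultimately upgrades a one-sided operator bound into a norm equivalence.

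\textbf{Forward direction, $\nabla L^{-1/2}:\HPL\to L^p$ (resp.\ $H^p$).}  For $p_-(L)<p<2+\eps(L)$ the identification $\HPL=L^p$ (cf.\ (\ref{eq1.13})) together with the $L^p$ boundedness of $\nabla L^{-1/2}$ from (\ref{eq1.5}) gives the bound immediately. For $rn/(n+r)<p\leq p_-(L)$ I would invoke Lemma~\ref{l4.1} with $T:=\nabla L^{-1/2}$, the weak-type $(2,2)$ hypothesis being supplied by the Kato estimate (\ref{eq1.4}). The task then reduces to the uniform molecular bound
\[
\|\nabla L^{-1/2}m\|_{L^p(\RR^n)}\leq C
\]
for every $(H^p_L,\eps,M)$-molecule $m$ adapted to a cube $Q$, $M>\tfrac{n}{2}(\tfrac1p-\tfrac12)$. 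This is proved by writing $L^{-1/2}=c\int_0^\infty e^{-t^2L}\,dt$ and splitting at $t\sim\ell(Q)$: the near part uses the $L^r-L^2$ off-diagonal estimates from Lemmata~\ref{l2.5}--\ref{l2.7} together with the $L^r$ size of $m$ on each annulus $S_i(Q)$, and it is here that the lower threshold $p>rn/(n+r)$ is forced in order to sum the annular contributions in $\ell^p$; the far part is handled by exploiting $m=L^M b$ and using the off-diagonal bounds of Lemma~\ref{l2.7} for $\nabla(t^2L)^{M+1/2}e^{-t^2L}L^{-M-1/2}$, which produces an integrable factor $t^{-(2M+1)}$. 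For $p\leq 1$, a parallel computation tracking moment conditions derived from $m\in R(L^M)$ shows that $\nabla L^{-1/2}m$ is (a uniform multiple of) a classical $(H^p,\eps,M)$-molecule in the sense of Taibleson-Weiss, which yields the $H^p$-bound required for (\ref{eq5.3}).

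\textbf{Reverse direction for $p>1$, via duality.}  Since $R\circ\nabla L^{-1/2}=I$, it suffices to show $R:L^p(\RR^n;\CC^n)\to\HPL$ is bounded for $1<p<2+\eps(L)$. Integration by parts gives the formal adjoint $R^*g=A^*\nabla L^{*,-1/2}g$. By the forward direction applied to $L^*$, whose semigroup inherits the $L^r-L^2$ off-diagonal hypothesis, the map $\nabla L^{*,-1/2}:H^{p'}_{L^*}(\RR^n)\to L^{p'}(\RR^n;\CC^n)$ is bounded, and multiplication by the bounded matrix $A^*$ preserves this. In view of the duality $(\HPL)^*=H^{p'}_{L^*}$ (cf.\ Lemma~\ref{l4.5} and Section~\ref{s9}), the adjoint bound transfers to $R:L^p\to\HPL$, whence
\[
\|f\|_{\HPL}=\|R(\nabla L^{-1/2}f)\|_{\HPL}\lesssim \|\nabla L^{-1/2}f\|_{L^p(\RR^n)}.
\]

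\textbf{Reverse direction for $p\leq 1$; main obstacle.}  The main technical difficulty, and the principal novelty of the section, is the low-$p$ reverse bound (\ref{eq5.3}). I would prove it by showing directly that $R:H^p(\RR^n;\CC^n)\to\HPL$ is bounded, verifying at the atomic level that if $a$ is a classical $H^p$-atom supported in a cube $Q$, then $R(a)=-L^{-1/2}\mathrm{div}(Aa)$ is, up to a universal constant, an $(H^p_L,\eps,M)$-molecule adapted to $Q$. The required estimates on $\|(\ell(Q)^{-2}L^{-1})^kR(a)\|_{L^2(S_j(Q))}$ are generated by the same semigroup splitting as in the forward direction, with the $\mathrm{div}$ absorbed locally through $L^r-L^2$ Gaffney estimates and the atomic size $\|a\|_{L^r}\lesssim|Q|^{1/r-1/p}$, and globally by using the classical cancellation of $a$ after integration by parts against a Taylor expansion of the semigroup kernel; once again the constraint $p>rn/(n+r)$ appears in the annular $\ell^p$-summation. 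Once this molecular image statement is in hand, Theorem~\ref{th4.1} gives the boundedness of $R$, and the factorisation $f=R(\nabla L^{-1/2}f)$ concludes the proof. I expect the atomic-to-molecular verification, and in particular the careful tracking of cancellation through $L^{-1/2}\mathrm{div}$, to be the most delicate step.
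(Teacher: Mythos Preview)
Your forward direction is essentially fine and follows the paper (molecular/interpolation), though the threshold $p>rn/(n+r)$ actually arises only in the \emph{reverse} direction; the forward bound $\nabla L^{-1/2}:H^p_L\to H^p$ holds for all $n/(n+1)<p\le 1$ without reference to $r$.

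The reverse direction, however, has a genuine gap in both regimes.

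\textbf{The duality argument for $1<p<2$ does not reach the stated range.} You need $R^*=A^*\nabla L^{*,-1/2}:H^{p'}_{L^*}\to L^{p'}$, hence $\nabla L^{*,-1/2}:H^{p'}_{L^*}\to L^{p'}$, for $p'$ \emph{large} (since $p$ is near $1$). But the forward Riesz bound, even in its $H^p_L$ form, is only available for $p'<2+\eps(L^*)$; above that it fails (this is precisely the content of the counterexamples in Section~\ref{s2}). Thus your duality argument proves the reverse inequality only for $p>(2+\eps(L^*))'$, a number just below $2$, and says nothing about the interesting range near $p_-(L)$ or $rn/(n+r)$. (Also, the $L^r$--$L^2$ off-diagonal hypothesis on $e^{-tL}$ dualizes to an $L^2$--$L^{r'}$ estimate for $e^{-tL^*}$, not an $L^r$--$L^2$ one.) The paper handles this range by a completely different route: a Calder\'on--Zygmund decomposition of $\dot W^{1,p}$ functions (Lemma~\ref{l5.6}, from \cite{AuscherSurvey}) is used to prove a direct weak-type bound $S_1\sqrt{L}:\dot W^{1,p}\to L^{p,\infty}$ at the endpoint $p=\max\{1,rn/(n+r)\}$, and then one interpolates with $p=2$. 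No duality is invoked.

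\textbf{The $p\le 1$ reverse argument cannot use a kernel Taylor expansion.} You propose to exploit ``the classical cancellation of $a$ after integration by parts against a Taylor expansion of the semigroup kernel.'' But the whole point of the paper's setting is that the heat kernel of $L$ need not satisfy \emph{any} pointwise bound (Corollary~\ref{c2.2}); there is no kernel to Taylor-expand. The paper instead passes to the Hardy--Sobolev space $H^{1,p}$ and uses its atoms, which carry a \emph{support} condition (not merely a moment condition): ${\rm supp}\,a\subset Q$, $\|\nabla a\|_{L^2}\le \ell(Q)^{n/2-n/p}$. This support condition is exactly what allows the $L^r$--$L^2$ Gaffney estimates to produce the far-annulus decay in $\|S_1\sqrt{L}\,a\|_{L^p(S_j(Q))}$, and it is here (in the near-cone integral over $t\gtrsim 2^j\ell(Q)$) that the threshold $p>rn/(n+r)$ is forced; see the estimate leading to (\ref{eq5.29}).
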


\noindent{\it Remark.}\, Note that, in particular,  (\ref{eq5.2})
holds for every $p$ such that
$\max\left\{1,\frac{p_-(L)n}{n+p_-(L)}\right\}<p<2+\eps(L)$, and if
$\frac{p_-(L)n}{n+p_-(L)}<1$, then (\ref{eq5.3}) holds for every $p$
such that $\frac{p_-(L)n}{n+p_-(L)}<p\leq 1$.

\vskip 0.08in

The proof of the Theorem will be split into
Propositions~\ref{p5.2}--\ref{p5.5}. Let us start with the case
$p\leq 1$. For the sake of notational convenience, given $p\in (0,1]$,
we shall throughout
this section fix $M>(n/2)(1/p-1/2)$ and $\eps>0$ (recall that, as we have seen, any such choice leads to an equivalent $H^p_L$ space), and we may therefore
refer to $(\hpl,\eps,M)$-molecules simply as $H^p_L$-molecules. The first result concerns the boundedness of the
Riesz transform.

\begin{proposition}\label{p5.2}
For every $p$ such that  $\frac{n}{n+1}< p \leq 1$, there is a constant $C$ depending only
on $n,p$ and ellipticity (and our fixed choices of $M$ and $\eps$), 
such that the Riesz transform $\nabla L^{-1/2},$ 
defined initially
on $L^2 \cap \HPL = \HPHML$ (cf. (\ref{hpequivalence})), satisfies
\begin{equation}\label{eq5.4}
\|\nabla L^{-1/2} f\|_{H^p(\RR^n)} \leq C \|f\|_{\HPHML}\,,
\end{equation}
and therefore extends to a bounded operator 
$\nabla L^{-1/2}: H^p_L(\RR^n)\longrightarrow H^p(\RR^n).$
\end{proposition}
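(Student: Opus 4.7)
Plan: By Theorem~\ref{th4.1}, $\HPHML$ is dense in $\HPL$, so it suffices to establish the \emph{a priori} estimate (\ref{eq5.4}) for $f\in \HPHML$. Given a molecular representation $f=\sum\lambda_j m_j$ converging in $L^2(\RR^n)$, the $L^2$-boundedness of $\nabla L^{-1/2}$ (the Kato bound (\ref{eq1.4})) gives $\nabla L^{-1/2}f=\sum\lambda_j \nabla L^{-1/2}m_j$ in $L^2(\RR^n)$. Since the classical $H^p$ quasi-norm is $p$-subadditive for $0<p\leq1$, so pointwise-a.e.\ control (cf.\ the argument in Lemma~\ref{l4.1} replacing $L^p$ by $H^p$ and using the $L^2$-boundedness of the grand maximal function on the tail sum) reduces the task to a uniform molecular estimate
$$\|\nabla L^{-1/2}m\|_{H^p(\RR^n)}\leq C,$$
valid for every $(\hpl,\eps,M)$-molecule $m$, with $M>\tfrac{n}{2}(\tfrac{1}{p}-\tfrac{1}{2})$ fixed. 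The strategy is then to verify that $\nabla L^{-1/2}m$ is (a uniform multiple of) a classical Taibleson--Weiss $(H^p,2,\tilde\eps)$-molecule adapted to $Q$.

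The two things to check are: \textbf{(a)} an annular $L^2$ size bound
$$\|\nabla L^{-1/2}m\|_{L^2(S_i(Q))}\leq C\,(2^i\ell(Q))^{n/2-n/p}\,2^{-i\tilde\eps},\qquad i=0,1,2,\dots,$$
for some $\tilde\eps>0$; and \textbf{(b)} the single vanishing moment
$$\int_{\RR^n}\nabla L^{-1/2}m\,dx=0,$$
which is the only cancellation required for classical $H^p$ when $p>n/(n+1)$. Given (a) and (b), the classical molecular theory of \cite{TW} yields the desired bound on $\|\nabla L^{-1/2}m\|_{H^p(\RR^n)}$.

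For (a) I would write $m=L^M b$ with $b:=L^{-M}m$ and use the $H^\infty$-calculus identity
$$\nabla L^{-1/2}m \;=\; c_{M}\int_{0}^{\infty}\nabla(t^{2}L)^{M+1}e^{-t^{2}L}\, b \;\frac{dt}{t^{2M+1}},$$
so that I can apply Lemma~\ref{l2.7} to $\psi(\xi)=\xi^{M+1/2}e^{-\xi}$ (noting that $\nabla L^{-1/2}e^{-tL}$ and $\nabla(tL)^{k}e^{-tL}$ satisfy $L^{2}$ off-diagonal estimates of arbitrarily high order by standard arguments applied to the first-order system associated to $L$). The integral in $t$ is then split at $t=2^{i-2}\ell(Q)$; for short times $t\leq 2^{i-2}\ell(Q)$, one uses off-diagonal decay in the form $\exp(-c (2^{i}\ell(Q))^{2}/t^{2})$ against the $L^{2}(\RR^n)$ bound on $b$ obtained from $\|b\|_{L^{2}(\RR^{n})}\lesssim \ell(Q)^{2M}|Q|^{1/2-1/p}$, while for long times $t\geq 2^{i-2}\ell(Q)$ one splits $b$ into its annular pieces and sums the contributions via Schur's lemma, exploiting the molecular decay $\|(\ell(Q)^{2}L)^{-M}m\|_{L^{2}(S_{j}(Q))}\leq (2^{j}\ell(Q))^{n/2-n/p}2^{-j\eps}$ together with $N$-th order off-diagonal decay with $N$ chosen large. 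The polynomial gain $t^{-2M}$ together with $M>\tfrac{n}{2}(\tfrac{1}{p}-\tfrac{1}{2})$ produces a positive $\tilde\eps$.

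For (b), the same representation shows $L^{-1/2}m=L^{M-1/2}b\in L^{2}(\RR^{n})$ with annular $L^{2}$ bounds that are integrable in $|x|^{n(1-2/q)}$ for some $q>n/(n-1)$; thus $L^{-1/2}m$ lies in $L^{q}(\RR^{n})$ for such $q$, and a straightforward truncation/divergence-theorem argument yields $\int \partial_{j}L^{-1/2}m\,dx=0$ for each $j$. I expect the technically heaviest step to be (a), specifically quantifying the off-diagonal $L^{2}$ bounds for the composite family $\nabla(t^{2}L)^{M+1/2}e^{-t^{2}L}$ uniformly in $t$ and coupling them with the annular structure of the molecule through a Schur estimate that produces a strictly positive $\tilde\eps$; the constraint $p>n/(n+1)$ enters only at the very end, in reducing cancellation to a single vanishing moment.
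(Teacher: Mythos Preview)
Your overall strategy---reduce to a uniform molecular bound and verify that $\nabla L^{-1/2}m$ is a classical $H^p$-molecule (size plus one vanishing moment)---is exactly what the paper does, and your treatment of the moment condition (b) is fine. The difference, and the difficulty, is in how you propose to obtain the annular size bound (a).

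The paper does \emph{not} pass through an integral representation of $\nabla L^{-1/2}$ acting on $b=L^{-M}m$. Instead it decomposes the molecule itself,
\[
m=(I-e^{-\ell(Q)^2L})^Mm+\bigl[I-(I-e^{-\ell(Q)^2L})^M\bigr]m,
\]
and then applies the ready-made polynomial off-diagonal estimates (\ref{eq5.9})--(\ref{eq5.10}) (from \cite{HM}) for the composites $\nabla L^{-1/2}(I-e^{-tL})^M$ and $\nabla L^{-1/2}(tLe^{-tL})^M$, at the fixed scale $t=\ell(Q)^2$. After splitting $m=m\chi_{2^{i-2}Q}+m\chi_{\RR^n\setminus 2^{i-2}Q}$, this gives the decay $2^{-2Mi}$ on the far piece and the molecular decay $2^{-i\eps}$ on the near piece, yielding (a) with $\tilde\eps=\min\{\eps,\,2M-n/p+n/2\}$.

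Your route via the integral formula has a genuine gap in the short-time regime. First, the exponents are off: the Calder\'on-type identity gives
\[
\nabla L^{-1/2}m \,=\, c\int_0^\infty \nabla(t^2L)^M e^{-t^2L}\,b\;\frac{dt}{t^{2M}},
\]
not $(t^2L)^{M+1}$ with $dt/t^{2M+1}$ (the latter reproduces $\nabla m$, not $\nabla L^{-1/2}m$). More seriously, for $t\le 2^{i-2}\ell(Q)$ you claim exponential decay $\exp(-c(2^i\ell(Q))^2/t^2)$ against $\|b\|_{L^2(\RR^n)}$, but $b=L^{-M}m$ is \emph{not} localized away from $S_i(Q)$: by (\ref{eq3.3}) it has nontrivial $L^2$ mass on every annulus, in particular on $\widetilde S_i(Q)$. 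For that nearby piece there is no off-diagonal gain, and one is left with
\[
\int_0^{2^{i-2}\ell(Q)} \bigl\|\nabla(t^2L)^M e^{-t^2L}(b\chi_{\widetilde S_i(Q)})\bigr\|_{L^2(S_i(Q))}\,\frac{dt}{t^{2M}}
\ \lesssim\ \|b\|_{L^2(\widetilde S_i(Q))}\int_0^{2^{i-2}\ell(Q)}\frac{dt}{t^{2M+1}},
\]
which diverges at $t=0$. In other words, the cancellation that makes $\nabla L^{-1/2}m$ well-defined in $L^2$ is destroyed once you localize the integral in $t$ and try to bound it absolutely on a fixed annulus. Lemma~\ref{l2.7} is also not directly applicable here, since $\nabla L^{-1/2}$ is not of the form $f(L)$ with $f\in H^\infty$; what is actually needed are the composite estimates (\ref{eq5.9})--(\ref{eq5.10}), which are proved by a separate argument in \cite{HM} and already package the cancellation correctly. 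Once you invoke those in place of your integral representation, your outline becomes the paper's proof.
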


\bp We begin by recalling that the classical Hardy spaces can be characterized via 
a molecular decomposition (see, e.g., \cite{CW}).  Our Theorem~\ref{th4.1} with $L=-\Delta$
provides one such characterization, but a more traditional
version is as follows.

The function $m\in L^2(\RR^n)$ is an $H^p$-molecule, $0<p\leq 1$,
if it satisfies (\ref{eq3.3}) for $k=0$ and
\begin{equation}\label{eq5.5}
\int_{\RR^n} x^{\alpha}m(x)\,dx=0, \qquad 0\leq |\alpha|\leq
\widetilde M,
\end{equation}

\noindent for some $\widetilde M\in\NN\cup\{0\}$ such that
$\widetilde M\geq [n(1/p-1)]$, with $[\gamma]$ denoting the
integer part of $\gamma\in\RR$. Given $p\in (0,1]$, fix some
$\widetilde M$ as above. Then the classical real variable
Hardy space can be realized as
\begin{equation}\label{eq5.6}
H^p(\RR^n)=\left\{\sum_{i=0}^\infty
\lambda_jm_j:\,\{\lambda_j\}_{j=0}^\infty\in \ell^p \mbox{ and
}\,m_j \mbox{ are $H^p$-molecules}\right\},
\end{equation}
with 
\begin{equation*}
\|f\|_{H^p(\RR^n)}\approx
\inf\Bigl\{\Bigl(\sum_{j=0}^\infty|\lambda_j|^p\Bigr)^{1/p} \Bigr\},
\end{equation*}
where the infimum runs over all decompositions $f=\sum_{j=0}^\infty\lambda_j m_j,$
converging in the space of tempered distributions ${\mathcal S}'$, such that
$\{\lambda_j\}_{j=0}^\infty \in\ell^p$ and each $m_j$ is an $H^p$ molecule. 
We do not know if this particular
version of molecular decomposition is explicitly stated anywhere
but it readily follows from the classical arguments (see
\cite{CW}, \S{2} of \cite{TW}, and \cite{GaCu}). 

Having these facts at hand, we first show that the Riesz transform maps
$H^p_L$-molecules into $H^p$-molecules.
 Let $m\in L^2(\RR^n)$ be
an $H^p_L$-molecule associated with some cube $Q\in\RR^n$ (and
$M>\frac{n}{2}\Bigl(\frac 1p-\frac 12\Bigr)$, $\eps>0$ fixed as above).
Then
\begin{equation}\label{eq5.7}
\|\nabla L^{-1/2}m\|_{L^2(2Q)}\leq C \|m\|_{L^2(\RR^n)}\leq
C\,l(Q)^{n/2-n/p},
\end{equation}

\noindent using boundedness of $\nabla L^{-1/2}$ in $L^2(\RR^n)$.
Next, for $i\geq 2$
\begin{eqnarray}\nonumber
\|\nabla L^{-1/2}m\|_{L^2(S_i(Q))}&\leq & \|\nabla
L^{-1/2}(I-e^{-l(Q)^2L})^Mm\|_{L^2(S_i(Q))}\\[4pt]
&&+ \|\nabla
L^{-1/2}[I-(I-e^{-l(Q)^2L})^M]m\|_{L^2(S_i(Q))}=:I+II.\label{eq5.8}
\end{eqnarray}

\noindent According to Theorem~3.2 in \cite{HM} (see also
Lemma~2.2 in \cite{HoMa}), for all closed sets $E$, $F$ in $\RR^n$
with ${\rm dist }(E,F)>0$, if  $f\in L^2(\RR^n)$ is supported in
$E$,  then  \Bk
\begin{eqnarray}\label{eq5.9}
\|\nabla L^{-1/2}(I-e^{-tL})^{M}f\|_{L^2(F)}&\!\leq \,
C\,\left(\frac{t}{{\rm
dist}\,(E,F)^2}\right)^{M}\,\|f\|_{L^2(E)},&\forall t>0,\\
\|\nabla L^{-1/2}(tLe^{-tL})^{M}f\|_{L^2(F)}&\!\leq \,
C\,\left(\frac{t}{{\rm
dist}\,(E,F)^2}\right)^{M}\,\|f\|_{L^2(E)},&\forall t>0.\label{eq5.10}
\end{eqnarray}

\noindent  Therefore,
\begin{eqnarray}\nonumber
I&\leq & \|\nabla
L^{-1/2}(I-e^{-l(Q)^2L})^M(m\chi_{2^{i-2}Q})\|_{L^2(S_i(Q))}\\[4pt]
&& +\,\,\|\nabla L^{-1/2}(I-e^{-l(Q)^2L})^M(m\chi_{\RR^n\setminus
2^{i-2}Q})\|_{L^2(S_i(Q))}\nonumber\\[4pt]
&\leq & C 2^{-2Mi} \|m\|_{L^2(2^{i-2}Q)}+C
\|m\|_{L^2(\RR^n\setminus 2^{i-2}Q)}\nonumber\\[4pt]
&\leq & C 2^{-2Mi} l(Q)^{n/2-n/p}+C
(2^il(Q))^{n/2-n/p}\,2^{-i\eps}.\label{eq5.11}
\end{eqnarray}

\noindent Since $M>\frac{n}{2}\Bigl(\frac 1p-\frac 12\Bigr)$, the
estimate (\ref{eq5.11}) implies
\begin{equation}\label{eq5.12}
I\leq C (2^il(Q))^{n/2-n/p}\,2^{-i\epsilon},
\end{equation}

\noindent where $\epsilon=\min\{\eps, 2M-n/p+n/2\}>0$.

Turning to the second part of (\ref{eq5.8}), we observe that
\begin{eqnarray}\nonumber
&&\|\nabla L^{-1/2}[I-(I-e^{-l(Q)^2L})^{M}]m\|_{L^2(\RR^n)}\leq C
\sup_{1\leq k\leq M}\|\nabla
L^{-1/2}e^{-kl(Q)^2L}m\|_{L^2(\RR^n)}\\[4pt]
&&\qquad  \leq C \sup_{1\leq k\leq M}\left\|\nabla
L^{-1/2}\left(\frac k{M}\, l(Q)^{2}Le^{-\frac
k{M}l(Q)^2L}\right)^{M}(l(Q)^{-2}L^{-1})^{M}m\right\|_{L^2(\RR^n)}.\label{eq5.13}
\end{eqnarray}

\noindent This allows to employ the argument above, using
(\ref{eq5.10}) in place of (\ref{eq5.9}), to prove an analogue of
(\ref{eq5.12}) for the expression $II$.

Finally, the vanishing moment condition (\ref{eq5.5}) is
satisfied, since
\begin{equation}\label{eq5.14}
\int_{\RR^n}\nabla L^{-1/2}m(x)\,dx=0,
\end{equation}

\noindent and one can take $\widetilde M=0$ when
$p>\frac{n}{n+1}$.

So far, we have established that Riesz transform maps
$H^p_L$-molecules into $H^p$-molecules for $p\in
\left(\frac{n}{n+1},1\right]$. Let us now show that this implies the desired
estimate (\ref{eq5.4}). 
To this end, let $f \in \HPHML$, so that by definition we may select 
an $L^2$ convergent molecular decomposition
$f=\sum_{i=0}^\infty\lambda_i m_i$, 
where each $m_i$ is an $H^p_L$-molecule, such that
$$\|f\|_{\HPHML}\approx \Bigl(\sum_{i=0}^\infty
|\lambda_i|^p\Bigr)^{1/p}.$$ 
By the $L^2$ convergence of the sum, we have that
$$\nabla L^{-1/2} f = \sum \lambda_i \,\left(\nabla L^{-1/2}m_i\right)=: \sum \lambda_i \, \widetilde{m}_i,$$
where by the preceeding argument each $\widetilde{m}_i$ is a classical $H^p$-molecule,
and where the last sum also converges in $L^2$ (hence in $\mathcal{S}'$).
The bound (\ref{eq5.4}) then follows immediately by the molecular characterization of classical $H^p$.
This finishes the proof. \ep

\begin{proposition}\label{p5.3}
Let $1<r\leq 2$ be such that the family $\{e^{-tL}\}_{t>0}$
satisfies $L^r-L^2$ off-diagonal estimates.  Then for every $p\leq
1$ such that $p> \frac{rn}{n+r}$
\begin{equation}\label{eq5.17}
\|h\|_{H^p_L(\RR^n)}\leq C\|\nabla L^{-1/2}h\|_{H^p(\RR^n)}
\end{equation}
for every $h\in L^2(\RR^n)\cap H^p_{L,Riesz}(\RR^n)$. In particular, if
$\frac{p_-(L)n}{n+p_-(L)}<1$, then (\ref{eq5.17}) holds for every $p$
such that $\frac{p_-(L)n}{n+p_-(L)}<p\leq 1$.
\end{proposition}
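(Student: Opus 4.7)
The plan is to reduce the inequality $\|h\|_{\HPL}\leq C\|\nabla L^{-1/2}h\|_{H^p(\mathbb{R}^n)}$ to a molecular boundedness result for the operator $T:=-L^{-1/2}\operatorname{div}(A\,\cdot\,)$, mapping classical vector-valued $H^p(\mathbb{R}^n;\mathbb{C}^n)$ into $\HPL$, via the identity
\[
h\;=\;-L^{-1/2}\operatorname{div}(A\nabla L^{-1/2}h)\;=\;T(\nabla L^{-1/2}h),
\]
which is immediate from $L=-\operatorname{div}(A\nabla)$ (both $\nabla L^{-1/2}$ and $L^{-1/2}\operatorname{div}$ are $L^2$-bounded by the Kato estimates for $L$ and $L^*$). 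Since $h\in L^2\cap\HRL$, the function $\vec F:=\nabla L^{-1/2}h$ lies in $L^2\cap H^p(\mathbb{R}^n;\mathbb{C}^n)$. Because $p>rn/(n+r)>n/(n+1)$ (the last inequality uses $r>1$), the classical Coifman--Weiss molecular decomposition gives $\vec F=\sum_j\lambda_j\vec m_j$ converging in $L^2$, where each $\vec m_j$ is a zero-mean $H^p$-molecule adapted to a cube $Q_j$ and $\bigl(\sum_j|\lambda_j|^p\bigr)^{1/p}\lesssim\|\vec F\|_{H^p}$.

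Applying the $L^2$-bounded operator $T$ termwise yields $h=\sum_j\lambda_j\tilde m_j$ in $L^2$, with $\tilde m_j:=T(\vec m_j)$. Fixing $M>(n/2)(1/p-1/2)$ and some $\eps>0$, I will show that each $\tilde m_j$ is a uniform multiple of an $(\HPL,2,\eps,M)$-molecule adapted to $Q_j$. By Theorem \ref{th4.1} (molecular characterization), this forces $h\in\HPHML$ with
\[
\|h\|_{\HPL}\;\lesssim\;\Bigl(\sum_j|\lambda_j|^p\Bigr)^{1/p}\;\lesssim\;\|\nabla L^{-1/2}h\|_{H^p(\mathbb{R}^n)},
\]
which is the desired inequality.

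The technical heart is therefore the uniform molecular estimate for $\tilde m:=T(\vec m)$, with $\vec m$ a classical zero-mean $H^p$-molecule associated with a cube $Q$: for $k=0,1,\dots,M$ and every $i\geq 0$,
\[
\|(\ell(Q)^{-2}L^{-1})^k\tilde m\|_{L^2(S_i(Q))}\;\leq\; C\,(2^i\ell(Q))^{\frac{n}{2}-\frac{n}{p}}\,2^{-i\eps}.
\]
I would proceed by duality: for $g\in L^2(S_i(Q))$ with $\|g\|_2=1$, use the adjoint identity $(\nabla L^{*,-1/2})^*=-L^{-1/2}\operatorname{div}$ together with the integral representation $L^{*,-k-1/2}=c_k\int_0^\infty s^{k-1/2}e^{-sL^*}\,ds$ to write $\langle L^{-k}\tilde m,g\rangle=\langle A\vec m,\nabla L^{*,-k-1/2}g\rangle$. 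For $i\leq 1$ (near $Q$), H\"older's inequality combined with uniform $L^2$-boundedness of $\nabla L^{*,-k-1/2}$ (established via the decomposition $\sqrt{s}\,\nabla e^{-sL^*}=(\nabla L^{*,-1/2})\psi(sL^*)$ with $\psi(z)=z^{1/2}e^{-z}$, Kato for $L^*$ and Lemma \ref{l2.7}) yields the bound. For $i\geq 2$, I would exploit $\int\vec m=0$ to subtract from $A^*\nabla L^{*,-k-1/2}g$ any constant over $Q$, then pair via H\"older in $(r,r')$ using $\|\vec m\|_{L^r(Q)}\leq C|Q|^{1/r-1/p}$; this $L^r$ bound couples with the $L^2$--$L^{r'}$ off-diagonal estimates for $e^{-sL^*}$ (the dual of the assumed $L^r$--$L^2$ estimate, cf.\ Lemma \ref{l2.6}) to extract exponential decay $\exp(-c(2^i\ell(Q))^2/s)$, which is then integrated against $s^{k-1/2}\,ds$.

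The main obstacle is the arithmetic of exponents: the product of the factor $|Q|^{1/r-1/p}$, a volume factor $(2^i\ell(Q))^{n(1/r'-1/2)}$ arising from the passage from $L^2$ to $L^{r'}$, and the integrated exponential decay, must collapse to $(2^i\ell(Q))^{n/2-n/p}\,2^{-i\eps}$ with a strictly positive $\eps$. The condition $p>rn/(n+r)$ is precisely the threshold at which this works out. A further delicate point, particularly when $k$ is close to $M$, is controlling the factors $s^{k-1/2}$ from the integral representation of $L^{*,-k-1/2}$: they must be absorbed either against the exponential decay at small $s$, or against the polynomial decay coming from the vanishing moment of $\vec m$ at large $s$.
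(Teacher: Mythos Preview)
Your strategy is genuinely different from the paper's and, as written, has a real gap.  The paper does \emph{not} go through the $(\HPL,\eps,M)$-molecular characterization at all: it sets $f=L^{-1/2}h$, uses Corollary~\ref{c4.4} to replace $\|h\|_{\HPL}$ by $\|S_1\sqrt{L}f\|_{L^p}$ with $S_1$ built from $\psi(z)=\sqrt z\,e^{-z}$, and then invokes the \emph{Hardy--Sobolev} atomic decomposition of $f\in H^{1,p}$ (atoms $a$ supported in $Q$ with $\|\nabla a\|_{L^2}\le \ell(Q)^{n/2-n/p}$).  The key point is that $\nabla f=\sum\lambda_j\nabla a_j$ preserves the \emph{gradient} structure, so one is estimating $S_1\sqrt{L}\,a$ rather than $S_1\,T(\vec m)$ for a generic vector atom.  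The proof then bounds $\|S_1\sqrt{L}\,a\|_{L^p}$ directly via the $L^r$--$L^2$ off-diagonal estimates.

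Your route fails at the molecular verification.  An $(\HPL,\eps,M)$-molecule must lie in $\mathcal R(L^k)$ for $k=1,\dots,M$ with $M>\tfrac{n}{2}(\tfrac1p-\tfrac12)$, which in the range $p\le 1$ forces $M>n/4$.  But for a generic classical $H^p$-molecule $\vec m$ with a \emph{single} vanishing moment, $L^{-k}\tilde m=-L^{-k-1/2}\mathrm{div}(A\vec m)$ need not be in $L^2$ once $k$ is moderately large.  Already for $L=\Delta$ and an atom $\vec m$, one has $\widehat{\mathrm{div}\,\vec m}(\xi)=O(|\xi|^2)$ near $0$, hence $|\xi|^{-2k-1}\widehat{\mathrm{div}\,\vec m}(\xi)=O(|\xi|^{1-2k})$, which is in $L^2(\mathbb R^n)$ only when $k<(n+2)/4$.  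For $n\ge 4$ this excludes $k=M$ whenever $M>n/4$, so $\tilde m$ is \emph{not} an $\HPL$-molecule and Theorem~\ref{th4.1} cannot be invoked.  Relatedly, your claim that $\nabla L^{*,-k-1/2}$ is ``uniformly $L^2$-bounded'' is simply false for $k\ge 1$; the decomposition $\sqrt s\,\nabla e^{-sL^*}=(\nabla L^{*,-1/2})\psi(sL^*)$ controls the \emph{integrand}, but the $ds$-integral $\int_0^\infty s^{k-1}\sqrt s\,\nabla e^{-sL^*}g\,ds$ diverges at infinity.  The single moment of $\vec m$ buys at most one extra power of $s^{-1/2}$ at large $s$, which is not enough when $k\ge 2$.

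In short, decomposing $\nabla f$ as a generic $H^p$ vector function discards the curl-free structure that makes the argument work; the paper's use of $H^{1,p}$-atoms and a direct square-function estimate is not merely a stylistic choice but the mechanism that replaces the unattainable ``$\tilde m\in\mathcal R(L^M)$'' condition.
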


\noindent{\it Remark:}  Combining Propositions \ref{p5.2} and \ref{p5.3}, we therefore
obtain (\ref{eq1.16}), for $f\in L^2(\RR^n)$, and thus
by density, we obtain  (\ref{eq1.rieszchar}) in the case $p\leq 1$.

\medskip

\bp Let $h\in H^p_{L,Riesz}(\RR^n)\cap L^2(\RR^n)$, and set
$$f:=L^{-1/2}h.$$  
Since, in particular, $h\in L^2(\RR^n),$ we have that $f$ is well defined:
indeed, the solution of the Kato square root problem \cite{KatoMain} (cf. (\ref{eq1.4})),
implies that $f \in \dot{W}^{1,2}(\RR^n)$ (cf. (\ref{eq5.36*})). 

Let us denote
\begin{equation}\label{eq5.18}
S_1h(x):=\left(\dint_{\Gamma(x)}|t\sqrt Le^{-t^2L}h(y)|^2\,\frac
{dydt}{t^{n+1}}\right)^{1/2},\qquad x\in\RR^n.
\end{equation}

\noindent Then by Corollary~\ref{c4.4}
\begin{equation}\label{eq5.19}
\|S_1h\|_{L^p(\RR^n)}\approx
\|h\|_{H^p_L(\RR^n)},\qquad  0<p\leq 2.\Bk
\end{equation}
Hence, 
matters are reduced to proving the estimate
\begin{equation}\label{eq5.20} \|S_1\sqrt L
f\|_{L^{p}(\RR^n)}\leq C\|\nabla f\|_{H^p(\RR^n)},\quad \quad
\frac{rn}{n+r}< p\leq 1.
\end{equation}

Let us recall the ``Hardy-Sobolev" spaces
\begin{equation}\label{eq5.21}
H^{1,p}(\RR^n):=\{f\in {\mathcal S}'(\RR^n)/{\mathbb C}:\,\nabla f\in
H^p(\RR^n)\},
\end{equation}
where ${\mathcal S}'(\RR^n)/{\mathbb C}$ is the space of tempered
distributions modulo constants. The space
$H^{1,p}(\RR^n)$ may be identified with the corresponding Triebel-Lizorkin spaces (see, e.g., \cite{MeMi} or Section~\ref{s8.2} of the current paper),  and thus admits an atomic decomposition \cite{FrJa}.
Specifically, a
function $a$ satisfying
\begin{equation}\label{eq5.22}
{\rm supp}\,a\subset Q, \quad\|\nabla a\|_{L^2(\RR^n)}\leq
l(Q)^{n/2-n/p},
\end{equation}

\noindent  is called an $H^{1,p}$-atom, $n/(n+1)<p\leq 1$
(as  usual, \Bk for smaller $p$ one has to impose an extra
vanishing moment condition). Then
\begin{equation}\label{eq5.23}
H^{1,p}(\RR^n)=\left\{\sum_{i=0}^\infty
\lambda_ja_j:\,\{\lambda_j\}_{j=0}^\infty\in \ell^p \mbox{ and
}\,a_j \mbox{ are $H^{1,p}$-atoms}\right\},
\end{equation}

\noindent with the series understood in the sense of convergence
in $S'(\RR^n)/{\mathbb C}$, and
\begin{multline*}
\|f\|_{H^{1,p}(\RR^n)}\approx \\[4pt]
\inf\Bigl\{\Bigl(\sum_{j=0}^\infty|\lambda_j|^p\Bigr)^{1/p}:
\,\,f=\sum_{j=0}^\infty\lambda_j m_j,\,\,
\{\lambda_j\}_{j=0}^\infty \in\ell^p \mbox{ and }a_j\textrm{ are
$H^{1,p}$-atoms} \Bigr\}.
\end{multline*}

\noindent We now  claim that it is enough to show that
\begin{equation}\label{eq5.24} \|S_1\!\sqrt L\,
a\|_{L^{p}(\RR^n)}\leq C,\quad {\mbox{for every $H^{1,p}$-atom
$a$}}, \quad p>\frac{rn}{n+r},\quad p\leq 1,
\end{equation}

\noindent where $C$ is a constant not depending on $a$. 
To see that (\ref{eq5.24}) suffices to obtain the conclusion of the Proposition, we proceed as follows.
We note that in the standard constructive tent space proof of the atomic decomposition of
$H^{1,p}$, one obtains, much as in the proof of Step 2 of Theorem \ref{th4.1} above, that for
$f$ in the dense subspace $\dot{W}^{1,2}(\RR^n)\cap H^{1,p}(\RR^n),$
there is a decomposition $f =\sum \lambda_j a_j$, converging in $\dot{W}^{1,2}(\RR^n)$,
where each $a_j$ is an
$H^{1,p}$-atom, and where
$$\sum|\lambda_j|^p \lesssim \|\nabla f\|^p_{H^{p}(\RR^n)}.$$
By the solution of the Kato square root problem \cite{KatoMain} (cf. (\ref{eq1.4})), and the $L^2$ boundedness of the square function $S_1$, we have that
$$S_1\!\sqrt{L}: \dot{W}^{1,2}(\RR^n) \to L^2 (\RR^n),$$ so 
using the $\dot{W}^{1,2}$ convergence of the
atomic sum, we obtain that pointwise $a.e.,$
$$S_1\sqrt{L}\,f \leq \sum |\lambda_j| \,\,S_1\!\sqrt{L}\,a_j\,.$$
Thus, (\ref{eq5.24}) implies (\ref{eq5.20}).

It remains to prove (\ref{eq5.24}). For $j\in\NN\cup\{0\}$ let
${\mathcal R}(S_j(Q)):=\bigcup_{x\in S_j(Q)}\Gamma(x)$ be a
saw-tooth region based on $S_j(Q)\subset\RR^n$. Then
\begin{eqnarray}\label{eq5.26}
&&\|S_1\sqrt L a\|^p_{L^{p}(\RR^n)}\leq \sum_{j=0}^\infty
(2^jl(Q))^{n\left(1-\frac
p2\right)}\left(\int_{S_j(Q)}\dint_{\Gamma(x)}|tLe^{-t^2L}a(y)|^2\frac{dydt}{t^{n+1}}\,dx\right)^{\frac
p2}\nonumber\\[4pt]
&&\qquad \leq C \sum_{j=3}^\infty (2^jl(Q))^{n\left(1-\frac
p2\right)}\left(\dint_{{\mathcal
R}(S_j(Q))}|tLe^{-t^2L}a(y)|^2\frac{dydt}{t}\right)^{\frac
p2}\nonumber\\[4pt]
&&\qquad\quad + \,C l(Q)^{n\left(1-\frac
p2\right)}\|S_1\sqrt La\|_{L^2(4Q)}^p \nonumber\\[4pt]
&&\qquad \leq C \sum_{j=3}^\infty (2^jl(Q))^{n\left(1-\frac
p2\right)}\left(\int_{\RR^n\setminus
2^{j-2}Q}\int_0^\infty|t^2Le^{-t^2L}a(y)|^2\frac{dydt}{t^3}\right)^{\frac
p2}\nonumber\\[4pt]
&&\qquad\quad  + \,C \sum_{j=3}^\infty (2^jl(Q))^{n\left(1-\frac
p2\right)}\left(\int_{2^{j-2}Q}\int_{c2^jl(Q)}^\infty|t^2Le^{-t^2L}a(y)|^2\frac{dydt}{t^3}\right)^{\frac
p2}\nonumber\\[4pt]
&&\qquad\quad  + \,C l(Q)^{n\left(1-\frac p2\right)}\|S_1\sqrt
La\|_{L^2(4Q)}^p=: I+II+III.
\end{eqnarray}

\noindent Then, since $S_1$ is bounded in $L^2(\RR^n)$,
\begin{equation}\label{eq5.27} III\leq C l(Q)^{n\left(1-\frac
p2\right)}\|\sqrt La\|^p_{L^2(\RR^n)}\leq C l(Q)^{n\left(1-\frac
p2\right)}\|\nabla a\|^p_{L^2(\RR^n)} \leq C.
\end{equation}

\noindent Going further, observe that
\begin{equation}\label{eq5.28}
\|a\|_{L^2(Q)}\leq l(Q)^{n/2-n/p+1},
\end{equation}

\noindent for every $H^{1,p}$ - atom $a$ by (\ref{eq5.22}),
Sobolev inequality and H\"older inequality. Then Lemma~\ref{l2.6},
(\ref{eq5.28}) and another application of H\"older inequality
imply that
\begin{eqnarray}\label{eq5.29} II&\leq & C \sum_{j=3}^\infty (2^jl(Q))^{n\left(1-\frac
p2\right)}\left(\int_{c2^jl(Q)}^\infty t^{2\left(\frac n2-\frac
nr\right)} \frac{dt}{t^3}\right)^{\frac
p2}\|a\|_{L^r(Q)}^p\nonumber\\[4pt]
&\leq & C \sum_{j=3}^\infty (2^jl(Q))^{n\left(1-\frac p2\right)}
(2^jl(Q))^{p\left(\frac n2-\frac nr-1\right)} \|a\|_{L^r(Q)}^p\leq
C,
\end{eqnarray}

\noindent provided $p>\frac{rn}{n+r}$.

Finally, in order to handle $I$, we split the integral in $t$ into
two parts, corresponding to $0<t<2^jl(Q)$ and $t\geq 2^jl(Q)$,
respectively. The second part can be estimated closely following
the argument in (\ref{eq5.29}). As for the first one,
\begin{eqnarray}\label{eq5.30} &&\sum_{j=3}^\infty (2^jl(Q))^{n\left(1-\frac
p2\right)}\left(\int_{\RR^n\setminus 2^{j-2}Q}\int_0^{2^jl(Q)}
|t^2Le^{-t^2L}a(y)|^2\frac{dydt}{t^3}\right)^{\frac
p2}\nonumber\\[4pt]
&&\qquad \leq C\sum_{j=3}^\infty (2^jl(Q))^{n\left(1-\frac
p2\right)}\left(\int_0^{2^jl(Q)} t^{2\left(\frac n2-\frac
nr\right)}\,e^{-\frac{(2^jl(Q))^2}{ct^2}}
\frac{dt}{t^3}\right)^{\frac p2}\,\|a\|_{L^r(\RR^n)}^p\leq C,
\end{eqnarray}

\noindent using $L^r-L^2$ off-diagonal estimates.  This completes the proof.\ep

Now we turn to the case $p>1$.

\begin{proposition}\label{p5.4}
The Riesz transform of the operator $L$ satisfies
\begin{equation}\label{eq5.31}
\nabla L^{-1/2}:H^p_L(\RR^n)\longrightarrow L^p(\RR^n)\qquad
\mbox{for}\qquad 1<p<2+\eps(L).
\end{equation}
\end{proposition}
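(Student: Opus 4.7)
\noindent \emph{Proof proposal for Proposition \ref{p5.4}.} The plan is to split the range $1<p<2+\eps(L)$ according to whether or not $H^p_L$ coincides with $L^p$, and to bridge any gap by complex interpolation using Lemma \ref{l4.5} and Proposition \ref{p5.2}.

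First, for $p$ in the range $\max\{1,p_-(L)\}<p<2+\eps(L)$, the identification $H^p_L(\RR^n)=L^p(\RR^n)$ from (\ref{eq1.13}) (which applies since $p<2+\eps(L)<p_+(L)$) reduces the claim to the known $L^p$ boundedness of the Riesz transform, namely (\ref{eq5.1}). Thus in the case $p_-(L)\leq 1$ there is nothing left to do, and we may assume henceforth that $p_-(L)>1$ so that the remaining range is $1<p\leq p_-(L)$.

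For this remaining range I would invoke complex interpolation between two endpoints. At the lower endpoint, Proposition \ref{p5.2} with $p=1$ yields a bounded extension
\begin{equation*}
\nabla L^{-1/2}:H^1_L(\RR^n)\longrightarrow H^1(\RR^n)\hookrightarrow L^1(\RR^n),
\end{equation*}
where the last inclusion is the standard embedding of classical $H^1$ into $L^1$. At the upper endpoint, pick any $p_0$ with $\max\{p_-(L),2\}<p_0<2+\eps(L)$; then $H^{p_0}_L(\RR^n)=L^{p_0}(\RR^n)$ by (\ref{eq1.13}), and the Riesz transform is bounded on $L^{p_0}$ by (\ref{eq5.1}), giving
\begin{equation*}
\nabla L^{-1/2}:H^{p_0}_L(\RR^n)\longrightarrow L^{p_0}(\RR^n).
\end{equation*}

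Since $\nabla L^{-1/2}$ is a single linear operator (densely defined on $L^2$), complex interpolation applies. By Lemma \ref{l4.5},
\begin{equation*}
\bigl[H^1_L(\RR^n),H^{p_0}_L(\RR^n)\bigr]_\theta=H^p_L(\RR^n),\qquad \tfrac{1}{p}=1-\theta+\tfrac{\theta}{p_0},
\end{equation*}
while $[L^1(\RR^n),L^{p_0}(\RR^n)]_\theta=L^p(\RR^n)$ is classical. Hence $\nabla L^{-1/2}$ extends boundedly from $H^p_L(\RR^n)$ to $L^p(\RR^n)$ for every $p\in(1,p_0)$, which covers the remaining range $1<p\leq p_-(L)$ and completes the proof.

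The only subtlety I anticipate is ensuring that the complex interpolation method is applicable in our setting; this was addressed in Lemma \ref{l4.5}, where it is shown that the $H^p_L$ scale fits into the analytically convex framework of \cite{KaMi,KMM}, so the classical Calder\'on complex interpolation of linear operators carries over directly. With that in hand the argument above is straightforward, and no delicate kernel estimates are needed beyond those already established in Propositions \ref{p5.2} and the background result (\ref{eq5.1}).
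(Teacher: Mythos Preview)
Your proposal is correct and follows essentially the same route as the paper: use the identification $H^p_L=L^p$ together with \eqref{eq5.1} for $p_-(L)<p<2+\eps(L)$, and then fill in $1<p\le p_-(L)$ by complex interpolation (Lemma~\ref{l4.5}) between the $p=1$ case of Proposition~\ref{p5.2} (composed with $H^1\hookrightarrow L^1$) and a point $p_0>p_-(L)$. The only cosmetic remark is that since $p_-(L)<2n/(n+2)\le 2$, your condition $\max\{p_-(L),2\}<p_0$ simply reads $2<p_0$.
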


\bp Since $p_+(L)\geq 2+\eps(L)$ (see \cite{AuscherSurvey}, Theorem~4.1 combined with \S{3.4}), the property (\ref{eq5.1}) and (\ref{eq1.13}) (proved in Proposition~\ref{embed} below) yield (\ref{eq5.31}) for
$p_-(L)<p<2+\eps(L)$. Then the full range of $p$ in (\ref{eq5.31}) can
be achieved by interpolation (Lemma~\ref{l4.5}) with the result
of Proposition~\ref{p5.2}. \ep

\begin{proposition}\label{p5.5}
Let $1<r\leq 2$ be such that the family $\{e^{-tL}\}_{t>0}$
satisfies $L^r-L^2$ off-diagonal estimates.  Then for all $p$ satisfying
$\max\left\{1,\frac{rn}{n+r}\right\}<p<p_+(L)$,
\begin{equation}\label{eq5.32}
\|h\|_{H^p_L(\RR^n)}\leq C\|\nabla L^{-1/2}h\|_{L^p(\RR^n)},
\end{equation}

\noindent for every $h\in L^2(\RR^n)\cap H^p_{L,Riesz}(\RR^n)$.

In particular, (\ref{eq5.32}) holds for every $p$ such that
$\max\left\{1,\frac{p_-(L)n}{n+p_-(L)}\right\}<p<2+\eps(L)$.
\end{proposition}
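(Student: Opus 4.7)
The plan is to combine the $L^p$-Kato estimate (\ref{eq1.Kp}) with Proposition \ref{p5.3} via complex interpolation, splitting the range $\max\{1, rn/(n+r)\} < p < p_+(L)$ into two subregions according to the position of $p$ relative to $p_-(L)$.

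For $p \in (\max\{1, p_-(L)\}, p_+(L))$, the identification $H^p_L(\RR^n) = L^p(\RR^n)$ from (\ref{eq1.13}) reduces the claim to $\|h\|_{L^p(\RR^n)} \lesssim \|\nabla L^{-1/2} h\|_{L^p(\RR^n)}$. Setting $f := L^{-1/2} h \in \dot{W}^{1,2}(\RR^n)$, this is $\|L^{1/2} f\|_{L^p(\RR^n)} \lesssim \|\nabla f\|_{L^p(\RR^n)}$, which is precisely the $L^p$-Kato inequality (\ref{eq1.Kp}), valid for $(p_-(L))_* < p < (p_-(L^*))' = p_+(L)$. Since the hypothesis that $\{e^{-tL}\}$ satisfies $L^r$-$L^2$ off-diagonal estimates forces $r > p_-(L)$ by Lemma \ref{l2.5}, one has $rn/(n+r) > (p_-(L))_*$, so (\ref{eq1.Kp}) is available throughout this subregion.

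For the remaining range $p \in (\max\{1, rn/(n+r)\}, p_-(L)]$, which is non-empty only when $p_-(L) > 1$, I would apply complex interpolation with Proposition \ref{p5.3} serving as the lower endpoint. Fix $p_0 \in (rn/(n+r), 1]$; Proposition \ref{p5.3} provides $\|h\|_{H^{p_0}_L(\RR^n)} \lesssim \|\nabla L^{-1/2} h\|_{H^{p_0}(\RR^n)}$ for $h \in L^2 \cap H^{p_0}_{L,\mathrm{Riesz}}(\RR^n)$. At the opposite endpoint $p = 2$, the Kato square root estimate (\ref{eq1.4}) yields $\|h\|_{H^2_L(\RR^n)} = \|h\|_{L^2(\RR^n)} \approx \|\nabla L^{-1/2} h\|_{L^2(\RR^n)}$. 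The target spaces $H^{p_0}_L$ and $L^2 = H^2_L$ interpolate by Lemma \ref{l4.5} to $H^p_L$ for intermediate $p$, while the source spaces $H^{p_0}$ and $L^2$ interpolate classically to $L^p$ for $1 < p < 2$; interpolating the identity map $h \mapsto h$, viewed as a bounded operator from the Riesz-transform Hardy space to the intrinsic Hardy space, then yields the desired bound on the intermediate range.

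The main obstacle is rigorously identifying the interpolation space $[H^{p_0}_{L,\mathrm{Riesz}}(\RR^n), L^2(\RR^n)]_\theta$ with $H^p_{L,\mathrm{Riesz}}(\RR^n)$. Since the latter is defined as the completion of $\{h \in L^2 : \nabla L^{-1/2} h \in L^p\}$ under the norm $\|\nabla L^{-1/2} \cdot\|_{L^p}$, it is isometric under the Riesz transform to a closed subspace of $L^p$ (resp. $H^p$); the retract/section structure provided by $\nabla L^{-1/2}$ being bounded on $L^q$ for $q \in (p_-(L), 2]$, together with the Kato lower bound, should allow identification of the interpolated subspace correctly. An alternative route bypassing this subtlety is a duality argument valid on the subrange $(2+\eps(L^*))' < p < p_+(L)$: using $(H^p_L)^* = H^{p'}_{L^*}$, the integration by parts identity $\langle h, g\rangle = \langle A \nabla L^{-1/2} h, \nabla (L^*)^{-1/2} g\rangle$ (obtained from $L^{1/2}(L^*)^{1/2} = L$ on the appropriate domain), together with H\"older's inequality and Proposition \ref{p5.4} applied to $L^*$ at exponent $p'$, yields the estimate directly; the gap $p \in (\max\{1, rn/(n+r)\}, (2+\eps(L^*))']$ is then closed by interpolation with Proposition \ref{p5.3}.
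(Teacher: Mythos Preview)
Your strategy is genuinely different from the paper's, and it has a gap.

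\textbf{The paper's approach.} The paper does \emph{not} interpolate against Proposition~\ref{p5.3}. Instead, after disposing of $p\geq 2$ by duality (essentially your first paragraph), it proves a \emph{weak-type} endpoint bound
\[
S_1\sqrt{L}:\dot W^{1,p}(\RR^n)\longrightarrow L^{p,\infty}(\RR^n),\qquad p=\max\Big\{1,\tfrac{rn}{n+r}\Big\},
\]
directly, using Auscher's Calder\'on--Zygmund decomposition of Sobolev functions (Lemma~\ref{l5.6}). The good part is handled by Kato in $L^2$; the bad part is split by scale and controlled via the $L^r$--$L^2$ off-diagonal estimates and Kolmogorov's lemma. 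Real interpolation with the strong $L^2$ bound then gives the whole range $\max\{1,rn/(n+r)\}<p<2$.

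\textbf{The gap in your argument.} Your interpolation with Proposition~\ref{p5.3} requires a base point $p_0\in\big(\tfrac{rn}{n+r},1\big]$. Such $p_0$ exists only when $\tfrac{rn}{n+r}<1$, i.e.\ $r<\tfrac{n}{n-1}$. But the proposition is stated for an arbitrary $r\in(1,2]$ satisfying the $L^r$--$L^2$ hypothesis, and when $r\geq \tfrac{n}{n-1}$ one has $\max\{1,\tfrac{rn}{n+r}\}=\tfrac{rn}{n+r}\geq 1$, so Proposition~\ref{p5.3} furnishes no endpoint at all. In that regime the subinterval $\big(\tfrac{rn}{n+r},\,p_-(L)\big]$ can be nonempty (this happens precisely when $p_-(L)<r<(p_-(L))^*$; e.g.\ $n=5$, $p_-(L)=1.4$, $r=1.5$ gives $\tfrac{rn}{n+r}\approx 1.15<1.4$), and neither your Kato-on-$L^p$ argument (which needs $p>p_-(L)$ for $H^p_L=L^p$) nor your duality alternative (which only reaches $p>(2+\eps(L^*))'$, possibly close to $2$) covers it. Your proposed closure ``by interpolation with Proposition~\ref{p5.3}'' is exactly the step that is unavailable here.

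\textbf{What each approach buys.} When $\tfrac{rn}{n+r}<1$ your interpolation idea is clean and essentially reduces to the tent-space/Triebel--Lizorkin interpolation already present in the paper (Lemma~\ref{l4.5}, Lemma~\ref{l8.5}), once one linearizes $S_1\sqrt L$ as $f\mapsto\big((y,t)\mapsto tLe^{-t^2L}f(y)\big)$ into $T^p$; the acknowledged subtlety about $[H^{p_0}_{L,\mathrm{Riesz}},L^2]_\theta$ then dissolves. The paper's weak-type argument is heavier but uniform in $r$: it produces the sharp lower threshold $\max\{1,\tfrac{rn}{n+r}\}$ directly, without needing any $p_0\leq 1$ input, and this is what your proof is missing.
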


\noindent {\it Remark.} This Proposition is a sharpened version of
\cite{AuscherSurvey}, Proposition~4.10:  in the latter, the left hand side of
(\ref{eq5.32}) is replaced by the $L^p$ norm.  Our proof is based on the 
circle of ideas developed in \cite{AuscherSurvey}, 
but the estimates we seek
are somewhat more delicate, since $H^p_L$ is ``strictly smaller" than $L^p$
(in the sense of Proposition \ref{embed} (ii) below)
in the range $1<p\leq p_-(L) $. 

\vskip 0.08in

\bp {\bf Step I.}  By (\ref{eq5.1}) applied to $L^*$, and a standard duality argument
we deduce that 
\begin{equation}\nonumber
\|\sqrt L \,g\|_{L^{p'}(\RR^n)}\leq C\|\nabla g\|_{L^{p'}(\RR^n)}, \quad
\frac 1p+\frac 1{p'}=1,
\end{equation}

\noindent for $p_-(L^*)<p<2+\eps(L^*)$, and hence, 
using the fact that $\left(p_-(L^*)\right)' = p_+(L)$, we have
\begin{equation}\label{eq5.33}
\|h\|_{L^p(\RR^n)}\leq C\|\nabla L^{-1/2}h\|_{L^p(\RR^n)}, \quad
2\leq p<p_+(L))\,,
\end{equation}

\noindent in which range of $p$ we have $\HPL = L^p(\RR^n)$ (cf. Appendix, Section \ref{s9}).
Therefore we may suppose that $p<2$. 

We claim that it is enough to show that for each $r$ as above,
\begin{equation}\label{eq5.34}
S_1\sqrt L: \dot W^{1,p}(\RR^n)\longrightarrow
L^{p,\infty}(\RR^n),\quad \quad
p=p(n,r):=\max\left\{1,\frac{rn}{n+r}\right\},
\end{equation}

\noindent 
because, given (\ref{eq5.34}), the desired estimate (\ref{eq5.32}), for the
range $p(n,r)<p<2$, follows by interpolation with (\ref{eq5.33}).
More precisely, setting $f:= L^{-1/2}h$, by 
(\ref{eq5.33}) and the boundedness of $S_1$ in $L^2$,
we have, in particular, that
\begin{equation}\label{eq5.37}
\|S_1\sqrt L f\|_{L^{2}(\RR^n)}\leq C \|f\|_{\dot{W}^{1,2}(\RR^n)}
\,.
\end{equation}
Thus, interpolating between the latter estimate and (\ref{eq5.34}), we obtain
\begin{equation}\label{eq5.38}
S_1\sqrt L: \dot W^{1,p}(\RR^n)\longrightarrow L^{p}(\RR^n)\quad
\mbox{ whenever
}\quad\max\left\{1,\frac{rn}{n+r}\right\}<p<2,
\end{equation}

\noindent and this is equivalent to $(\ref{eq5.32})$, in the remaining case $p(n,r)<p<2$.

Hence, it remains only to prove (\ref{eq5.34}), i.e., we shall show that
\begin{equation}\label{eq5.39}
\left|\left\{x\in\RR^n:\,S_1\sqrt L f(x)>\alpha\right\}\right|\leq
\frac {C}{\alpha^p}\int_{\RR^n}|\nabla
f(y)|^p\,dy,\qquad\forall\,\alpha>0,
\end{equation}

\noindent for $p$ as in (\ref{eq5.34}), where by density we may suppose that $f\in C^\infty_0$.

Our proof is based on the use of a ``Calder\'{o}n-Zygmund type" decomposition of Sobolev spaces taken from \cite{AuscherSurvey}, where it was used to establish an analogue of (\ref{eq5.39}), but for $\sqrt{L}$ rather than for $S_1 \sqrt{L}$.

\begin{lemma}\label{l5.6} (\cite{AuscherSurvey}) Suppose $n\geq 1$, $1\leq
p< \infty$ and $f\in \dot{W}^{1,p}(\RR^n)$. Then for every $\alpha>0$ there exists a
collection of cubes $\{Q_i\}_{i\in\ZZ}$ with finite overlap,  a
\Bk function $g$ and a family of functions $\{b_i\}_{i\in\ZZ}$
satisfying
\begin{eqnarray}\label{eq5.40}
&&  {\rm
supp}\,b_i\subset Q_i, \qquad \|\nabla b_i\|_{L^p(\RR^n)}\leq
C\alpha |Q_i|^{1/p}, \qquad \forall\,i\in\ZZ,\\[4pt]\label{eq5.41}
&& \|\nabla
g\|_{L^p(\RR^n)}\leq C \|\nabla f\|_{L^p(\RR^n)}, \quad \|\nabla g\|_{L^\infty(\RR^n)}\leq C\alpha,
\end{eqnarray}

\noindent such that $f$ can be represented in the form
\begin{equation}\label{eq5.42}
f=g+\sum_{i\in\ZZ}b_i,\quad\mbox{with}\quad
\sum_{i\in\ZZ}|Q_i|\leq C\alpha^{-p}\|\nabla f\|_{L^p(\RR^n)}^p.
\end{equation}

\end{lemma}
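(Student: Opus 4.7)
The plan is to adapt the classical Calder\'on--Zygmund decomposition to the Sobolev setting by decomposing at the level of the gradient, following the standard strategy of Auscher--Coulhon--Duong--Hofmann and using a Whitney covering together with a smooth partition of unity. Concretely, given $f\in\dot{W}^{1,p}(\RR^n)$ and $\alpha>0$, I would first introduce the ``bad set''
\begin{equation*}
\Omega:=\bigl\{x\in\RR^n:\,\mathcal{M}(|\nabla f|^p)(x)>\alpha^p\bigr\},
\end{equation*}
where $\mathcal{M}$ is the Hardy--Littlewood maximal operator. By the weak-type $(1,1)$ bound, $|\Omega|\leq C\alpha^{-p}\|\nabla f\|_{L^p(\RR^n)}^p$, which already gives the last estimate in (\ref{eq5.42}). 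If $\Omega=\emptyset$, set $g=f$ and we are done; otherwise $\Omega$ is open and proper, so I take a Whitney decomposition $\{Q_i\}_{i\in\ZZ}$ of $\Omega$ with $\ell(Q_i)\approx \operatorname{dist}(Q_i,\RR^n\setminus\Omega)$ and bounded overlap, then fix a smooth partition of unity $\{\chi_i\}$ subordinate to a mild dilation of $\{Q_i\}$, with $\|\nabla\chi_i\|_\infty\lesssim \ell(Q_i)^{-1}$ and $\sum_i \chi_i=\mathbf{1}_{\Omega}$.

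Next I would define the bad pieces and good part by
\begin{equation*}
b_i:=(f-c_i)\chi_i,\qquad c_i:=\frac{1}{|Q_i|}\int_{Q_i} f,\qquad g:=f-\sum_{i\in\ZZ} b_i,
\end{equation*}
so that $\operatorname{supp}b_i\subset Q_i$ (after the mild dilation is absorbed into the indexing). For the bad-piece estimate I would write $\nabla b_i=\chi_i\nabla f+(f-c_i)\nabla\chi_i$ and use (i) the Whitney property, which supplies a point $y_i$ adjacent to $Q_i$ with $\mathcal{M}(|\nabla f|^p)(y_i)\leq \alpha^p$, to bound $\|\nabla f\|_{L^p(Q_i)}\lesssim \alpha|Q_i|^{1/p}$, and (ii) the $L^p$-Poincar\'e inequality, $\|f-c_i\|_{L^p(Q_i)}\lesssim \ell(Q_i)\|\nabla f\|_{L^p(Q_i)}$, combined with $\|\nabla\chi_i\|_\infty\lesssim\ell(Q_i)^{-1}$.

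The main subtlety is the $L^\infty$ bound $\|\nabla g\|_{L^\infty(\RR^n)}\leq C\alpha$. On $\RR^n\setminus\Omega$ the sum $\sum b_i$ vanishes, so $g=f$ and by Lebesgue differentiation $|\nabla f|\leq \mathcal{M}(|\nabla f|^p)^{1/p}\leq \alpha$ a.e. On $\Omega$ one has $g=\sum_i c_i\chi_i$, and differentiating gives $\nabla g=\sum_i c_i\nabla\chi_i$; since $\sum_i\nabla\chi_i=0$ on $\Omega$, this can be rewritten as $\nabla g=\sum_i (c_i-c_{i_0})\nabla\chi_i$ for any fixed reference index $i_0$ with $Q_{i_0}$ near a given point. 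The key geometric observation is that for two neighboring Whitney cubes $Q_i,Q_j$ (the only pairs that interact at a point), $|c_i-c_j|\lesssim \alpha\,\ell(Q_i)$, which one proves by a telescoping Poincar\'e argument through a chain of cubes linking $Q_i$ and $Q_j$ to an adjacent point $y\notin\Omega$ where the $L^p$ average of $|\nabla f|$ is controlled by $\alpha$. Combined with the bounded overlap and $\|\nabla\chi_i\|_\infty\lesssim\ell(Q_i)^{-1}$, this yields the desired $\|\nabla g\|_{L^\infty}\lesssim \alpha$.

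Finally, the $L^p$ bound on $\nabla g$ follows by combining the pointwise bound $|\nabla g|\leq |\nabla f|$ on $\Omega^c$ and the uniform $L^\infty$ bound on $\Omega$ together with $|\Omega|\lesssim \alpha^{-p}\|\nabla f\|_{L^p}^p$; interpolating trivially gives $\|\nabla g\|_{L^p}\lesssim \|\nabla f\|_{L^p}$. Convergence of $\sum b_i$ (e.g.\ in $\dot{W}^{1,p}$) is automatic from the bounded overlap of the $Q_i$ and the per-cube estimate on $\|\nabla b_i\|_{L^p}$. The hard step is unquestionably the $L^\infty$ control of $\nabla g$ on $\Omega$, since it requires the chain-of-Whitney-cubes argument together with Poincar\'e on each link to transfer the off-$\Omega$ gradient bound of $f$ into a uniform bound for the differences $c_i-c_j$.
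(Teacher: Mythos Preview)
The paper does not give its own proof of this lemma; it is quoted from \cite{AuscherSurvey} and used as a black box. Your sketch reproduces the standard argument from that reference (the Calder\'on--Zygmund decomposition at the gradient level via a Whitney covering of $\{\mathcal{M}(|\nabla f|^p)>\alpha^p\}$ and a smooth partition of unity), and the outline is correct. One minor remark: for the estimate $|c_i-c_j|\lesssim\alpha\,\ell(Q_i)$ between \emph{neighboring} Whitney cubes you do not actually need a chain argument---a single application of Poincar\'e on a common dilate $Q^*\supset Q_i\cup Q_j$, together with the Whitney geometry providing a nearby point $y\notin\Omega$ so that $\|\nabla f\|_{L^p(Q^*)}\lesssim\alpha|Q^*|^{1/p}$, already suffices.
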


Returning to (\ref{eq5.39}) we can write using the lemma above
\begin{eqnarray}\nonumber
S_1\sqrt L f(x)&\leq &S_1\sqrt L
g(x)+\left(\dint_{\Gamma(x)}\Bigl|\sum_{i\in\ZZ} t Le^{-t^2L}
b_i(y)\chi_{(0,l(Q_i))}(t)\Bigr|^2\,\frac
{dydt}{t^{n+1}}\right)^{1/2}\\[4pt]
&&\quad +\left(\dint_{\Gamma(x)}\Bigl|\sum_{i\in\ZZ} tLe^{-t^2L}
b_i(y)\chi_{[l(Q_i), \infty)}(t)\Bigr|^2\,\frac
{dydt}{t^{n+1}}\right)^{1/2}\nonumber\\[4pt]\nonumber
&\leq &S_1\sqrt L
g(x)+\sum_{i\in\ZZ}\left(\dint_{\stackrel{t<l(Q_i)}{|x-y|<t}}|t
Le^{-t^2L}b_i(y)|^2\,\frac
{dydt}{t^{n+1}}\right)^{1/2}\\[4pt]
&&\quad
+\left(\dint_{\Gamma(x)}\Bigl|t^2Le^{-t^2L}\sum_{i\in\ZZ}\frac{b_i(y)}{l(Q_i)}\,\Bigr|^2\,\frac
{dydt}{t^{n+1}}\right)^{1/2}\nonumber\\[4pt]
& =: & I_0(x)+I_1(x)+I_2(x),\label{eq5.43}
\end{eqnarray}

\noindent for all $x\in\RR^n$. Let us assign now
\begin{eqnarray*}\nonumber
&& A_l:=\left\{x\in\RR^n:\,I_l(x)>\alpha/3\right\},\qquad l=0,1,2,
\end{eqnarray*}

\noindent so that
\begin{equation}\label{eq5.44}
\left|\left\{x\in\RR^n:\,S_1\sqrt L f(x)>\alpha\right\}\right|\leq
|A_0|+|A_1|+|A_2|.
\end{equation}

\vskip 0.08 in \noindent {\bf Step II.} Consider $A_0$. By
Chebyshev's inequality
\begin{equation}\label{eq5.45}
|A_0|\leq \frac{C}{\alpha^2}\int_{\RR^n}\left|S_1 \sqrt L
g(x)\right|^2\,dx\leq \frac{C}{\alpha^2}\int_{\RR^n}\left|\nabla
g(x)\right|^2\,dx
\end{equation}

\noindent where for the last estimate we used boundedness of $S_1$
in $L^2(\RR^n)$ and the Kato square root estimate (\cite{KatoMain}).
Combining the two statements in (\ref{eq5.41}), we obtain that the expression in (\ref{eq5.45}) is bounded by
$C\alpha^{-p} \|\nabla f\|_{L^p(\RR^n)}^p$, as desired.

\vskip 0.08 in \noindent {\bf Step III.} The contribution from
$A_2$ can be estimated as follows. By Chebyshev's inequality
\begin{eqnarray}\label{eq5.46}
|A_2|&\leq & \frac{C}{\alpha^{r}}\int_{\RR^n}\Bigl|S
\Bigl(\sum_{i\in\ZZ}\frac{b_i(y)}{l(Q_i)}\Bigr)\Bigr|^{r}\,dx,
\end{eqnarray}

\noindent with $S$ as in (\ref{eq1.8}). On the other hand, the
$L^{r}-L^2$ off-diagonal estimates for the heat semigroup imply
that $S$ is bounded in $L^{r}(\RR^n)$  (see, e.g.,
\cite{AuscherSurvey}, Theorem~6.1,  for an analogous result in the case of
vertical square function and \cite{HM}). Therefore, by H\"older's
inequality for sequences
\begin{equation}
|A_2|\leq  \frac{C}{\alpha^{r}}
\Bigg\|\sum_{i\in\ZZ}\frac{|b_i|}{l(Q_i)}\Bigg\|_{L^{r}(\RR^n)}^{r}
\leq \frac{C}{\alpha^{r}}
\Bigg\|\Bigl(\sum_{i\in\ZZ}\frac{|b_i|^{r}}{l(Q_i)^{r}}\Bigr)^{1/r}
\Bigl(\sum_{i\in\ZZ}\chi_{Q_i}\Bigr)^{1-1/r}\Bigg\|_{L^{r}(\RR^n)}^{r}.
\label{eq5.47}
\end{equation}

\noindent Now we recall that the cubes $\{Q_i\}_{i\in\ZZ}$ have
finite overlap, i.e. there exists some fixed constant $C$ such
that $\sum_{i\in\ZZ}\chi_{Q_i}(x)\leq C$ for all $x\in\RR^n$. This
implies that
\begin{equation}
|A_2| \leq \frac{C}{\alpha^{r}}
\int_{\RR^n}\sum_{i\in\ZZ}\frac{|b_i|^{r}}{l(Q_i)^{r}}\,dx.
\label{eq5.48}
\end{equation}

When $p=\frac{rn}{n+r}$ we deduce from (\ref{eq5.40}) and
Poincar\'e's inequality that
\begin{equation}\label{eq5.49}
\|b_i\|_{L^{r}(\RR^n)}\leq C \|\nabla b_i\|_{L^{p}(\RR^n)}\leq
C\alpha |Q_i|^{1/p}=C\alpha |l(Q_i)|^{1+n/r}.
\end{equation}

\noindent When $p=1>\frac{rn}{n+r}$, by H\"older's inequality
\begin{equation}\label{eq5.50}
\|b_i\|_{L^{r}(Q_i)}\leq C |Q_i|^{\frac 1r- \frac {n-1}{n}}
\|b_i\|_{L^{\frac{n}{n-1}}(Q_i)}\leq C |Q_i|^{\frac 1r- \frac
{n-1}{n}}\|\nabla b_i\|_{L^{1}(Q_i)}\leq C\alpha |l(Q_i)|^{1+n/r}.
\end{equation}

\noindent Hence, in any case,
\begin{equation}\label{eq5.51}
|A_2| \leq C\sum_{i\in\ZZ}|Q_i| \leq C\alpha^{-p}\|\nabla
f\|_{L^p(\RR^n)}^p.
\end{equation}

\vskip 0.08 in \noindent {\bf Step 4.} We now proceed to estimate
$|A_1|$. The argument here resonates with that in \cite{AuscherCoulhon}, Section~1.2. 
 For each function $v$, define
\begin{equation}\label{eq5.52}
T_iv(x):= \left(\dint_{\stackrel{t<l(Q_i)}{|x-y|<t}}|t
Le^{-t^2L}v(y)|^2\,\frac {dydt}{t^{n+1}}\right)^{1/2}, \quad
i\in\ZZ, \quad x\in\RR^n.
\end{equation} \Bk

\noindent Then
\begin{eqnarray}\nonumber
|A_1|&\leq& \sum_{i\in\ZZ}|4Q_i|+ \left\{x\in\RR^n\setminus
\cup_{i\in\ZZ} 4Q_i:
\,\left|\sum_{i\in\ZZ}T_ib_i(x)\right|>\alpha/3\right\}\\[4pt]
&\leq & \frac{C}{\alpha^{p}}\,\|\nabla f\|_{L^p(\RR^n)}^p +
\frac{C}{\alpha^2}\int_{\RR^n}
\Bigl|\sum_{i\in\ZZ}T_ib_i(x)\chi_{\RR^n\setminus
4Q_i}(x)\Bigr|^2\,dx. \label{eq5.53}
\end{eqnarray}

\noindent The second term above (referred to as $\widetilde T$
later on) is bounded by
\begin{equation}
\frac{C}{\alpha^2}\left(\sum_{i\in\ZZ}\int_{\RR^n\setminus 4Q_i}
T_ib_i(x)u(x)\,dx \right)^2, \label{eq5.54}
\end{equation}

\noindent for some $u\in L^2(\RR^n)$ such that
$\|u\|_{L^2(\RR^n)}=1$. Therefore,
\begin{eqnarray}\nonumber
\widetilde T &\leq &
\frac{C}{\alpha^2}\left(\sum_{i\in\ZZ}\sum_{j=3}^\infty\|
T_ib_i\|_{L^2(S_j(Q_i))}\|u\|_{L^2(S_j(Q_i))} \right)^2\\[4pt]
&\leq & \frac{C}{\alpha^2}\left(\sum_{i\in\ZZ}\sum_{j=3}^\infty
\left(\dint_{\stackrel{t<l(Q_i)}{(y,t)\in {\mathcal R}(S_j(Q_i))}}
|t^2Le^{-t^2L}b_i(y)|^2\,\frac{dydt}{t^3}\right)^{1/2}\|u\|_{L^2(S_j(Q_i))}
\right)^2, \label{eq5.55}
\end{eqnarray}

\noindent where, as before,  ${\mathcal R}(S_j(Q_i))=\cup_{x\in
S_j(Q_i)}\Gamma(x)$ stands for the saw-tooth region built on the
set $S_j(Q_i)$. Then, using Lemma~\ref{l2.6} and
(\ref{eq5.49})--(\ref{eq5.50}), we see that
\begin{eqnarray}\nonumber
\widetilde T &\leq &
\frac{C}{\alpha^2}\left(\sum_{i\in\ZZ}\sum_{j=3}^\infty
\left(\int_0^{l(Q_i)}
\|t^2Le^{-t^2L}b_i\|_{L^2(2^{j+1}Q_i\setminus 2^{j-2}Q_i)}^2\,
\frac{dt}{t^3}\right)^{1/2}\|u\|_{L^2(S_j(Q_i))} \right)^2\\[4pt]
\nonumber &\leq &
\frac{C}{\alpha^2}\left(\sum_{i\in\ZZ}\sum_{j=3}^\infty
\left(\int_0^{l(Q_i)}
e^{-\frac{(2^jl(Q_i))^2}{ct^2}}t^{2\left(\frac n 2
-\frac{n}{r}-1\right)} \|b_i\|_{L^{r}(Q_i)}^2\,
\frac{dt}{t}\right)^{1/2}\|u\|_{L^2(S_j(Q_i))} \right)^2\\[4pt]
&\leq & C\left(\sum_{i\in\ZZ}\sum_{j=3}^\infty 2^{-jN}l(Q_i)^n
\left[{\mathcal M}\left(|u|^2\right)(y)\right]^{1/2}\right)^2
\label{eq5.56}
\end{eqnarray}

\noindent for any $y\in Q_i$ and any large positive number $N$.
Here ${\mathcal M}$ stands for the Hardy-Littlewood maximal
function, i.e., 
\begin{equation}\nonumber
{\mathcal M} g(x)=\sup_{Q\ni x}\frac{1}{|Q|}\int_Q|g(y)|\,dy,\qquad x\in\RR^n.
\end{equation}

\noindent Next, one can sum up the expression above in $j$ and
integrate in $y$ to obtain
\begin{multline}\label{eq5.57}
\widetilde T \leq C\left(\int_{\RR^n}\sum_{i\in\ZZ}\chi_{Q_i}(y)
\left[{\mathcal M}\left(|u|^2\right)(y)\right]^{1/2}\,dy\right)^2 \\
\leq C \left(\int_{\bigcup_{i\in\ZZ}\,Q_i} \left[{\mathcal
M}\left(|u|^2\right)(y)\right]^{1/2}\,dy\right)^2,
\end{multline}

\noindent by the finite overlap property of cubes
$\{Q_i\}_{i\in\ZZ}$. At this point we use Kolmogorov's lemma. It amounts to the fact that every sublinear operator $T$ of weak type (1,1) satisfies the property 
\begin{equation}\nonumber
\int_E |Tf(x)|^q\,dx \leq C |E|^{1-q} \|f\|_{L^1(\RR^n)}^q, \quad\mbox{for all} \quad f\in L^1(\RR^n), \quad 0<q<1, 
\end{equation}

\noindent and any set $E$ of finite Lebesgue measure.
Then, using the 
weak type $(1,1)$ boundedness of the Hardy-Littlewood maximal
function we control the expression in \eqref{eq5.57} by
\begin{equation}\label{eq5.58}
C\left(\Bigl|\bigcup_{i\in\ZZ}
Q_i\Bigr|^{1/2}\left\||u|^2\right\|_{L^1(\RR^n)}^{1/2}\right)^2
\leq C \sum_{i\in\ZZ}|Q_i|\leq \frac{C}{\alpha^{p}}\,\|\nabla
f\|_{L^p(\RR^n)}^p,
\end{equation}

\noindent as desired.  This concludes the proof of Proposition \ref{p5.5}, and thus also that of Theorem
\ref{t5.1}.\ep

\section{Sharp maximal function characterization.}\label{s6}
\setcounter{equation}{0}

Recall the sharp maximal function introduced in (\ref{eq1.22}). This Section is devoted to the proof of (\ref{eq1.23}).   More precisely, we define
$H^p_{\sharp,M,L}(\RR^n)$ to be the completion of the set 
$$\mathbb{H}^p_{\sharp,M,L}(\RR^n):= \{f\in L^2(\RR^n): {\mathcal M}^{\sharp}_Mf \in L^p(\RR^n)\}\,,$$
with respect to the norm
$$\|f\|_{H^p_{\sharp,M,L}(\RR^n)}:= \|{\mathcal M}^{\sharp}_Mf\|_{L^p(\RR^n)}.$$
We have the following:

\begin{theorem}\label{t6.1} Let $2<p<\infty$ and $M>n/4$. 
Then 
$f\in H^p_L(\RR^n) = H^p_{\sharp,M,L}(\RR^n)$, and, for all $f\in L^2(\RR^n)$,
\begin{equation}\label{eq6.1}
\|f\|_{H^p_L(\RR^n)}\approx \|{\mathcal M}^{\sharp}_Mf\|_{L^p(\RR^n)}.
\end{equation}
\end{theorem}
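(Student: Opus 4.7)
My plan is to recast both sides in terms of tent-space quantities, reducing Theorem~\ref{t6.1} to a pointwise (or distributional) comparison between $\mathcal{M}^\sharp_M f$ and a Carleson-type functional. By Corollary~\ref{c4.4} applied with any $\psi \in \Psi_{\beta,\alpha}(\Sigma_\mu^0)$ satisfying $\beta > n/4$---for instance $\psi(z) = z^{M'}e^{-z}$ with $M' > n/4$---we have $\|f\|_{H^p_L(\RR^n)} \approx \|Q_{\psi,L} f\|_{T^p(\RR^{n+1}_+)} = \|\mathcal{A}(Q_{\psi,L} f)\|_{L^p(\RR^n)}$, and since $p>2$, the equivalence (\ref{eq4.tentp>2}) yields in turn
$$\|f\|_{H^p_L(\RR^n)} \approx \|\mathcal{C}(Q_{\psi,L} f)\|_{L^p(\RR^n)}.$$
So the task reduces to proving $\|\mathcal{C}(Q_{\psi,L} f)\|_{L^p} \approx \|\mathcal{M}^\sharp_M f\|_{L^p}$.

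For the easy direction $\|\mathcal{M}^\sharp_M f\|_{L^p} \lesssim \|f\|_{H^p_L}$, I would start from the integrated identity
$$(I - e^{-\ell(Q)^2 L})^M = 2\int_0^{\ell(Q)} \phi(s^2 L)\,\frac{ds}{s},\qquad \phi(z) := M z e^{-z}(1-e^{-z})^{M-1},$$
obtained by integrating the derivative $\partial_s(I - e^{-s^2L})^M = 2Ms L e^{-s^2L}(I-e^{-s^2L})^{M-1}$. Note $\phi \in \Psi_{M,\tau}(\Sigma_\mu^0)$ for every $\tau>0$, and in particular $\phi \in \Psi_{\beta,\alpha}$ with $\beta = M > n/4$. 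Applying Minkowski's inequality in $L^2(Q,dy)$, followed by a Cauchy-Schwarz with the weight $s^\gamma \cdot s^{-\gamma}$ (for $0<\gamma<M$, so that both resulting $s$-integrals converge at $0$), I would obtain a pointwise estimate of the form
$$\frac{1}{|Q|}\int_Q \bigl|(I-e^{-\ell(Q)^2L})^M f(y)\bigr|^2\, dy \,\lesssim\, \frac{1}{|Q|}\iint_{R_{Q^*}} |\phi(s^2L)f(y)|^2 \,\frac{dy\,ds}{s},$$
for a mild enlargement $Q^*$ of $Q$. Taking the supremum over $Q\ni x$ gives the pointwise bound $\mathcal{M}^\sharp_M f(x) \lesssim \mathcal{C}(Q_{\phi,L}f)(x)$, whose $L^p$ norm is controlled by $\|f\|_{H^p_L}$ via Corollary~\ref{c4.4}.

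The harder direction $\|f\|_{H^p_L} \lesssim \|\mathcal{M}^\sharp_M f\|_{L^p}$ will proceed by duality. Since $p>2$, (\ref{eq1.10}) gives $\|f\|_{H^p_L} = \sup_{\|g\|_{H^{p'}_{L^*}}\leq 1}|\langle f,g\rangle|$, and one can choose $\psi,\widetilde\psi \in \Psi(\Sigma_\mu^0)$ with $\int_0^\infty \psi(s)\widetilde\psi(s)\frac{ds}{s}=1$, so that the Calder\'on reproducing formula (\ref{eq4.17}) yields
$$\langle f,g\rangle \,=\, c\iint_{\RR^{n+1}_+} \psi(t^2L)f(y)\,\overline{\widetilde\psi(t^2L^*)g(y)}\,\frac{dy\,dt}{t}.$$
Combining this with the tent-space pairing inequality of \cite{CMS},
$$\iint_{\RR^{n+1}_+} |FG|\,\frac{dy\,dt}{t} \,\lesssim\, \int_{\RR^n}\mathcal{C}(F)(x)\,\mathcal{A}(G)(x)\,dx,$$
and Corollary~\ref{c4.4} applied to $L^*$ (giving $\|\mathcal{A}(Q_{\widetilde\psi,L^*}g)\|_{L^{p'}} \approx \|g\|_{H^{p'}_{L^*}}$, as $p'<2$), one reduces the matter to the Carleson-measure bound
$$\|\mathcal{C}(Q_{\psi,L}f)\|_{L^p} \,\lesssim\, \|\mathcal{M}^\sharp_M f\|_{L^p},$$
to be established by a good-$\lambda$ argument in the spirit of Fefferman-Stein: dyadically splitting the tent $\widehat B$ in $t$, applying on each annulus an integrated Calder\'on reproducing identity, and exploiting the off-diagonal decays of Lemmas~\ref{l2.4}, \ref{l2.7}, and \ref{l4.2} to transfer scale-by-scale control of $\psi(s^2L)f$ to the fixed-scale averages $(I-e^{-\ell(Q)^2L})^M f$ that define $\mathcal{M}^\sharp_M f$.

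The principal obstacle is this final Carleson-measure inequality. Unlike the classical Fefferman-Stein situation, where pointwise Lebesgue differentiation allows one to control a function by its sharp maximal function, here $\mathcal{M}^\sharp_M$ is a single-scale, $L^2$-averaged BMO-type object, whereas $\mathcal{C}$ aggregates contributions from all smaller scales simultaneously; bridging this discrepancy requires a careful John-Nirenberg-style iteration which leverages the BMO-like structure of $\mathcal{M}^\sharp_M f$ together with an integrated Calder\'on reproducing identity with weights, so as to sum up the scale contributions without losing the off-diagonal decay that the absence of pointwise heat kernel bounds forces us to rely upon.
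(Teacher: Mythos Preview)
Your reduction of both directions to tent-space estimates is the right framework, and your hard-direction target $\|\mathcal{C}(Q_{\psi,L}f)\|_{L^p}\lesssim\|\mathcal{M}^\sharp_M f\|_{L^p}$ is exactly what the paper proves. But neither direction in your proposal is executed the way the paper does it, and your easy direction contains a genuine gap.

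\textbf{The easy direction.} Your Cauchy--Schwarz with weight $s^\gamma$ gives
\[
\Bigl(\int_0^{\ell(Q)}\|\phi(s^2L)f\|_{L^2(Q)}\,\frac{ds}{s}\Bigr)^2
\le C\,\ell(Q)^{2\gamma}\int_0^{\ell(Q)} s^{-2\gamma}\|\phi(s^2L)f\|_{L^2(Q)}^2\,\frac{ds}{s},
\]
which carries the blow-up factor $(\ell(Q)/s)^{2\gamma}$ and is \emph{not} dominated by the unweighted Carleson box $\frac{1}{|Q|}\iint_{R_{Q^*}}|\phi(s^2L)f|^2\frac{dy\,ds}{s}$ as you claim. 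For generic $f\in L^2$ you only have $\|\phi(s^2L)f\|_{L^2}\le C\|f\|_2$, so the weighted $s$-integral need not even be comparable to the unweighted one. The paper avoids this entirely: it proves the pointwise bound $\mathcal{M}^\sharp_M f(x)\le C\bigl(\mathcal{M}(|f|^2)(x)\bigr)^{1/2}$ by splitting $f$ over the annuli $S_j(Q)$ and using Gaffney estimates, obtaining $\mathcal{M}^\sharp_M:L^p\to L^p$ for all $p>2$. This already gives $\mathcal{M}^\sharp_M:H^p_L\to L^p$ for $2<p<p_+(L)$ via $H^p_L=L^p$ there; the full range then follows by interpolating with the trivial endpoint $\mathcal{M}^\sharp_M:BMO_L\to L^\infty$ (valid by definition when $M>n/4$).

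\textbf{The hard direction.} Your duality detour is unnecessary: Corollary~\ref{c4.4} together with (\ref{eq4.tentp>2}) already gives $\|f\|_{H^p_L}\lesssim\|\mathcal{C}\bigl((t^2L)^{M+1}e^{-t^2L}f\bigr)\|_{L^p}$ directly (note the paper uses exponent $M{+}1$, not $M$, which is crucial below). The paper does \emph{not} run a good-$\lambda$ or John--Nirenberg iteration. Instead it proves a pointwise bound
\[
\sup_{Q\ni x}\Bigl(\frac{1}{|Q|}\iint_{R_Q}\bigl|(t^2L)^{M+1}e^{-t^2L}f\bigr|^2\frac{dy\,dt}{t}\Bigr)^{1/2}\;\lesssim\;\mathcal{M}_2\bigl(\mathcal{M}^\sharp_M f\bigr)(x),
\]
and then uses $L^p$-boundedness of the $L^2$-based maximal function $\mathcal{M}_2$ for $p>2$. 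The key is an \emph{algebraic} decomposition of $f$ itself (formulas (\ref{eq3.5})--(\ref{eq3.10}), rewritten as (\ref{eq6.16})): one writes
\[
f=\sum_i C_i\, T_i^{\ell(Q)}\Bigl[(I-e^{-s^2L})^M\bigl(\ell(Q)^{-2}L^{-1}\bigr)^{N_i}f\Bigr],
\]
where each $T_i^{\ell(Q)}$ is a linear combination of semigroup operators at scale $\sim\ell(Q)^2$ with good Gaffney decay. Plugging this into $(t^2L)^{M+1}e^{-t^2L}f$ (the extra power of $t^2L$ is spent absorbing the $L^{-N_i}$ and still leaving enough for the vertical square function), one obtains
\[
I_Q\lesssim\sum_{j\ge 0}2^{-jN}\frac{1}{|2^jQ|^{1/2}}\sup_{\ell(Q)\le s\le\sqrt{2}\,\ell(Q)}\|(I-e^{-s^2L})^Mf\|_{L^2(2^jQ)}.
\]
The passage from this to $\mathcal{M}_2(\mathcal{M}^\sharp_M f)(x)$ is a direct computation: partition $2^jQ$ into $2^{jn}$ unit-scale subcubes and observe that $\frac{1}{|2^jQ|}\int_{2^jQ}|(I-e^{-\ell(Q)^2L})^Mf|^2$ is itself dominated by the average of $(\mathcal{M}^\sharp_M f)^2$ over $2^jQ$. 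No iteration, no stopping-time argument---the identity (\ref{eq6.16}) manufactures $(I-e^{-s^2L})^M$ explicitly at the right scale, which is precisely the missing idea in your proposal.
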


\bp  Recall that we have shown in the proof of Corollary \ref{c4.4}
that $L^2(\RR^n) \cap \HPL$ is dense in $\HPL$ when $2<p<\infty$ (for $p\leq 2$, the analogous
density statement holds by definition).   Consequently,
it  suffices to establish (\ref{eq6.1}).

\smallskip

\noindent {\bf Step I.}\,
First, we shall establish that for all $M\in\NN$
\begin{equation}\label{eq6.2}
{\mathcal M}^{\sharp}_M:L^p(\RR^n)\longrightarrow L^p(\RR^n),\quad\mbox{for}\quad 2<p\leq\infty,
\quad M\in\NN.
\end{equation}

\noindent Clearly, the latter estimate is an immediate consequence of the pointwise bound
${\mathcal M}^{\sharp}_Mf \leq C\left({\mathcal M}(|f|^2)\right)^{1/2}$,
where ${\mathcal M}$ denotes the Hardy-Littlewood maximal operator.  In turn, we establish
the pointwise bound as follows:
\begin{eqnarray}
{\mathcal M}^{\sharp}_Mf \nonumber  & \!\leq \!&
\sup_{Q\ni \,x}\sum_{j=0}^\infty\left(\frac{1}{|Q|}\int_Q
\left|(I-e^{-l(Q)^2{L}})^M(f\chi_{S_j(Q)})(y)\right|^2\,dy\right)^{1/2}\nonumber \\[4pt] && \leq \,\,
C_M\sup_{Q\ni \,x}\left(\frac{1}{|Q|}\int_{Q}
\left|f(y)\right|^2\,dy \right)^{1/2} \nonumber \\[4pt] &&\qquad\qquad + \,\,C_M
\sup_{Q\ni \,x}\sup_{1\leq k\leq M}\sum_{j=1}^\infty\left(\frac{1}{|Q|}\int_Q
\left|e^{-kl(Q)^2{L}}(f\chi_{S_j(Q)})(y)\right|^2\,dy\right)^{1/2}.
\label{eq6.3}
\end{eqnarray}

\noindent  Then
by the Gaffney estimate (Lemma~\ref{l2.5}), the expression above is controlled by
\begin{multline}
C \left({\mathcal M}(|f|^2)\right)^{1/2}\,+\,
C\sup_{Q\ni \,x}\sup_{1\leq k\leq M}\sum_{j=1}^\infty e^{-\frac{{\rm
dist}\,(Q,S_j(Q))^2}{cl(Q)^2}}\frac{1}{|Q|^{1/2}}\|f\|_{L^2(S_j(Q))}
\\[4pt] 
\leq C \left({\mathcal M}(|f|^2)\right)^{1/2}.
\label{eq6.4}
\end{multline}
This finishes the proof of (\ref{eq6.2}).  Since for $2\leq p<p_+(L)$,  
the spaces $H^p_L$ coincide with $L^p$,  we also have
\begin{equation}\label{eq6.5}
{\mathcal M}^{\sharp}_M:H^p_L(\RR^n)\longrightarrow L^p(\RR^n)\quad\mbox{for}\quad 2<p<p_+(L),\quad M\in\NN.
\end{equation}

\noindent  Interpolating (\ref{eq6.5}) with the property
\begin{equation}\label{eq6.6}
{\mathcal M}^{\sharp}_M:BMO_L(\RR^n)\longrightarrow L^\infty(\RR^n),\quad M>n/4,
\end{equation}

\noindent we deduce that
\begin{equation}\label{eq6.7}
{\mathcal M}^{\sharp}_M:H^p_L(\RR^n)\longrightarrow L^p(\RR^n)\quad\mbox{for}\quad 2<p<\infty,\quad M>n/4.
\end{equation}

\vskip 0.08in \noindent{\bf Step II.} Now we turn to the converse of (\ref{eq6.7}). More precisely, let us show that
\begin{equation}\label{eq6.8}
\|f\|_{H^p_L(\RR^n)}\leq C \|{\mathcal M}^{\sharp}_Mf\|_{L^p(\RR^n)},
\end{equation}

\noindent whenever $2<p<\infty$, $M>n/4$ and
$f \in L^2(\RR^n).$  Note that for such $f$, the adapted sharp function ${\mathcal M}^{\sharp}_Mf$
is well-defined.

Recall the discussion of tent spaces in Section~\ref{s3}. 
In particular, by (\ref{eq4.tentp>2}) and Corollary~\ref{c4.4}, we have, for each $M>n/4$ and 
every $2<p<\infty$,
\begin{equation}\label{eq6.13}
\|f\|_{H^p_L(\RR^n)}\leq C_{M,p} \,\Bigg\|\sup_{B\ni x}\left(\frac{1}{|B|}\dint_{\widehat
B}|(t^2L)^Me^{-t^2L}f(y)|^2\,\frac{dydt}{t}\right)^{1/2}\Bigg\|_{L^p(\RR^n)}.
\end{equation}
Thus, in order to conclude (\ref{eq6.8}) it suffices to show that, for $M>n/4$,
\begin{equation}\label{eq6.14} \Bigg\|\sup_{Q\ni x}\left(\frac{1}{|Q|}\int_0^{l(Q)}\int_{Q}|(t^2L)^{M+1}e^{-t^2L}f(x)|^2\,\frac{dxdt}{t}\right)^{1/2}\Bigg\|_{L^p(\RR^n)}\leq C
\|{\mathcal M}^{\sharp}_M f\|_{L^p(\RR^n)},\end{equation}

\noindent for $2<p<\infty$. Note that we have replaced the exponent 
$M$ by $M+1$ on the left-hand side
of \eqref{eq6.14}, but this is harmless:
since \eqref{eq6.13} holds for {\it every}  $M>n/4$, we may choose it larger at our convenience.

\vskip 0.08in \noindent{\bf Step III.}
In this part we prove that
for every cube $Q\subset\RR^n$
\begin{eqnarray}
I_Q&:=&\left(\frac{1}{|Q|}\int_0^{l(Q)}\int_{Q}
\left|(t^2L)^{M+1}e^{-t^2L}f(y)\right|^2\,\frac{dydt}{t}\right)^{1/2}\nonumber \\[4pt]
&\leq & C\sum_{j=0}^\infty 2^{-jN} \frac{1}{|2^{j}Q|^{1/2}}\,\, \sup_{l(Q)\leq \,s\,\leq \sqrt2 l(Q)}
\|(I-e^{-s^2L})^M f\|_{L^2(2^jQ)},\quad \forall\,N\in\NN.\label{eq6.15}
\end{eqnarray}

 Following the procedure  outlined in (\ref{eq3.5})--(\ref{eq3.10}) one can split
\begin{eqnarray}\label{eq6.16}
&&\hskip -0.7cm f  = {2^M}\Bigg(l(Q)^{-2}\int_{l(Q)}^{\sqrt 2
l(Q)}s(I-e^{-s^2L})^M\,ds \nonumber\\[4pt]
 &&\hskip -0.7cm\qquad + \sum_{k=1}^M C_{k,M} l(Q)^{-2}L^{-1}
e^{-kl(Q)^2L}(I-e^{-l(Q)^2L})\sum_{i=0}^{k-1}
e^{-il(Q)^2L} \Bigg)^Mf\nonumber\\[4pt]
 &&\hskip -0.7cm\quad
= C_{1,1}
T_{1,1}^{l(Q)}(I-e^{-l(Q)^2L})^M l(Q)^{-2M}L^{-M} f\nonumber\\[4pt]
 &&\hskip -0.7cm\qquad +
\sum_{i=1}^{(M+1)^M-1} C_{i,2}
T_{i,2}^{l(Q)}\left(l(Q)^{-2}\int_{l(Q)}^{\sqrt 2
l(Q)}s(I-e^{-s^2L})^Ml(Q)^{-2N_i}L^{-N_i} f\,ds\right),
\end{eqnarray}
\noindent where $C_{i,k}$ are some constants, $0\leq N_i\leq M$, and each  $T_{i,k}$ is given by a constant (possibly, zero) plus a linear combination of the terms in the form $e^{-t^2L}$ with $t\approx l(Q)^2$. In particular, $T_{i,k}$'s are bounded in $L^2(\RR^n)$
with the constant independent of $l(Q)$ (see Lemma~\ref{l2.6})) and
satisfy Gaffney estimates (\ref{eq2.19}) with $t\approx l(Q)^2$.

All the terms on the right-hand side of (\ref{eq6.16}) are essentially of the same nature, and will be handled similarly. Let us  concentrate on the first one.
The corresponding part of $I_Q$ is bounded by
\begin{multline}\label{eq6.17}\\[4pt]\!\!\!\!\!\!\!\!\!\!\!\!\!\!
\sum_{j=0}^\infty
\left(\frac{1}{|Q|}\int_0^{l(Q)}\int_{Q}
\left|\Bigl(\frac{t}{l(Q)}\Bigr)^{2M}t^2Le^{-t^2L}T_{1,1}^{l(Q)}\bigl[\chi_{S_j(Q)}(I-e^{-l(Q)^2L})^M
f\bigr](y)\right|^2\,\frac{dydt}{t}\right)^{1/2}.
\end{multline}

\noindent Since  the mapping
\begin{equation}\label{eq6.18}
f \mapsto \left(\int_0^\infty|t^2Le^{-t^2L}f(\cdot)|^2\, \frac
{dt}{t}\right)^{1/2}, 
\end{equation}

\noindent is bounded in $L^2(\RR^n)$ (a consequence of the $H^\infty$ calculus for $L$, see \cite{ADM}), and the operator $T_{1,1}^{l(Q)}$ is bounded in $L^2$, we can write
\begin{multline}\\\!\!\!\!\!\!\!\!\!\!\!\!\!\!\!
\sum_{j=0,1}\left(\frac{1}{|Q|}\int_0^{l(Q)}\int_{Q}
\left|\Bigl(\frac{t}{l(Q)}\Bigr)^{2M}t^2Le^{-t^2L}T_{1,1}^{l(Q)}\bigl[\chi_{S_j(Q)}(I-e^{-l(Q)^2L})^M
f\bigr](y)\right|^2\,\frac{dydt}{t}\right)^{1/2}\\[4pt]
\leq C\frac{1}{|Q|^{1/2}}\,\|(I-e^{-l(Q)^2L})^M f\|_{L^2(2Q)}.\label{eq6.19}
\end{multline}

\noindent Furthermore, by Gaffney estimates and Lemma~\ref{l2.4}, when $j\geq 2$ we have
\begin{eqnarray}
&& \left(\frac{1}{|Q|}\int_0^{l(Q)}\int_{Q}
\left|\Bigl(\frac{t}{l(Q)}\Bigr)^{2M}t^2Le^{-t^2L}T_{1,1}^{l(Q)}\bigl[\chi_{S_j(Q)}(I-e^{-l(Q)^2L})^M
f\bigr](y)\right|^2\,\frac{dydt}{t}\right)^{1/2}\nonumber\\[4pt]
&& \qquad \leq
\frac{C}{|Q|^{1/2}}\left(\int_0^{l(Q)}\,e^{-\frac{(2^jl(Q))^2}{cl(Q)^2}}\,\Bigl(\frac{t}{l(Q)}\Bigr)^{2M}\,\frac{dt}{t}\right)^{1/2}
\, \|(I-e^{-l(Q)^2L})^M f\|_{L^2(S_j(Q))}\nonumber\\[4pt]
&& \qquad \leq C2^{-jN}\,
\frac{1}{|2^jQ|^{1/2}}
\, \|(I-e^{-l(Q)^2L})^M f\|_{L^2(S_j(Q))},\label{eq6.20}
\end{eqnarray}

\noindent for any $N\in\NN$. Now the combination of (\ref{eq6.19}) and (\ref{eq6.20}), together with analogous considerations for the remaining terms in (\ref{eq6.16}), implies
\begin{eqnarray}
 I_Q &\leq &  C\sum_{j=0}^\infty 2^{-jN}\,
\frac{1}{|2^jQ|^{1/2}}
\, \Bigg(\|(I-e^{-l(Q)^2L})^M f\|_{L^2(S_j(Q))}\nonumber\\[4pt]&&\qquad + \Bigg\|l(Q)^{-2}\int_{l(Q)}^{\sqrt 2
l(Q)}s(I-e^{-s^2L})^M f\,ds\Bigg\|_{L^2(S_j(Q))}\Bigg)\nonumber\\[4pt]&\leq &  C\sum_{j=0}^\infty 2^{-jN}\,
\frac{1}{|2^jQ|^{1/2}}
\, \Bigg(\|(I-e^{-l(Q)^2L})^M f\|_{L^2(S_j(Q))}\nonumber\\[4pt]&&\qquad + l(Q)^{-2}\int_{l(Q)}^{\sqrt 2
l(Q)}s \|(I-e^{-s^2L})^M f\|_{L^2(S_j(Q))}\,ds\Bigg)\nonumber\\[4pt]&
\leq &C\sum_{j=0}^\infty 2^{-jN}\,
\frac{1}{|2^jQ|^{1/2}}
\, \sup_{l(Q)\leq \,s\,\leq \sqrt2 l(Q)}\|(I-e^{-s^2L})^M f\|_{L^2(S_j(Q))}, \label{eq6.21}
\end{eqnarray}
\noindent as desired.

\vskip 0.08in \noindent{\bf Step IV.} The next step is to show that
\begin{eqnarray}
&&\sup_{Q\ni x}\sum_{j=0}^\infty 2^{-jN}\,
\frac{1}{|2^jQ|^{1/2}}
\, \sup_{l(Q)\leq \,s\,\leq \sqrt2 l(Q)}\|(I-e^{-s^2L})^M f\|_{L^2(S_j(Q))} \nonumber\\[4pt]&&\qquad \leq  C {\mathcal M}_2({\mathcal M}^{\sharp}_Mf)(x),\qquad x\in\RR^n,\qquad M\in\NN,\label{eq6.22}
\end{eqnarray}

\noindent where ${\mathcal M}_2$ is an $L^2$-based version of the Hardy-Littlewood maximal function, i.e.
\begin{equation}\label{eq6.23}
{\mathcal M}_2 g(x)=\sup_{Q\ni x}\left(\frac{1}{|Q|}\int_Q|g(y)|^2\,dy\right)^{1/2},\qquad x\in\RR^n.
\end{equation}

Clearly,
\begin{equation}\label{eq6.24}
{\mathcal M}_2 g(x)=\sup_{Q\ni x}\sup_{j\in\NN\cup\{0\}}\left(\frac{1}{|2^jQ|}\int_{2^jQ}|g(y)|^2\,dy\right)^{1/2},\qquad x\in\RR^n.
\end{equation}

\noindent Hence,
\begin{eqnarray}
&&{\mathcal M}_2({\mathcal M}^{\sharp}_Mf)(x)\nonumber\\[4pt]
&&\qquad = \sup_{Q\ni x}\sup_{j\in\NN\cup\{0\}}\left(\frac{1}{|2^jQ|}\int_{2^jQ}\sup_{\widetilde Q\ni y} \frac{1}{|\widetilde Q|} \int_{\widetilde Q}|(I-e^{-l(\widetilde Q)^2L})^M f(z)|^2\,dz\,dy\right)^{1/2}.
\label{eq6.25}
\end{eqnarray}

\noindent Let us denote by $\{Q_i^j\}_{i=1}^{2^{jn}}$ some partition of $2^jQ$ into  subcubes of sidelength $l(Q)$. Then the expression above is further equal to
\begin{eqnarray}\label{eq6.26}
&&\sup_{Q\ni x}\sup_{j\in\NN\cup\{0\}}\left(\frac{1}{|2^jQ|}\sum_{i=1}^{2^{jn}}\int_{Q_i^j}\sup_{\widetilde Q\ni y} \frac{1}{|\widetilde Q|} \int_{\widetilde Q}|(I-e^{-l(\widetilde Q)^2L})^M f(z)|^2\,dz\,dy\right)^{1/2}\nonumber\\[4pt]
&&\quad \geq \sup_{Q\ni x}\sup_{j\in\NN\cup\{0\}}\left(\frac{1}{|2^jQ|}\sum_{i=1}^{2^{jn}}\int_{Q_i^j}\frac{1}{|Q_i^j|} \int_{Q_i^j}(I-e^{-l(Q)^2L})^M f(z)|^2\,dz\,dy\right)^{1/2}\nonumber\\[4pt]
&&\quad =\sup_{Q\ni x}\sup_{j\in\NN\cup\{0\}}\left(\frac{1}{|2^jQ|}\sum_{i=1}^{2^{jn}}\int_{Q_i^j}(I-e^{-l(Q)^2L})^M f(z)|^2\,dz\right)^{1/2}\nonumber\\[4pt]
&&\quad =\sup_{Q\ni x}\sup_{j\in\NN\cup\{0\}}\left(\frac{1}{|2^jQ|}\int_{2^jQ}(I-e^{-l(Q)^2L})^M f(z)|^2\,dz\right)^{1/2},
\end{eqnarray}

\noindent where we used the fact that $l(Q_i^j)$=$l(Q)$ for all $i=1,...,2^{jn}$, $j\in\NN\cup\{0\}$, to switch from $e^{-l(Q_i^j)^2L}$ to $e^{-l(Q)^2L}$ in the first inequality above. We claim that the expression in the last line of (\ref{eq6.26}) controls the left-hand side of (\ref{eq6.22}). Indeed,
\begin{eqnarray}\label{eq6.27}
&&\sup_{Q\ni x}\sum_{j=0}^\infty 2^{-jN}\,
\frac{1}{|2^jQ|^{1/2}}
\, \sup_{l(Q)\leq \,s\,\leq \sqrt2 l(Q)}\|(I-e^{-s^2L})^M f\|_{L^2(S_j(Q))} \nonumber\\[4pt]
&&\quad \leq C \sup_{Q\ni x}\sum_{j=0}^\infty 2^{-jN}\,
\sup_{l(Q)\leq \,s\,\leq \sqrt2 l(Q)}\Bigg(\frac{1}{|2^jQ_s|}\int_{2^jQ_s}|(I-e^{-s^2L})^M f(z)|^2\,dz\Bigg)^{1/2},
\end{eqnarray}

\noindent where $Q_s$ is a cube with the same center as $Q$ and sidelength $s$. Since $s\geq l(Q)$, in particular, $Q_s\supset Q\ni x$. Then the right-hand side of (\ref{eq6.27}) is bounded by
\begin{eqnarray}\label{eq6.28} &&C \sum_{j=0}^\infty 2^{-jN}\,
\sup_{Q_s\ni x}\Bigg(\frac{1}{|2^jQ_s|}\int_{2^jQ_s}|(I-e^{-l(Q_s)^2L})^M f(z)|^2\,dz\Bigg)^{1/2}\nonumber\\[4pt]
&&\quad \leq C \sup_{Q\ni x}\sup_{j\in\NN\cup\{0\}}\left(\frac{1}{|2^jQ|}\int_{2^jQ}(I-e^{-l(Q)^2L})^M f(z)|^2\,dz\right)^{1/2} \sum_{j=0}^\infty 2^{-jN}\nonumber\\[4pt]
&&\quad \leq C \sup_{Q\ni x}\sup_{j\in\NN\cup\{0\}}\left(\frac{1}{|2^jQ|}\int_{2^jQ}(I-e^{-l(Q)^2L})^M f(z)|^2\,dz\right)^{1/2}.
\end{eqnarray}
\noindent This finishes the proof of (\ref{eq6.22}).

\vskip 0.08in \noindent{\bf Step IV.} Finally, (\ref{eq6.15}), (\ref{eq6.22}) allow to conclude that
\begin{multline}\label{eq6.29} \Bigg\|\sup_{Q\ni x}\left(\frac{1}{|Q|}\int_0^{l(Q)}\int_{Q}|(t^2L)^{M+1}e^{-t^2L}f(x)|^2\,\frac{dxdt}{t}\right)^{1/2}\Bigg\|_{L^p(\RR^n)}\\[4pt]
\leq C \|{\mathcal M}_2({\mathcal M}^{\sharp}_M f)\|_{L^p(\RR^n)},\end{multline}

\noindent for every $0<p<\infty$. But since the classical Hardy-Littlewood maximal function is bounded in $L^p$ for $1<p<\infty$, the operator
${\mathcal M}_2$ is bounded in $L^p(\RR^n)$ for $2<p<\infty$, and therefore,
\begin{equation}\label{eq6.30}
\|{\mathcal M}_2({\mathcal M}^{\sharp}_M f)\|_{L^p(\RR^n)}\leq C\|{\mathcal M}^{\sharp}_M f\|_{L^p(\RR^n)},\qquad 2<p<\infty.\end{equation}

\noindent Now the combination of (\ref{eq6.29}) and (\ref{eq6.30}) yields (\ref{eq6.14}) and finishes the proof of the theorem. \ep

\section{Fractional powers of the operator $L$.}\label{s7}
\setcounter{equation}{0}

Recall that for $p_-(L)<p<r<p_+(L)$
\begin{equation}\label{eq7.1}
L^{-\alpha}:L^p(\RR^n)\longrightarrow L^r(\RR^n), \qquad
\alpha=\frac 12\left(\frac np-\frac nr\right).
\end{equation}

\noindent This result has been
proved in \cite{AuscherSurvey}, Proposition~5.3. In this section we aim to prove
the generalization of (\ref{eq7.1}) to the full scale of $H^p_L$
spaces.

\begin{theorem}\label{t7.1}
Let $0<p<r<\infty$. Then
\begin{eqnarray}\label{eq7.2}
&& L^{-\alpha}:H^p_L(\RR^n)\longrightarrow H^r_L(\RR^n), \qquad
\alpha=\frac 12\left(\frac np-\frac nr\right),\\[4pt]
\label{eq7.3} && L^{-\alpha}:H^p_L(\RR^n)\longrightarrow
BMO_L(\RR^n), \qquad
\alpha=\frac {n}{2p},\\[4pt]
\label{eq7.4} && L^{-\alpha}:BMO_L(\RR^n)\longrightarrow
\Lambda^{2\alpha}_L(\RR^n), \qquad
\alpha>0,\\[4pt]
\label{eq7.5} &&
L^{-\alpha}:\Lambda^{\beta}_L(\RR^n)\longrightarrow
\Lambda^{\beta+2\alpha}_L(\RR^n), \qquad \alpha>0,\qquad \beta>0.
\end{eqnarray}
\end{theorem}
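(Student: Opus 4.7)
The plan is to prove (\ref{eq7.2}) by combining the tent-space square-function characterization of Hardy spaces (Corollary~\ref{c4.4}) with a fractional multiplier estimate on tent spaces, and then to deduce (\ref{eq7.3})--(\ref{eq7.5}) from (\ref{eq7.2}) by Banach-space duality, invoking Theorem~\ref{t3.5} to identify $(H^p_L)^*=\Lambda^{n(1/p-1)}_{L^*}$ (with $BMO_{L^*}=:\Lambda^0_{L^*}$) for $0<p\leq 1$.

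For (\ref{eq7.2}), fix $f\in L^2(\RR^n)\cap H^p_L(\RR^n)$ and choose $\psi\in\Psi_{\sigma,\tau}(\Sigma_\mu^0)$ with $\sigma>\alpha$ and with $\sigma,\tau$ sufficiently large that Corollary~\ref{c4.4} applies simultaneously to $H^p_L$ and $H^r_L$. Setting $\widetilde\psi(\xi):=\xi^{-\alpha}\psi(\xi)\in\Psi_{\sigma-\alpha,\tau+\alpha}(\Sigma_\mu^0)$, the $L^2$-functional calculus yields the pointwise identity $\psi(t^2 L)L^{-\alpha}f=t^{2\alpha}\widetilde\psi(t^2 L)f$, so that $Q_{\psi,L}(L^{-\alpha}f)(x,t)=t^{2\alpha}Q_{\widetilde\psi,L}f(x,t)$. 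Corollary~\ref{c4.4} applied on both ends then reduces (\ref{eq7.2}) to the tent-space multiplier estimate
\begin{equation}\label{eqtentmul}
\|t^{2\alpha}G\|_{T^r(\RR^{n+1}_+)}\,\leq\,C\,\|G\|_{T^p(\RR^{n+1}_+)},\qquad 2\alpha=\tfrac{n}{p}-\tfrac{n}{r},\quad 0<p<r<\infty.
\end{equation}
The endpoint case $0<p<r\leq 1$ is proved atomically via Proposition~\ref{prop4.7}: if $A$ is a $T^p$-atom supported in a Carleson box $R_Q$, then
$$
\|t^{2\alpha}A\|^2_{T^2(\RR^{n+1}_+)}\,=\,\iint_{R_Q}t^{4\alpha}|A|^2\,\frac{dxdt}{t}\,\leq\,\ell(Q)^{4\alpha}\|A\|^2_{T^2(\RR^{n+1}_+)}\,\leq\,|Q|^{4\alpha/n+1-2/p}\,=\,|Q|^{1-2/r},
$$
so $t^{2\alpha}A$ is (up to a uniform constant) a $T^r$-atom supported in the same $R_Q$; the embedding $\ell^p\hookrightarrow\ell^r$ combined with the atomic decomposition then yields (\ref{eqtentmul}) when $r\leq 1$. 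For $r>1$ the remaining cases are handled by interpolation between the atomic range and the trivial case $r=p$ (where $\alpha=0$) together with tent-space duality from $p'>r'$, using $(T^q)^*=T^{q'}$ of \cite{CMS} and the formal self-adjointness of multiplication by $t^{2\alpha}$ under the pairing $\iint F\overline{G}\,dxdt/t$.

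With (\ref{eq7.2}) in hand, statements (\ref{eq7.4}) and (\ref{eq7.5}) follow by taking Banach-space adjoints: for (\ref{eq7.5}) choose $r=n/(n+\beta)$ and $p=n/(n+\beta+2\alpha)$, and for (\ref{eq7.4}) choose $r=1$ and $p=n/(n+2\alpha)$; since $(L^{-\alpha})^*=(L^*)^{-\alpha}$, Theorem~\ref{t3.5} identifies the dualized form of (\ref{eq7.2}) as the desired Lipschitz-to-Lipschitz (resp.\ $BMO$-to-Lipschitz) bound for $L^*$, and the $L$-version follows by symmetry. For (\ref{eq7.3}) in the range $p\geq 1$, the same duality applied to (\ref{eq7.2}) for $L^*$ with indices $1$ and $p'$ yields, via $(H^1_{L^*})^*=BMO_L$, the desired $L^{-\alpha}:H^p_L\to BMO_L$; for $0<p<1$ one may instead rerun the tent-space argument with Corollary~\ref{c4.4} replaced by the $T^\infty$-characterization of $BMO_L$ from \cite{HM}, verifying the atomic bound $\mathcal{C}(t^{n/p}A)\lesssim 1$ and summing via the embedding $\ell^p\hookrightarrow\ell^1$. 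The principal obstacle will be the proof of (\ref{eqtentmul}) outside the atomic range $r\leq 1$, where one cannot directly reduce to single atoms and must interlock interpolation with tent-space duality, while consistently interpreting $L^{-\alpha}$ as an unbounded operator via the $L^2$-functional calculus on the dense subspace $L^2\cap H^p_L$.
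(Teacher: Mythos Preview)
Your approach is correct in outline and genuinely different from the paper's. The paper establishes (\ref{eq7.2}) by a direct molecular computation: it restricts to $0<p<r\leq 1$ with $\alpha\leq 1/2$, represents $L^{-\alpha}=C\int_0^\infty t^{\alpha-1}e^{-tL}\,dt$, and estimates $\|S(L^{-\alpha}m)\|_{L^r}$ for each molecule $m$ through a rather lengthy sequence of Gaffney-type bounds (splitting $m=(I-e^{-\ell(Q)^2L})^Mm+\cdots$, dyadic annuli, separate treatment of small and large $t$, etc.). Your route via the functional-calculus identity $Q_{\psi,L}L^{-\alpha}=t^{2\alpha}Q_{\widetilde\psi,L}$ bypasses all of this and reduces the operator-theoretic content to the pure tent-space embedding \eqref{eqtentmul}. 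This is cleaner and separates the two ingredients (bounded holomorphic calculus and Hardy--Littlewood--Sobolev on tent spaces) that are entangled in the paper's argument; it also makes clear that nothing beyond the $H^\infty$-calculus of $L$ is used.

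Two points deserve tightening. First, your extension of \eqref{eqtentmul} beyond $r\leq 1$ by ``interpolation and duality'' is vague as written: tent-space duality from the atomic range $r\leq 1$ does not directly reach large exponents since $(T^r)^*$ is not $T^{r'}$ there. What does work is Stein-type complex interpolation of the analytic family $G\mapsto t^{2\alpha(z)}G$ between an atomic endpoint $(p_0,r_0)$ with $p_0<r_0\leq 1$ and the trivial diagonal map on $T^q$ for large $q$; the relation $2\alpha=n(1/p-1/r)$ is linear in $(1/p,1/r)$, and $|t^{2(a+ib)}|=t^{2a}$, so the hypotheses are met and every pair $0<p<r<\infty$ is reached. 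Second, Corollary~\ref{c4.4} is stated for $f\in L^2$, but $L^{-\alpha}f\notin L^2$; you must either invoke (\ref{eq7.1}) to place $L^{-\alpha}f\in L^q$ with $q<p_+(L)$ and appeal to the $L^q$-version of the square-function characterization, or equivalently define the extension of $L^{-\alpha}$ to $H^p_L$ as $\pi_{\psi'}\circ(t^{2\alpha}\cdot)\circ Q_{\widetilde\psi}$ and verify it agrees with the functional-calculus definition on a dense subclass. The paper handles the analogous issue explicitly via (\ref{eq7.1}) and $L^q$-convergence of molecular sums. Your derivation of (\ref{eq7.3})--(\ref{eq7.5}) by duality matches the paper's; for (\ref{eq7.3}) with $p<1$ the paper simply factors $L^{-n/(2p)}=L^{-n(1/p-1)/2}\circ L^{-n/2}$ through $H^1_L$, which is a bit quicker than your $T^\infty$ variant.
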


\bp Let us denote $p_n:=2n/(n+2)$ and $p_n':=2n/(n-2)$. 
We recall that by \cite{AuscherSurvey}, we have $p_-(L)<p_n$ and  $p_+(L) > p_n'.$
We begin by claiming that it is enough to prove (\ref{eq7.2}) for
\begin{equation}\label{eq7.6}
0<p<r\leq 1\quad\mbox{such that}\quad \frac 12\left(\frac np-\frac
nr\right)\leq\frac 12\left(\frac n{p_n}-\frac n2\right)= \frac{1}{2},
\end{equation}

\noindent which, in particular, says that
\begin{equation}\label{eq7.7}
0<\alpha=\frac 12\left(\frac np-\frac nr\right)\leq\frac 12.
\end{equation}

\noindent Indeed, once (\ref{eq7.2}) has been proved for this range, by
interpolating with (\ref{eq7.1}) via Lemma~\ref{l4.5}, we may obtain
that (\ref{eq7.2}) holds for all
\begin{equation}\label{eq7.8}
0<p<r<p_+(L)\quad\mbox{such that}\quad \frac 12\left(\frac np-\frac
nr\right)\leq\frac 12,
\end{equation}

\noindent with $\alpha$ satisfying (\ref{eq7.7}). We can
then write $L^{-\alpha}=(L^{-\alpha/k})^k$ for $k$ large enough in
order to remove restrictions on $\alpha$ and, equivalently, on the
difference between $p$ and $r$, and obtain (\ref{eq7.2}) for
\begin{equation}\label{eq7.9}
0<p<r<p_+(L),\qquad \alpha=\frac 12\left(\frac np-\frac nr\right),
\end{equation}
without restriction on the size of $\alpha$.
From here the results in (\ref{eq7.2})--(\ref{eq7.5})
follow for the full range of indices by duality and another
application of the procedure with $L^{-\alpha}=(L^{-\alpha/k})^k$.

Indeed, the fact that \eqref{eq7.2} holds for $1<p<r\leq 2$ for all elliptic operators (and hence, in particular, $L^*$) together with \eqref{eq1.10} implies that \eqref{eq7.2} holds also for $2< p<r<+\infty$. Combining this with the range \eqref{eq7.9} and suitably representing the powers of $L$ as a composition of smaller powers, we cover the full range $0<p<r<+\infty$ for \eqref{eq7.2}. Furthermore, using \eqref{eq7.2} for $L^*$ with $p=1$ and Theorem~\ref{t3.5},  we arrive at \eqref{eq7.3}. Similarly, dualizing \eqref{eq7.2} for $L^*$ with $r=1$ and using, once again, Theorem~\ref{t3.5}, one obtains \eqref{eq7.4}, and, by the same procedure starting with $0<p<r<1$,  \eqref{eq7.5}.

Thus, it suffices to establish (\ref{eq7.2}) under the restrictions
(\ref{eq7.6})--(\ref{eq7.7}), and it is to this task that we now turn.
We first show that
\begin{equation}\label{eq7.10}
\|S(L^{-\alpha}m)\|_{L^r(\RR^n)}\leq C,\quad \mbox{for every
$(H^p_L,\eps,M)$-molecule $m$},
\end{equation}

\noindent where $M>\frac{n}{2}\Bigl(\frac 1p-\frac 12\Bigr)$ and
$\eps>0$. To this end, observe that by H\"older's inequality
\begin{equation}\label{eq7.11}
 \|S(L^{-\alpha}m)\|_{L^r(\RR^n)}^r \leq C\sum_{j=0}^\infty
(2^{j}l(Q))^{n(1-r/2)} \|S(L^{-\alpha}m)\|_{L^2(S_j(Q))}^r.
\end{equation}

\noindent When $j\leq 10$, we employ boundedness of $S$ in
$L^2(\RR^n)$ and (\ref{eq7.1}) to obtain the estimate
\begin{equation}\label{eq7.12}
\|S(L^{-\alpha}m)\|_{L^2(S_j(Q))} \leq \|m\|_{L^{q}(\RR^n)}.
\end{equation}

\noindent Here and throughout the proof $q$ is such that $\alpha =
\frac 12\left(\frac n{q}-\frac n2\right)$, so that $q\leq 2$ and
$q>p_n$ by (\ref{eq7.7}). Then by the  definition of the molecule
the expression above is bounded by $l(Q)^{\frac n2 -\frac nr }$.
Indeed, by H\"older inequality every $(H^p_L,\eps,M)$ - molecule
satisfies (\ref{eq3.21}) for $q\leq 2$. Therefore,
\begin{equation}\label{eq7.13}
\|m\|_{L^{q}(\RR^n)}\leq \sum_{j=0}^\infty \|m\|_{L^q(S_j(Q))}\leq
C l(Q)^{\frac nq -\frac np }=Cl(Q)^{\frac n2 -\frac nr },
\end{equation}

\noindent since $\frac 12\left(\frac np-\frac nr\right)=\alpha =
\frac 12\left(\frac n{q}-\frac n2\right)$.

Turning to the case $j\geq 10$, one can represent the molecule as
follows
\begin{eqnarray}\label{eq7.14}
 \hskip -.7cm m & = & (I-e^{-l(Q)^2L})^Mm+[I-(I-e^{-l(Q)^2L})^{M}]m\nonumber\\[4pt]
\hskip -.7cm \qquad\quad & = & (I-e^{-l(Q)^2L})^Mm+ \sum_{1\leq k\leq
M}C_{k,M}\left(\frac k{M}\, l(Q)^{2}Le^{-\frac
k{M}l(Q)^2L}\right)^{M}(l(Q)^{-2}L^{-1})^{M}m, \label{eqfp10}
\end{eqnarray}

\noindent where $C_{k,M}$ are some constants depending on $k,M$ only. Starting with the first term above, we write
\begin{multline}\label{eq7.15}
\|S(L^{-\alpha}(I-e^{-l(Q)^2L})^M m)\|_{L^2(S_j(Q))}\\[4pt] \leq \,\,
\|S(L^{-\alpha}(I-e^{-l(Q)^2L})^M (m\chi_{\widehat{S}_j(Q)}))\|_{L^2(S_j(Q))}\\[4pt]
 +\,\,\,\|S(L^{-\alpha}(I-e^{-l(Q)^2L})^M
(m\chi_{\RR^n\setminus \widehat{S}_j(Q)}))\|_{L^2(S_j(Q))},
\end{multline}

\noindent where, as before, 
\begin{equation}\label{eq7.16} \widehat{S}_j (Q) := 2^{j+2}Q \setminus 2^{j-3}Q.
\end{equation}

\noindent Then
\begin{multline}\label{eq7.17}
\|S(L^{-\alpha}(I-e^{-l(Q)^2L})^M(m\chi_{\widehat{S}_j(Q)}))\|_{L^2(\RR^n)}\\\leq
C \|m\|_{L^{q}(\widehat{S}_j(Q))}\leq C\,(2^jl(Q))^{\frac n2-\frac
nr}\,2^{-j\eps}.
\end{multline}

\noindent As for the second part of (\ref{eq7.15}), using the
notation (\ref{eq3.15}), one can write
\begin{eqnarray}
&&\hskip -0.7 cm \nonumber\|S(L^{-\alpha}(I-e^{-l(Q)^2L})^M
(m\chi_{\RR^n\setminus
\widehat{S}_j(Q)}))\|_{L^2(S_j(Q))}\\[4pt]
 &&\hskip -0.7 cm \nonumber\qquad\leq C
\left(\dint_{{\mathcal R}(S_j(Q))}
|s^2Le^{-s^2L}L^{-\alpha}(I-e^{-l(Q)^2L})^{M}(m\chi_{\RR^n\setminus
\widehat{S}_j(Q)})(x)|^2\,\frac{ds\,dx}{s}\right)^{1/2}\\[4pt]\nonumber
&&\hskip -0.7 cm \qquad \leq C \left( \int_{\widetilde
S_j(Q)}\int_0^\infty
|s^2Le^{-s^2L}L^{-\alpha}(I-e^{-l(Q)^2L})^{M}(m\chi_{\RR^n\setminus
\widehat{S}_j(Q)})(x)|^2\,\frac{ds\,dx}{s}\right)^{1/2}\\[4pt]
&&\hskip -0.7 cm \qquad +\,C \left( \int_{\RR^n\setminus
\widetilde S_j(Q)}\int_{c2^jl(Q)}^\infty
|s^2Le^{-s^2L}L^{-\alpha}(I-e^{-l(Q)^2L})^{M}(m\chi_{\RR^n\setminus
\widehat{S}_j(Q)})(x)|^2\,\frac{ds\,dx}{s}\right)^{1/2}\nonumber \\[4pt]
&& \qquad =: \,\,\, I+II.\label{eq7.18}
\end{eqnarray}

We claim that for arbitrary closed sets $E,F\subset\RR^n$
\begin{equation}\label{eq7.19}
\left\| \frac s\tau\, e^{-sL}(I-e^{-\tau L})g\right\|_{L^2(F)}
\leq C e^{-\frac{{\rm dist}\,(E,F)^2}{cs}}\|g\|_{L^2(E)},
\end{equation}

\noindent provided $s\geq \tau$ and ${\rm{ supp}}\,g\subset E$.
Indeed,
\begin{eqnarray}\nonumber
&&\left\| \frac s\tau
(e^{-sL}-e^{-(s+\tau)L})g\right\|_{L^2(F)}=\left\|\frac s\tau
\int_0^\tau\partial_r e^{-(s+r)L}g\,dr\right\|_{L^2(F)}\\[4pt]
&&\quad \leq C \frac s\tau\int_0^\tau \left\|(s+r)L
e^{-(s+r)L}g\right\|_{L^2(F)}\,\frac{dr}{s+r} \nonumber\\[4pt]
&&\quad \leq  C \,\|g\|_{L^2(E)} \,\left(\frac s\tau\int_0^\tau
e^{-\frac{{\rm
dist}\,(E,F)^2}{c(s+r)}}\,\frac{dr}{s+r}\right).\label{eq7.20}
\end{eqnarray}

\noindent Since $s+r\approx s$ for $s\geq \tau$ and
$r\in(0,\tau)$, the expression above does not exceed
\begin{equation}\label{eq7.21}
C \,\|g\|_{L^2(E)} e^{-\frac{{\rm dist}\,(E,F)^2}{cs}}
\,\left(\frac s\tau\int_0^\tau \frac{dr}{s+r}\right) \leq C
e^{-\frac{{\rm dist}\,(E,F)^2}{cs}}\|g\|_{L^2(E)}.
\end{equation}

Next, recall that
\begin{equation}\label{eq7.22}
L^{-\alpha}f=C\int_0^\infty t^{\alpha-1} e^{-tL}f\,dt.
\end{equation}

\noindent Then we obtain the estimate
\begin{eqnarray}\nonumber &&  II\leq C \left( \int_{[c2^jl(Q)]^2}^\infty \int_{\RR^n}
|sLe^{-sL}L^{-\alpha}(I-e^{-l(Q)^2L})^{M}(m\chi_{\RR^n\setminus
\widehat{S}_j(Q)})(x)|^2\,\frac{dx\,ds}{s}\right)^{1/2}\\[4pt]\nonumber
&&\quad \leq
 C \left(\int_{[c2^jl(Q)]^2}^\infty\left(\int_0^\infty
t^{\alpha-1}
\|sLe^{-sL}e^{-tL}(I-e^{-l(Q)^2L})^{M}(m\chi_{\RR^n\setminus
\widehat{S}_j(Q)})\|_{L^2(\RR^n)}
\,dt\right)^2\,\frac{ds}{s}\right)^{1/2}\\[4pt]\nonumber
&&\quad \leq C \,\Bigg(\int_{c'[2^jl(Q)]^2}^\infty\left(\int_0^\infty
t^{\alpha-1} \Bigg(\frac{l(Q)^2}{s+t}\right)^M\\[4pt]\nonumber
&&\qquad \quad \times \Bigl\|sL
e^{-sL}\left(\frac{s+t}{l(Q)^2}\right)^M
e^{-(s+t)L}(I-e^{-l(Q)^2L})^{M}(m\chi_{\RR^n\setminus
\widehat{S}_j(Q)})\Bigr\|_{L^2(\RR^n)}
\,dt\Bigg)^2\,\frac{ds}{s}\Bigg)^{1/2}.\nonumber
\end{eqnarray}

\noindent To estimate the last line above, we split further $e^{-(s+t)L}(I-e^{-l(Q)^2L})^{M}=\left[e^{-\frac{(s+t)}{M}\,L}(I-e^{-l(Q)^2L})\right]^{M}$ and use  Lemma~\ref{l2.6} and (\ref{eq7.19}) with $\tau=l(Q)^2$ and $(s+t)/M$ in place of $s$  (assuming that $(s+t)/M\geq l(Q)^2$) and otherwise, if $c'[2^jl(Q)]^2\leq (s+t)/M\leq l(Q)^2$, just directly  Lemma~\ref{l2.6}. All in all, 

\begin{eqnarray}\nonumber &&  II\leq C \,\Bigg(\int_{c'[2^jl(Q)]^2}^\infty\left(\int_0^\infty
t^{\alpha-1} \Bigg(\frac{l(Q)^2}{s+t}\right)^M
\,dt\Bigg)^2\,\frac{ds}{s}\Bigg)^{1/2}\|m\|_{L^2(\RR^n)}\\[4pt]
&&\qquad\qquad\qquad\qquad  \leq C (2^jl(Q))^{\frac n2-\frac nr}2^{j\left(\frac
np-\frac n2-2M\right)}\leq C (2^jl(Q))^{\frac n2-\frac
nr}2^{-j\eps}, \label{eq7.23}
\end{eqnarray}

\noindent with $\eps$ denoting minimum between $\eps$ from the
definition of the $(p,\eps,M)$ molecule and $\frac np-\frac
n2-2M$. We do not distinguish them in the notation as soon as
$\eps>0$. 

In order to estimate $I$, let us denote by $S^*$ the vertical
version of square function, i.e.
\begin{equation}\label{eq7.24}
S^*f(x)=\left(\int_0^\infty |t^2Le^{-t^2L}f(x)|^2\,\frac
{dt}{t}\right)^{1/2},\qquad x\in\RR^n,
\end{equation}

\noindent and record the following result:
\begin{equation}\label{eq7.25}
\|S^*e^{-tL}(I-e^{-\tau L})^{M}f\|_{L^2(F)}\leq C\,
\left(\frac{\max\{t,\tau\}}{{\rm
dist}\,(E,F)^2}\right)^{M}\,\|f\|_{L^2(E)},
\end{equation}

\noindent for arbitrary closed sets $E,F\subset\RR^n$, $f\in
L^2(E)$ and $t,\tau>0$. For $t=0$ this has been established in
Theorem~3.2, \cite{HM}, and the proof of (\ref{eq7.25}) follows
the same path. Then
\begin{eqnarray}\nonumber
&&\hskip -0.7cm I\leq C \int_{0}^{(M+1)l(Q)^2}t^{\alpha-1} \|S^*
e^{-tL}(I-e^{-l(Q)^2L})^{M}(m\chi_{\RR^n\setminus
\widehat{S}_j(Q)})\|_{L^2(\widetilde S_j(Q))}
\,dt\\[4pt]\nonumber
&&\hskip -0.7cm \quad +\,C \int_{(M+1)l(Q)^2}^\infty
t^{\alpha-1}\left(\int_0^\infty \|sLe^{-sL}
e^{-tL}(I-e^{-l(Q)^2L})^{M}(m\chi_{\RR^n\setminus
\widehat{S}_j(Q)})\|_{L^2(\widetilde S_j(Q))}^2\,\frac{ds}{s}\right)^{1/2}
\,dt \\[4pt]
&& \qquad =: \, I_1+I_2\label{eq7.26}
\end{eqnarray}

\noindent where we first used \eqref{eq7.22}, then Minkowski inequality to switch the $L^2$ and $L^1$ norms, then split the integral in $t$, and then made a substitution $s^2$ to $s$ in the second term.  Then,  by (\ref{eq7.25})
\begin{equation}\label{eq7.27}
I_1\leq C l(Q)^{2\alpha} 2^{-2Mj}\|m\|_{L^2(\RR^n)}\leq C
(2^jl(Q))^{\frac n2-\frac nr}2^{-j\eps}.
\end{equation}

\noindent Going further,
\begin{eqnarray}\nonumber
&&\hskip -0.7cm I_2\leq C \int_{(M+1)l(Q)^2}^\infty
t^{\alpha-1}\Bigg(\int_0^t \left(\frac st\right)^2 \left(\frac {l(Q)^2}t\right)^{2M}\\[4pt]\nonumber
&&\hskip -0.7cm \qquad\qquad \times
\Bigg\|tLe^{-tL}e^{-sL}\left(\frac
t{l(Q)^2}\right)^M(I-e^{-l(Q)^2L})^{M}(m\chi_{\RR^n\setminus
\widehat{S}_j(Q)})\Bigg\|_{L^2(\widetilde
S_j(Q))}^2\,\frac{ds}{s}\Bigg)^{1/2}
\,dt \\[4pt]\nonumber
&&\hskip -0.7cm \qquad +\,C \int_{(M+1)l(Q)^2}^\infty
t^{\alpha-1}\Bigg(\int_t^\infty \left(\frac {l(Q)^2}s\right)^{2M}\\[4pt]
&&\hskip -0.7cm \qquad\qquad \times \Bigg\|sLe^{-sL}
e^{-tL}\left(\frac s{l(Q)^2}\right)^M
(I-e^{-l(Q)^2L})^{M}(m\chi_{\RR^n\setminus
\widehat{S}_j(Q)})\Bigg\|_{L^2(\widetilde
S_j(Q))}^2\,\frac{ds}{s}\Bigg)^{1/2} \,dt. \nonumber
\end{eqnarray}

\noindent According to (\ref{eq7.19}), the expression above is
bounded by
\begin{eqnarray}\nonumber
&&\hskip -0.7cm C \|m\|_{L^2(\RR^n)}\int_{(M+1)l(Q)^2}^\infty
t^{\alpha-1}\left(\frac
{l(Q)^2}t\right)^{M}e^{-\frac{(2^jl(Q))^2}{ct}} \Bigg(\int_0^t
\left(\frac st\right)^2 \,\frac{ds}{s}\Bigg)^{1/2}
\,dt \\[4pt]
&&\hskip -0.7cm \qquad +\,C \|m\|_{L^2(\RR^n)}
\int_{(M+1)l(Q)^2}^\infty t^{\alpha-1}\Bigg(\int_t^\infty \left(\frac
{l(Q)^2}s\right)^{2M} e^{-\frac{(2^jl(Q))^2}{cs}}
\,\frac{ds}{s}\Bigg)^{1/2} \,dt. \label{eq7.28}
\end{eqnarray}

\noindent Here, to estimate the first term, we used (\ref{eq7.19}) with $\frac t{M+1}$ in place of $s$ and $l(Q)^2$ in place of $\tau$, splitting $tLe^{-tL}(I-e^{-l(Q)^2L})^{M}=tLe^{-\frac{t}{M+1}L}\left[e^{-\frac{t}{M+1}L} (I-e^{-l(Q)^2L})\right]^{M}$. Similarly, for the second term we employed (\ref{eq7.19}) with $\frac s{M+1}$ in place of $s$ and $l(Q)^2$ in place of $\tau$, and split $sLe^{-sL}(I-e^{-l(Q)^2L})^{M}=sLe^{-\frac{s}{M+1}L}\left[e^{-\frac{s}{M+1}L} (I-e^{-l(Q)^2L})\right]^{M}$. 

Now, making the change of variables $t\mapsto r$,
$r:=-\frac{(2^jl(Q))^2}{ct}$, in the first line of  \eqref{eq7.28}, we control
it by
\begin{equation}\label{eq7.29}
C(2^jl(Q))^{2\alpha}2^{-2jM}\|m\|_{L^2(\RR^n)}\leq C
(2^jl(Q))^{\frac n2-\frac nr}2^{-j\eps}.
\end{equation}

\noindent In order to control the second term in (\ref{eq7.28}),
let us take some $\delta>0$ and write
\begin{eqnarray}\nonumber
&&C \|m\|_{L^2(\RR^n)} \int_{(M+1)l(Q)^2}^\infty
t^{\alpha-1}\Bigg(\int_t^\infty \left(\frac {l(Q)^2}s\right)^{2M}
e^{-\frac{(2^jl(Q))^2}{cs}} \,\frac{ds}{s}\Bigg)^{1/2} \,dt\\[4pt]\nonumber
&&\, \leq C \|m\|_{L^2(\RR^n)} \int_{(M+1)l(Q)^2}^\infty
t^{\alpha-1}\left(\frac
{l(Q)^2}t\right)^{\alpha+\delta}\Bigg(\int_t^\infty \left(\frac
{l(Q)^2}s\right)^{2M-2\alpha-2\delta}
e^{-\frac{(2^jl(Q))^2}{cs}} \,\frac{ds}{s}\Bigg)^{1/2} dt\\[4pt]
&&\qquad \leq \,C
l(Q)^{2\alpha}2^{-j(2M-2\alpha-2\delta)}\|m\|_{L^2(\RR^n)}\,\leq \,C
(2^jl(Q))^{\frac n2-\frac nr}2^{-j\eps},\label{eq7.30}
\end{eqnarray}

\noindent provided $\delta>0$ is small enough.

All in all, we have the desired control for $SL^{-\alpha}$ acting
on the first term in (\ref{eqfp10}). The second one can be handled
by a similar argument, since $(l(Q)^{-2}L^{-1})^Mm$ satisfies the
same size conditions as a molecule itself and
$\left(l(Q)^{2}Le^{-l(Q)^2L}\right)^{M}$ behaves much as
$(I-e^{-l(Q)^2L})^M$. Roughly speaking, these two operators
exhibit the same cancellation and decay properties (it can be
seen, e.g., from the argument of Theorem~\ref{t3.5}).

This finishes the proof of (\ref{eq7.10}), and it remains only to pass
to (\ref{eq7.2}), under the conditions (\ref{eq7.6})--(\ref{eq7.7}).  In particular, $\alpha\leq 1/2,$
so by (\ref{eq7.1}), and the fact that $p_+(L)> 2n/(n-2)$ (cf. \cite{AuscherSurvey}),
we then have that 
\begin{equation}\label{alphabound}L^{-\alpha}: L^2(\RR^n)\to L^q(\RR^n),  \qquad
\frac{1}{q}= \frac{1}{2}- \frac{2\alpha}{n}.\end{equation}

Now by density, as usual it is enough to work with $f \in \HPHML$,
so that there is an $L^2$ convergent molecular decomposition
$f=\sum \lambda_j m_j$, with $\sum|\lambda_j|^p \lesssim \|f\|_{\HPHML}^p.$
Consequently, (\ref{alphabound}) 
implies that $$L^{-\alpha} f = \sum \lambda_j\, L^{-\alpha} m_j\quad\, {\rm in}\,\, 
L^q(\RR^n),$$
and therefore also, since $q<p_+(L),$ that
$$ S \left(L^{-\alpha} f\right) \leq \sum |\lambda_j|\, S\left(L^{-\alpha} m_j\right)$$
(here we have used that  $S:L^q\to L^q$ whenever
$p_-(L)<q<p_+(L)$, by a slight modification of an argument in \cite{AuscherSurvey}, Theorem~6.1).
It is now immediate that (\ref{eq7.10}) implies
(\ref{eq7.2}), under the conditions (\ref{eq7.6})--(\ref{eq7.7}), and as we have observed above, the conclusion of Theorem \ref{t7.1} follows.

 \ep

\section{Functional calculus and fractional powers of $L$ in smoothness spaces.}\label{s8}
\setcounter{equation}{0}

\subsection{Functional calculus and fractional powers of $L$ in $H^p_L$-$BMO_L$-$\Lambda^\alpha_L$ spaces}\label{s8.1}

Recall from Section~\ref{s2.1} that $L$ has a bounded holomorphic functional calculus on $L^2$ and (\ref{eq2.5}) holds.
In general, these properties do not extend to all $L^p$, $1<p<\infty$. Otherwise, the heat semigroup would be bounded in all $L^p$, $1<p<\infty$, as an $H^\infty$ function, which would contradict Proposition~\ref{p2.1}. However, the functional calculus can be extended
to a full scale of
$H^p_L$-$BMO_L$-$\Lambda^\alpha_L$ spaces.

\begin{lemma}\label{l8.1} The operator $L$ defined in
(\ref{eq1.1})--(\ref{eq1.3}) has  a \Bk bounded holomorphic
functional calculus in $H^p_L(\RR^n)$, $0< p< \infty$,
$BMO_L(\RR^n)$ and $\Lambda^\alpha_L(\RR^n)$, $\alpha>0$, in the
following sense.

When $0<p\leq 2$, for every non-trivial $\psi\in H^\infty(\Sigma_\mu^0)$ the
operator $\psi(L)$ originally defined on $L^2(\RR^n)$ extends by
continuity to a bounded operator on $H^p_L(\RR^n)$ satisfying
\begin{equation}\label{eq8.1}
\|\psi(L)f\|_{H^p_L(\RR^n)}\leq C \|\psi\|_{L^\infty(\Sigma_\mu^0)}
\|f\|_{H^p_L(\RR^n)} \quad \mbox{for every}\quad f\in H^p_L(\RR^n).
\end{equation}

\noindent For $p>2$ the operator $\psi(L)$ can be defined on
$H^p_L(\RR^n)$ by duality:
\begin{equation}\label{eq8.2}
\forall f\in H^p_L(\RR^n),\,\, p>2, \,\, \forall g\in
H^{p'}_{L*}(\RR^n)\,\,\qquad\langle \psi(L)f,g\rangle:=\langle f,\psi(L^*)g\rangle,
\end{equation}

\noindent and satisfies (\ref{eq8.1}). In the same way $\psi(L)$ can
be defined on $BMO_L(\RR^n)$ and $\Lambda^\alpha_L(\RR^n)$,
$\alpha>0$, and
\begin{equation}\label{eq8.3}
\|\psi(L)f\|_{BMO_L(\RR^n)}\leq  C
\|\psi\|_{L^\infty(\Sigma_\mu^0)} \|f\|_{BMO_L(\RR^n)} \quad
\mbox{for every}\quad f\in BMO_L(\RR^n),\end{equation}\vskip -.6cm
\begin{equation}\label{eq8.4}
\|\psi(L)f\|_{\Lambda^\alpha_L(\RR^n)}\leq  C
\|\psi\|_{L^\infty(\Sigma_\mu^0)} \|f\|_{\Lambda^\alpha_L(\RR^n)}
\quad \mbox{for every}\quad f\in \Lambda^\alpha_L(\RR^n),\quad
\alpha>0.
\end{equation}
\end{lemma}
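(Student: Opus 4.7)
My plan is to reduce all four estimates to the square function characterization of $\HPL$ from Corollary~\ref{c4.4} combined with the tent space boundedness of ${\mathcal Q}^f$ in Proposition~\ref{p4.1}, and then to handle the cases $p>2$, $BMO_L$, and $\Lambda^\alpha_L$ by duality.

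For $0<p\leq 2$, I will work on the dense subspace $L^2(\RR^n)\cap\HPL$ and establish (\ref{eq8.1}) with a constant independent of $f$. The idea is to select auxiliary functions $\psi_0\in\Psi_{\alpha,\beta}(\Sigma_\mu^0)$ and $\widetilde\psi_0\in\Psi_{\beta,\alpha}(\Sigma_\mu^0)$ with $\alpha>0$ and $\beta>\tfrac{n}{2}(\max\{1/p,1\}-\tfrac12)$, normalized so that the Calder\'on reproducing formula $f=\pi_{\widetilde\psi_0,L}\circ Q_{\psi_0,L}f$ of (\ref{eq4.17}) holds on $L^2(\RR^n)$; these decay parameters are precisely the ones required simultaneously by Corollary~\ref{c4.4} and by case (1) of Proposition~\ref{p4.1}. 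Since $\psi(L)$ commutes on $L^2(\RR^n)$ with $\psi_0(s^2L)$ and $\widetilde\psi_0(t^2L)$, all coming from the bounded $H^\infty$-calculus on $L^2$, inserting the reproducing formula inside $Q_{\psi_0,L}(\psi(L)f)(x,s)$ rewrites it as ${\mathcal Q}^\psi(Q_{\psi_0,L}f)(x,s)$, where ${\mathcal Q}^\psi$ is the operator in (\ref{eq4.7}) (with the $H^\infty$-function $\psi$ in the middle and $\psi_0$, $\widetilde\psi_0$ as the outer $\Psi$-functions). Proposition~\ref{p4.1} bounds the tent space norm of the latter by $C\|\psi\|_{L^\infty(\Sigma_\mu^0)}\|Q_{\psi_0,L}f\|_{T^p(\RR^{n+1}_+)}$, and two applications of Corollary~\ref{c4.4} translate the resulting inequality into (\ref{eq8.1}); $\psi(L)$ then extends by continuity to all of $\HPL$.

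For $2<p<\infty$, the operator $\psi(L)$ is defined directly on $H^p_L(\RR^n)=(H^{p'}_{L^*}(\RR^n))^*$ by (\ref{eq8.2}), consistently with its $L^2$ definition. Applying the previous step to $L^*$ (which enjoys the same bounded $H^\infty$-calculus on $L^2$ with the same norm) gives the estimate for $\psi(L^*)$ on $H^{p'}_{L^*}(\RR^n)$, so that $|\langle\psi(L)f,g\rangle|=|\langle f,\psi(L^*)g\rangle|\leq C\|\psi\|_{L^\infty(\Sigma_\mu^0)}\|f\|_{\HPL}\|g\|_{H^{p'}_{L^*}}$ for all $g\in H^{p'}_{L^*}(\RR^n)$, yielding (\ref{eq8.1}). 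Exactly the same duality argument, combined with the identifications $BMO_L(\RR^n)=(H^1_{L^*}(\RR^n))^*$ and $\Lambda^\alpha_L(\RR^n)=(H^p_{L^*}(\RR^n))^*$ with $\alpha=n(1/p-1)$, $0<p<1$, afforded by Theorem~\ref{t3.5}, yields (\ref{eq8.3}) and (\ref{eq8.4}).

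The main item to handle carefully is the composition identity $Q_{\psi_0,L}\circ\psi(L)\circ\pi_{\widetilde\psi_0,L}={\mathcal Q}^\psi$ on the dense subspace, together with the compatibility of the $\Psi$-class hypotheses in Corollary~\ref{c4.4} and Proposition~\ref{p4.1}. Neither is genuinely difficult; both flow from the bounded $H^\infty$-calculus on $L^2$ and from the matching exponent conditions built into those results, and no new quadratic estimates or off-diagonal bounds beyond those already developed in Sections~\ref{s2.3} and \ref{s4} will be needed.
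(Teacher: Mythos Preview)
Your proposal is correct and follows essentially the same route as the paper's proof: both arguments use the Calder\'on reproducing formula (\ref{eq4.17}) to write $Q_{\psi_0}\circ\psi(L)$ as ${\mathcal Q}^\psi\circ Q_{\psi_0}$, invoke Proposition~\ref{p4.1} for the tent space bound, convert back to $\HPL$ via the square function characterization (Corollary~\ref{c4.4}/Proposition~\ref{p4.3}), and then treat $p>2$, $BMO_L$, $\Lambda^\alpha_L$ by duality via (\ref{eq1.10}) and Theorem~\ref{t3.5}. The only cosmetic difference is that the paper takes the auxiliary functions in the symmetric class $\Psi_{\beta,\beta}$ while you take $\psi_0\in\Psi_{\alpha,\beta}$, $\widetilde\psi_0\in\Psi_{\beta,\alpha}$; both choices satisfy the hypotheses of Proposition~\ref{p4.1} and Corollary~\ref{c4.4}.
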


\bp Let $0<p\leq 2$ and $\beta>\frac{n}{2}\left(\max\{\frac
1p,1\}-\frac 12\right)$. Now take $\psi\in\Psi_{\beta,\beta}(\Sigma_{\mu}^0)$ and
build $\widetilde \psi\in\Psi_{\beta,\beta}(\Sigma_{\mu}^0)$ using (\ref{eq4.18}) so that (\ref{eq4.17}) is satisfied.
Then for any $g\in H^p_L(\RR^n)$
\begin{equation}\label{eq8.5}
Q_{\psi}g\in T^p(\RR_+^{n+1})\quad \mbox{and}\quad \|Q_{\psi}g\|_{T^p(\RR_+^{n+1})}\leq C \|g\|_{H^p_L(\RR^n)}.
\end{equation}

\noindent Furthermore, by Proposition~\ref{p4.1}
\begin{equation}\label{eq8.6}
Q_{\psi}\circ f(L)\circ \pi_{\widetilde \psi}:T^p(\RR_+^{n+1})\longrightarrow T^p(\RR_+^{n+1}),
\end{equation}

\noindent and hence, by (\ref{eq8.5})
\begin{equation}\label{eq8.7}
Q_{\psi}\circ f(L)=Q_{\psi}\circ f(L)\circ \pi_{\widetilde \psi}\circ Q_{\psi}:H^p_L(\RR^n)\longrightarrow T^p(\RR_+^{n+1}).
\end{equation}

\noindent By virtue of (\ref{eq4.23}) the property (\ref{eq8.7}) implies that
\begin{equation}\label{eq8.8}
f(L):H^p_L(\RR^n)\longrightarrow H^p_L(\RR^n),
\end{equation}

\noindent thereby concluding the case $0<p\leq 2$.

Now the functional calculus of $L$ in $H^p_L$ for $p>2$, 
$BMO_L$ and $\Lambda^{\alpha}_L$, $\alpha>0$,   follows from
(\ref{eq1.10}) and Theorem~\ref{t3.5}.\ep

\subsection{Classical scales of function spaces measuring
smoothness}\label{s8.2}

So far we have worked with a few different scales of function spaces on
$\RR^n$: $L^p(\RR^n)$, $1<p\leq \infty$, Hardy  spaces $H^p(\RR^n)$,
$0<p\leq 1$, homogeneous Sobolev spaces $\dot W^{s,p}(\RR^n)$,
$s\in\RR$, $1<p<\infty$ (cf. 
(\ref{eq5.36})), and their counterparts for $p\leq 1$ and $s=1$, namely
the regular Hardy spaces $H^{1,p}(\RR^n)$ defined in (\ref{eq5.21}).
All of them belong to (or can be identified with the members of) a
more extensive scale of the Triebel-Lizorkin spaces, $\dot
F^{p,q}_s(\RR^n)$, $s\in\RR$, $0<p,q<\infty$.

Let us denote by ${\mathcal F}$ the Fourier transform operator. We fix a Schwartz function $\varphi$ such that:
\begin{enumerate}
\item ${\rm supp}\,{\mathcal{F}}(\varphi)\subseteq
\{\xi\in{\mathbb R}^n:\,\frac{1}{2}\leq|\xi|\leq 2\}$,

\item $\left|{\mathcal F}(\varphi)(\xi)\right|\geq c>0$ uniformly for
$\frac{3}{5}\leq|\xi|\leq\frac{5}{3}$,

\item $\sum_{i\in {\mathbb Z}}|{\mathcal F}(\varphi)(2^i\xi)|^2=1$ if $\xi\neq 0$,
\end{enumerate}
\noindent and let $\varphi_i(x):=2^{in}\varphi (2^ix)$, $i\in\ZZ$, $x\in\RR^n$.
Then for $s\in{\mathbb R}$, $0<p<\infty$ and $0<q\leq
\infty$,
\begin{equation}\label{eq8.9}
\dot{F}^{p,q}_s({\mathbb R}^n):=\Bigl\{f\in{\mathcal S}'/{\mathcal
P}:\, \|f\|_{\dot{F}^{p,q}_s({\mathbb R}^n)}
:=\Bigl\|\Bigl(\sum_{i\in{\mathbb{Z}}}(2^{is}|\varphi_i*f|)^q
\Bigr)^{\frac{1}{q}}\Bigr\|_{L^p}<\infty\Bigr\},
\end{equation}

\noindent where ${\mathcal S}'/{\mathcal P}$ is the space of
tempered distributions on $\RR^n$ modulo polynomials. We have
\begin{eqnarray}
\label{eq8.10} L^p({\mathbb{R}}^n) & \approx & \dot
F_0^{p,2}({\mathbb{R}}^n),\quad 1<p<\infty,
\\[4pt]
\label{eq8.11} \dot W^{s,p}({\mathbb{R}}^n) & \approx & \dot
F_s^{p,2}({\mathbb{R}}^n), \quad   1<p<\infty, \qquad s\in\RR,
\\[4pt]
\label{eq8.12} H^p({\mathbb{R}}^n) & \approx & \dot
F_0^{p,2}({\mathbb{R}}^n), \quad  0<p\leq 1,
\\[4pt]
\label{eq8.13} H^{1,p}({\mathbb{R}}^n) & \approx & \dot
F_1^{p,2}({\mathbb{R}}^n), \quad  0<p\leq 1.
\end{eqnarray}

\noindent The details on the identifications in (\ref{eq8.10})
and (\ref{eq8.12}) are presented in \cite{FrJa} (Remark 7.8 and
Appendix B). The identifications (\ref{eq8.11}) and (\ref{eq8.13}) will be
discussed after Lemma~\ref{l8.3}. We shall use the following
basic properties of Triebel-Lizorkin spaces.
\begin{lemma}\label{l8.2} The space
\begin{equation}\label{eq8.14}
{\mathcal Z}(\RR^n):=\{\varphi\in {\mathcal S}(\RR^n):\,(D^\alpha
{\mathcal F} \varphi)(0)=0 \mbox{ for every multiindex }\alpha\}
\end{equation}

\noindent is a dense subspace of $\dot{F}^{p,q}_s({\mathbb R}^n)$
for all $s\in{\mathbb R}$, $0<p,q<\infty$.
\end{lemma}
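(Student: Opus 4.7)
The plan is to prove density by a two-step approximation: first reduce $f\in\dot F^{p,q}_s(\mathbb R^n)$ to a function with Fourier support in a fixed annulus bounded away from $0$ and $\infty$, and then approximate such a function by an element of $\mathcal{Z}(\mathbb R^n)$ in the $\dot F^{p,q}_s$-norm. A preliminary observation is that $\mathcal{Z}(\mathbb R^n)\subset\dot F^{p,q}_s(\mathbb R^n)$: for $\eta\in\mathcal{Z}$, the Fourier transform $\widehat\eta$ vanishes to infinite order at $0$ and is Schwartz, which forces $\|\varphi_i*\eta\|_{L^p}$ to decay faster than any power of $2^{-|i|}$, so the $L^p(\ell^q)$ norm is trivially finite.

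Step 1 (frequency truncation). Set $\widetilde\varphi_i(x):=\overline{\varphi_i(-x)}$ so that $\widehat{\widetilde\varphi}_i=\overline{\widehat{\varphi}_i}$ and, by property (3) of $\varphi$, $\sum_{i\in\mathbb Z}\widehat{\widetilde\varphi}_i(\xi)\,\widehat{\varphi}_i(\xi)=1$ for $\xi\ne 0$. This yields a Calder\'on reproducing identity
\begin{equation*}
f=\sum_{i\in\mathbb Z}\widetilde\varphi_i*\varphi_i*f\quad\text{in }\mathcal S'/\mathcal P,
\end{equation*}
valid for every $f\in\dot F^{p,q}_s$. Let $f_N:=\sum_{|i|\le N}\widetilde\varphi_i*\varphi_i*f$. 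Because $\widehat{\varphi}$ is supported in $\{1/2\le|\xi|\le 2\}$, the piece $\varphi_j*(f-f_N)$ vanishes as soon as $|j|\le N-2$, while for $|j|>N-2$ it is controlled by a finite linear combination (over neighboring indices $i$) of $\varphi_i*f$. A dominated convergence argument in the $L^p(\ell^q)$-norm applied to the tail of the absolutely summable sequence $\{2^{js}|\varphi_j*f|\}_{j}$ then gives $\|f-f_N\|_{\dot F^{p,q}_s}\to 0$.

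Step 2 (Schwartz approximation of $f_N$). The function $f_N$ has $\widehat{f_N}$ supported in the compact annulus $A_N:=\{2^{-N-1}\le|\xi|\le 2^{N+1}\}$, so $f_N$ is smooth, and a Bernstein-type argument combined with $\varphi_i*f\in L^p$ at each of the finitely many active scales yields that $f_N$ has at most polynomial growth. Choose $\eta\in\mathcal Z$ with $\widehat\eta\equiv 1$ on $A_N$ and with $\widehat\eta$ supported in a slightly larger annulus $A'_N$ still bounded away from $0$. Then $f_N=\eta*f_N$. For a smooth cutoff $\chi_R\in C_c^\infty$ equal to $1$ on $\{|x|\le R\}$, set $g_R:=\eta*(\chi_R f_N)$. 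Since $\chi_R f_N\in C_c^\infty$ and $\eta\in\mathcal S$, we have $g_R\in\mathcal S$; and since $\widehat{g_R}=\widehat\eta\,\widehat{\chi_R f_N}$ is supported in $A'_N$, $g_R\in\mathcal Z$. As all relevant Fourier supports lie in a fixed bounded annulus, only finitely many Littlewood--Paley indices $j$ contribute to $\|f_N-g_R\|_{\dot F^{p,q}_s}$, so matters reduce to showing that $\|\varphi_j*\eta*((1-\chi_R)f_N)\|_{L^p}\to 0$ as $R\to\infty$ for each such $j$. Since $\varphi_j*\eta\in\mathcal S$ decays rapidly while $(1-\chi_R)f_N$ is supported in $\{|y|\ge R\}$ with at most polynomial growth, the desired convergence follows from direct size estimates on the convolution integral.

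The main obstacle is Step 2 in the quasi-Banach regime $p<1$ or $q<1$, where one loses the convenient $L^p$ triangle inequality and Young's convolution inequality. In that regime, one should replace the direct convolution bounds either by Peetre-type maximal function estimates (exploiting the fact that $\eta*((1-\chi_R)f_N)$ inherits the Fourier localization in $A'_N$, so its $\dot F^{p,q}_s$ quasi-norm is controlled by a local $L^p$ expression), or by Nikol'ski\u{\i}-type inequalities for frequency-localized functions, which allow one to pass from $L^r$ estimates (for large $r$) to $L^p$ estimates at the cost of a constant depending only on $N$ and the Fourier support. Verifying uniform control along these lines is the essential technical point, and the convergence in the two steps combined yields $\mathcal Z(\mathbb R^n)$ dense in $\dot F^{p,q}_s(\mathbb R^n)$.
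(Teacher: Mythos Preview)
The paper does not prove this lemma; it simply cites Triebel, \emph{Theory of Function Spaces}, Section~5.1.3. Your two-step scheme---frequency truncation via the Calder\'on reproducing formula, then spatial truncation convolved with a band-limited mollifier---is precisely the standard argument found there, and your outline is correct for $p,q\ge 1$.

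Your final paragraph, however, contains one misstep and one point that needs sharpening. The Nikol'ski\u{\i} inequality for band-limited functions runs from $L^p$ to $L^q$ with $p\le q$, not the other way; a band-limited function can lie in $L^r$ for large $r$ and fail to be in $L^p$ for small $p$ (e.g.\ $\mathrm{sinc}$), so this route does not work. The Peetre maximal function route does work, but the point is to apply it to the band-limited function $f_N$ itself rather than to the convolution $\eta*((1-\chi_R)f_N)$. With $f_N^{**}(x):=\sup_y|f_N(x-y)|(1+2^N|y|)^{-a}$ and $a>n/p$, Peetre's inequality gives $\|f_N^{**}\|_{L^p}\le C\|f_N\|_{L^p}$. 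Writing $\psi:=\varphi_j*\eta\in\mathcal S$ and splitting the convolution integral according to whether $|z|\le R/2$ or $|z|>R/2$, one obtains the pointwise bound
\[
|\psi*((1-\chi_R)f_N)(x)|\ \le\ C_M\,R^{-M}f_N^{**}(x)\ +\ C\,f_N^{**}(x)\,\mathbf{1}_{\{|x|\ge R/2\}},
\]
from which $L^p$ convergence to zero follows for every $0<p<\infty$ by dominated convergence. (A similar Peetre/Fefferman--Stein argument is what makes the ``dominated convergence'' in your Step~1 rigorous when $\min(p,q)<1$.) With this correction your argument is complete.
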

\begin{lemma}\label{l8.3} The operator $\Delta^\alpha$, $\alpha\in\RR$, is an
isomorphism from $\dot{F}^{p,q}_s({\mathbb R}^n)$ onto
$\dot{F}^{p,q}_{s-2\alpha}({\mathbb R}^n)$, $s\in{\mathbb R}$,
$0<p,q<+\infty$. Also, for any $m\in{\mathbb{N}}$,
\begin{eqnarray}\label{eq8.15}
\dot F^{p,q}_s({\mathbb{R}}^n) & = & \{f\in {\mathcal S}'/{\mathcal
P}:\,D^\alpha f\in \dot F^{p,q}_{s-m}({\mathbb{R}}^n),
\,\,\forall\,\alpha\mbox{ with }|\alpha|= m\}.
\end{eqnarray}
\end{lemma}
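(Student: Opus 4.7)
The plan is to reduce both assertions to standard Fourier multiplier arguments on the Littlewood-Paley building blocks defining $\dot F^{p,q}_s$ in (\ref{eq8.9}).

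For the isomorphism $\Delta^\alpha : \dot F^{p,q}_s \to \dot F^{p,q}_{s-2\alpha}$, the key observation is that $\Delta^\alpha$ corresponds to Fourier multiplication by $|\xi|^{2\alpha}$. Choose $\eta \in C_c^\infty(\{\tfrac13 < |\xi| < 3\})$ with $\eta \equiv 1$ on $\operatorname{supp}\mathcal{F}\varphi$, and define $\psi^{(\alpha)} \in \mathcal{S}(\RR^n)$ by $\mathcal{F}\psi^{(\alpha)}(\xi) := |\xi|^{2\alpha}\eta(\xi)$. Then for each $i \in \ZZ$,
$$\varphi_i * \Delta^\alpha f \,=\, 2^{2i\alpha}\,\psi_i^{(\alpha)} * \varphi_i * f,\qquad \psi_i^{(\alpha)}(x):= 2^{in}\psi^{(\alpha)}(2^ix),$$
since on the Fourier support of $\varphi_i * f$ the cutoff $\eta(2^{-i}\cdot)$ equals $1$. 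Because $\psi^{(\alpha)}\in \mathcal{S}$, a standard pointwise estimate gives
$$|\psi_i^{(\alpha)} * \varphi_i * f(x)|\,\leq\,C\,\mathcal{M}(\varphi_i * f)(x),$$
with $C$ independent of $i$, where $\mathcal{M}$ is the Hardy-Littlewood maximal operator. Multiplying by $2^{i(s-2\alpha)}$, taking $\ell^q$ in $i$ and $L^p$ in $x$, and applying the Fefferman-Stein vector-valued maximal inequality (valid for $0<p,q<\infty$ after an $r$-trick with $r<\min(p,q)$, as in \cite{FrJa}) yields $\|\Delta^\alpha f\|_{\dot F^{p,q}_{s-2\alpha}} \lesssim \|f\|_{\dot F^{p,q}_s}$. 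Since $\Delta^{-\alpha}$ is defined analogously, the reverse estimate follows by symmetry, establishing the isomorphism.

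For the second assertion, the inclusion $(\subseteq)$ follows by the identical argument applied to $D^\alpha = \partial^\alpha$, which is Fourier multiplication by $(i\xi)^\alpha$, a symbol homogeneous of degree $m$: one defines $\widetilde\psi^{(\alpha)} \in \mathcal{S}$ by $\mathcal{F}\widetilde\psi^{(\alpha)}(\xi) = (i\xi)^\alpha \eta(\xi)$ and repeats the above with $2^{im}$ in place of $2^{2i\alpha}$. For the reverse inclusion $(\supseteq)$, I would exploit the algebraic identity
$$|\xi|^{2m}\,=\,\sum_{|\alpha|=m}\binom{m}{\alpha}\,\xi^{2\alpha},$$
which on the operator side reads $(-\Delta)^m = \sum_{|\alpha|=m}\binom{m}{\alpha}\,D^\alpha D^\alpha$. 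Consequently, writing $f = (-\Delta)^{-m}(-\Delta)^m f$ in $\mathcal{S}'/\mathcal{P}$, the first assertion gives $(-\Delta)^{-m}D^\alpha : \dot F^{p,q}_{s-m}\to \dot F^{p,q}_{s+m}$ bounded (up to a sign-convention constant, this is multiplication by $(i\xi)^\alpha |\xi|^{-2m}$, handled by a cutoff/multiplier argument as before since the symbol is smooth away from $0$ and homogeneous of degree $-m$), and thus
$$\|f\|_{\dot F^{p,q}_s}\,\lesssim\,\sum_{|\alpha|=m}\|D^\alpha f\|_{\dot F^{p,q}_{s-m}}.$$

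The main technical obstacle is verifying that the scalar bound $|\psi_i^{(\alpha)} * \varphi_i * f| \lesssim \mathcal{M}(\varphi_i * f)$ passes cleanly through the $L^p(\ell^q)$ norm of the Littlewood-Paley square function, particularly when $p$ or $q$ is $\leq 1$; this requires the $r$-trick to invoke Fefferman-Stein. The quotient by polynomials in $\mathcal{S}'/\mathcal{P}$ also needs a brief check to ensure that $\Delta^{-\alpha}\Delta^\alpha = I$ on this quotient, but this is immediate since any element whose Fourier transform vanishes outside the origin is a polynomial. All these ingredients are classical (cf. \cite{FrJa}), and I would cite rather than reproduce them.
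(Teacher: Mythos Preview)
Your sketch is essentially correct and follows the standard Fourier-multiplier / Fefferman--Stein route; this is precisely the argument underlying Theorem~5.2.3 in Triebel's book \cite{Tr83}, which is all the paper invokes: the paper gives no proof of its own but simply cites \cite{Tr83}. One small slip: you write that $(-\Delta)^{-m}D^\alpha$ maps $\dot F^{p,q}_{s-m}\to\dot F^{p,q}_{s+m}$, but its symbol is homogeneous of degree $-m$, so the correct target is $\dot F^{p,q}_{s}$ --- which is exactly what you need, and indeed your final displayed inequality is right.
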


Lemma~\ref{l8.2} is proved in \cite{Tr83}, Section 5.1.3, and
Lemma~\ref{l8.3} directly follows from Theorem 5.2.3 in
\cite{Tr83}. Note that Lemma~\ref{l8.3} together with
(\ref{eq8.10}) and (\ref{eq8.12}) implies (\ref{eq8.11})
and (\ref{eq8.13}).

Finally, we would like to record the following consequence of the
Kato estimate.
\begin{lemma}\label{l8.4} Let $L$ be an operator defined by (\ref{eq1.1})--(\ref{eq1.3}). Then $L^\alpha$, $-1/2\leq\alpha\leq 1/2$, is an
isomorphism from $\dot{W}^{s,2}({\mathbb R}^n)$ onto
$\dot{W}^{s-2\alpha,2}({\mathbb R}^n)$, $-1\leq s\leq 1$.
\end{lemma}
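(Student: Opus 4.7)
The plan is to combine the Kato square root estimate (1.4) with complex interpolation.

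First, I would collect endpoint isomorphisms from Kato and its immediate consequences: $L^{1/2}:\dot W^{1,2}\to L^2$ is iso by (1.4); applying Kato to $L^*$ (also of type (1.1)--(1.3)) and dualizing gives $L^{1/2}:L^2\to\dot W^{-1,2}$ iso; inverting yields $L^{-1/2}:L^2\to\dot W^{1,2}$ and $L^{-1/2}:\dot W^{-1,2}\to L^2$ iso; composing gives $L:\dot W^{1,2}\to\dot W^{-1,2}$ iso (which also follows directly from the sesquilinear-form definition and ellipticity); and $L^0=I$ on any $\dot W^{s,2}$.

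Second, I would extend Kato along the Sobolev scale by complex interpolation of Hilbert spaces, using $[\dot W^{s_0,2},\dot W^{s_1,2}]_\theta=\dot W^{(1-\theta)s_0+\theta s_1,2}$. Applying the complex method to the fixed operator $L^{1/2}$, simultaneously bounded on $(\dot W^{1,2},L^2)$ and on $(L^2,\dot W^{-1,2})$, produces $L^{1/2}:\dot W^{s,2}\to \dot W^{s-1,2}$ for every $s\in[0,1]$; interpolating the inverse $L^{-1/2}$ symmetrically yields the two-sided inverse and upgrades this to an isomorphism. A mirror argument gives $L^{-1/2}:\dot W^{s,2}\to \dot W^{s+1,2}$ iso for $s\in[-1,0]$.

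Third, for general $\alpha\in(-1/2,1/2)$, I would apply Stein's complex interpolation theorem for analytic operator families. Fixing $s$ in the admissible range, consider the analytic family
\[
F(z)\ :=\ \Delta^{(s-2z)/2}L^{z}\Delta^{-s/2},\qquad -\tfrac12\le\Re z\le\tfrac12,
\]
of operators on $L^2(\RR^n)$. On the boundary $\Re z=1/2$ one factors
\[
F(\tfrac12+it)=\Delta^{(s-1)/2}\Delta^{-it}L^{it}\bigl(L^{1/2}\Delta^{-s/2}\bigr),
\]
using that $L^{it}$ and $L^{1/2}$ commute, as do $\Delta^{-it}$ and $\Delta$-powers. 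The parenthesized operator is bounded from $L^2$ to $\dot W^{s-1,2}$ by the second step; the bounded $H^\infty$ calculus (cf.\ (2.5)) gives $\|L^{it}\|_{L^2\to L^2}\le Ce^{\omega|t|}$ with $\omega<\pi/2$; and $\Delta^{-it}$ is an $L^2$-isometry by Plancherel. A symmetric estimate holds on $\Re z=-1/2$. Since $\omega<\pi/2$, Stein's growth hypothesis is met, and $F(\alpha):L^2\to L^2$ is bounded for every real $\alpha\in[-1/2,1/2]$, which is equivalent to $L^\alpha:\dot W^{s,2}\to\dot W^{s-2\alpha,2}$ being bounded. Running the same argument with $L^{-\alpha}$ delivers the two-sided inverse and hence the isomorphism.

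The main technical obstacle is ensuring that $L^{it}$ retains the growth $\|L^{it}\|\le Ce^{\omega|t|}$ not just on $L^2$ but on each intermediate space $\dot W^{\sigma,2}$, $\sigma\in[-1,1]$, arising in the factorization. This is handled by observing that the sesquilinear form extends $L$ to a sectorial operator of the same angle on $\dot W^{-1,2}$, so $L$ carries a bounded $H^\infty$ calculus there as well; by complex interpolation between the $L^2$ and $\dot W^{\pm 1,2}$ realizations, the same bound on $L^{it}$ persists across the full admissible Sobolev scale, completing the Stein argument.
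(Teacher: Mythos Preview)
Your overall strategy---Kato estimate plus complex interpolation in the exponent---matches the paper's. But Step~3 has a genuine gap: with $s$ fixed and the Stein strip taken to be $-\tfrac12\le\Re z\le\tfrac12$, the boundary $\Re z=\tfrac12$ uses $L^{1/2}:\dot W^{s,2}\to\dot W^{s-1,2}$, which your Step~2 supplies only for $s\in[0,1]$, while $\Re z=-\tfrac12$ uses $L^{-1/2}:\dot W^{s,2}\to\dot W^{s+1,2}$, available from Step~2 only for $s\in[-1,0]$. Both boundaries hold simultaneously only at $s=0$, so as written your Step~3 yields nothing beyond $L^\alpha:L^2\to\dot W^{-2\alpha,2}$. (Note also that for, say, $s<0$ the target $\dot W^{s-1,2}$ has index below $-1$, so Step~2 cannot be stretched to cover it.)

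The paper sidesteps this by arguing along the ``diagonal'' first: one interpolates between $L^0=I$ on $L^2$ and $L^{1/2}:\dot W^{1,2}\to L^2$ (Kato) to obtain that $L^\alpha:\dot W^{2\alpha,2}\to L^2$ is an isomorphism for $0\le\alpha\le\tfrac12$; this Stein interpolation (e.g.\ via $G(z)=L^{z}\Delta^{-z}$ on $L^2$) requires only $\|L^{it}\|_{L^2\to L^2}\le Ce^{\omega|t|}$, which is immediate from \eqref{eq2.5}. Duality applied to $L^*$ then gives $L^\alpha:L^2\to\dot W^{-2\alpha,2}$, and now one simply composes
\[
L^\alpha=L^{\alpha-s/2}\circ L^{s/2}:\ \dot W^{s,2}\longrightarrow L^2\longrightarrow \dot W^{s-2\alpha,2},
\]
valid whenever $|s|\le 1$ and $|s-2\alpha|\le 1$. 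This completely avoids your ``main technical obstacle'' (bounded $H^\infty$ calculus for $L$ on $\dot W^{\sigma,2}$ with $\sigma\neq 0$). Your route could be repaired---for instance by running Stein on the half-strip $[0,\tfrac12]$ when $s\in[0,1]$, which then genuinely needs that obstacle, and patching the remaining $(s,\alpha)$ by inversion and duality---but the paper's compose-through-$L^2$ trick is shorter and demands less machinery.
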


\bp The Kato estimate (\ref{eq1.4}) implies that 
$L^{1/2}$ maps the Sobolev space $\dot W^{1,2}(\RR^n)$ isomorphically onto $L^2(\RR^n)$. 
Using this observation and interpolation, one can further show
that
\begin{equation}\label{eq8.16}
\mbox{$L^\alpha$, $0\leq \alpha\leq 1/2$, is an isomorphism between
$\dot W^{2\alpha,2}(\RR^n)$ and $L^2(\RR^n)$,} \end{equation}

\noindent (see, e.g. the proof of Proposition~5.3 in
\cite{AuscherSurvey} for the details). Now we write
$L^\alpha=L^{s/2}\circ L^{-s/2+\alpha}$ and use duality and
(\ref{eq8.16}) to finish the argument. \ep

Interchanging the order in which $L^p$ and $\ell^q$ norms are taken
in (\ref{eq8.9}), one would obtain the homogeneous Besov spaces
$\dot B_s^{p,q}$, $s\in\RR$, $0<p,q\leq\infty$. There are also
appropriate versions of (\ref{eq8.9}) corresponding to $p=\infty$
or $q=\infty$; see, e.g., {{\cite{FrJa}}}, Sections~1,2, for the definitions. Since
we aim to concentrate on the properties of the operator $L$ in
Sobolev spaces and their counterparts for $p\leq 1$, we do not
further elaborate on this point. However, below we will use the notation $\dot
F_s^{p,2}$ in place of $\dot W^{s,p}$ and $H^{s,p}$ for uniformity and to avoid repetition when considering
$p>1$ and $p\leq 1$.



\subsection{Weighted tent spaces}\label{s8.3}

Let $s\in\RR$, $0<p,q<\infty$, and consider the spaces
\begin{equation}\label{eq8.17}
T^{p,q}_s(\RR^{n+1}_+):=\{F:\RR^{n+1}_+\longrightarrow
\CC;\,\|F\|_{T_s^{p,q}(\RR^{n+1}_+)}:=\|{\Sq}_s^q
F\|_{L^p(\RR^n)}<\infty\},
\end{equation}

\noindent where
\begin{equation}\label{eq8.18}
{\Sq}_s^q F(x):= \left(\dint_{\Gamma(x)}
|F(y,t)|^q\frac{dydt}{t^{sq+n+1}}\right)^{1/q},\qquad x\in\RR^n.
\end{equation}

\noindent When $s=0$, these are the classical tent spaces we discussed in Section~\ref{s4}. They were first introduced and studied in
\cite{CMS}. In particular, the authors established the complex interpolation of tent spaces for $s=0$ and $p,q\geq 1$ (when the underlying spaces are
Banach). Later on the complex interpolation of tent spaces was
proved for $0<p,q<\infty$ and $s=0$ in \cite{Be}, \cite{VerbIP} (see
also \cite{AlMi1}, \cite{AlMi2}, \cite{BeCe}). We stated a partial case of this result in (\ref{eq4.4}). 
However, for the
applications we have in mind we need to show that the tent spaces
interpolate in $s,p$ and $q$ for the full range of indices.

\begin{lemma}\label{l8.5}
For all $s_0,s_1\in\RR$, $0< p_0,p_1<\infty$, $0< q_0,q_1<\infty$,
\begin{equation}\label{eq8.19}
\left[T_{s_0}^{p_0,q_0}(\RR^{n+1}_+),T_{s_1}^{p_1,q_1}(\RR^{n+1}_+)\right]_\theta=T_{s}^{p,q}(\RR^{n+1}_+),
\quad 0<\theta<1,
\end{equation}

\noindent where $s=(1-\theta)s_0+\theta s_1$,
$\frac{1}{p}=\frac{1-\theta}{p_0}+\frac\theta{p_1}$ and
$\frac{1}{q}=\frac{1-\theta}{q_0}+\frac\theta{q_1}$.
\end{lemma}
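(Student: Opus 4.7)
The approach is to reduce to the unweighted case $s_0 = s_1 = 0$, which is already available in the literature, by conjugating with an analytic family of pointwise weight multipliers. First I would note, directly from the definitions (\ref{eq8.17})--(\ref{eq8.18}), that for every $s \in \RR$ and $0 < p, q < \infty$, the operator $M_s F(y,t) := t^s F(y,t)$ is an isometric isomorphism from $T_0^{p,q}(\RR^{n+1}_+)$ onto $T_s^{p,q}(\RR^{n+1}_+)$. Setting $\phi(z) := (1-z)s_0 + z s_1$ on the closed strip $\overline{S} := \{z \in \CC : 0 \leq \Re z \leq 1\}$, define the family
\[
M_{\phi(z)} F(y,t) := t^{\phi(z)} F(y,t) = e^{\phi(z) \log t} F(y,t), \qquad z \in \overline{S}.
\]
This family is pointwise holomorphic in $z$ on the interior of $S$, and since $|t^{\phi(z)}| = t^{\Re \phi(z)}$ with $\Re \phi(it) = s_0$ and $\Re \phi(1+it) = s_1$, on the two boundary lines one has
\[
\|M_{\phi(it)} F\|_{T_{s_0}^{p_0,q_0}} = \|F\|_{T_0^{p_0,q_0}}, \qquad \|M_{\phi(1+it)} F\|_{T_{s_1}^{p_1,q_1}} = \|F\|_{T_0^{p_1,q_1}}.
\]

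Next I would invoke Stein's analytic interpolation of operators, in the extension to analytically convex quasi-Banach spaces developed in \cite{KaMi, KMM}, applied to the family $\{M_{\phi(z)}\}$, to conclude that $M_s = M_{\phi(\theta)}$ maps $[T_0^{p_0,q_0}(\RR^{n+1}_+), T_0^{p_1,q_1}(\RR^{n+1}_+)]_\theta$ continuously into $[T_{s_0}^{p_0,q_0}(\RR^{n+1}_+), T_{s_1}^{p_1,q_1}(\RR^{n+1}_+)]_\theta$. Combining this with the known unweighted interpolation identity
\[
[T_0^{p_0,q_0}(\RR^{n+1}_+), T_0^{p_1,q_1}(\RR^{n+1}_+)]_\theta = T_0^{p,q}(\RR^{n+1}_+),
\]
valid in the full range $0 < p_j, q_j < \infty$ (cf. \cite{CMS, Be, VerbIP, BeCe}), and with the isometry $M_s: T_0^{p,q} \to T_s^{p,q}$, yields the continuous inclusion $T_s^{p,q}(\RR^{n+1}_+) \hookrightarrow [T_{s_0}^{p_0,q_0}, T_{s_1}^{p_1,q_1}]_\theta$. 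The reverse inclusion is obtained by running the same argument with the inverse family $\{M_{-\phi(z)}\}$, which acts isometrically $T_{s_0}^{p_0,q_0} \to T_0^{p_0,q_0}$ and $T_{s_1}^{p_1,q_1} \to T_0^{p_1,q_1}$ on the two sides of the strip.

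The principal obstacle lies in justifying the Stein-type analytic interpolation of the multiplier family $\{M_{\phi(z)}\}$ when some of the indices $p_j$ or $q_j$ falls below $1$, so that the underlying tent spaces are only quasi-Banach. This is handled within the framework of analytically convex (quasi-)Banach spaces developed in \cite{KaMi, KMM}, exactly as in the proof of Lemma~\ref{l4.5} above (see also \cite{VerbIP}, Section~3): the analytic convexity of the sum $T_{s_0}^{p_0,q_0} + T_{s_1}^{p_1,q_1}$ is transported, via the bijective multiplier $M_{\phi(z)}$, from the already-established analytic convexity of the unweighted sum $T_0^{p_0,q_0} + T_0^{p_1,q_1}$, and this same transport preserves the holomorphic test-functions and quasi-norm estimates witnessing membership in the Calder\'on interpolation space.
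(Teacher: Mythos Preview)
Your approach via an analytic family of weight multipliers is genuinely different from the paper's, and while the idea is natural, the step you flag as the ``principal obstacle'' is indeed a gap that you have not closed.

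The paper does not use Stein-type interpolation at all. Instead it works entirely through the Calder\'on product: by Theorem~7.9 of \cite{KMM}, for analytically convex separable quasi-Banach lattices one has $[X_0,X_1]_\theta = X_0^{1-\theta}X_1^\theta$. The paper first verifies analytic convexity of each $T_s^{p,q}$ directly (lattice $r$-convexity with $r=\min\{p,q\}$, via Minkowski), and then computes the Calder\'on product by combining the unweighted factorization $T_0^{p,q}=T_0^{p_0,q_0}\cdot T_0^{p_1,q_1}$ from \cite{VerbIP} with the trivial pointwise identity $\frac{F}{t^{s_0}}\cdot\frac{G}{t^{s_1}}=\frac{FG}{t^{s_0+s_1}}$ and the power rule $(T_s^{p,q})^r=T_{sr}^{p/r,q/r}$. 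The weighted result then drops out in one line. What this buys is that everything is reduced to a pointwise inequality $|F|\le|G|^{1-\theta}|H|^\theta$, where the weight can be distributed by hand; no holomorphic families of operators are needed.

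Your route, by contrast, requires a Stein-type theorem for the family $\{M_{\phi(z)}\}$ acting between quasi-Banach couples, and this is not supplied by \cite{KaMi,KMM} as stated; those references establish the complex method and its identification with the Calder\'on product, not interpolation of analytic operator families. Your last paragraph proposes to transport admissible functions via $G(z)\mapsto t^{\phi(z)}G(z)$, but this needs two verifications you do not carry out: (i) that the weighted couple $(T_{s_0}^{p_0,q_0},T_{s_1}^{p_1,q_1})$ is itself analytically convex, so that $[\,\cdot\,,\,\cdot\,]_\theta$ is even defined for it (this is true, by the same lattice-convexity argument, but you must say so); and (ii) that $t^{\phi(z)}G(z)$ remains an admissible function for the weighted couple in the precise sense required by the quasi-Banach complex method---in particular that it takes values in the sum $T_{s_0}^{p_0,q_0}+T_{s_1}^{p_1,q_1}$ and is holomorphic there, which is not automatic since for $0<\Re z<1$ the multiplier $t^{\phi(z)}$ is not an isometry into either endpoint. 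Note that once you have the Calder\'on-product identification on both the weighted and unweighted sides, the pointwise equivalence $|F|\le|G|^{1-\theta}|H|^\theta \Longleftrightarrow |t^{-s}F|\le|t^{-s_0}G|^{1-\theta}|t^{-s_1}H|^\theta$ already gives the result without any analytic family---which is exactly the paper's shortcut.
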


\bp As we already mentioned (see the discussion preceding
Lemma~\ref{l4.5}), extension of the complex interpolation method
to quasi-Banach spaces is not straight forward, and over the years
several approaches to this issue have been developed. Here we continue to follow the method of complex interpolation of analytically
convex spaces which have been employed in the classical
Hardy-Sobolev-Besov-Triebel-Lizorkin scales in \cite{KaMi},
\cite{MeMi}, \cite{KMM}, and for the tent spaces with $s=0$ in
\cite{VerbIP}.

According to Theorem 7.9 in \cite{KMM} (see also \cite{KaMi}), we
have
\begin{equation}\label{eq8.20}
\left[T_{s_0}^{p_0,q_0}(\RR^{n+1}_+),T_{s_1}^{p_1,q_1}(\RR^{n+1}_+)\right]_\theta=
\left(T_{s_0}^{p_0,q_0}(\RR^{n+1}_+)\right)^{(1-\theta)}\left(T_{s_1}^{p_1,q_1}(\RR^{n+1}_+)\right)^\theta,
\end{equation}

\noindent provided that $T_{s_i}^{p_i,q_i}(\RR^{n+1}_+)$,
$i=0,1$, are analytically convex and separable. Here the
space on the right-hand side of (\ref{eq8.20}) is interpreted as a
set of functions $F:\RR^{n+1}_+\longrightarrow \CC$ such that
$|F|\leq |G|^{1-\theta}|H|^{\theta}$ for some $G\in
T_{s_0}^{p_0,q_0}$ and $H\in T_{s_1}^{p_1,q_1}$, equipped with the
natural infimum norm.

The fact that the tent spaces are separable is fairly obvious
(note that $p,q<\infty$). Furthermore, any tent space is a
quasi-Banach lattice (a quasi-Banach space with a partial order),
and a quasi-Banach lattice $X$ is analytically convex if it is
lattice $r$-convex for some $r>0$, i.e.
\begin{equation}\label{eq8.21}
\left\|\Bigl(\sum_{j=1}^m|f_j|^r\Bigr)^{1/r}\right\|_X\leq
\Bigl(\sum_{j=1}^m\left\|f_j\right\|_X^r\Bigr)^{1/r}
\end{equation}

\noindent for any finite family $\{f_j\}_{1\leq j\leq m}\subset X$
(see Theorem 7.8 in \cite{KMM}). The elements of $T_{s}^{p,q}$
satisfy (\ref{eq8.21}) with $r=\min\{p,q\}$ by Minkowski inequality.
Hence, the spaces (\ref{eq8.17}) are analytically convex and
(\ref{eq8.20}) applies.

Now recall the factorization results from \cite{VerbIP} for the tent
spaces without weight:
\begin{equation}\label{eq8.22}
T_{0}^{p,q}(\RR^{n+1}_+)=
T_{0}^{p_0,q_0}(\RR^{n+1}_+)\,\cdot\,T_{0}^{p_1,q_1}(\RR^{n+1}_+),\qquad
\frac 1p=\frac{1}{p_0}+\frac{1}{p_1},\quad \frac
1q=\frac{1}{q_0}+\frac{1}{q_1},
\end{equation}

\noindent where $0<p,q \leq\infty$ and the product in (\ref{eq8.22})
is interpreted similarly to (\ref{eq8.20}). Since for all functions
$F,G:\RR^{n+1}_+\longrightarrow \CC$ and $s\in\RR$ we have
$\frac{F}{t^{s_0}}\,\frac{G}{t^{s_1}}=\frac{FG}{t^{s_0+s_1}}$, the
formula (\ref{eq8.22}) entails
\begin{equation}\label{eq8.23}
T_{s}^{p,q}(\RR^{n+1}_+)=
T_{s_0}^{p_0,q_0}(\RR^{n+1}_+)\,\cdot\,T_{s_1}^{p_1,q_1}(\RR^{n+1}_+),
\end{equation}
\noindent with $s=s_0+s_1$, $\frac{1}{p}=\frac{1}{p_0}+\frac 1{p_1}$
and $\frac{1}{q}=\frac 1{q_0}+\frac 1{q_1}$. Furthermore, it can be
checked directly that $\left(T_s^{p,q}\right)^r=T_{sr}^{p/r,q/r}$,
so that (\ref{eq8.23}) implies
\begin{equation}\label{eq8.24}
\left(T_{s_0}^{p_0,q_0}\right)^{(1-\theta)}\left(T_{s_1}^{p_1,q_1}\right)^\theta
=T_{s_0(1-\theta)}^{\frac{p_0}{(1-\theta)},\frac{q_0}{(1-\theta)}}\,\cdot\,T_{s_1\theta}^{\frac{p_1}{\theta},\frac{q_1}{\theta}}
=T_{s}^{p,q}(\RR^{n+1}_+),
\end{equation}
\noindent for $s=(1-\theta)s_0+\theta s_1$,
$\frac{1}{p}=\frac{1-\theta}{p_0}+\frac\theta{p_1}$ and
$\frac{1}{q}=\frac{1-\theta}{q_0}+\frac\theta{q_1}$. Together with
(\ref{eq8.20}) this finishes the proof.
 \ep

\subsection{Hardy-Sobolev spaces associated to $L$: general theory}\label{s8.4}

Let us now define a smooth version of the Hardy spaces
$H^{s,p}_L(\RR^n)$, $0\leq s\leq 1$, $0<p\leq 2$, as a completion of
$L^{-s/2}(L^2\cap H^{p}_L)$ in the norm
\begin{equation}\label{eq8.25}
\|f\|_{H^{s,p}_L(\RR^n)}:=\|S
L^{s/2}f\|_{L^p(\RR^n)}=\|L^{s/2}f\|_{H^p_L(\RR^n)}.
\end{equation}

\noindent Recall that $L^{-s/2}$ is an isomorphism of $L^2$ onto the
space $\dot W^{s,2}$, hence, $L^{-s/2}(L^2\cap H^{p}_L)$ is a
subspace of $\dot W^{s,2}$, in particular, $L^{s/2}f$ is
well-defined for every $f\in L^{-s/2}(L^2\cap H^{p}_L)$. Moreover,
it follows that
\begin{equation}\label{eq8.26}
\dot W^{s,2}(\RR^n)\cap H^{s,p}_L(\RR^n) \mbox{ is dense in }
H^{s,p}_L(\RR^n), \quad \mbox{ for all }\quad 0\leq s\leq
1,\,\,0<p\leq 2.
\end{equation}

\begin{lemma}\label{l8.6}
The operator $L^\alpha$, $-1/2\leq \alpha \leq 1/2$, is an
isomorphism of $H_{L}^{s,p}$ onto $H_L^{s-2\alpha,p}$ provided
$0\leq s-2\alpha \leq 1$, $0\leq s\leq 1$ and $0<p\leq 2$.
\end{lemma}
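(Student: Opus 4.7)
The plan is to work on the dense subspace $\dot W^{s,2}(\RR^n)\cap H^{s,p}_L(\RR^n)$ of $H^{s,p}_L(\RR^n)$ provided by (\ref{eq8.26}), and to reduce the isomorphism to a norm identity that follows from the semigroup property of the fractional powers of $L$.

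Given $f\in \dot W^{s,2}\cap H^{s,p}_L$, I would apply Lemma~\ref{l8.4} with exponent $\alpha$ (whose hypotheses $-1/2\leq\alpha\leq 1/2$, $0\leq s\leq 1$, and $0\leq s-2\alpha\leq 1$ are all implied by the hypotheses of the present lemma) to conclude that $L^\alpha f\in \dot W^{s-2\alpha,2}$. A second application of Lemma~\ref{l8.4}, this time to $\dot W^{s-2\alpha,2}$ with exponent $(s-2\alpha)/2$, yields $L^{(s-2\alpha)/2}(L^\alpha f)\in L^2$. Next, the bounded $H^\infty$ functional calculus of $L$ on $L^2$ (see Section~\ref{s2.1}) produces the semigroup identity
$$L^{(s-2\alpha)/2}(L^\alpha f)=L^{s/2}f\qquad\text{in }L^2,$$
and by the choice of $f$, the right-hand side lies in $L^2\cap H^p_L$. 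Hence $L^\alpha f\in \dot W^{s-2\alpha,2}\cap H^{s-2\alpha,p}_L$, and the definition (\ref{eq8.25}) immediately gives
$$\|L^\alpha f\|_{H^{s-2\alpha,p}_L}=\|L^{(s-2\alpha)/2}L^\alpha f\|_{H^p_L}=\|L^{s/2}f\|_{H^p_L}=\|f\|_{H^{s,p}_L}.$$
Thus $L^\alpha$ is an isometry on this dense subspace, and extends by continuity to an isometry $L^\alpha : H^{s,p}_L(\RR^n)\to H^{s-2\alpha,p}_L(\RR^n)$.

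For the reverse direction, I would apply the same argument with $-\alpha$ in place of $\alpha$ and $s-2\alpha$ in place of $s$ (the new indices again satisfy the hypotheses of both Lemma~\ref{l8.6} and Lemma~\ref{l8.4}) to obtain an isometry $L^{-\alpha}:H^{s-2\alpha,p}_L(\RR^n)\to H^{s,p}_L(\RR^n)$. On the dense subspaces the functional calculus yields $L^{-\alpha}L^\alpha=I=L^\alpha L^{-\alpha}$, and these identities propagate to the full spaces by density, exhibiting $L^{-\alpha}$ as the two-sided inverse of $L^\alpha$. The only real technical point in this scheme is the composition identity displayed above, but it is a routine consequence of the bounded $H^\infty$ calculus of $L$ on $L^2$, since $L^{s/2}f\in L^2$ whenever $f$ is in our dense subspace, so that $L^\alpha$ and $L^{(s-2\alpha)/2}$ compose in the expected way.
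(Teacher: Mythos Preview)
Your proof is correct and follows essentially the same approach as the paper: both arguments exploit the definition (\ref{eq8.25}) together with the composition law for fractional powers on the appropriate dense subspace to obtain a norm identity, then extend by density. The only organizational difference is that the paper first treats the special case $s=0$ (showing $L^{-\alpha}:H^p_L\to H^{2\alpha,p}_L$ is an isometric isomorphism) and then factors the general case as $L^\alpha=L^{s/2}\circ L^{\alpha-s/2}$, whereas you handle general $s$ in one pass.
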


\bp This result is a direct consequence of the definitions. Indeed,
by definition $L^2\cap H^p_L$ is dense in $H^p_L$ and
\begin{equation}\label{eq8.27}
\|L^{-\alpha}f\|_{H^{2\alpha,p}_L}=\|SL^{\alpha}L^{-\alpha}f\|_{L^p}=\|f\|_{H^p_L},\quad \forall 
f\in L^2\cap H^p_L, \quad 0\leq \alpha\leq 1/2.
\end{equation}

\noindent Hence, the operator $L^{-\alpha}$ extends by continuity to
$L^{-\alpha}:H^p_L\to H^{2\alpha,p}_L$ and its range is closed in
$H^{2\alpha,p}_L$. On the other hand, its range contains
$L^{-\alpha}(L^2\cap H^{p}_L)$, a dense subset of $H^{2\alpha,p}_L$,
and therefore, the range of $L^{-\alpha}$ in $H^{2\alpha,p}_L$
actually coincides with $H^{2\alpha,p}_L$. Then $L^{-\alpha}$ is an
isomorphism of $H^p_L$ onto $H^{2\alpha,p}_L$, $0\leq \alpha\leq
1/2$. Using this fact and writing $L^{\alpha}=L^{s/2}\circ
L^{\alpha-s/2}$ we finish the proof of the Lemma. \ep

 Clearly, $H^{s,p}_L$ are analogues of the Sobolev spaces
adapted to the elliptic operator $L$. In particular,
Lemmas~\ref{l8.6}, \ref{l8.3}  and the remark after
(\ref{eq1.13}) show that
\begin{equation}\label{eq8.28}
H^{s,p}_\Delta(\RR^n)\approx\dot W^{s,p}(\RR^n),\qquad 0\leq s \leq
1,\quad 1<p\leq 2.
\end{equation}

 As their counterparts for $L=\Delta$, the spaces
$H^{s,p}_L(\RR^n)$ are amenable to complex interpolation, satisfy
natural duality properties, admit some version of the molecular
decomposition etc. If necessary, the scale of $H^{s,p}_L$ spaces can
be extended to the full range of $p$ and $s$ analogously to the
Triebel-Lizorkin spaces.  We do not pursue this subject in the
present paper, and only mention the results which are important for
the applications we have in mind.

\begin{lemma}\label{l8.7}
The operator $L$ has bounded holomorphic functional calculus in $H^{s,p}_L(\RR^n)$
for all $0\leq s \leq 1$ and $0<p\leq 2$, in the sense that for every $\varphi\in
H^{\infty}(\Sigma_\mu^0)$
\begin{equation}\label{eq8.29}
\varphi(L):H^{s,p}_L(\RR^n)\longrightarrow H^{s,p}_L(\RR^n),
\end{equation}

\noindent with the norm bounded by $\|\varphi\|_{L^\infty(\Sigma_\mu^0)}$.

 Moreover, for every $\varphi\in
\Psi'(\Sigma_\mu^0)$ and for all $0\leq \alpha,\beta\leq 1$ and $0<p\leq q\leq 2$
\begin{equation}\label{eq8.30}
\varphi(L):H_{L}^{\alpha,p}(\RR^n)\longrightarrow H_{L}^{\beta,q}(\RR^n),
\end{equation}

\noindent and
\begin{equation}\label{eq8.31}
\|\varphi(L)f\|_{H_{L}^{\beta,q}(\RR^n)}\leq C
\left\|z^{\frac{\beta-\alpha}{2}+\frac 12\left(\frac np-\frac
nq\right)}\varphi\right\|_{L^\infty(\Sigma_\mu^0)}\|f\|_{H_{L}^{\alpha,p}(\RR^n)},
\end{equation}

\noindent whenever the $L^\infty$ norm on the right-hand side is
finite.

\end{lemma}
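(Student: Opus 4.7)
The plan is to reduce both assertions to the bounded holomorphic functional calculus on $H^p_L$ (Lemma \ref{l8.1}) and to the fractional integration Theorem \ref{t7.1}, exploiting the fact that $\varphi(L)$, $L^{s/2}$, and $L^{-\delta}$ all commute on the natural $L^2$-functional calculus domain.  For the first assertion, I would take $\varphi \in H^\infty(\Sigma_\mu^0)$ and $f \in L^{-s/2}(L^2 \cap H^p_L)$, which is dense in $H^{s,p}_L$ by construction; then the definition (\ref{eq8.25}) together with the commutativity of $\varphi(L)$ and $L^{s/2}$ gives
$$\|\varphi(L)f\|_{H^{s,p}_L(\RR^n)} \,=\, \|L^{s/2}\varphi(L)f\|_{H^p_L(\RR^n)} \,=\, \|\varphi(L)(L^{s/2}f)\|_{H^p_L(\RR^n)}\,,$$
and since $L^{s/2}f \in L^2 \cap H^p_L$, Lemma \ref{l8.1} controls the right-hand side by $\|\varphi\|_{L^\infty(\Sigma_\mu^0)}\|L^{s/2}f\|_{H^p_L}=\|\varphi\|_{L^\infty(\Sigma_\mu^0)}\|f\|_{H^{s,p}_L}$, whence (\ref{eq8.29}) extends to $H^{s,p}_L$ by density.

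For the second assertion, set $\delta := \tfrac12(\tfrac{n}{p}-\tfrac{n}{q})\geq 0$ (since $p \leq q$) and $\gamma := \tfrac{\beta-\alpha}{2}+\delta$, so that $\psi(z):=z^\gamma\varphi(z)$ lies in $H^\infty(\Sigma_\mu^0)$ with $\|\psi\|_\infty = \|z^\gamma\varphi\|_{L^\infty(\Sigma_\mu^0)}$ by hypothesis.  The key factorization is the spectral identity
$$L^{(\beta-\alpha)/2}\,\varphi(L) \,=\, L^{-\delta}\,\psi(L)\,.$$
Then for $f$ in the dense subspace $L^{-\alpha/2}(L^2\cap H^p_L) \subset H^{\alpha,p}_L$, a chain of commutations analogous to the first part yields
$$\|\varphi(L)f\|_{H^{\beta,q}_L} \,=\, \|L^{(\beta-\alpha)/2}\varphi(L)(L^{\alpha/2}f)\|_{H^q_L} \,=\, \|L^{-\delta}\,\psi(L)(L^{\alpha/2}f)\|_{H^q_L}\,.$$
Lemma \ref{l8.1} applied to $\psi \in H^\infty$ gives $\psi(L)(L^{\alpha/2}f) \in H^p_L$ with norm controlled by $\|\psi\|_\infty\|L^{\alpha/2}f\|_{H^p_L}=\|\psi\|_\infty\|f\|_{H^{\alpha,p}_L}$, and Theorem \ref{t7.1} provides the boundedness of $L^{-\delta}\colon H^p_L \to H^q_L$ at the prescribed scaling.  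Composing these two estimates yields (\ref{eq8.31}) on the dense subspace, and then on all of $H^{\alpha,p}_L$ by density.

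The main obstacle is the rigorous justification that the formal identity $L^{(\beta-\alpha)/2}\varphi(L) = L^{-\delta}\psi(L)$ actually defines $\varphi(L)f$ consistently with the unbounded $\Psi'$-calculus of Section \ref{s2.1}, since $\varphi$ itself may grow polynomially at $0$ and $\infty$ and need not admit a bounded calculus directly.  This is handled by restricting attention to $f \in \dot W^{\alpha,2} \cap H^{\alpha,p}_L$, which is dense in $H^{\alpha,p}_L$ by the remark following (\ref{eq8.26}): then $g := L^{\alpha/2}f \in L^2$, and $\psi(L)g$ is unambiguously defined via the bounded $H^\infty$-calculus on $L^2$; one defines $\varphi(L)f := L^{-(\beta/2+\delta)}\psi(L)g$, verifies using the $L^2$-spectral representation of Section \ref{s2.1} that $L^{-(\beta/2+\delta)}\psi(L)$ agrees with $L^{-\beta/2}\cdot z^{(\beta-\alpha)/2}\varphi(L)$ on $L^2$, and thereby identifies this construction with the standard $\Psi'$-calculus definition of $\varphi(L)f$.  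Once this is in place, the argument above produces (\ref{eq8.30})--(\ref{eq8.31}).
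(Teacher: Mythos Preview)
Your proof is correct and follows essentially the same route as the paper's: factor $\varphi(L)$ through the bounded function $\psi(z)=z^\gamma\varphi(z)\in H^\infty(\Sigma_\mu^0)$, reduce to the $H^p_L$ scale via the definition of $H^{s,p}_L$ (equivalently Lemma~\ref{l8.6}), and apply the $H^\infty$-calculus on $H^p_L$ (Lemma~\ref{l8.1}). The paper's proof is a one-line version of exactly this, writing $\varphi(L)=\bigl(L^\gamma\varphi(L)\bigr)L^{-\gamma}$ with $\gamma=\frac{\beta-\alpha}{2}+\frac12\bigl(\frac np-\frac nq\bigr)$, and citing only Lemmas~\ref{l8.6} and~\ref{l8.1}; your explicit invocation of Theorem~\ref{t7.1} for the step $L^{-\delta}\colon H^p_L\to H^q_L$ makes visible an ingredient the paper's citation list leaves implicit. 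Your discussion of the well-definedness of $\varphi(L)f$ for $\varphi\in\Psi'$ on the dense subspace $\dot W^{\alpha,2}\cap H^{\alpha,p}_L$ is a legitimate technical point that the paper passes over silently.
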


\bp The Lemma follows directly from Lemmas~\ref{l8.6} and \ref{l8.1} as soon as we observe that
\begin{equation}\label{eq8.32}
\varphi(L)=\left(L^{\frac{\beta-\alpha}{2}+\frac 12\left(\frac
np-\frac
nq\right)}\varphi(L)\right)\,L^{-\frac{\beta-\alpha}{2}-\frac
12\left(\frac np-\frac nq\right)},
\end{equation}

\noindent and by our assumptions the function $z\mapsto z^{\frac{\beta-\alpha}{2}+\frac
12\left(\frac np-\frac nq\right)}\varphi(z)$ belongs to $H^\infty(\Sigma_\mu^0)$.\ep

\begin{lemma}\label{l8.8}
For all $0\leq s_0,s_1\leq 1$ and $0<
p_0,p_1\leq 2$
\begin{equation}\label{eq8.33}
\left[H^{s_0,p_0}_L(\RR^n),H^{s_1,p_1}_L(\RR^n)\right]_\theta=H^{s,p}_L(\RR^n),
\quad 0<\theta<1,
\end{equation}

\noindent where $s=(1-\theta)s_0+\theta s_1$ and
$\frac{1}{p}=\frac{1-\theta}{p_0}+\frac\theta{p_1}$.
\end{lemma}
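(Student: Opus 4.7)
\medskip
\noindent\textit{Proof plan for Lemma~\ref{l8.8}.}\,\, The plan is to follow the blueprint of Lemma~\ref{l4.5}, realizing each $H^{s,p}_L(\RR^n)$ as a retract of the weighted tent space $T^{p,2}_s(\RR^{n+1}_+)$, and then appealing to the interpolation result Lemma~\ref{l8.5} together with the general theory of complex interpolation of analytically convex quasi-Banach spaces (\cite{KMM}, Theorem~7.9 and Lemma~7.11).

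First I would set up the retract. Fix $\mu\in(\omega,\pi/2)$ and pick a function $\psi \in \Psi_{\alpha,\beta}(\Sigma_\mu^0)$ with $\alpha>0$ and $\beta>\frac{n}{2}(\max\{1/p_0,1/p_1,1\}-1/2)$, chosen large enough for Proposition~\ref{p4.3} to apply uniformly across the indices $p_0,p_1$. Then select a companion $\widetilde\psi\in\Psi(\Sigma_\mu^0)$ via the recipe (\ref{eq4.18}), so that the Calder\'on reproducing identity $\pi_{\widetilde\psi,L}\circ Q_{\psi,L}=I$ holds on $L^2(\RR^n)$. The key observation is the algebraic identity
$$\psi(t^2L)=t^{-s}L^{-s/2}\eta(t^2L),\qquad \eta(z):=z^{s/2}\psi(z),$$
which, together with its analogue for $\widetilde\psi$, allows the weighted maps to be reduced to the unweighted case. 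Using Lemma~\ref{l8.6} (or directly the definition (\ref{eq8.25})) to identify $L^{s/2}$ as an isomorphism between $H^{s,p}_L$ and $H^p_L$ for $0\leq s\leq 1$, $0<p\leq 2$, one checks
$$\|Q_{\psi,L}f\|_{T^{p,2}_s(\RR^{n+1}_+)}\approx \|Q_{\eta,L}(L^{-s/2}f)\|_{T^p(\RR^{n+1}_+)}\approx \|L^{-s/2}f\|_{H^p_L}=\|f\|_{H^{s,p}_L},$$
and analogously
$$\|\pi_{\widetilde\psi,L}F\|_{H^{s,p}_L}\lesssim \|F\|_{T^{p,2}_s(\RR^{n+1}_+)},$$
both by appeal to Proposition~\ref{p4.3} in the unweighted setting. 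The reproducing identity extends from $L^2\cap H^{s,p}_L$ to all of $H^{s,p}_L$ by density (cf.\ (\ref{eq8.26})), so that $(Q_{\psi,L},\pi_{\widetilde\psi,L})$ is a genuine coretraction/retraction pair realizing $H^{s,p}_L$ as a complemented subspace of $T^{p,2}_s(\RR^{n+1}_+)$, uniformly in the relevant range of $s$ and $p$.

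Next I would combine these mapping properties with Lemma~\ref{l8.5}. Since $\pi_{\widetilde\psi,L}$ is a single operator that restricts to a bounded retraction from $T^{p_i,2}_{s_i}(\RR^{n+1}_+)$ onto $H^{s_i,p_i}_L(\RR^n)$ for $i=0,1$ as well as for all intermediate indices, and $Q_{\psi,L}$ is the matching right inverse, the retraction principle for complex interpolation of analytically convex spaces (\cite{KMM}, Lemma~7.11) reduces the desired identity to
$$\left[T^{p_0,2}_{s_0}(\RR^{n+1}_+),T^{p_1,2}_{s_1}(\RR^{n+1}_+)\right]_\theta=T^{p,2}_s(\RR^{n+1}_+),$$
which is exactly Lemma~\ref{l8.5}. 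To apply this machinery I would verify that the ambient sum space $T^{p_0,2}_{s_0}+T^{p_1,2}_{s_1}$ is analytically convex, which follows, as in the proof of Lemma~\ref{l8.5}, from lattice $r$-convexity with $r=\min\{p_0,p_1,2\}$ (Minkowski's inequality, combined with \cite{KMM}, Theorem~7.8), and that it is separable. Analytic convexity then transfers to $H^{s_0,p_0}_L+H^{s_1,p_1}_L$ through the retract.

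The main obstacle, in my view, is ensuring that the retract structure and the Calder\'on reproducing formula cooperate cleanly across the full range $0\leq s\leq 1$, $0<p\leq 2$, so that a single choice of $\psi,\widetilde\psi$ handles all indices simultaneously and the reproducing identity passes to the completion. Once this is arranged, and once one observes that the operator $L^{-s/2}$ intertwines cleanly with the weight $t^{-s}$ appearing in (\ref{eq8.18}), the remaining verifications are routine applications of the retract/coretract framework and the interpolation theorem for analytically convex quasi-Banach lattices.
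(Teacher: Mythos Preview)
Your overall strategy is exactly the paper's: realize each $H^{s,p}_L$ as a retract of the weighted tent space $T^{p,2}_s$ via the pair $(Q_\psi,\pi_{\widetilde\psi})$, then invoke Lemma~\ref{l8.5} together with the KMM framework. However, the reduction you write down has a sign error. The correct intertwining is
\[
t^{-s}\psi(t^2L)f=\varphi(t^2L)\,L^{s/2}f,\qquad \varphi(z):=z^{-s/2}\psi(z),
\]
so that $\|Q_\psi f\|_{T^{p,2}_s}=\|Q_\varphi(L^{s/2}f)\|_{T^p}\lesssim\|L^{s/2}f\|_{H^p_L}=\|f\|_{H^{s,p}_L}$, the last equality being the definition (\ref{eq8.25}). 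Your choice $\eta(z)=z^{s/2}\psi(z)$ with $L^{-s/2}f$ gives instead $Q_\psi f=t^{-s}Q_\eta(L^{-s/2}f)$, which after dividing by the weight $t^{-s}$ in the $T^{p,2}_s$ norm leaves an extra $t^{-s}$; and the claimed identity $\|L^{-s/2}f\|_{H^p_L}=\|f\|_{H^{s,p}_L}$ is false (Lemma~\ref{l8.6} gives $L^{-s/2}:H^p_L\to H^{s,p}_L$, not the other direction). With the corrected $\varphi\in\Psi_{\alpha-s/2,\beta+s/2}$, Proposition~\ref{p4.3} requires $\alpha>s/2$, not merely $\alpha>0$; the paper accordingly takes $\alpha>s_1/2$ (with $s_1=\max(s_0,s_1)$) so that one fixed $\psi,\widetilde\psi$ works across all intermediate indices.

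One further point the paper flags that you do not: to set up complex interpolation one needs the pair $(H^{s_0,p_0}_L,H^{s_1,p_1}_L)$ to be compatible, i.e., continuously embedded in a common ambient space. The paper handles this by noting that, via Lemma~\ref{l8.6} and Theorem~\ref{t7.1}, all the $H^{s,p}_L$ under consideration embed into spaces covered by Proposition~\ref{p10.1}.
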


\bp Similarly to the case $s_0=s_1=0$ we prove (\ref{eq8.33}) via
the reduction to the interpolation of tent spaces, this time, using
the weighted tent spaces discussed in Section~\ref{s8.3}. Recall the
operators $Q_{\psi}$ and $\pi_{\psi}$ introduced in (\ref{eq4.5}) and (\ref{eq4.6}), respectively. Let $\mu\in(\omega,\pi/2)$.
Using Proposition~\ref{p4.3}, Lemma~\ref{l8.6} and the fact that multiplication by $t^{-s}$ is an isomorphism from
$T_s^{p,2}(\RR^{n+1}_+)$ onto $T^p(\RR^{n+1}_+)$, we can verify that
\begin{equation}\label{eq8.34}
Q_{\psi}:H^{s,p}_L(\RR^n)\longrightarrow T_s^{p,2}(\RR^{n+1}_+),\quad
\mbox{and}\quad \pi_{\widetilde\psi}:T_s^{p,2}(\RR^{n+1}_+)\longrightarrow
H^{s,p}_L(\RR^n),
\end{equation}
\noindent for $\psi\in \Psi_{\alpha,\beta}(\Sigma_\mu^0)$ and $\widetilde\psi\in \Psi_{\beta,\alpha}(\Sigma_\mu^0)$,
where $\alpha>\frac s2$ and $\beta>\frac{n}{2}\left(\max\{\frac
1p,1\}-\frac 12\right)-\frac s2$.

Now for any given $(s_0,p_0)$ and $(s_1,p_1)$, $s_0\leq s_1$, we choose $\psi\in \Psi_{\alpha,\beta}(\Sigma_\mu^0)$ and $\widetilde\psi\in \Psi_{\beta,\alpha}(\Sigma_\mu^0)$, where $\alpha>\frac {s_1}{2}$ and $\beta>\frac{n}{2}\left(\max\{\frac
1p,1\}-\frac 12\right)-\frac {s_1}{2}$. Then the corresponding $Q_{\psi}$ and $\pi_{\widetilde\psi}$ satisfy (\ref{eq8.34}) for all $s_0\leq s\leq s_1$ and all $p$ between $p_0$ and $p_1$. The rest of the proof follows the same lines as the proof of Lemma~\ref{l4.5}.   We omit the remaining details, except to note that by Lemma \ref{l8.6} and Theorem \ref{t7.1},  the $H^{s,p}_L$ spaces under consideration embed into $H^p_L$ or $\Lambda_L^\alpha$ spaces falling under the scope of
Proposition \ref{p10.1} below, and thus may all be embedded into a common ambient Banach
space. 
\ep

\subsection{Hardy-Sobolev spaces associated to $L$: identifications with classical scales}\label{s8.5}

\begin{proposition}\label{p8.9} For every $p$ such that $\frac{p_-(L)n}{n+p_-(L)}<p\leq 2$
\begin{equation}\label{eq8.35}
H^{1,p}_L(\RR^n)\approx \dot F^{p,2}_1(\RR^n).
 \end{equation}
\end{proposition}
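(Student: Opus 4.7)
The plan is to reduce the equivalence (\ref{eq8.35}) to the Riesz transform characterization of $\HPL$ established in Theorem \ref{t5.1}. By the definition (\ref{eq8.25}) of $H^{1,p}_L$ and the identifications (\ref{eq8.11})--(\ref{eq8.13}), the proposition amounts, modulo density arguments, to the two claimed equivalences
\begin{equation}\label{eqp1}
\|L^{1/2}f\|_{H^p_L(\RR^n)}\,\approx\, \|\nabla f\|_{L^p(\RR^n)}\,,\qquad \max\Bigl\{1,\tfrac{p_-(L)n}{n+p_-(L)}\Bigr\}<p\leq 2,
\end{equation}
\begin{equation}\label{eqp2}
\|L^{1/2}f\|_{H^p_L(\RR^n)}\,\approx\, \|\nabla f\|_{H^p(\RR^n)}\,,\qquad \tfrac{p_-(L)n}{n+p_-(L)}<p\leq 1.
\end{equation}
Setting $h:=L^{1/2}f$, so that $f=L^{-1/2}h$ and $\nabla f=\nabla L^{-1/2}h$, these are precisely the statements (\ref{eq5.2}) and (\ref{eq5.3}) of Theorem \ref{t5.1}, applied with $r$ chosen arbitrarily close to $p_-(L)$ (which is legitimate because the semigroup satisfies $L^r$--$L^2$ off-diagonal estimates for every $r>p_-(L)$, cf. Lemma \ref{l2.5}).

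More precisely, I would first work on the dense subspace $L^{-1/2}(L^2\cap H^p_L)\subset H^{1,p}_L$. For $f$ in this set, $h:=L^{1/2}f$ belongs to $L^2(\RR^n)\cap H^p_L(\RR^n)$, so Theorem \ref{t5.1} yields
\begin{equation*}
\|f\|_{H^{1,p}_L(\RR^n)}=\|h\|_{H^p_L(\RR^n)}\approx \|\nabla L^{-1/2}h\|_{\mathcal{X}^p(\RR^n)}=\|\nabla f\|_{\mathcal{X}^p(\RR^n)},
\end{equation*}
where $\mathcal{X}^p=L^p$ in the range (\ref{eqp1}) and $\mathcal{X}^p=H^p$ in the range (\ref{eqp2}). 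By (\ref{eq8.11})--(\ref{eq8.13}), the right-hand side equals, up to equivalence of norms, $\|f\|_{\dot F_1^{p,2}(\RR^n)}$. Thus the identity map extends to a bounded injection with bounded inverse between dense subspaces, giving the isomorphism of completions.

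The step that requires some care, and which I expect to be the principal technical point, is verifying that $L^{-1/2}(L^2\cap H^p_L)$ is also dense in $\dot F_1^{p,2}(\RR^n)$, so that the equivalence of norms on this subspace genuinely identifies the two completions. For $p>1$, one checks that every $f\in C_0^\infty(\RR^n)$ lies in $\dot W^{1,2}\cap \dot W^{1,p}$, hence $h=L^{1/2}f\in L^2(\RR^n)$ by the Kato estimate (\ref{eq1.4}), and the content of Proposition \ref{p5.5} (applied to the specific $r$ just above $p_-(L)$) is precisely that $\|Sh\|_{L^p}<\infty$, i.e.\ $h\in H^p_L$. Thus $C_0^\infty\subset L^{-1/2}(L^2\cap H^p_L)$, which by (\ref{eq5.36*}) is dense in $\dot W^{1,p}$. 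For $p\le 1$, one argues analogously using the atomic decomposition of $H^{1,p}$ recalled in (\ref{eq5.21})--(\ref{eq5.23}) together with Proposition \ref{p5.3}: each $H^{1,p}$-atom $a$ is in $\dot W^{1,2}$, so $L^{1/2}a\in L^2$, and the proof of Proposition \ref{p5.3} (cf. (\ref{eq5.24})) shows $\|SL^{1/2}a\|_{L^p}\le C$, so $L^{1/2}a\in L^2\cap H^p_L$ and $a\in L^{-1/2}(L^2\cap H^p_L)$; density in $H^{1,p}=\dot F_1^{p,2}$ then follows from the atomic decomposition. Once this density is established, the two-sided bound on the dense subspace propagates to the full spaces by a standard completion argument.
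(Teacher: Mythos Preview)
Your proposal is correct and follows essentially the same strategy as the paper: both directions of the isomorphism are reduced to the Riesz transform characterization (Theorem \ref{t5.1}, or equivalently Propositions \ref{p5.2}--\ref{p5.5}), and the identification of the two completions is handled by a density argument. The only notable difference is the choice of dense subspace in $\dot F_1^{p,2}$: the paper uses the Schwartz-type space $\mathcal{Z}(\RR^n)$ of Lemma \ref{l8.2} uniformly for all $p$, whereas you split into $C_0^\infty$ for $p>1$ and $H^{1,p}$-atoms for $p\le 1$; both work, though the paper's choice is slightly more economical and also makes explicit the (minor) point that elements of $H^{1,p}_L$ can be identified with tempered distributions modulo polynomials.
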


\bp {\bf Step I}. First, we would like to show that
\begin{equation}\label{eq8.36}
H^{1,p}_L(\RR^n)\hookrightarrow \dot F^{p,2}_1(\RR^n),
\quad\mbox{for \quad $\frac{n}{n+1}<p\leq 2$ }.
 \end{equation}

By Propositions~\ref{p5.2}, \ref{p5.4} and (\ref{eq8.10}),
(\ref{eq8.12}) we have
\begin{equation}\label{eq8.37}
\nabla L^{-1/2}: H^p_L(\RR^n)\longrightarrow \dot F_0^{p,2}(\RR^n),
\quad\mbox{if \quad $\frac{n}{n+1}<p<2+\eps(L)$ }.
\end{equation}

\noindent On the other hand, according to Lemma~\ref{l8.6} the
operator $L^{1/2}$ is an isomorphism of $H^{1,p}_L$ onto $H^p_L$ for
$0<p\leq 2$. Hence, $L^{1/2}g\in H^p_L$ for every $g\in H^{1,p}_L$,
and
\begin{equation}\label{eq8.38}
\|\nabla g\|_{\dot F_0^{p,2}(\RR^n)}= \|\nabla L^{-1/2}
L^{1/2}g\|_{\dot F_0^{p,2}(\RR^n)}\leq C
\|L^{1/2}g\|_{H^p_L(\RR^n)}\leq C
\|g\|_{H^{1,p}_L(\RR^n)},\,\,\forall\,g\in H^{1,p}_L,
\end{equation}

\noindent if  $\frac{n}{n+1}<p<2+\eps(L)$. This gives the desired norm
estimate (see Lemma~\ref{l8.3}). It remains to show that  the
elements of $H^{1,p}_L(\RR^n)$ can be seen as tempered distributions
modulo polynomials.

Indeed, by (\ref{eq8.26}) for every $g\in H^{1,p}_L$ there is a
sequence $\{g_n\}_{n=1}^{\infty}\subset \dot W^{1,2}\cap H^{1,p}_L$
converging to $g$ in $H^{1,p}_L$ norm. Then

\begin{equation}\label{eq8.39}
\{g_n\}_{n=1}^{\infty}\subset \dot W^{1,2}\approx \dot
F_1^{2,2}\subset {\mathcal S}'/{\mathcal P}, \end{equation}

\noindent  in particular, $g_n$, $n=1, 2,...$, are tempered
distributions modulo polynomials. Also, $\{g_n\}_{n=1}^{\infty}$ is
a Cauchy sequence in $H^{1,p}_L$ norm. Hence,

\begin{equation}\label{eq8.40}
\mbox{$\{\nabla g_n\}_{n=1}^{\infty}$ is Cauchy in $\dot F_0^{p,2}$
norm} \end{equation}

\noindent by (\ref{eq8.38}). Combining (\ref{eq8.39}),
(\ref{eq8.40}) and Lemma~\ref{l8.3}, we conclude that
$\{g_n\}_{n=1}^{\infty}\subset \dot F_1^{p,2}$ and
$\{g_n\}_{n=1}^{\infty}$ is Cauchy in $\dot F_1^{p,2}$. Now $g$ can
be identified with the limit of $\{g_n\}$ in $\dot F_1^{p,2}$.

\vskip 0.08in {\bf{Step II}}. Now we concentrate on the inverse
inclusion, and show that
\begin{equation}\label{eq8.41}
H^{1,p}_L(\RR^n)\hookleftarrow \dot F^{p,2}_1(\RR^n), \quad\mbox{for
\quad $\frac{p_-(L)n}{n+p_-(L)}<p\leq 2$}.
 \end{equation}

It follows from (\ref{eq5.20}), (\ref{eq5.38}),
(\ref{eq8.11}) and (\ref{eq8.13})  that
\begin{equation}\label{eq8.42}
S_1\sqrt L: \dot F^{p,2}_1(\RR^n)\longrightarrow L^p(\RR^n),
\quad\mbox{for \quad $\frac{p_-(L)n}{n+p_-(L)}<p\leq 2$}.
 \end{equation}

\noindent Combining this with (\ref{eq5.19}) we have
\begin{equation}\label{eq8.43}
\|f\|_{H^{1,p}_L(\RR^n)}\leq C \|f\|_{\dot F^{p,2}_1(\RR^n)},
\,\,\forall \,\,f\in  \dot F^{p,2}_1(\RR^n), \quad \mbox{
$\frac{p_-(L)n}{n+p_-(L)}<p\leq 2$},
 \end{equation}

\noindent and it remains to show that $f$ actually belongs to
$H^{1,p}_L(\RR^n)$, i.e., that it can be approximated by the
elements of $L^{-1/2}(L^2\cap H^{p}_L)$.

According to Lemma~\ref{l8.2}, ${\mathcal Z}(\RR^n)$ is a dense
subset of $\dot F^{p,2}_1(\RR^n)$. Then every $f$ in (\ref{eq8.43})
can be approximated  in $\dot F^{p,2}_1(\RR^n)$ norm by a sequence
$\{f_n\}_{n=1}^{\infty}\subset {\mathcal Z}(\RR^n)$. The operator
$\sqrt L$ maps $\dot W^{1,2}\approx \dot F_1^{2,2}$ to $L^2(\RR^n)$
and ${\mathcal Z}(\RR^n)$ is a subset of $\dot F_1^{2,2}$.
Hence,
\begin{equation}\label{eq8.44}
\sqrt L f_n\in L^2(\RR^n), \quad n=1,2,....
 \end{equation}

\noindent Since, in addition, $\|\sqrt Lf_n\|_{H^p_L(\RR^n)}$ is
finite for every $n=1,2,...$ by (\ref{eq8.42}), we can conclude that
$\{\sqrt L f_n\}_{n=1}^{\infty}\subset L^2\cap H^p_L$ and therefore,
$\{f_n\}_{n=1}^{\infty}\subset L^{-1/2}(L^2\cap H^p_L)$.

By our assumptions $\{f_n\}_{n=1}^{\infty}$ is Cauchy in $\dot
F^{p,2}_1(\RR^n)$ norm. Then by (\ref{eq8.42}), it is also Cauchy in
$H^{1,p}_L(\RR^n)$ norm and belongs to $L^{-1/2}(L^2\cap H^p_L)$.
Now we identify its limit in $H^{1,p}_L(\RR^n)$ with $f\in \dot
F^{p,2}_1(\RR^n)$, and derive (\ref{eq8.41}) with the appropriate
norm estimate. \ep


\subsection{Functional calculus and fractional powers of $L$ in Sobolev and regular Hardy spaces}\label{s8.6}

In this section we restrict ourselves to the case $n\geq 3$. One can derive analogues of all the results below for $n=2$ following the same arguments. We will not state them for the sake of brevity.

\begin{theorem}\label{t8.10} Let $L$ be an elliptic operator satisfying (\ref{eq1.1})--(\ref{eq1.3}), and let $p(L)$ and $\eps(L)$ retain the same significance as before. Assume that $-1\leq s \leq 1$ and $0<p<\infty$
are such that either of the conditions (1) or (2) below is satisfied
\begin{eqnarray}
&& {\rm(1)}\,\,-1\leq s\leq 0\quad\mbox{and}\nonumber\\[4pt]
&& \label{eq8.45} \qquad\qquad \mbox{ $\max\left\{0,\frac 1n \,s+1-\frac{1}{p_-(L^*)}\right\}<\frac 1p<\left(\frac{1}{2+\eps(L^*)}-1+\frac{1}{p_-(L)}\right)s +\frac{1}{p_-(L)}$},\end{eqnarray}\begin{eqnarray}
&&\hskip -0.7cm
\label{eq8.46}
{\rm(2)}\,\,\,\,0\leq s\leq 1\quad\mbox{and}\nonumber\\[4pt]
&& \qquad\qquad \mbox{ $\left(\frac{1}{2+\eps(L)}-1+\frac{1}{p_-(L^*)}\right)s +1-\frac{1}{p_-(L^*)}<\frac 1p<\frac 1n \, s+\frac{1}{p_-(L)}$}.
\end{eqnarray}

\noindent Then $L$ has bounded holomorphic functional calculus in $\dot F_{s}^{p,2}(\RR^n)$,
in the sense that for every $\varphi\in
H^{\infty}(\Sigma_\mu^0)$
\begin{equation}\label{eq8.47}
\varphi(L):\dot F_{s}^{p,2}(\RR^n)\longrightarrow \dot F_{s}^{p,2}(\RR^n),
\end{equation}

\noindent with the norm bounded by $\|\varphi\|_{L^\infty(\Sigma_\mu^0)}$.

Moreover, for every $\varphi\in
\Psi'(\Sigma_\mu^0)$

\begin{equation}\label{eq8.48}
\varphi(L):\dot F_{\alpha}^{p,2}(\RR^n)\longrightarrow \dot F_{\beta}^{q,2}(\RR^n),
\end{equation}

\noindent and
\begin{equation}\label{eq8.49}
\|\varphi(L)f\|_{\dot F_{\beta}^{q,2}(\RR^n)}\leq C
\left\|z^{\frac{\beta-\alpha}{2}+\frac 12\left(\frac np-\frac
nq\right)}\varphi\right\|_{L^\infty(\Sigma_\mu^0)}\|f\|_{\dot F_{\alpha}^{p,2}(\RR^n)},
\end{equation}

\noindent whenever $p\leq q$,  and the pairs $(\alpha,1/p)$, $(\beta,1/q)$ satisfy (\ref{eq8.45}) or (\ref{eq8.46}).
\end{theorem}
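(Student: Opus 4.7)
The strategy is two-stage: first establish the bounded $H^\infty$-calculus statement (\ref{eq8.47}) for all $(s,1/p)$ in the union of the regions (\ref{eq8.45}) and (\ref{eq8.46}); then deduce the shifted mapping property (\ref{eq8.48})--(\ref{eq8.49}) by extracting a fractional power of $L$ and reducing to the first stage. The whole proof funnels the claim back to Lemma~\ref{l8.7} on the adapted scale $H^{s,p}_L$, via the identification $\dot F^{p,2}_s\approx H^{s,p}_L$ inside the relevant parameter range.

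The core case is $0\le s\le 1$, $0<p\le 2$ within (\ref{eq8.46}). Here the two endpoints of the identification are already available: at $s=0$ one has $\dot F^{p,2}_0=L^p=H^p_L$ for $p_-(L)<p\le 2$, by (\ref{eq8.10}) and (\ref{eq1.13}); at $s=1$ one has $\dot F^{p,2}_1\approx H^{1,p}_L$ for $\frac{p_-(L)n}{n+p_-(L)}<p\le 2$ by Proposition~\ref{p8.9}. The adapted scale interpolates complexly in $(s,p)$ via Lemma~\ref{l8.8}, while the classical Triebel--Lizorkin scale interpolates complexly in the same parameter range by the analytically-convex / retract machinery sketched in Section~\ref{s8.2} (combining Lemma~\ref{l8.5} for the weighted tent spaces with the retraction (\ref{eq8.34})). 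Uniqueness of the complex interpolation functor on compatible endpoints---performed inside the common ambient Banach space produced in Section~\ref{s10} below---forces $\dot F^{p,2}_s\approx H^{s,p}_L$ throughout this subregion, and Lemma~\ref{l8.7} then gives (\ref{eq8.47}) there. The remaining portions of the hypothesis region (condition (\ref{eq8.45}) with $s\le 0$, and condition (\ref{eq8.46}) with $p>2$) are covered by duality: the involution $(s,1/p)\mapsto(-s,1/p')$ interchanges (\ref{eq8.45}) for $L$ with (\ref{eq8.46}) for $L^*$ (using $p_-(L^*)'=p_+(L)$ and the symmetric role of $\varepsilon(L),\varepsilon(L^*)$), and $(\dot F^{p,2}_s)^*\simeq \dot F^{p',2}_{-s}$ for $1<p<\infty$, so that a bound on $\varphi(L)$ is equivalent to the corresponding bound on $\bar\varphi(L^*)$ on the dual space, which falls in the case already handled. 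A final interpolation across $p=2$ via Lemma~\ref{l8.8} and its dual seals the region.

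For (\ref{eq8.48})--(\ref{eq8.49}), set $\gamma:=\frac{\beta-\alpha}{2}+\frac12\bigl(\frac{n}{p}-\frac{n}{q}\bigr)$ and factor
\[
\varphi(L)=\psi(L)\circ L^{\gamma},\qquad \psi(z):=z^{-\gamma}\varphi(z),
\]
where $\psi\in H^\infty(\Sigma_\mu^0)$ with $\|\psi\|_{L^\infty(\Sigma_\mu^0)}$ equal to the right-hand side of (\ref{eq8.49}) by hypothesis. The Sobolev-type shift $L^{\gamma}:\dot F^{p,2}_\alpha\to \dot F^{q,2}_\beta$ is obtained by splitting $\gamma=\frac{\beta-\alpha}{2}+\frac12(\frac{n}{p}-\frac{n}{q})$: the integrability-gain piece $L^{\frac12(n/p-n/q)/2\cdot 2}$ follows from Theorem~\ref{t7.1}, transferred through the identification of the first stage, while the smoothness-shift piece is handled by Lemma~\ref{l8.6}. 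The second factor $\psi(L)$ is bounded on $\dot F^{q,2}_\beta$ by the first stage, with norm $\lesssim \|\psi\|_{L^\infty(\Sigma_\mu^0)}$. Composing yields (\ref{eq8.49}).

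The principal obstacle will be the first stage: justifying that the identification $\dot F^{p,2}_s\approx H^{s,p}_L$ actually extends by interpolation from the two isolated endpoint identifications to the full parameter subregion used above. This requires a common ambient space in which both interpolation functors see the same objects (hence the appeal to Section~\ref{s10}), together with care at the quasi-Banach end $p<1$, where neither complex interpolation nor duality is automatic. A secondary subtlety is in the duality step, since one must verify that $\varphi(L)^{*}=\bar\varphi(L^*)$ in the distributional sense used to define $\varphi(L)$ on $\dot F^{p,2}_s$ for $p>2$ or $s<0$; this is standard for the sesquilinear form definition of $L$ but should be stated explicitly for the $H^\infty$-functional calculus realization.
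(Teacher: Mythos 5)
You have a genuine gap in the first stage. Your plan is to obtain the whole hypothesis region from the core case ($0\le s\le 1$, $0<p\le 2$ in \eqref{eq8.46}) by duality and ``a final interpolation across $p=2$'', but this cannot reach the parts of the region that the paper covers by importing the results of \cite{AuscherSurvey}, Sections~5.3--5.4 (the polygon $G_LB_LE_LH_LD_LF_L$ in Figure~3, i.e.\ the portion of the region with $p_-(L)<p<p_+(L)$, including the segments $G_LB_L$ and $H_LD_L$). Concretely, the theorem includes the closed segment $C_LD_L$, e.g.\ the points $(s,1/p)=(1,1/p)$ with $\frac{1}{2+\eps(L)}\le\frac1p<\frac12$ (that is, $s=1$ and $2<p\le 2+\eps(L)$), and dually the points of $A_LB_L$ with $\frac1p>\frac12$. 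To reach a point with $s=\pm1$ by complex interpolation, both endpoint spaces must themselves sit on the line $s=\pm1$; and your duality step exchanges $s=1$ with $s=-1$ while reflecting $\frac1p\mapsto 1-\frac1p$. Since your covered set at $s=1$ contains only $\frac1p\ge\frac12$ and at $s=-1$ (after dualizing) only $\frac1p\le\frac12$, no iteration of duality and interpolation ever produces these boundary points; nearby interior points fail too, since if $(s_0,\frac1{p_0})=(1-\theta)(s_1,\frac1{p_1})+\theta(s_2,\frac1{p_2})$ with $s_1\ge0$, $s_2\ge-1$, $\frac1{p_2}\le\frac12$, then $\theta\ge|s_0|$ and $\frac1{p_0}\le\frac12+(1-|s_0|)\bigl(\frac1n+\frac1{p_-(L)}-\frac12\bigr)\to\frac12$ as $s_0\to-1$, whereas the claimed region at $s$ near $-1$ goes up to $\frac1p$ near $1-\frac1{2+\eps(L^*)}>\frac12$. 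The tell-tale sign is that $\eps(L)$ and $\eps(L^*)$, which determine the corners $D_L$ and $B_L$ of the region and encode the $L^p$ Riesz transform/square root estimates for $2<p<2+\eps$, never enter your argument; these bounds are genuinely additional analytic input and cannot be generated by soft duality/interpolation from the Hardy-space identification alone.

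The rest of your plan does track the paper: the identification $\dot F^{p,2}_s\approx H^{s,p}_L$ obtained from the endpoints $s=0$ (via \eqref{eq1.13} and \eqref{eq8.10}) and $s=1$ (Proposition~\ref{p8.9}), interpolated through Lemma~\ref{l8.8} and the classical Triebel--Lizorkin interpolation inside the ambient space of Section~\ref{s10}, followed by Lemma~\ref{l8.7} and a duality step, is exactly the paper's second and third ingredients; and your factorization $\varphi(L)=\psi(L)\circ L^{\gamma}$ for \eqref{eq8.48}--\eqref{eq8.49} is how Lemma~\ref{l8.7} (cf.\ \eqref{eq8.32}) treats the off-diagonal case. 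To repair the argument, add the Auscher functional-calculus result on $\dot W^{s,p}$ for $p,q$ in the semigroup range as a third covered sub-region, and then glue the three pieces by duality and interpolation, as the paper does.
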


 \noindent {\it Remark.} Above, the expression ``the pair $(\alpha,1/p)$ satisfies (\ref{eq8.45}) or (\ref{eq8.46})" means that either of the conditions (\ref{eq8.45}), (\ref{eq8.46}) holds with $\alpha$ in place of $s$. Similarly, ``the pair $(\beta,1/q)$ satisfies (\ref{eq8.45}) or (\ref{eq8.46})" means that either of the conditions (\ref{eq8.45}), (\ref{eq8.46}) holds with $\beta$ in place of $s$ and $q$ in place of $p$. Finally, the expression ``the pairs $(\alpha,1/p)$, $(\beta,1/q)$ satisfy (\ref{eq8.45}) or (\ref{eq8.46})" means that both ``the pair $(\alpha,1/p)$ satisfies (\ref{eq8.45}) or (\ref{eq8.46})" and ``the pair $(\beta,1/q)$ satisfies (\ref{eq8.45}) or (\ref{eq8.46})", in the sense outlined above.

\vskip 0.08in

The range of $s$ and $p$ satisfying either  (\ref{eq8.45}) or (\ref{eq8.46}) can be identified with a polygon on the $(s,1/p)$ plane. The shape of such a polygon depends on whether $\frac{n+p_-(L^*)}{np_-(L^*)}<1$ (in which case we will denote the corresponding polygon by ${\mathcal R}_1(L)$) or $\frac{n+p_-(L^*)}{np_-(L^*)}\geq 1$ (then the polygon will be denoted by ${\mathcal R}_2(L)$).

 First, assume that $\frac{n+p_-(L^*)}{np_-(L^*)}<1$. The region ${\mathcal R}_1(L)$ consists of the open polygon with vertices
\begin{equation}\label{eq8.50}
\begin{array}{l}
 B_L=\left(-1,1-\frac
{1}{2+\eps(L^*)}\right),\qquad\qquad\,
E_L=\left(0,\frac{1}{p_-(L)}\right), \quad\quad\,\,
C_L=\left(1,\frac{n+p_-(L)}{np_-(L)}\right),\\[4pt]
 A_L=\left(-1,1-\frac{n+p_-(L^*)}{np_-(L^*)}\right),\qquad
F_L=\left(0,1-\frac{1}{p_-(L^*)}\right),\qquad D_L=\left(1,\frac
{1}{2+\eps(L)}\right),
\end{array}
\end{equation}
\noindent together with the sides $A_LB_L$ and $C_LD_L$. It is shown on Figure~3.

\setlength{\unitlength}{0.4 cm}

\begin{picture}(14,19)(-7,-3)

\thinlines
\put(-4,1){\line(1,0){24}}    
\put(8,0){\line(0,1){14}}    

\put(7.92,13.8){\vector(0,1){0}}     %
\put(8.08,13.8){\vector(0,1){0}}      
\put(8,14){\vector(0,1){0}}           

\put(19.8,0.92){\vector(1,0){0}}     %
\put(19.8,1.08){\vector(1,0){0}}      
\put(20,1){\vector(1,0){0}}           

\multiput(8.00,7.0)(0.5,0){21}{\line(-1,0){0.25}}    
\multiput(8.00,9.91)(0.5,0){21}{\line(-1,0){0.25}}    

\multiput(8.00,7.0)(-0.5,0){20}{\line(-1,0){0.25}}    
\multiput(8.00,4.2)(-0.5,0){20}{\line(-1,0){0.25}}    

\put(8.00,12.00){\circle*{0.2}} \put(8.00,7.00){\circle*{0.2}}

\put(18.00,1){\circle*{0.2}} \put(-2.00,1.00){\circle*{0.2}}

\put(8.00,9.91){\circle*{0.2}} \put(-2.00,7.45){\circle*{0.3}}
\put(18.00,6.55){\circle*{0.3}} \put(18.00,9.91){\circle*{0.3}}

\put(-2.00,2.5){\circle*{0.3}} \put(18.00,11.5){\circle*{0.3}}
\put(8.00,4.1){\circle*{0.2}} \put(-2.00,4.2){\circle*{0.3}}

\put(8.1,7.5){\scriptsize{{$\frac{\rm 1}{\rm 2}$}}}
\put(-2,0){\rm\scriptsize -1} \put(18,0){\rm \scriptsize 1}
\put(8.1,12){\rm \scriptsize 1}
\put(8.15,13.5){$\textstyle{{\frac{1}{p}}}$}
\put(19.5,0.25){$\textstyle{s}$}

\put(16,12){\scriptsize{{$C_L=\left(1,\frac{n+p_-(L)}{np_-(L)}\right)$}}}
\put(-4,1.5){\scriptsize{{$A_L=\left(-1,1-\frac{n+p_-(L^*)}{np_-(L^*)}\right)$}}}
\put(4.3,10.3){\scriptsize{{$E_L=\left(0,\frac{1}{p_-(L)}\right)$}}}
\put(8.1,3.5){\scriptsize{{$F_L=\left(0,1-\frac{1}{p_-(L^*)}\right)$}}}
\put(-4,8.3){\scriptsize{{$B_L=\left(-1,\frac
{1+\eps(L^*)}{2+\eps(L^*)}\right)$}}}
\put(16,5){\scriptsize{{$D_L=\left(1,\frac
{1}{2+\eps(L)}\right)$}}}
\put(18.4,9.7){\scriptsize{{$H_L$}}} \put(-3,4){\scriptsize{{$G_L$}}}

\linethickness{0.12\unitlength} \thicklines
\put(-2,2.5){\line(0,1){5}}            
\put(18,6.50){\line(0,1){5}}            

\thicklines
\put(-2,7.44){\line(4,1){10}}            
\put(8,9.91){\line(6,1){10}}            
\put(-2,2.50){\line(6,1){10}}            
\put(8,4.1){\line(4,1){10}}            

\put(-3.00,-1.25){{\bf Figure 3 -- the region ${\mathcal R}_1(L)$.}}

\end{picture}

For the case $\frac{n+p_-(L^*)}{np_-(L^*)}\geq 1$ we define the second region, ${\mathcal R}_2(L)$, as an open polygon with the vertices
\begin{equation}\label{eq8.51}
\begin{array}{l}
 B_L=\left(-1,\frac
{1+\eps(L^*)}{2+\eps(L^*)}\right),\qquad\qquad\,\,
E_L=\left(0,\frac{1}{p_-(L)}\right), \quad\quad\,\,
C_L=\left(1,\frac{n+p_-(L)}{np_-(L)}\right),\\[4pt]\quad
\widetilde A_L=(-1,0),\qquad \quad\widetilde F_L=\left(0,\frac{n}{p_-(L^*)}-n\right),\\[4pt]\quad
F_L=\left(0,1-\frac{1}{p_-(L^*)}\right),\quad D_L=\left(1,\frac
{1}{2+\eps(L)}\right),
\end{array}
\end{equation}
\noindent together with the sides $\widetilde A_LB_L$ and $C_LD_L$. Its picture is a modified version of Figure~3, much as Figure~2 is a modification of Figure~1.

\noindent {\it Proof of Theorem~\ref{t8.10}}. Let us introduce auxiliary points $O=(0,0)$, $B=(-1,1/2)$ and $D=(1,1/2)$.
As we already mentioned, the statement of Theorem~\ref{t8.10} was proved for all $p,q$ which in addition to the aforementioned restrictions
satisfy $p,q>p_-(L)$ (see \cite{AuscherSurvey}, Sections~5.3, 5.4). Thus, the interior of the polygon $G_LB_LE_LH_LD_LF_L$ is already covered (i.e. the statement of the Theorem holds with ${\mathcal R}_1$ substituted by $G_LB_LE_LH_LD_LF_L$). The same argument applies to the segments $G_LB_L$ and $H_LD_L$.

Next, (\ref{eq1.13}), (\ref{eq8.35}) and Lemma~\ref{l8.8} together with the well-known results on the complex interpolation of Triebel-Lizorkin spaces lead to the conclusion that $H^{s,p}_L(\RR^n)\approx \dot F_s^{p,2}(\RR^n)$ whenever $(s,1/p)$ belongs to $OE_LC_LD$ or the segment $C_LD$.
Then, by Lemma~\ref{l8.7}, the statement of the theorem holds in $OE_LC_LD$ and on the segment $C_LD$.

Combining these observations, we recover the result on the entire ${\mathcal R}_1$ or ${\mathcal R}_2$ using duality and interpolation. \ep

\vskip 0.08in \noindent {\it Remark.} When $\frac{n+p_-(L^*)}{np_-(L^*)}\geq 1$, then Theorem~\ref{t8.10} can be complemented by the corresponding results for $p=\infty$. Specifically, consider the spaces $\dot F_{\alpha}^{\infty,\infty}$. For $-1\leq \alpha <0$ they can be seen, e.g.,  as the dual spaces for $\dot F_{1}^{p,2}(\RR^n)$ with $p=\frac{n}{n+\alpha +1}$ (see, e.g., \cite{FrJa}, Remark 5.14, and references therein). Then for every $\varphi\in
H^{\infty}(\Sigma_\mu^0)$
\begin{equation}\label{eq8.52}
\varphi(L):\dot F_{\alpha}^{\infty,\infty}(\RR^n)\longrightarrow \dot F_{\alpha}^{\infty,\infty}(\RR^n),
\end{equation}

\noindent whenever $-1\leq\alpha <n\left(\frac{1}{p_-(L^*)}-1\right)$.  In the same way the spaces $\dot F_{\alpha}^{\infty,\infty}$ can be incorporated in (\ref{eq8.48})--(\ref{eq8.49}), that is, we can say that (\ref{eq8.48})--(\ref{eq8.49}) hold whenever $p\leq q$ and $(\alpha,1/p)$, $(\beta,1/q)$ belong to $\widetilde {\mathcal R}_2={\mathcal R}_2\cup \widetilde A_L \widetilde F_L$, where the segment $\widetilde A_L \widetilde F_L$ corresponds to the classes $\dot F_{s}^{\infty,\infty}$.

The Theorem~\ref{t8.10} and sharpness results in Section~\ref{s2.2} lead to the complete description of all functions spaces on Hardy-Sobolev-Triebel-Lizorkin scale where one can develop functional calculus for an arbitrary elliptic operator satisfying (\ref{eq1.1})--(\ref{eq1.3}).

\begin{corollary}\label{c8.11} Let $L$ be an elliptic operator satisfying (\ref{eq1.1})--(\ref{eq1.3}), and assume that $s\in\RR$, $p\in (0,\infty)$ are such that
\begin{equation}\label{eq8.53}
-1\leq s\leq 1\quad\mbox{and\quad $\max\left\{0,\frac 1n\, s+\frac{n-2}{2n}\right\}\leq \frac 1p \leq \frac 1n\, s+\frac{n+2}{2n}$}.
\end{equation}

\noindent
Then $L$ has a bounded holomorphic functional calculus in $\dot F_{s}^{p,2}(\RR^n)$, in the sense that
\begin{equation}\label{eq8.54}
\varphi(L):\dot F_{s}^{p,2}(\RR^n)\longrightarrow \dot F_{s}^{p,2}(\RR^n),\quad \mbox{for every $\varphi\in
H^{\infty}(\Sigma_\mu^0)$},
\end{equation}

\noindent with the norm bounded by $\|\varphi\|_{L^\infty(\Sigma_\mu^0)}$.

More generally, if $0<p\leq q<\infty$ and the pairs $\alpha,p$ and $\beta,q$ satisfy (\ref{eq8.53}), i.e.
\begin{eqnarray}\label{eq8.55}
&&-1\leq \alpha\leq 1\quad\mbox{and\quad $\max\left\{0,\frac 1n\, \alpha+\frac{n-2}{2n}\right\}\leq \frac 1p \leq \frac 1n\, \alpha+\frac{n+2}{2n}$},\\[4pt]
\label{eq8.56}
&&-1\leq \beta\leq 1\quad\mbox{and\quad $\max\left\{0,\frac 1n\, \beta+\frac{n-2}{2n}\right\}\leq \frac 1q \leq \frac 1n\, \beta +\frac{n+2}{2n}$},
\end{eqnarray}

\noindent then
\begin{equation}\label{eq8.57}
\varphi(L):\dot F_{\alpha}^{p,2}(\RR^n)\longrightarrow \dot F_{\beta}^{q,2}(\RR^n),
\end{equation}

\noindent with
\begin{equation}\label{eq8.58}
\|\varphi(L)f\|_{\dot F_{\beta}^{q,2}(\RR^n)}\leq C
\left\|z^{\frac{\beta-\alpha}{2}+\frac 12\left(\frac np-\frac
nq\right)}\varphi\right\|_{L^\infty(\Sigma_\mu^0)}\|f\|_{\dot F_{\alpha}^{p,2}(\RR^n)},
\end{equation}

\noindent for every $\varphi\in
\Psi'(\Sigma_\mu^0)$ such that the $L^\infty$ norm on the right-hand side of (\ref{eq8.58}) is
finite.

 These result are sharp for all $n\geq 3$. For every $-1\leq s\leq 1$, $0<p<\infty$ not satisfying (\ref{eq8.53})
there exists an elliptic operator $L$ such that the heat semigroup is not bounded in $\dot F_{s}^{p,2}(\RR^n)$ and hence, the property (\ref{eq8.54})
does not hold. Similarly, (\ref{eq8.57}), (\ref{eq8.58}) need not hold if $\alpha,p$ or $\beta,q$ do not satisfy (\ref{eq8.55})--(\ref{eq8.56}).
\end{corollary}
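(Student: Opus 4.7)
\smallskip

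The plan is to obtain (\ref{eq8.54}) and (\ref{eq8.57})--(\ref{eq8.58}) as an immediate consequence of Theorem~\ref{t8.10} applied to each individual operator $L$, and to establish the sharpness claim by combining Proposition~\ref{p2.1} with a Littlewood--Paley analysis of the Frehse function $u\phi$.

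For the positive direction, I first claim that the closed region defined by (\ref{eq8.53}) coincides with the intersection, as $L$ ranges over all elliptic operators satisfying (\ref{eq1.1})--(\ref{eq1.3}), of the partially closed polygons $\mathcal{R}_1(L)$ or $\mathcal{R}_2(L)$ appearing in Theorem~\ref{t8.10}. This is a direct vertex check: substituting the sharp (but unattained) extremal values $p_-(L)=2n/(n+2)$, $p_+(L)=2n/(n-2)$, and $2+\eps(L)=2$ into (\ref{eq8.50})--(\ref{eq8.51}) collapses each $E_L,F_L,C_L,D_L,B_L,A_L$ (or $\widetilde A_L,\widetilde F_L$) precisely to the corresponding corner $E,F,C,D,B,A$ (or $\widetilde A,\widetilde F$) of (\ref{eq8.53}), while the connecting edges of slope $\pm 1/n$ become the Sobolev boundary lines of (\ref{eq8.53}). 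Since every admissible $L$ satisfies $p_-(L)<2n/(n+2)$, $p_+(L)>2n/(n-2)$ and $\eps(L)>0$ strictly, the closed region (\ref{eq8.53}) is contained in $\mathcal{R}_i(L)$ for every such $L$, and (\ref{eq8.47})--(\ref{eq8.49}) of Theorem~\ref{t8.10}, applied to the given $L$, deliver (\ref{eq8.54}) and (\ref{eq8.57})--(\ref{eq8.58}) with no further work.

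For the sharpness, fix $(s,1/p)$ violating (\ref{eq8.53}). Invoking the duality $L\leftrightarrow L^*$ together with $(\dot F_s^{p,2})^*\approx \dot F_{-s}^{p',2}$ for $p>1$ (and the Hardy--Sobolev analogue at small $p$ via Proposition~\ref{p8.9}), I may assume $1/p > s/n + (n+2)/(2n)$. Choose $\epsilon>0$ small enough that $1/p > s/n+(n+2)/(2n)+2\epsilon/n$, and let $L$ be the Frehse operator from the proof of Proposition~\ref{p2.1} with $q=n/2-\epsilon$, so that $u(x)=x_1|x|^{-q}e^{i\lambda\log|x|}$ solves $Lu=0$ away from the origin and $L(u\phi)=f\in C_0^\infty(\RR^n)$. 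Suppose for contradiction that $e^{-tL}$ were uniformly bounded on $\dot F_s^{p,2}$; interpolating this bound with the $L^2$ spectral calculus of $L$ and the $L^2$ off-diagonal estimates of Subsection~\ref{s2.3} via the complex interpolation framework of Lemma~\ref{l8.5} (in the spirit of Proposition~5.3 of \cite{AuscherSurvey}), one would obtain a mapping property of the form $L^{-1}:\dot F_{s-2}^{p,2}\longrightarrow \dot F_s^{p,2}$. Since $f\in C_0^\infty\subset\dot F_{s-2}^{p,2}$, this would force $u\phi=L^{-1}f\in\dot F_s^{p,2}$. A Littlewood--Paley computation near the origin shows, however, that the $s$-th derivative of $u\phi$ behaves like $|x|^{1-q-s}$, so that $\|u\phi\|_{\dot F_s^{p,2}}^p\sim \int_{|x|<1}|x|^{(1-q-s)p}\,dx$ diverges in the stated range of parameters, providing the desired contradiction.

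The main obstacle is the transfer step: converting uniform $\dot F_s^{p,2}$-boundedness of the heat semigroup into an honest mapping property for $L^{-1}$. The naive identity $L^{-1}=\int_0^\infty e^{-tL}\,dt$ does not converge from semigroup boundedness alone, so one must couple the $\dot F_s^{p,2}$ hypothesis with $L^2$ spectral decay and Gaffney estimates to extract enough off-diagonal decay for the Riesz-potential integral to be meaningful. An equally delicate point is the local $\dot F_s^{p,2}$-quasinorm analysis of $u\phi$ at the origin, which requires the Littlewood--Paley decomposition (\ref{eq8.9}) rather than the simpler $L^r$-norm estimate that sufficed in Proposition~\ref{p2.1}. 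In the quasi-Banach range $p<1$ and for fractional $s$, both steps must be performed within the complex interpolation framework of Subsection~\ref{s8.3} and with the Hardy--Sobolev identification of Proposition~\ref{p8.9}, rather than working directly with tempered distributions modulo polynomials.
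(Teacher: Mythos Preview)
Your positive direction is correct and matches the paper exactly: the closed region (\ref{eq8.53}) sits strictly inside each $\mathcal{R}_i(L)$ because $p_-(L)<2n/(n+2)$, $p_+(L)>2n/(n-2)$ and $\eps(L)>0$ hold strictly, so Theorem~\ref{t8.10} applied to the given $L$ delivers (\ref{eq8.54}) and (\ref{eq8.57})--(\ref{eq8.58}) at once.

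Your sharpness argument, however, has a genuine gap at precisely the step you yourself label an ``obstacle'': the transfer from uniform $\dot F_s^{p,2}$-boundedness of $e^{-tL}$ to a mapping $L^{-1}:\dot F_{s-2}^{p,2}\to\dot F_s^{p,2}$. Boundedness of the semigroup on a \emph{single} space, even combined with $L^2$ functional calculus and Gaffney estimates, does not produce this two-order smoothing. In Auscher's Proposition~5.3 the analogous step works because one has semigroup boundedness on an \emph{interval} of $L^p$ spaces, which by interpolation of off-diagonal estimates yields $L^p\to L^q$ decay of $e^{-tL}$ and hence convergence of $\int_0^\infty e^{-tL}\,dt$ between different spaces. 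Here you assume boundedness only at the isolated point $(s,1/p)$; interpolating with $L^2$ gives the semigroup on the segment joining $(s,1/p)$ to $(0,1/2)$, but $(s-2,1/p)$ does not lie on that segment. Your Littlewood--Paley estimate $\|u\phi\|_{\dot F_s^{p,2}}\sim(\int_{|x|<1}|x|^{(1-q-s)p}\,dx)^{1/p}$ is also only asserted, and for fractional $s$ and $p<1$ it is not a one-line computation.

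The paper sidesteps both difficulties with a much shorter argument. Rather than fixing a single Frehse operator and analyzing $L^{-1}$, it argues by contradiction at the level of the whole class: if for some $(s_0,1/p_0)$ outside (\ref{eq8.53}) the semigroup were bounded on $\dot F_{s_0}^{p_0,2}$ for \emph{every} elliptic $L$, then, since by the positive part it is also bounded on $\dot F_s^{p,2}$ for every $(s,1/p)$ in the region (\ref{eq8.53}) and every $L$, complex interpolation of Triebel--Lizorkin spaces yields semigroup boundedness for every $L$ on every segment from $(s_0,1/p_0)$ into the region. Choosing such a segment (e.g.\ toward the vertex $B$ or $D$) one obtains a crossing of the line $s=0$ at some $1/p$ strictly outside $[(n-2)/(2n),(n+2)/(2n)]$, i.e.\ $L^p$-boundedness of $e^{-tL}$ for some $p\notin[2n/(n+2),2n/(n-2)]$ and for every $L$. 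This contradicts Proposition~\ref{p2.1} directly --- no $L^{-1}$ mapping, no Frehse-specific analysis, and no Littlewood--Paley computation of $u\phi$ are needed.
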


The Corollary~\ref{c8.11} extends to the case $p=\infty$ in the vein of remark after the proof of Theorem~\ref{t8.10}.

As we mentioned in the introduction, 
the range of indices $s$ and $p$ satisfying (\ref{eq8.53}) can be described as a region on $(s,1/p)$ plane.

Assume first that $n\geq 4$. We denote by ${\mathcal R}_1$ a {\it closed} polygon on $(s,1/p)$ plane with vertices at
\begin{equation}\label{eq8.59}
\begin{array}{l}
 A=\left(-1,\frac {n-4}{2n}\right),\qquad\qquad\,\,
B=\left(-1,\frac 12\right), \\[4pt]
C=\left(1,\frac {n+4}{2n}\right),\qquad\qquad\,\,
D=\left(1,\frac 12\right).
\end{array}
\end{equation}

\noindent On an $(s,1/p)$ plane the Region~${\mathcal R}_1$ is shown on Figure~1.

Now let $n\leq 4$, and let ${\mathcal R}_2$ be a {\it closed} polygon on $(s,1/p)$ plane with vertices at
\begin{equation}\label{eq8.60}
\begin{array}{l}
\widetilde A=\left(-1,0\right),\qquad\qquad\,\,
B=\left(-1,\frac 12\right), \\[4pt]
C=\left(1,\frac {n+4}{2n}\right),\qquad\qquad\,\,
D=\left(1,\frac 12\right),\\[4pt]
\widetilde F=\left(\frac{2-n}{2},0\right).
\end{array}
\end{equation}

\noindent The region ${\mathcal R}_2$ is depicted on Figure~2.

Observe that for $n=4$ we have ${\mathcal R}_1={\mathcal R}_2$, and the corresponding picture can be seen as an extreme case of ${\mathcal R}_1$ (with $A=(-1,0)$ and $C=(1,1)$) or an extreme case of ${\mathcal R}_2$ (with $\widetilde A=\widetilde F=(-1,0)$). 

In general, as dimension decreases, the slope of the line $BC$ becomes larger, while $B$ is fixed and $C$ moves up along the line $\{s=1\}$. When $n=4$, $C=(1,1)$ and for $n\leq 3$ the point $C$ corresponds to $p<1$. Strictly speaking, the Figure~2 shows ${\mathcal R}_2$ for $n=3$, and as we mentioned above, $n=4$ is its extreme case.

All in all, $s\in [-1,1]$ and $p\in (0,\infty]$ satisfy (\ref{eq8.53}) if and only if the point $(s,1/p)$ belongs to ${\mathcal R}_1$ ($n\geq 4$) or  to ${\mathcal R}_2$ ($n\leq 4$). As before, the segment $\widetilde A\widetilde F$ corresponds to the spaces $\dot F_s^{\infty,\infty}$.

\vskip 0.08in \noindent {\it Proof of Corollary~\ref{c8.11}}.\, The Corollary follows from Theorem~\ref{t8.10} and the fact that $p_-(L)<\frac{2n}{n+2}$ for every elliptic operator $L$. The sharpness is a consequence of Proposition~\ref{p2.1}. Indeed, if $n\geq 4$ and for some point $(s_0,1/p_0)\not\in{\mathcal R}_1$ the heat semigroup $e^{-tL}$, $t>0$, is bounded in $\dot F_{s_0}^{p_0,2}(\RR^n)$ for all elliptic operators $L$, then by interpolation the heat semigroup is bounded in all $\dot F_{s}^{p,2}(\RR^n)$ with $(s,1/p)$ in the linear span of $(s_0,1/p_0)$ and ${\mathcal R}_1$. In particular, there exists $p\not\in \left[\frac{2n}{n+2},\frac{2n}{n-2}\right]$
such that the heat semigroup is bounded in $L^p$ for all $L$, which contradicts Proposition~\ref{p2.1}. Similarly, when $n=3$, we discover such a contradiction starting with any $(s_0,1/p_0)\not\in{\mathcal R}_2$.\ep

\section{Appendix 1:  Relationships between $H^p_L$ and classical $H^p$}\label{s9}

In this Appendix, we establish  (\ref{eq1.13}) - (\ref{eq1.13b}).   We note that the containments
in \eqref{eq1.13a} (resp. \eqref{eq1.13b}) are strict if $1< p_-(L)$  (resp. $p_+(L)<\infty).$  For
example, see item (vi) in Proposition \ref{embed} below, and its proof.

We recall that classical $H^p(\RR^n ) = L^p(\RR^n),$
if $1<p<\infty$, that
$(p_-(L),p_+(L))$ is the interior of the interval of $L^p$ boundedness
of the heat semigroup $e^{-tL}$,
and that $p_-(L)<2n/(n+2)$ and $p_+(L) > 2n/(n-2)$, if $n>2$.  
For $\alpha>0$, 
we let $\Lambda^\alpha(\RR^n)$
denote the classical homogeneous ``Lip$_\alpha$" spaces (cf. (\ref{eq9.lip}) below), 
and in the case $\alpha = 0$,
we let $\Lambda^0(\RR^n),\, \Lambda^0_L(\RR^n)$ denote, respectively, the classical and $L$-adapted
BMO spaces $BMO(\RR^n)$ and $BMO_L(\RR^n)$.   We define null spaces
$$\mathcal{N}_p(L):=\{f\in L^p(\RR^n) \cap W^{1,2}_{loc}: Lf=0\}, \quad p_+(L)\leq p<\infty,$$
and
$$\mathcal{N}_\alpha(L):=\{\varphi\in \Lambda^\alpha(\RR^n) 
\cap W^{1,2}_{loc}: L\,\varphi=0\}, \quad 0\leq \alpha.$$
\begin{proposition}\label{embed}
We have the following containments and continuous embeddings:
\begin{enumerate}
\item[(i)] $L^2(\RR^n)\cap \HPL  \subset L^2(\RR^n)\cap H^p(\RR^n),\quad$ $n/(n+1)<p\leq 1,$ and
\begin{equation}\label{eq9.4}
\|f\|_{H^p(\RR^n)} \lesssim \|f\|_{\HPL}\,, \qquad f\in L^2(\RR^n) \cap \HPL.
\end{equation}
\item[(ii)] $L^2(\RR^n)\cap \HPL  \subset L^2(\RR^n)\cap L^p(\RR^n),\quad$ $1<p\leq p_-(L),$
and
\begin{equation}\label{eq9.3**}\|f\|_{L^p(\RR^n)}\lesssim\|f\|_{\HPL}\,,
\qquad f\in L^2(\RR^n) \cap \HPL.\end{equation}
\item[(iii)] $L^p(\RR^n)/\mathcal{N}_p(L)\hookrightarrow \HPL,\quad$  $p_+(L) \leq p<\infty,$ and
\begin{equation}\label{eq9.lpnorm}\|f\|_{\HPL}\leq C\|f\|_{L^p(\RR^n)},\quad p_+(L) \leq p<\infty.
\end{equation}
\item[(iv)] $\Lambda^\alpha(\RR^n)/\mathcal{N}_\alpha(L)
 \hookrightarrow \Lambda_L^\alpha(\RR^n),\quad$ 
$0\leq \alpha < 1$,\footnote{In the presence of pointwise heat kernel bounds,
the case $\alpha = 0$ of (iv) was previously obtained in \cite{DL}.} and
\begin{equation}\label{eq9.alphanorm}
\|\varphi\|_{\Lambda_L^\alpha(\RR^n)}\leq C\|\varphi\|_{\Lambda^\alpha(\RR^n)},\quad 0 \leq \alpha<1.\end{equation}
\end{enumerate}
Moreover,
\begin{enumerate}
\item[(v)] $\HPL = L^p(\RR^n),\quad$ $p_-(L)<p<p_+(L).$
\smallskip
\item[(vi)] $\HPL \neq L^p(\RR^n),\quad$ $1<p\leq p_-(L)$ or $p_+(L)\leq p<\infty.$
\end{enumerate}
Finally,  for each $p>2n/(n-2),\, n\geq 3$ (resp., for each $\alpha \in [0,1)$), 
there is an operator $L$ and a 
non-trivial $u \in L^p(\RR^n)$ (resp., $u\in \Lambda^\alpha(\RR^n)$) such that $Lu=0$
weakly in $\RR^n$.  Thus, for each such $p$ or $\alpha$,
there is an operator $L$ for which the corresponding null space $\mathcal{N}_p(L)$ or
$\mathcal{N}_\alpha(L)$ is non-trivial.
\end{proposition}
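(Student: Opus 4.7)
The plan is to invoke Frehse's construction \cite{Frehse}, the same device used for Proposition~\ref{p2.1}. Recall that for $n\geq 3$ and each $q\in(0,n/2)$, $\lambda>0$, and $\alpha_0>0$ sufficiently small, there is a uniformly elliptic matrix $A=(\alpha_0+i)I+\beta(q,\lambda,\alpha_0)(x\otimes x)/|x|^2$ for which $u(x):=x_1|x|^{-q}e^{i\lambda\log|x|}$ solves $Lu=-\mathrm{div}(A\nabla u)=0$ weakly on all of $\RR^n$; by indicial analysis of the first spherical harmonic mode (roots summing to $2-n$), the companion function $v(x):=x_1|x|^{q-n}e^{-i\lambda\log|x|}$ is likewise a global weak solution for $q>1$.

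For the Lipschitz case with $\alpha\in(0,1)$, I would take $q:=1-\alpha$, which satisfies $q<n/2$ for $n\geq 3$. The homogeneity $u(\mu x)=\mu^\alpha e^{i\lambda\log\mu}u(x)$ yields $|u(x)|\leq|x|^\alpha$, and a direct computation gives $|\nabla u(x)|\lesssim|x|^{\alpha-1}$ for $x\neq 0$. The $\Lambda^\alpha$ seminorm is then bounded by a standard dichotomy: when $|x-y|\geq\tfrac12\min(|x|,|y|)$, the triangle inequality together with $|u|\lesssim|\cdot|^\alpha$ suffices; when $|x-y|<\tfrac12\min(|x|,|y|)$, the mean value theorem and the pointwise gradient bound give $|u(x)-u(y)|\lesssim|x|^{\alpha-1}|x-y|\leq|x-y|^\alpha$, using $\alpha\leq 1$ and $|x-y|\leq|x|$. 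The continuous extension $u(0)=0$ handles the pair $(x,0)$, and non-triviality modulo constants follows from the oscillation factor. For $\alpha=0$, take $q=1$: then $|u|\leq 1$ places $u\in L^\infty(\RR^n)\subset BMO(\RR^n)$, and $u$ is clearly non-constant.

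For the $L^p$ case with $p>2n/(n-2)$, the scale invariance of the Frehse matrix is an intrinsic obstruction: any scale-covariant solution $r^\gamma Y_k(\omega)$ has mutually incompatible $L^p$-integrability requirements at $0$ and at $\infty$, so neither $u$ nor $v$ lies globally in $L^p(\RR^n)$. My plan is to break the scale invariance by replacing $A$ with a matrix $\widetilde A$ equal to Frehse's $A$ outside a ball $B_r$, equal to the identity inside $B_{r/2}$, and smoothly and elliptically interpolating on the annulus $B_r\setminus B_{r/2}$. For any $q\in(1,n/2)$ the pointwise bound $|v(x)|\lesssim|x|^{q-n+1}$ places $v\in L^p(\RR^n\setminus B_r)$, since the required threshold $p>n/(n-1-q)$ is automatic given $p>2n/(n-2)$. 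The plan is then to produce a global weak solution $\widetilde v\in L^p(\RR^n)\cap W^{1,2}_{\mathrm{loc}}$ of $\widetilde L\widetilde v=0$ that coincides with $v$ outside $B_r$ and extends smoothly into the interior; this reduces to a matched Dirichlet/conormal transmission problem across $\partial B_r$ and $\partial B_{r/2}$.

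The hard part of this plan is the non-triviality of the transmission problem: the matching conditions give a Dirichlet-to-Neumann system at both interfaces, and one must verify that for a suitable choice of the transition profile the system admits a non-trivial solution with the prescribed exterior asymptotics. I anticipate this to be handled by a perturbation argument in the interpolating matrix, exploiting the non-self-adjointness of the exterior Frehse operator. Once $\widetilde v$ is obtained, its interior smoothness and exterior decay give $\widetilde v\in L^p(\RR^n)$, and interior elliptic regularity automatically supplies $\widetilde v\in W^{1,2}_{\mathrm{loc}}$.
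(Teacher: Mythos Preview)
Your treatment of the $\Lambda^\alpha$ case is correct and essentially identical to the paper's: take $q=1-\alpha$ in Frehse's construction and verify $u\in\Lambda^\alpha$ via homogeneity and the gradient bound (the paper simply asserts this; your verification is a welcome elaboration). For $\alpha=0$ both you and the paper observe $|u|\leq 1$.

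For the $L^p$ case there is a genuine gap. Your transmission problem is not solved; ``I anticipate this to be handled by a perturbation argument'' is a plan, not a proof, and there is no obvious mechanism forcing the matched Dirichlet/conormal system to have a non-trivial solution with the prescribed exterior asymptotics. Also, your claim that the companion $v(x)=x_1|x|^{q-n}e^{-i\lambda\log|x|}$ is a \emph{global} weak solution is false for $q<n/2$: near the origin $|\nabla v|\sim|x|^{q-n}$ is not locally $L^2$ unless $q>n/2$, so $v\notin W^{1,2}_{\mathrm{loc}}$. You only need $v$ to solve the equation outside a ball, where this objection vanishes, but the statement as written is incorrect.

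The paper avoids the transmission problem entirely by a cut-off-and-correct argument. Modify $A$ to a smooth matrix $A_1$ (identity near $0$, Frehse's $A$ outside a neighborhood of $0$), set $L_1=-\mathrm{div}\,A_1\nabla$. Cut off Frehse's $u$ \emph{away} from $0$ to get $w=u\Phi$; then $L_1w=Lw=f\in C_0^\infty$ is supported in an annulus. Define the correction $w_1:=L_1^{-1}f$, which lies in $\dot W^{1,2}\cap L^r$ with $r=2n/(n-2)$ by the universal mapping property valid for \emph{every} elliptic $L_1$. Then $v:=w-w_1$ solves $L_1v=0$ globally in the weak sense, and is non-trivial because $w\notin L^r$ (choose $q$ with $1<q<n/2$) while $w_1\in L^r$. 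To show $v\in L^p$, the paper proves a local lemma: for $C^1$ coefficients, any global weak null solution satisfies $\|v\|_{L^\infty(Q)}\lesssim\|v\|_{L^r(\kappa Q)}$ on every \emph{unit} cube $Q$ (a freezing-coefficients plus fractional-integral iteration). Summing over the unit dyadic grid and using $w\in L^p$, $w_1\in L^r$ with $r<p$ gives $v\in L^p$.

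The paper's route buys you a construction with no matching conditions: the correction $w_1$ is produced by functional calculus already in hand, and the only new ingredient is a classical interior estimate that exploits the \emph{smoothness} of the modified matrix $A_1$. Your route, if the transmission problem could be solved, would give a solution equal to an explicit homogeneous function outside a ball, but at the cost of a boundary-value problem whose solvability you have not established.
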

\bp
We carry out the proof in the following order:  (iv), (v), (iii), (i), (ii), (vi) and then conclude by
presenting  examples of non-trivial global null solutions.

\smallskip

\noindent {\it Proof of (iv).} 
Fix $\varphi \in \Lambda^\alpha,\, 0\leq \alpha <1$.  
By definition, for $n/(n+1)<p\leq1$ (as is the case if $0\leq \alpha = n(p^{-1}-1)<1$), 
an $\HPL$-molecule is, in particular,
a classical $H^p(\RR^n)$-molecule (since the operator $L$ kills constants).   
Consequently, by the classical duality results \cite{FeSt,DRS} we have that 
$\varphi \in {\bf M}^{M,\,*}_{\alpha,L^*}\,$, the ambient space
in which $\Lambda_{L}^{\alpha}$ is defined (cf. (\ref{eq1.19}) and the related discussion,
bearing in mind that in our present context, the roles of $L$ and $L^*$ have been reversed).
Also, $\|\varphi\|_{\Lambda_{L}^{\alpha}} = 0$ 
for $\varphi \in \mathcal{N}_\alpha(L)$, by definition of the $\Lambda_{L}^{\alpha}$
norm (cf. (\ref{eq1.20}), but with 
$L$ in place of $L^*$).  Thus,
to prove (iv), it suffices to show that $\varphi$ 
satisfies the norm estimate \eqref{eq9.alphanorm}.  
To this end, we fix a cube $Q
\subset \RR^n,$ and use the fact that $e^{-tL}1=1$ to write
\begin{multline*}\frac{1}{|Q|^{\alpha/n}}\left(\frac{1}{|Q|}\int_Q
\left|(I-e^{-l(Q)^2{L}})^M\varphi(x)\right|^2\,dx\right)^{1/2}\\
= \frac{1}{|Q|^{\alpha/n}}\left(\frac{1}{|Q|}\int_Q
\left|(I-e^{-l(Q)^2{L}})^M(\varphi-\varphi_Q)(x)\right|^2\,dx\right)^{1/2},\end{multline*}
where  $\varphi_Q:= \fint_Q \varphi$.  It is then a routine matter to verify that
this last expression is bounded uniformly in $Q$ by either $\|\varphi\|_{BMO}$ (if $\alpha = 0$), or by
\begin{equation}\label{eq9.lip}
\|\varphi\|_{\Lambda^\alpha(\RR^n)}:= \sup_{x\neq y}\frac{|\varphi(x)-\varphi(y)|}{|x-y|^\alpha}
\end{equation}
(if $0<\alpha<1$), using a dyadic  annular decomposition plus
the Gaffney estimates, much as in the proof of (\ref{eq6.2}).  We omit the details.

\smallskip

\noindent {\it Proof of (v).} Recall that $L^2(\RR^n)\cap\HPL$ is dense in $\HPL$ (by definition,
if $0<p\leq2$, and as proved in Corollary \ref{c4.4}, if $2<p<\infty$).  Of course, $L^2(\RR^n)\cap
L^p(\RR^n)$ is dense in $L^p(\RR^n)$.  Therefore, it is enough to show that
$L^2(\RR^n)\cap L^p(\RR^n) = L^2(\RR^n)\cap\HPL,$ with equivalence of norms.

One direction is easy:  fix $f \in L^2(\RR^n)\cap L^p(\RR^n),\, p_-(L)<p<p_+(L)$.  By
Corollary \ref{c4.4}, for appropriate $\psi$ we have that
\begin{equation}\label{ref1}
\|f\|_{\HPL} \approx  \left\|\left(\dint_{\Gamma(\cdot)}|\psi(t^2L)f(y)|^2\,\frac
{dydt}{t^{n+1}}\right)^{1/2}\right\|_{L^p(\RR^n)} \leq\, C\|f\|_{L^p(\RR^n)}\,,\end{equation}
where the last step essentially follows by the argument used in \cite{AuscherSurvey}, Theorem~6.1,  where the case
$\psi(z) = \sqrt ze^{-z}$ for the vertical (rather than conical) square function was treated. The appropriate modifications are fairly straightforward.

Conversely, suppose that $f \in L^2(\RR^n)\cap\HPL$, and let $g \in  L^2(\RR^n)\cap L^{p'}(\RR^n)$,
with $\|g\|_{L^{p'}(\RR^n)}=1$.   By the Calder\'{o}n reproducing formula
(\ref{eq4.17}), for appropriate $\psi,\, \widetilde{\psi}$ we have that
\begin{multline*}\left|\int_{\RR^n} f\, \overline{g}\,\right| = \left|\int_{\RR^n} 
\pi_{\widetilde\psi,L}\circ Q_{\psi,L} \,f\, \overline{g}\,\right|\\[4pt] \leq\,\,
\|Q_{\psi,L} \,f\|_{T^p(\RR^{n+1}_+)}\,\|Q_{\widetilde{\psi},L^*} \,g\|_{T^{p'}(\RR^{n+1}_+)}\,\,
\leq\,\,C\,
\|f\|_{\HPL}\,\|g\|_{ L^{p'}(\RR^n)},
\end{multline*}
where in the last step we have used (\ref{eq4.15}) and the square function bounds of
\cite{AuscherSurvey} (cf. the second inequality in \eqref{ref1} and the references thereafter).  The latter are applicable to the adjoint operator $L^*$ in $L^{p'}(\RR^n)$ 
since $p_+(L^*) = \left(p_-(L)\right)'$.  Taking the supremum over all such $g$, we obtain that
$$\|f\|_{L^{p}(\RR^n)} \leq C \|f\|_{\HPL},$$ as desired.

\smallskip

\noindent {\it Proof of (iii).} We
interpolate the inclusion map between 
$p=2$ and $p=\infty$ (i.e., $\alpha = 0$
in (iv)), to obtain \eqref{eq9.lpnorm}.  In turn, Theorem \ref{t6.1} implies that $\|f\|_{\HPL} = 0$ for $f \in 
\mathcal{N}_p(L)$, whence (iii) follows.

\smallskip

\noindent {\it Proof of (i).}
We suppose that $n/(n+1)<p\leq1$.  
As noted above, an $\HPL$-molecule is also
a classical $H^p(\RR^n)$-molecule, if $n/(n+1) <p\leq1$.  Consequently, by 
\eqref{hpequivalence} and
the molecular decomposition of classical $H^p$ spaces, we have that,
$L^2(\RR^n) \cap \HPL \subset L^2(\RR^n) \cap H^p(\RR^n)$ and \eqref{eq9.4} holds.

\smallskip

\noindent{\it Remark:} by the density of $L^2(\RR^n) \cap \HPL$ in $\HPL$,
one may now extend the identity map by continuity
to produce an 	``embedding" $\mathcal{J}: \HPL \to H^p(\RR^n),$ 
which equals the identity on $L^2(\RR^n) \cap \HPL$.  It remains an open question 
to determine whether, in general,
this embedding is necessarily 1-1. 

We further remark that, in the case $p=1$, the containment $L^2 \cap H^1_L \subset L^2 \cap H^1$
amounts to saying that, for $f \in L^2 \cap H^1_L$, the limits
of the molecular decomposition $f = \sum \lambda_j m_j$, in $H^1_L,\,H^1$ and $L^1$,
are all the same.  It is not known whether the same can be said for an {\it arbitrary} element
of $H^1_L,$ except in the special case that the kernel of the heat semigroup
$e^{-tL}$ enjoys a {\it pointwise} Gaussian upper bound.  In that case, it is a routine 
matter to verify that one has the 1-1 embedding $H^1_L \hookrightarrow H^1$.

\smallskip

\noindent {\it Proof of (ii).}
Let $f \in L^2(\RR^n)\cap \HPL,\, 1<p\leq p_-(L)$, and let $g\in L^2(\RR^n)\cap L^{p'}(\RR^n),$ 
so that in particular, by (iii) above, we have that $g\in H^{p'}_{L^*}(\RR^n)$ (here we are using that
$(p_-(L))' = p_+(L^*)$).  Then for such $f,g$,
we have that $$\left|\int_{\RR^n} f\,\bar{g}\,\right| = |\langle f,g\rangle|\leq 
\|f\|_{\HPL}\,\|g\|_{H^{p'}_{L^*}(\RR^n)}\lesssim\|f\|_{\HPL}\,\|g\|_{L^{p'}(\RR^n)},$$
where $\langle \cdot,\cdot\rangle$ denotes the $\HPL-H^{p'}_{L^*}(\RR^n)$ duality pairing, and where
in the last step we have used the $L^*$ version of \eqref{eq9.lpnorm}.
Taking a supremum over all $g$ as above, with $\|g\|_{L^{p'}(\RR^n)}
=1$, we obtain that $f\in L^p$ and satisfies \eqref{eq9.3**}. 

\smallskip

\noindent {\it Proof of (vi).}  By duality, it suffices to treat the case $1<p\leq p_-(L),$ since
$p_+(L) = \left(p_-(L^*)\right)'.$  Moreover, it is enough to treat the case $p=p_-(L)$:  indeed,
if (vi) holds in that case, then it must also hold for $1<p<p_-(L)$, or else
we would reach a contradiction
by interpolating with the case $p=2$.

We therefore suppose that $p = p_-(L) >1$.  
We recall 
that by  \cite{AuscherSurvey}, 
the Riesz transform $\nabla L^{-1/2}$ fails to be bounded on $L^{p},$ if $p=p_-(L)$
(cf. \eqref{eq1.5}).
Thus, by
Proposition \ref{p5.4}, we must have that $\HPL$ cannot equal $L^p(\RR^n)$ if
$p=p_-(L).$

\smallskip

To conclude the proof of the proposition, it remains to construct examples to show that the null spaces
$\mathcal{N}_\alpha(L),\, 0\leq\alpha<1$ and $\mathcal{N}_p(L),\, 2n/(n-2)<p<\infty,$
may be non-trivial.  To this end, we recall the examples of Frehse \cite{Frehse}, discussed above in 
Section \ref{s2}, namely that
for each $q<n/2$ and $\lambda >0$, there exists 
$L:=-{\rm div} A\nabla$, with $A$ complex elliptic, 
$L^\infty(\RR^n)$ and 
$C^\infty(\RR^n\setminus\{0\})$, for which
the $W^{1,2}_{loc}$ function
\begin{equation}\label{eq9.7*}
u(x):=\frac{x_1}{|x|^q}\,e^{i\lambda \ln |x|}\end{equation}
is a global weak solution of the equation $Lu=0$ in $\mathbb{R}^n.$  
Taking $\alpha = 1-q$, we then have that $u$ in \eqref{eq9.7*} belongs to 
$\Lambda_\alpha(\RR^n)$ if $0<q\leq 1$;  in fact, if $q=1$ we even have the stronger statement that
$u \in L^\infty(\RR^n)$.  Thus, $u \in \mathcal{N}_\alpha(L)$.

To exhibit an $L$ for which $\mathcal{N}_p(L)$ is non-trivial
is a bit more delicate, although matters will still depend on the construction in \cite{Frehse}.
Fix now $p>2n/(n-2)$ and choose $q<n/2$ such that $p(q-1) >n$.  We observe that for such
$p,q$, the solution $u$ in \eqref{eq9.7*} belongs to $L^p$ ``at infinity", i.e., in the complement of
any ball centered at the origin.  However, $u$ is not in $L^p$ in any neighborhood of the origin,
so we shall have to work a little harder to produce a null solution that belongs globally to
$L^p.$

Let $L:=-{\rm div}A\nabla$ be the complex elliptic matrix constructed in \cite{Frehse}, for which 
$u$ in \eqref{eq9.7*} is a global weak solution in $\RR^n$ (the matrix $A$
is given explicitly in \eqref{eq2.11} above).  We note that $A$ is smooth away from the origin,
and that $|\nabla A(x)|\leq C$ if, say, $|x|> 1/4.$  Fix a smooth cut-off function 
$\eta \in C^\infty_0(|x|\leq 3/8),$ with $0\leq \eta \leq 1$, and $\eta(x) \equiv 1$ if $|x|\leq 1/4$.
Let {\bf 1} denote the $n\times n$ identity matrix, and define an auxiliary matrix 
$$A_1 := \eta {\bf 1} + (1-\eta) A.$$
Then $A_1\in C^\infty(\RR^n)$ is complex elliptic (in the sense of \eqref{eq1.2}),
with $\|\nabla A_1\|_{L^\infty(\RR^n)} \leq C.$   Set $L_1:= -{\rm div} A_1\nabla.$

Next, we smoothly truncate $u$ away from 0.  Let $0\leq \Phi \in C^\infty(\RR^n)$, with
$\Phi (x) \equiv 1$ if $|x|\geq 1$, and $\Phi(x) \equiv 0$ if $|x|\leq 1/2$, and define
$$w:= u \,\Phi.$$ We observe that
$$L_1\, w= L \,w = - {\rm div}(u A\nabla \Phi)- A\nabla u\cdot \nabla\Phi =: f 
\,\,\in C^\infty_0\left(\frac12\leq |x|\leq 1\right).$$
We now fix $r:= 2n/(n-2)$ and $r' = 2n/(n+2)$.   Recall that 
by \cite{AuscherSurvey}, we have that
\begin{equation}\label{eq9.8*}L_1^{-1}: L^{r'}(\RR^n)\to \dot{W}^{1,2}(\RR^n)\cap L^r(\RR^n).
\end{equation}
Thus, $$w_1:= L_1^{-1} f\,\,\in \dot{W}^{1,2}(\RR^n)\cap L^r(\RR^n).$$
On the other hand, since $q<n/2$, the solution $u$ in \eqref{eq9.7*}, and 
hence also $w$, do {\bf not} belong to $L^r(\RR^n)$, nor to $\dot{W}^{1,2}(\RR^n)$
(this is related to the failure of semigroup bounds for $L_1$ in $L^p$, when $p>n/(q-1)$).
Consequently, $v:= w-w_1$ is non-trivial, and solves
$L_1 v = 0,$
globally in $\RR^n$ in the weak sense.

It therefore remains only to show that $v \in L^p(\RR^n)$ (in spite of the failure of functional calculus for
$L_1$ in $L^p$), where we recall that $p>2n/(n-2)$ was fixed above.  We begin with
the following
\begin{lemma}\label{l9.9*}  Let $r=2n/(n-2)$.  Suppose that $A\in C^1(\RR^n)$ 
is complex elliptic (in the sense of \eqref{eq1.2}), and that $\|\nabla A\|_{L^\infty(\RR^n)} \leq C_0.$
Set $L:= -{\rm div} A \nabla$, and suppose that $v\in W^{1,2}_{loc}$ is a global weak solution of
$Lv=0.$  Then there are constants $C_1$ and 
$\kappa$, depending only on $n, \,C_0$ and ellipticity, such that
for every {\bf unit} cube $Q\subset \RR^n$, we have that
\begin{equation}\label{eq9.10*}
\|v\|_{L^\infty(Q)} \leq C_1 \left(\int_{\kappa Q} |v|^r\right)^{1/r},
\end{equation}
where $\kappa Q$ denotes the concentric dilate of the unit cube $Q$, with side length $\kappa$.
\end{lemma}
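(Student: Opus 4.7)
The plan is to prove \eqref{eq9.10*} by classical elliptic regularity bootstrapping, exploiting that $A \in C^1(\RR^n)$ with $\|\nabla A\|_{L^\infty(\RR^n)} \leq C_0$. First I would reduce to an $L^2$ bound: since $r>2$ and $|\kappa Q|\leq \kappa^n$, Hölder's inequality yields
\[ \|v\|_{L^2(\kappa' Q)} \leq C(\kappa,n)\, \|v\|_{L^r(\kappa Q)} \]
for any $\kappa'\in[1,\kappa]$, so it suffices to establish
\[ \|v\|_{L^\infty(Q)} \leq C\, \|v\|_{L^2(\kappa' Q)} \]
for some $\kappa'>1$ and some $C$ depending only on $n$, $C_0$, and ellipticity.

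The heart of the argument is to show by induction on $k\geq 1$ that $v \in W^{k,2}_{loc}$ with the quantitative bound
\[ \|v\|_{W^{k,2}(Q')} \leq C_k\, \|v\|_{L^2(Q'')} \]
for any concentric cubes $Q' \subset Q''\subset \kappa Q$, where $C_k$ depends only on $n$, $k$, $C_0$ and ellipticity. The base case $k=1$ is the standard Caccioppoli inequality, obtained by testing $Lv=0$ against $\eta^2 \bar v$ with a suitable cutoff $\eta$ and using the coercivity \eqref{eq1.2}. For the inductive step, let $D_h$ denote a difference quotient in a fixed coordinate direction $e_j$ and set $\tau_h v(x) = v(x+he_j)$; a direct calculation shows that $D_h v$ solves the divergence-form equation
\[ -\mathrm{div}\bigl(A\, \nabla (D_h v)\bigr) \,=\, \mathrm{div}\bigl((D_h A)\, \nabla (\tau_h v)\bigr). \]
Since $\|D_h A\|_{L^\infty}\leq C_0$ uniformly in $h$, applying the inhomogeneous Caccioppoli estimate to $D_h v$ and letting $h\to 0$ yields $\|\nabla \partial_j v\|_{L^2(Q')}\leq C\|v\|_{W^{1,2}(Q'')}$; chaining this produces the full $W^{k,2}$ bound on nested dilates. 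Finally, fixing any integer $k>n/2$ and invoking the Sobolev embedding $W^{k,2}(Q) \hookrightarrow L^\infty(Q)$ on the unit cube (whose embedding constant is universal) gives
\[ \|v\|_{L^\infty(Q)} \,\leq\, C\, \|v\|_{W^{k,2}(Q)} \,\leq\, C\, \|v\|_{L^2(\kappa' Q)}, \]
which, combined with the initial Hölder step, yields \eqref{eq9.10*}.

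The main technical point is to verify that the differentiated Caccioppoli iteration closes cleanly in the complex-coefficient setting. This is fortunately automatic: at every stage, the only properties of $A$ that are invoked are the sesquilinear coercivity \eqref{eq1.2} (to handle the ellipticity term after testing with $\eta^2\bar w$) and the $L^\infty$ bound on $\nabla A$ (to control the error terms generated by commuting $D_h$ past the coefficients). No real-variable De Giorgi--Nash theory, which may fail for complex coefficients, is required—only repeated energy estimates. Since all the constants arising from Caccioppoli and from Sobolev embedding on concentric dilates of unit cubes can be bounded purely in terms of $n$, $C_0$, ellipticity, and the fixed number $k>n/2$, the final constants $C_1$ and $\kappa$ depend only on these quantities, as claimed.
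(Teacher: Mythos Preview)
Your approach has a genuine gap for $n\geq 4$. The difference-quotient argument you describe, with $A\in C^1$ only, gives exactly one step of improvement: from $v\in W^{1,2}_{loc}$ to $v\in W^{2,2}_{loc}$. To iterate to $W^{3,2}_{loc}$ you would apply the same reasoning to $w=\partial_j v$, which satisfies $-\mathrm{div}(A\nabla w)=\mathrm{div}\bigl((\partial_j A)\nabla v\bigr)$. Taking difference quotients of this equation produces a term $D_h(\partial_j A)$, which you cannot control in $L^\infty$ unless $A\in C^2$. In general, reaching $W^{k,2}_{loc}$ by this route requires $A\in C^{k-1}$. Since the Sobolev embedding $W^{k,2}\hookrightarrow L^\infty$ on a unit cube needs $k>n/2$, your argument as written only closes when $n=3$; for $n\geq 4$ the ``chaining'' step is unjustified under the hypothesis $A\in C^1$.

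The paper avoids this obstruction by a different mechanism: it freezes coefficients at a point $x_0$, uses the fundamental solution $\Gamma_0$ of the constant-coefficient operator $-\mathrm{div}\,A(x_0)\nabla$, and integrates by parts to obtain the pointwise representation
\[
|v(x)|\;\lesssim\;\frac{1}{|Q|}\int_{3Q}|v|\;+\;\int_{3Q}|x-y|^{1-n}|v(y)|\,dy,\qquad x\in Q.
\]
This uses $\|\nabla A\|_\infty$ only once (to bound $|A(y)-A(x_0)|\lesssim|y-x_0|$). The second term is a Riesz potential of order $1$, so each application of this pointwise bound upgrades the integrability exponent by one Sobolev step via the fractional integration theorem. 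Iterating the \emph{same} pointwise inequality on nested dilates of $Q$ (rather than differentiating the equation again) reaches an exponent $t>n$ after finitely many steps, at which point the $L^\infty$ bound follows. The key point is that the iteration is at the level of the integral inequality, not the PDE, so no additional smoothness of $A$ is consumed.
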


Let us momentarily take the lemma for granted, and conclude the proof of Proposition \ref{embed}.
We apply Lemma \ref{l9.9*} to the operator $L_1$ and to the solution $v=w-w_1$ constructed above.
We recall that $w\in L^p(\RR^n),\, w_1 \in L^r(\RR^n),$ with $p>r:= 2n/(n-2).$
Let $\{Q_j\}$ be an enumeration of the dyadic grid of unit cubes in $\RR^n$,
and we observe that for $\kappa$ as in the lemma,
$$\sum a_j := \sum \int_{\kappa Q_j} |w_1|^r \approx \int_{\RR^n} |w_1|^r < \infty,$$
since the dilated cubes $\kappa Q_j$ have bounded overlaps.  
We now consider
\begin{multline*}
\int_{\RR^n}|v|^p= \sum \int_{Q_j}|v|^p \\\lesssim \sum  \left(\int_{\kappa Q_j} |v|^r\right)^{p/r} \lesssim
\sum  \left(\int_{\kappa Q_j} |w|^r\right)^{p/r}+\sum (a_j)^{p/r}\\=:\sum_1 +\sum_2,\end{multline*}
where in the first inequality we have used \eqref{eq9.10*}.
By H\"{o}lder's inequality, we have
$$\sum_1\lesssim \sum \int_{\kappa Q_j}|w|^p\lesssim \int_{\RR^n}|w|^p <\infty.$$
Moreover,
$$\sum_2 \,\leq\, \left(\sum a_j\right)^{p/r}\,<\,\infty, 
$$
since $p>r$.  This concludes the proof of Proposition
\ref{embed}, modulo the proof of  Lemma \ref{l9.9*}.
\ep
\begin{proof}[Proof of Lemma \ref{l9.9*}]  The inequality \eqref{eq9.10*} is a 
variant of standard classical estimates.
For the reader's convenience, we provide a proof here using a well known perturbation argument
(e.g., as in the argument on pages 87-88 in the monograph of Giaquinta \cite{Gi}), plus an iteration scheme.   

For the moment, we fix an arbitrary (i.e., not necessarily unit)
cube $Q$, of side length $\ell(Q)$, and a point $x_0 \in Q$, and define a constant coefficient
complex elliptic operator $L_0 :=-{\rm div}A_0\nabla,$ where $A_0:= A(x_0).$
By standard results for constant coefficient operators, we have that $\Gamma_0$, the fundamental solution for $L_0$, belongs to $C^\infty(\RR^n\setminus\{0\})$ and satisfies
\begin{equation}\label{eq9.11*}
|\Gamma_0 (x) |\lesssim |x|^{2-n},\quad |\nabla \Gamma_0 (x) |\lesssim |x|^{1-n},
\quad |\nabla^2 \Gamma_0 (x) |\lesssim |x|^{-n},
\end{equation}
where the implicit constants depend only upon ellipticity and dimension.

Let $\phi_Q$ be a smooth non-negative cut-off function supported in $3Q$, with $\phi_Q \equiv 1$
on $2Q$, and satisfying 
$\|\nabla \phi_Q\|_\infty \lesssim \ell(Q)^{-1},\,
\|\nabla^2 \phi_Q\|_\infty \lesssim \ell(Q)^{-2}.$
We now write
\begin{multline*}v(x_0)= v(x_0)\,\phi_Q(x_0) = \int \overline{\nabla_y\Gamma_0(x_0-y)}\cdot A_0\nabla
\left(v(y)\phi_Q(y)\right) dy\\[4pt]=\int \overline{\nabla_y\Gamma_0(x_0-y)}\cdot A_0\nabla
v(y)\,\phi_Q(y) dy \,+\,\int \overline{\nabla_y\Gamma_0(x_0-y)}\cdot A_0\nabla\phi_Q(y)\,
v(y)dy\\[4pt]= \int \overline{\nabla_y\left(\Gamma_0(x_0-y)\,\phi_Q(y)\right)}\cdot (A_0-A(y))\nabla
v(y) dy -\int \overline{\Gamma_0(x_0-y)}\nabla\phi_Q(y)\cdot A_0\nabla
v(y)dy\\[4pt] +\,\, \int \overline{\nabla_y\Gamma_0(x_0-y)}\cdot A_0\nabla\phi_Q(y)\,
v(y)dy\,=:\, I+II+III,
\end{multline*}
where we have used in term $I$ that $Lv=0$.

By \eqref{eq9.11*} and the definition of $\phi_Q$, we have that
$$|III|\lesssim \frac{1}{|Q|}\int_{3Q\setminus 2Q} |v|.$$
The same bound holds for $II$, as may be seen by integrating by parts to move the
gradient away from $v$.
Similarly, integrating by parts in term $I$ yields the estimate
\begin{multline*}|I|\lesssim \int |\nabla \Gamma_0| \,|\nabla \phi_Q|\, |v|\,+\,
 \int |\Gamma_0| \,|\nabla^2 \phi_Q|\, |v|
\\[4pt]+\,\|\nabla A\|_\infty 
\int_{3Q} |\nabla^2\Gamma_0(x_0-y)|\,|x_0-y|\, |v(y)| dy\,+\,\|\nabla A\|_\infty 
 \int |\Gamma_0| \,|\nabla \phi_Q|\, |v| \\[4pt]+\,
\|\nabla A\|_\infty 
\int_{3Q} |\nabla \Gamma_0(x_0-y)|\, |v(y)| dy\,=:\,I_1+I_2+I_3+I_4+I_5.\end{multline*}
The terms $I_1,\,I_2$ satisfy the same bound as do $II$ and $III$.
For the remaining terms, we have 
$$|I_3+I_4+I_5|\lesssim \int_{3Q}|x_0-y|^{1-n}\,|v(y)|\,dy=:I_Qv\,(x_0).$$
Combining our estimates, we obtain
\begin{equation}\label{eq9.12*}|v(x)|\lesssim \frac{1}{|Q|}\int_{3Q} |v| \,+\,I_Qv\,(x)\,,\qquad \forall x\in Q.
\end{equation}
By H\"{o}lder's inequality, we have
$$I_Qv\,(x) \lesssim \ell(Q) \left(\frac{1}{|Q|}\int_{3Q}|v|^t\right)^{1/t},$$
for any $t>n$, and each $x\in Q$, so that also
\begin{equation}\label{eq9.12**}|v(x)|\lesssim \frac{1}{|Q|}\int_{3Q} |v| \,+\,
\ell(Q) \left(\frac{1}{|Q|}\int_{3Q}|v|^t\right)^{1/t}\,,\qquad \forall x\in Q.
\end{equation}
Iterating (that is, using \eqref{eq9.12*} with $Q$ replaced by $3Q$), 
we obtain for $x\in Q$,
\begin{multline*}|v(x)|\lesssim \frac{1}{|Q|}\int_{3Q} |v| \,+\,
\ell(Q) \left(\frac{1}{|Q|}\int_{3Q}|v|^t\right)^{1/t}\\[4pt]
\lesssim \frac{1}{|Q|}\int_{3Q} |v| \,+\, \frac{\ell(Q)}{|Q|}\int_{9Q} |v|+
\ell(Q) \left(\frac{1}{|Q|}\int_{3Q}|I_{3Q}v|^t\right)^{1/t}\\[4pt]
\lesssim \frac{1}{|Q|}\int_{3Q} |v| \,+\,\frac{\ell(Q)}{|Q|}\int_{9Q} |v|+
(\ell(Q))^2 \left(\frac{1}{|Q|}\int_{9Q}|v|^s\right)^{1/s},
\end{multline*}
where in the last step $1/t=1/s -1/n$  and
we have used the fractional integral theorem.
Iterating further, and taking $Q$ to be a unit cube, we obtain the conclusion of the lemma.
\end{proof}

\section{Appendix 2:  Embedding of $\HPL$ spaces into an ambient Banach space}
\label{s10}  We shall continue to use the notational convention that
$\Lambda^0_L(\RR^n) := BM0_L(\RR^n)$.  In this appendix, we prove the following:

\begin{proposition}\label{p10.1}  Let $0<p_0<1$, and $0\leq \alpha_0<\infty$.  Then there exists a Banach space
$\mathcal{B}=\mathcal{B}(p_0,\alpha_0)$ such that 
the spaces $\HPL,\, p_0\leq p<\infty,$ and 
$\Lambda^\alpha_L(\RR^n),\, 0\leq \alpha\leq \alpha_0$, are all 
continuously embedded into $\mathcal{B}.$
\end{proposition}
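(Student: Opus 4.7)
\emph{The plan.} I would construct $\mathcal{B}$ as the dual of a suitably small ``test-function'' space $\mathcal{T}$, chosen from among the molecular spaces used to define $\Lambda^\alpha_L$ in Section~\ref{s1}. Set
\begin{equation*}
\alpha_* := \max\{\alpha_0,\, n(1/p_0-1)\},
\end{equation*}
choose an integer $M$ with $M > \max\{\tfrac{1}{2}(\alpha_*+\tfrac{n}{2}),\ \tfrac{n}{2}(\tfrac{1}{p_0}-\tfrac{1}{2})\}$, fix any $\eps > 0$, and define
\begin{equation*}
\mathcal{T} := {\bf M}^{\eps,M}_{\alpha_*, L^*}, \qquad \mathcal{B} := \mathcal{T}^*.
\end{equation*}
Equipped with the molecular norm, $\mathcal{T}$ is a Banach space, hence so is $\mathcal{B}$. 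It remains to show that both scales continuously embed.

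\emph{Lipschitz side.} This is essentially tautological: for $\alpha \leq \alpha_0 \leq \alpha_*$, the stronger annular-decay weight in the $\mathcal{T}$-norm gives the continuous inclusion $\mathcal{T} = {\bf M}^{\eps,M}_{\alpha_*, L^*} \hookrightarrow {\bf M}^{\eps,M}_{\alpha, L^*}$, and restriction of functionals then embeds $\Lambda^\alpha_L \subset ({\bf M}^{\eps,M}_{\alpha, L^*})^*$ continuously into $\mathcal{T}^* = \mathcal{B}$.

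\emph{Hardy side.} By density of $L^2 \cap \HPL$ in $\HPL$ (immediate for $p \leq 2$; established inside the proof of Corollary~\ref{c4.4} for $p > 2$), it suffices to prove the uniform pairing estimate
\begin{equation}\label{eqProp10plan}
|\langle f, \phi\rangle_{L^2}| \leq C\,\|f\|_{\HPL}\,\|\phi\|_{\mathcal{T}}, \qquad f \in L^2 \cap \HPL,\ \phi \in \mathcal{T},
\end{equation}
uniformly in $p \in [p_0, \infty)$. My strategy splits on the size of $p$. For $p_0 \leq p \leq 1$, I would first prove the auxiliary embedding $\mathcal{T} \hookrightarrow \Lambda^{n(1/p-1)}_{L^*}$, whereupon \eqref{eqProp10plan} follows from the duality of Theorem~\ref{t3.5}. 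The embedding is a direct computation of the $\Lambda^\beta_{L^*}$-norm of $\phi \in \mathcal{T}$ for $\beta = n(1/p-1) \in [0,\alpha_*]$: using the iteration of $(I-e^{-\tau L^*}) = \tau L^*\int_0^1 e^{-s\tau L^*}\,ds$ together with $\phi \in R((L^*)^M)$ and the annular decay of $L^{*-k}\phi$ encoded in $\|\phi\|_{\mathcal{T}}$, one dominates $\tfrac{1}{|Q|^{\beta/n+1/2}}\|(I-e^{-\ell(Q)^2L^*})^M\phi\|_{L^2(Q)}$ uniformly over cubes $Q$ by $C\|\phi\|_{\mathcal{T}}$, provided $M \geq \tfrac{1}{2}(\alpha_*+n/2)$. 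For $p > 1$, I would use the Calder\'on reproducing formula
\begin{equation*}
\langle f, \phi\rangle_{L^2} = c_M \iint_{\RR^{n+1}_+} \psi(t^2 L) f(x)\cdot \overline{(t^2L^*)^M e^{-t^2L^*}\phi(x)}\,\frac{dx\,dt}{t}
\end{equation*}
for an appropriate $\psi \in \Psi(\Sigma_\mu^0)$, observing that the first factor lies in $T^p(\RR^{n+1}_+)$ with norm controlled by $\|f\|_{\HPL}$ (by Proposition~\ref{p4.3} for $p \leq 2$, and by Corollary~\ref{c4.4} together with \eqref{eq4.tentp>2} for $p > 2$). Tent-space duality \eqref{eq4.3}, interpreted as the Carleson duality $T^1$--$T^\infty$ at the endpoint $p=1$, reduces \eqref{eqProp10plan} to the companion bound
\begin{equation*}
\|(t^2L^*)^M e^{-t^2L^*}\phi\|_{T^{p'}(\RR^{n+1}_+)} \leq C\,\|\phi\|_{\mathcal{T}},\qquad p' \in (1,\infty],
\end{equation*}
which I would establish by the same annular-plus-Gaffney analysis as in the Lipschitz case, with $p' = \infty$ producing a Carleson bound of the form \eqref{e3.59}.

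\emph{Main obstacle.} The crux is the family of companion estimates $\mathcal{T} \hookrightarrow \Lambda^\beta_{L^*}$ for $\beta \in [0,\alpha_*]$ and $\|(t^2L^*)^M e^{-t^2L^*}\phi\|_{T^{p'}} \lesssim \|\phi\|_{\mathcal{T}}$ for $p' \in (1,\infty]$, required to hold uniformly over the respective parameter ranges with a single choice of $\alpha_*, M, \eps$. Each is, at heart, an $L$-adapted square-function or Carleson bound for the wave $(t^2L^*)^Me^{-t^2L^*}\phi$ of a molecule, established using the same ingredients that drive the molecular decomposition in Section~\ref{s3} and the duality proof of Theorem~\ref{t3.5}: annular splitting relative to the reference cube $Q_0$, Gaffney decay of the semigroup (Lemmas~\ref{l2.5} and~\ref{l2.7}), and absorption of small-scale contributions into the factor $M \geq \tfrac{1}{2}(\alpha_*+n/2)$. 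The technical delicacy is that the exponent $\beta$ (respectively $p'$) varies over an entire closed interval, with endpoint behaviour at both ends that must be captured by a single molecular class—this is precisely what forces the simultaneous choice of $\alpha_*$, $M$ and $\eps$ at the outset of the construction.
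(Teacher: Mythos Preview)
Your overall architecture is right, and for the Lipschitz scale and for $p\geq 2$ your plan would go through. The gap is in your choice of test space: $\mathcal T={\bf M}^{\eps,M}_{\alpha_*,L^*}$ only records that $\phi$ lies in the \emph{range} of $(L^*)^k$ (with decay on $(L^*)^{-k}\phi$), not in the \emph{domain}. Without domain control you cannot get the small-scale rate that both of your key companion estimates require.

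Concretely, the claimed embedding $\mathcal T\hookrightarrow\Lambda^\beta_{L^*}$ fails for every $\beta\geq 0$. For a small cube $Q$ with $\ell(Q)=\delta$ you need $\|(I-e^{-\delta^2L^*})^M\phi\|_{L^2(Q)}\lesssim\delta^{\beta+n/2}$. Your identity $(I-e^{-\tau L^*})=\tau L^*\!\int_0^1 e^{-s\tau L^*}\,ds$ iterated $M$ times gives a factor $\tau^M$ only when it lands on $(L^*)^M\phi\in L^2$, i.e.\ only if $\phi\in\mathcal D((L^*)^M)$; the range property $\phi=(L^*)^M\psi$ goes the wrong way. (Take $L=-\Delta$ and a rough compactly supported $\phi$: it sits in ${\bf M}^{\eps,M}_{\alpha_*,\Delta}$ but not in $\Lambda^\beta_\Delta=\Lambda^\beta$.) The same defect kills the $T^{p'}$ bound for $p'>2$ (needed when $1<p<2$): near $t=0$ one has only $\|(t^2L^*)^Me^{-t^2L^*}\phi\|_{L^2}\leq C\|\phi\|_{L^2}$, and $\int_0^1 t^{-n-1}\,dt$ diverges.

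The paper repairs exactly this by shrinking the test space to ${\bf M}_0\subset{\bf M}^{\eps_0,M}_{\alpha_0,L^*}\cap\mathcal D((L^*)^M)$, with norm controlling $\|(L^*)^k\varphi\|_{L^2(S_j(Q_0))}$ for $k=-M,\dots,M$. Then for $p_0\leq p\leq 1$ the pairing with an $(H^p_L,\eps,M)$-molecule $m$ adapted to $Q$ is estimated directly: if $\ell(Q)\geq 1$, Cauchy--Schwarz with $k=0$; if $\ell(Q)<1$, write $\int\varphi\,m=\ell(Q)^{2M}\!\int(L^*)^M\varphi\cdot(\ell(Q)^2L)^{-M}m$ and use $2M>n(1/p-1/2)$. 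For $1<p<2$ one writes $(t^2L^*)^Me^{-t^2L^*}\varphi=t^{2M}e^{-t^2L^*}(L^*)^M\varphi$ on $(0,1)$ to get $\Sq(Q_{\psi,L^*}\varphi)\in L^\infty$, hence the $T^{p'}$ bound for $p'>2$. If you add the domain condition to your $\mathcal T$, your outline becomes essentially the paper's proof.
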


\bp  We shall realize the space $\mathcal{B}$ as the dual of an appropriate normed space
${\bf M}_0 = {\bf M}_0(p_0,\alpha_0)$, which in turn will be a subspace of the intersection of
${\bf M}^{\eps_0,M}_{\alpha_0,L^*}$ 
(cf. Section \ref{s1}) and $\mathcal{D}((L^*)^M)$ (the domain of $(L^*)^M$
in $L^2(\RR^n)$), where $\eps_0>0$  and 
\begin{equation}\label{eq10.1}
M>\max\left(\frac12(\alpha_0 + n/2),\frac n2\left(\frac{1}{p_0}-\frac12\right)\right).\end{equation}  
More precisely, for such 
$\eps_0$ and $M$ fixed, we define
${\bf M}_0 = {\bf M}_0(p_0,\alpha_0)$ as the collection of all $\varphi \in L^2(\RR^n)$ such that
$\varphi$ belongs to the $\mathcal{R}((L^*)^k)$, the range of $(L^*)^k$ in $L^2(\RR^n)$,
and also to $\mathcal{D}((L^*)^k)$, for each $k = 0,1,....,M,$ and satisfies
\begin{equation}\label{eq10.2}  \|\varphi\|_{{\bf M}_0} :=  \sup_{j\geq 0}
2^{j(n/2+\alpha_0+\eps_0)}\sum_{k=-M}^M\|(L^*)^{k}\varphi\|_{L^2(S_j(Q_0))}<\infty,\end{equation}
where $Q_0$ is the unit cube centered at $0$ and $S_j(Q_0)$, $j\in\NN$, are the corresponding dyadic annuli (see (\ref{eq3.2})).  We note that $\|\cdot\|_{{\bf M}_0}$ clearly defines a norm.  We observe also
that it is easy to construct elements of ${\bf M}_0$:  just set $\varphi = (L^*)^M e^{-L^*} f$, where $f\in L^2$ with support in $Q_0$.  The bound $\|\varphi\|_{{\bf M}_0} \leq C \|f\|_{L^2(Q_0)}$ follows 
immediately from Gaffney estimates.

We now set $\mathcal{B}:= {\bf M}_0'$, the dual space of ${\bf M}_0,$
and we consider first the embedding $\Lambda^\alpha_{L}(\RR^n)\hookrightarrow \mathcal{B}$, for 
$0\leq\alpha\leq\alpha_0.$ Suppose that $\varphi \in {\bf M}_0$, with $\|\varphi\|_{{\bf M}_0}=1$.
Then $\varphi$ is an $(H^p_{L^*},(\alpha_0-\alpha)+\eps_0,M)$-molecule adapted to $Q_0$ (cf. (\ref{eq3.3})), up to multiplication by some harmless constant $C$, 
with $\alpha = n(1/p-1)$, for every $p$ such that $n/(n+\alpha_0)\leq p\leq 1.$  Thus, by Lemma
\ref{l3.1}, for every $g \in \Lambda^\alpha_L(\RR^n), 0\leq \alpha \leq \alpha_0,$ we have
$$|\langle \varphi, g\rangle|\leq C \|g\|_{\Lambda^\alpha_L(\RR^n)} = C\|\varphi\|_{{\bf M}_0} 
\|g\|_{\Lambda^\alpha_L(\RR^n)},$$
whence it follows that $\Lambda^\alpha_{L}(\RR^n)\hookrightarrow \mathcal{B}$.

Next, we consider the embedding $\HPL \hookrightarrow \mathcal{B},\, p_0\leq p\leq 1.$
Since ${\bf M}_0 \subset L^2(\RR^n),$ by (\ref{hpequivalence}) and Definition \ref{def 2.4},
it is enough to show that, given $\eps>0$, 
\begin{equation}\label{10.3}
\big|\int_{\RR^n} \varphi(x) \, m(x) \, dx\, \big|\,\leq \,C\, \|\varphi\|_{{\bf M}_0} ,
\end{equation}
for every $(H^p_L,\eps,M)$-molecule $m$.  We fix such a molecule $m$, associated to a cube $Q$. 
It is clear from the definitions (cf. (\ref{eq10.2}) and (\ref{eq3.3})) that for $k=0,1,...,M$,
\begin{equation}\label{eq10.4}
\|(L^*)^k\varphi\|_{L^2(\RR^n)}\leq C \|\varphi\|_{{\bf M}_0}\quad {\rm and } \quad 
\|\left(\big(\ell(Q)\big)^2 L\right)^{-k}m\|_{L^2(\RR^n)}\leq C \ell(Q)^{n/2-n/p},\end{equation}
Thus, for  $\ell(Q) \geq 1, $ the bound (\ref{10.3}) follows immediately from Schwarz's inequality
and (\ref{eq10.4}) with $k=0$.
On the other hand, if $\ell(Q) < 1$, we have
\begin{multline*}\big|\int_{\RR^n} \varphi(x) \, m(x) \, dx\, \big| 
=\,\ell(Q)^{2M}\, \big|\int_{\RR^n} (L^*)^M\varphi(x) \,\left(\big(\ell(Q)\big)^2 L\right)^{-M}m(x) \, dx\, \big|\\
\leq \,C\,\|\varphi\|_{{\bf M}_0} \, \ell(Q)^{2M +n/2-n/p}
\end{multline*}
by (\ref{eq10.4}) with $k=M$.  Since $p\geq p_0$, for $M$ as in (\ref{eq10.1}), we obtain (\ref{10.3}).

Finally, we suppose that $1<p<\infty,$ and let $f\in
L^2(\RR^n) \cap \HPL$.  Setting $\psi(\zeta):=\zeta^Me^{-\zeta}$,
by the Calder\'{o}n reproducing formula (\ref{eq4.17}) and duality, we have
\begin{multline*}\big|\int_{\RR^n} \varphi(x) \, f(x) \, dx\, \big|\,\leq \,C\, \|Q_{\psi,L} f\|_{T^p(\RR^{n+1}_+)}
\, \|Q_{\psi,L^*} \varphi\|_{T^{p'}(\RR^{n+1}_+)}\\
\leq \,C\,\|f\|_{\HPL}\, \|Q_{\psi,L^*} \varphi\|_{T^{p'}(\RR^{n+1}_+)}.
\end{multline*}
It is therefore enough to show that, for $\|\varphi\|_{{\bf M}_0} = 1$,
\begin{equation}\label{10.6}\, \|Q_{\psi,L^*} \varphi\|_{T^{p'}(\RR^{n+1}_+)}\equiv \|\Sq (Q_{\psi,L^*} \varphi)\|_{L^{p'}(\RR^n)} \leq C, \qquad 1<p'<\infty,\end{equation}
where we remind the reader that the ``area integral" $\Sq$ is defined in (\ref{eq4.2}).
We first note that (\ref{10.6}) with $p'=2$ follows immediately by
standard quadratic estimates and the case $k=0$ of (\ref{eq10.4}).   Moreover,
$\|\varphi\|_{H^1_L(\RR^n)}\leq C$ (indeed, as mentioned above,
$\varphi$ is an $(H^1_L,\alpha_0+\eps_0,M)$-molecule adapted to $Q_0$, 
up to multiplication by a harmless constant),
so that by Proposition \ref{p4.3}, we have
$$\|Q_{\psi,L^*} \varphi\|_{T^{1}(\RR^{n+1}_+)}=\|\Sq(Q_{\psi,L^*} \varphi)\|_{L^{1}(\RR^n)} \leq C.$$
Combining the latter bound with that for $p'=2$, we obtain immediately  (\ref{10.6}) in the case
$1<p'<2.$

Similarly, to handle the case $2<p'<\infty,$ it is enough to show that 
$\Sq(Q_{\psi,L^*} \varphi)\in L^\infty(\RR^n).$  To this end, we write
\begin{multline*}\left(\Sq(Q_{\psi,L^*} \varphi)(x)\right)^2  :=
\iint_{|x-y|<t} |(t^2L^*)^Me^{-t^2L^*}\!\varphi\,(y)|^2 \frac{dy dt}{t^{n+1}}\\[4pt]
\leq\, \int_0^1\!\int_{\RR^n}|t^{2M}e^{-t^2L^*}(L^*)^M\varphi\,(y)|^2 \frac{dy dt}{t^{n+1}}
\,+\,\int_1^\infty\!\int_{\RR^n}|(t^2L^*)^Me^{-t^2L^*}\!\varphi\,(y)|^2 \frac{dy dt}{t^{n+1}}\\[4pt]
\leq \,\int_0^1t^{4M-n-1} dt \,+\,\int_1^\infty t^{-n-1} dt\,\leq C\,,
\end{multline*}
where in the next-to-last 
inequality we have used (\ref{eq10.4}) with $k=M$ in the first term and with $k=0$ in the
second, along with $L^2$ boundedness of $(t^2L^*)^k e^{-t^2L^*}$ for every non-negative integer
$k$, and in the very last step we have used that $M>n/4$, by (\ref{eq10.1}) and the fact that $p_0\leq 1.$

\ep

\end{document}